\numberwithin{equation}{section}
\newcommand{\ddt}[1]{\frac{\partial #1}{\partial t}}
\newcommand{\matdev}{\partial^{\bullet}}
\newcommand{\grad}{\nabla}
\newcommand{\Div}{\nabla \cdot}
\newcommand{\esssup}[1]{\underset{#1}{\operatorname{ess \ sup \ }}}
\newenvironment{rcases}
{\left.\begin{aligned}}
	{\end{aligned}\right\rbrace}
\theoremstyle{plain}
\newtheorem{theorem}{Theorem}[section]
\newtheorem{lemma}[theorem]{Lemma}
\newtheorem{definition}[theorem]{Definition}
\newtheorem{remark}[theorem]{Remark}
\newcommand{\mbf}[1]{\mathbf{#1}}
\newcommand{\mbb}[1]{\mathbb{#1}}
\newcommand{\tr}[1]{\operatorname{tr}\left(#1\right)}
\newcommand{\utwo}{\boldsymbol{\mathfrak{u}}^2}
\newcommand{\ptwo}{\boldsymbol{\mathfrak{p}}^2}
\title[An iterative approach to a fluid-rigid body interaction problem]{An iterative approach to a fluid-rigid body interaction problem}
\author[C.~M.~Elliott]{Charles~M.~Elliott$^\star$}
\author[T.~Sales]{Thomas~Sales$^\dagger$}
\address[$\star$]{Mathematics Institute, Zeeman Building, University of Warwick, Coventry CV4 7AL, United Kingdom}
\address[$\dagger$]{Department of Mathematics, University of Sussex, Brighton  BN1 9RF, United Kingdom}
\email{\href{mailto:c.m.elliott@warwick.ac.uk}{c.m.elliott@warwick.ac.uk}($\star$), \href{mailto:t.p.sales@sussex.ac.uk}{t.p.sales@sussex.ac.uk}($\dagger$)}
\subjclass[2020]{35R37, 35Q30, 76D05}
\keywords{Navier--Stokes equations, fluid-structure interaction, moving boundary problem, evolving domain}
\begin{document}
\begin{abstract}
	We study a novel approach for the existence of solutions to an incompressible fluid-rigid body interaction problem in three dimensions.
    Our approach introduces an iteration based on a sequence of related problems posed on domains with prescribed evolution.
	In particular we prove the short-time existence of strong solutions to a system coupling the incompressible Navier--Stokes equations to the ordinary differential equations governing the motion of a rigid body, with no slip boundary conditions on the boundary of the rigid body, provided that the relative density, $\frac{\rho}{\rho_B}$, is sufficiently small.
    We also discuss the use of our iterative approach in numerical methods for the moving boundary problem, and complement this with some numerical experiments in two dimensions which demonstrate the necessity of the smallness assumption on $\frac{\rho}{\rho_B}$.
\end{abstract}

\maketitle

\section{Introduction}
\label{section: FSI Intro}
This paper considers a novel iterative approach to construct solutions to a \emph{moving boundary problem} where the incompressible Navier--Stokes equations are coupled to the equations of motion for a rigid body.
The movement of the boundary comes from the motion of the rigid body, and so one must simultaneously solve for both the solutions to the system of differential equations and the (time-dependent) fluid domain.
In particular we study the short time existence of functions $(\mbf{q}, \boldsymbol{\omega}, \mbf{u}, p)$ which solve, in a suitable weak sense,
\begin{subequations}
\label{eqn: FSI fluid}
\begin{gather}
	\rho \left( \ddt{\mbf{u}} + (\mbf{u} \cdot \grad) \mbf{u} \right) = - \grad p + \mu \Delta \mbf{u},\label{FSI1}\\
	\Div \mbf{u} = 0,\label{FSI2}
\end{gather}
\end{subequations}
on a non-cylindrical space-time domain, $\bigcup_{t \in [0,T]} \Omega(t) \times \{ t \} \subset \mbb{R}^3 \times [0,T]$, which is not known a priori.
Here $\rho, \mu$ are the constant density and (dynamic) viscosity of the fluid respectively.
At each time the fluid domain is given by $\Omega(t) := (\Omega \setminus B(t))^\circ$ for $B(t)$ a rigid body with motion determined by its the centre of mass,~$\mbf{q}$, and angular momentum,~$\boldsymbol{\omega}$, solving
\begin{subequations}
\label{eqn: FSI rigid body}
\begin{gather}
	m \frac{\mathrm{d}^2 \mbf{q}}{\mathrm{d}t^2} = - \int_{\partial B(t)} \mbb{T} \boldsymbol{\nu},\label{FSI3}\\
	\frac{\mathrm{d}}{\mathrm{d}t} \left( \mbb{J} \boldsymbol{\omega} \right) = - \int_{\partial B(t)} (\mbf{x} - \mbf{q}(t)) \times \mbb{T} \boldsymbol{\nu}, \label{FSI4}
\end{gather}
\end{subequations}
subject to appropriate initial conditions.
We will often omit the measures in integrals when it is clear from context.
Here $\boldsymbol{\nu}$ denotes the outward unit normal vector, $\mbb{T}$ denotes the viscous stress tensor, and $\mbb{J}$ the inertial tensor,
\begin{gather*}
	\mbb{T} := -p \mbb{I} + \mu \left( \grad \mbf{u} + (\grad \mbf{u})^T \right),\\
	\mbb{J} := \int_{B(t)} \rho_B \left( |\mbf{x}-\mbf{q}(t)|^2 \mbb{I} - (\mbf{x} - \mbf{q}(t)) \otimes (\mbf{x} - \mbf{q}(t)) \right),
\end{gather*}
where $\mbb{I}$ is the identity matrix, and $\rho_B$ is the constant density of the rigid body.
We note that one considers the unit normal vector, $\boldsymbol{\nu}$, to be pointing \emph{out} of the fluid domain $\Omega(t)$, so that on $\partial B(t)$ the normal vector is pointing \emph{into} $B(t)$.
Although the pressure, $p$, is only unique up to a constant this does not change how we understand the boundary integrals in \eqref{eqn: FSI rigid body} since the divergence theorem yields
\[ \int_{\partial B(t)} \boldsymbol{\nu} = 0 = \int_{\partial B(t)} (\mbf{x} - \mbf{q}(t)) \times \boldsymbol{\nu}. \]
Thus, without loss of generality, we shall consider $p$ such that $\int_{\Omega(t)} p = 0$ at all times.
Here \eqref{eqn: FSI fluid} are the usual incompressible Navier--Stokes equations, and \eqref{eqn: FSI rigid body} are the equations of motion for a rigid body (in three dimensions).

To avoid confusing notation we define the boundary of $\Omega(t)$ as a union of disjoint sets $\partial \Omega(t) = \Gamma \cup \partial B(t)$, where $\Gamma = \partial \Omega$ is the (stationary) outer portion of the boundary and $\partial B(t)$ is the boundary of the rigid body.
\begin{figure}[ht]
    \centering
    \scalebox{1}{
    \begin{tikzpicture}
		\draw [blue!30, thick, -]  (10,1) -- (2.5,1);
		\filldraw[color=black, fill = blue!30, thick] (2.5,1) to[out=85, in= 75] (10,1);
		\filldraw[color=black, fill = blue!30, thick] (10,1) to[out=-110,in=-95] (2.5,1);
		\filldraw[color=black, fill = white, thick](7,2) circle (0.6);
		\node at (7,2) {$B(t)$};
		\node[scale=1.25] at (5,-0.7) {$\Omega(t)$};
        \node[scale =0.75] at (6.7,-1.3) {$\mbf{u} = 0$}; 
		\draw [black, thick, ->]  (6.65,1.5) -- (6.32,1);
		\node[scale = 0.75] at (6.2,0.8) {$\mbf{u} = \boldsymbol{q}'(t) + \boldsymbol{\omega}(t) \times (\boldsymbol{x} - \boldsymbol{q}(t))$};
		\node[scale=1.25] at (9.7,-0.1) {$\Gamma$};
	\end{tikzpicture}
    }
    \caption{A diagram showing a fluid-rigid body interaction problem.}
    \label{fig: fsi diagram}
\end{figure}
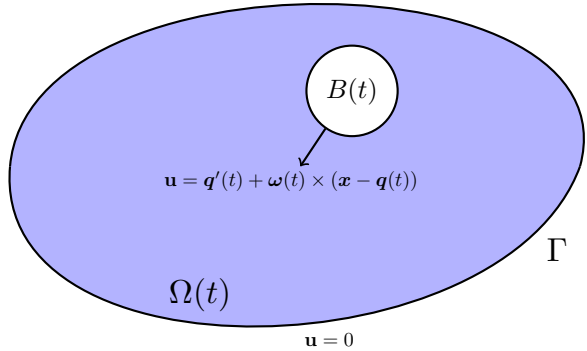
The system \eqref{eqn: FSI fluid}, \eqref{eqn: FSI rigid body} is complemented with no-slip boundary conditions
\begin{subequations}\label{eqn: FSI BCs}
    \begin{gather}
	\mbf{u} = 0, \text{ on } \Gamma,\label{eqn: FSI BC1}\\
	\mbf{u} = \mbf{q}'(t) + \boldsymbol{\omega}(t) \times (\mbf{x} - \mbf{q}(t)), \text{ on } \partial B(t), \label{eqn: FSI BC2}
\end{gather}
\end{subequations}
and initial conditions for $\mbf{u}$, $\mbf{q}$, and $\boldsymbol{\omega}$
\begin{align}
	\mbf{u}(0) = \mbf{u}_0, \quad \mbf{q}(0) = \mbf{q}_0, \quad \frac{\mathrm{d}\mbf{q}}{\mathrm{d}t}(0) = \mbf{v}_0, \quad \boldsymbol{\omega}(0) = \boldsymbol{\omega}_0, \label{FSI IC}
\end{align}
where $\Div \mbf{u}_0 = 0$.
We shall assume throughout that the initial data are compatible in the sense that
\[ \mbf{u}_0 |_{\partial B(0)} = \mbf{v}_0 + \boldsymbol{\omega}_0 \times (\mbf{x} - \mbf{q}_0), \text{ on } \partial B(0). \]
We shall consider $\mbf{u}_0 \in \mbf{H}_0^1(\Omega)$ (where we are using boldface letters to denote vector-valued Sobolev spaces) so that this equality is understood in the trace sense.

We assume throughout that $\Gamma$ and $\partial B(0)$ are both $C^{2,1}$ surfaces, so that $\Omega(0)$ is a $C^{2,1}$ domain.
We also assume that the rigid body and the outer boundary, $\Gamma$, are not initially in contact --- that is to say there exists some $\delta > 0$ such that
\[ \mathrm{dist}(\Gamma, B(0)) > \delta > 0. \]

This system models the motion of a rigid body immersed in a viscous, incompressible fluid, with no slip on the boundary of the rigid body.
The no-slip boundary conditions \eqref{eqn: FSI BCs} are  typically considered in the literature, but one could alternatively consider Navier-slip boundary conditions on the boundary of the rigid body, that is we replace \eqref{eqn: FSI BC2} with
\begin{subequations}
\label{eqn: slip BCs}
\begin{gather}
	\mbf{u} \cdot \boldsymbol{\nu} = \left(\mbf{q}'(t) + \boldsymbol{\omega}(t) \times (\mbf{x} - \mbf{q}_0)\right) \cdot \boldsymbol{\nu}, \label{eqn: slip BC1}\\
	\left(\mu \left( \grad \mbf{u} + (\grad \mbf{u})^T \right) \boldsymbol{\nu}\right) \times \boldsymbol{\nu} = -\beta \left( \mbf{u} - \mbf{q}'(t) + \boldsymbol{\omega}(t) \times (\mbf{x} - \mbf{q}_0) \right) \times \boldsymbol{\nu}, \label{eqn: slip BC2}
\end{gather}
\end{subequations}
on $\partial B(t)$ for some slip length $\beta > 0$.
With these boundary conditions the system has been studied in \cite{al2018strong,chemetov2017motion,gerard2014existence}.

Fluid-structure interaction problems naturally find application in the biomedical sciences, namely to the study of blood flow around the heart \cite{astorino2009fluid,vcanic2014fluid,van2006fluid} and the design of stents \cite{vcanic2021next}.
More generally, free/moving boundary problems are often motivated by applications in applied sciences and industry, as discussed in \cite{EllOckBook82}.
For further details on fluid-structure interactions and their applications we refer the reader to \cite{vcanic2021moving,galdi2002motion} and the references therein.

Before stating our main result we firstly we define what our notion of a strong solution of \eqref{eqn: FSI fluid}, \eqref{eqn: FSI rigid body} entails.

\begin{definition}
    \label{defn: strong soln fsi}
    A strong solution to \eqref{eqn: FSI fluid}, \eqref{eqn: FSI rigid body} is a quadruple of functions $(\mbf{q}, \boldsymbol{\omega}, \mbf{u}, p)$, defined on a time interval $[0,T]$, such that the following hold:
    \begin{enumerate}
        \item $\mbf{q} \in \mbf{H}^2([0,T])$, and $\boldsymbol{\omega} \in \mbf{H}^1([0,T])$ define a time-dependent domain $\Omega(t)$, and a parametrisation $\Phi(t): \Omega(0) \rightarrow \Omega(t)$.
        The rigid body evolves via $B(t) = \Phi(B(0),t)$ and is such that 
        \[\mathrm{dist}(\Gamma, B(t)) > \frac{\delta}{2} > 0,\]
        for all $t \in [0,T]$.
        \item The pairs $(\mbf{W}^{k,q}(\Omega(t)),\Phi(t))_{t \in [0,T]}$ are compatible, as defined in \S\ref{subsection: function spaces}, for all $k = 0,1,2$ and $q \in [0,\infty]$.
        \item The fluid quantities are such that $\mbf{u} \in H^1_{\mbf{L}^2}(\Phi)\cap L^2_{\mbf{H}^2}(\Phi)$, $p \in L^2_{H^1 \cap L^2_0}(\Phi)$.
        Moreover $(\mbf{u}, p)$ solve \eqref{eqn: FSI fluid} pointwise almost everywhere on $\bigcup_{t \in [0,T]} \Omega(t) \times \{t\}$, along with the boundary conditions \eqref{eqn: FSI BCs}, and initial condition $\mbf{u}(0) = \mbf{u}_0$ on $\Omega(0)$.
        \item $(\mbf{q}, \boldsymbol{\omega})$ solve \eqref{eqn: FSI rigid body} almost everywhere on $[0,T]$, and satisfy the initial conditions \eqref{FSI IC}.
    \end{enumerate}
    
\end{definition}

For a similar notation of a strong solution to this problem we refer the reader to \cite[Definition 2.1]{takahashi2003analysis}.
We defer further details on function spaces and notation to Section \ref{section: FSI prelim}.
With this definition of a strong solution, our main result is the following short time existence theorem.
\begin{theorem}
	\label{thm: FSI existence}
    Let the relative density, $\frac{\rho}{\rho_B}$, be sufficiently small\footnote{The smallness of $\frac{\rho}{\rho_B}$ is determined by the geometry of $\Omega(0)$ and $\mathrm{diam}(B(0))$ (see the proofs of Lemma \ref{lemma: non degeneracy2} and Lemma \ref{lemma: fsi contraction}).}.
    Then there exists some time $T$, depending only on the initial data and the geometry of $\Omega(0)$, such that there exists a unique quadruple
	\[(\mbf{q}, \boldsymbol{\omega}, \mbf{u}, p) \in \mbf{H}^2([0,T]) \times \mbf{H}^1([0,T]) \times L^2_{\mbf{H}^2}(\Phi)\cap H^1_{\mbf{L}^2}(\Phi) \times L^2_{H^1}(\Phi)\]
	of strong solutions solving \eqref{eqn: FSI fluid}, \eqref{eqn: FSI rigid body} in the sense of Definition \ref{defn: strong soln fsi}.
\end{theorem}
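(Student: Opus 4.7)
The plan is to obtain the quadruple $(\mbf{q}, \boldsymbol{\omega}, \mbf{u}, p)$ as the unique fixed point of an iteration in which a candidate rigid-body trajectory drives a Navier--Stokes problem on the corresponding prescribed evolving domain, and the resulting fluid stress drives the rigid-body ODEs. Given a pair $(\mbf{q}, \boldsymbol{\omega}) \in \mbf{H}^2([0,T]) \times \mbf{H}^1([0,T])$ matching the initial data (\ref{FSI IC}) and lying in a closed ball $\mathcal{B}$ about the free trajectory determined by $(\mbf{q}_0, \mbf{v}_0, \boldsymbol{\omega}_0)$, I would first construct a parametrisation $\Phi(t) : \Omega(0) \to \Omega(t)$ that restricts to the rigid motion $\mbf{x} \mapsto \mbf{q}(t) + \mathcal{Q}(t)(\mbf{x} - \mbf{q}_0)$ on $B(0)$, where $\mathcal{Q}(t)$ is the rotation whose angular velocity is $\boldsymbol{\omega}(t)$, and is extended smoothly to $\Omega(0)$ by a cut-off that is the identity near $\Gamma$. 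For $T$ small enough the non-degeneracy Lemma \ref{lemma: non degeneracy2} ensures $\mathrm{dist}(\Gamma, B(t)) > \delta/2$ and that $\Phi(t)$ is a $C^{2,1}$-diffeomorphism compatible with the function-space framework of \S\ref{subsection: function spaces}.

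With the domain thus prescribed, the iteration map $\mathcal{F}$ is defined in two steps. First, one solves the Navier--Stokes system (\ref{eqn: FSI fluid}) on $\Omega(t)$ with no-slip boundary data (\ref{eqn: FSI BCs}) generated by $(\mbf{q}, \boldsymbol{\omega})$ and initial condition $\mbf{u}_0$, obtaining $(\mbf{u}, p)$ in the regularity class of Definition \ref{defn: strong soln fsi} via the moving-domain Navier--Stokes theory established earlier in the paper, with norms controlled in terms of $\|\mbf{u}_0\|_{\mbf{H}^1}$, the input norms, and $T$. Second, one feeds the resulting stress into (\ref{eqn: FSI rigid body}) with initial conditions (\ref{FSI IC}) to produce $(\tilde{\mbf{q}}, \tilde{\boldsymbol{\omega}}) := \mathcal{F}(\mbf{q}, \boldsymbol{\omega})$; this is just a linear system of ODEs with a forcing that lies in $L^2([0,T])$ by trace estimates on $\partial B(t)$. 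The self-mapping property $\mathcal{F}:\mathcal{B}\to\mathcal{B}$ is then obtained by absorbing the fluid-generated forces into a factor of the form $\sqrt{T}$ coming from bounding the increment of $(\tilde{\mbf{q}}, \tilde{\boldsymbol{\omega}})$ over the free trajectory in $\mbf{H}^2 \times \mbf{H}^1$ by the $L^2$-in-time norm of the stress integrated over $\partial B(t)$.

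For the contraction step, given two inputs $(\mbf{q}_i, \boldsymbol{\omega}_i)$, $i=1,2$, I would pull both fluid problems back to $\Omega(0)$ via their respective $\Phi_i$ and subtract: the difference $\Phi_1 - \Phi_2$ is linearly controlled by the input difference, and through the transported Navier--Stokes equations this propagates to a bound on the stress difference on $\partial B(0)$. The ODE step then introduces a factor of $m^{-1} \sim (\rho_B |B(0)|)^{-1}$, while the stress itself carries a factor of $\rho$ through the momentum equation $\grad p = -\rho\bigl(\ddt{\mbf{u}} + (\mbf{u} \cdot \grad)\mbf{u}\bigr) + \mu \Delta \mbf{u}$, yielding a contraction constant of the schematic form $C(\mathcal{B})\sqrt{T} + C'(\mathcal{B})\,\rho/\rho_B$. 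Choosing $T$ small and $\rho/\rho_B$ small (Lemma \ref{lemma: fsi contraction}) makes this strictly less than one, and Banach's fixed point theorem delivers the unique solution in $\mathcal{B}$; uniqueness within the regularity class of Definition \ref{defn: strong soln fsi} follows from the same contraction argument.

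The main obstacle is precisely this contraction estimate, because the geometric perturbation $\Phi_1 - \Phi_2$ enters the pulled-back Navier--Stokes system simultaneously in the coefficients, the time derivative (through the material-derivative structure), and the divergence constraint, so one needs sharp continuity of the pulled-back operators together with a careful pressure estimate in a domain of only $C^{2,1}$ regularity. Moreover, the stress difference must be controlled in $L^2$ on the moving boundary, which relies on the full $L^2_{\mbf{H}^2}(\Phi)$ regularity of $\mbf{u}$ and $L^2_{H^1}(\Phi)$ regularity of $p$ so that the trace theorem applies uniformly in the iteration parameter. It is in precisely this feedback step that a plain Gronwall-in-$T$ contraction is not enough to close the estimate, and the relative density $\rho/\rho_B$ appears as the necessary smallness parameter.
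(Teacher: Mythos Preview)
Your overall strategy matches the paper's: iterate on rigid-body data via a map that solves Navier--Stokes on the prescribed domain and then the rigid-body ODEs, closing by Banach's fixed point theorem. However, there is a genuine gap in how you obtain the self-mapping property.

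You claim that $\mathcal{F}:\mathcal{B}\to\mathcal{B}$ follows by absorbing the fluid-generated forces into a factor of $\sqrt{T}$. This does not work. The highest-order contribution to $\|\tilde{\mbf{q}}\|_{\mbf{H}^2([0,T])}$ is $\|\tilde{\mbf{q}}''\|_{L^2(0,T)}$, which equals $m^{-1}$ times the $L^2$-in-time norm of the boundary stress, with \emph{no} factor of $T$ gained. That stress is in turn controlled (via Lemma~\ref{lemma: fsi energy estimates2}) by, among other things, $\int_0^T\|\mbf{B}\|_{\mbf{L}^2(\Omega(t))}^2$, where $\mbf{B}=\rho\bigl(\partial_t\widetilde{\mbf{u}}+(\grad\widetilde{\mbf{u}})^T\widetilde{\mbf{u}}\bigr)$ is the body force arising from the inhomogeneous boundary data. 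This term is of order $\rho^2 R^2$ and does \emph{not} vanish as $T\to 0$, because $\partial_t\widetilde{\mbf{u}}$ is governed by $\mbf{q}''$ and $\boldsymbol{\omega}'$, which are only $L^2$ in time. The paper resolves this precisely via the relative-density assumption: the factor $\rho$ in $\mbf{B}$ combines with the factor $\rho_B^{-1}$ in $m^{-1}$ (and in $\mbb{J}^{-1}$) to produce $\rho/\rho_B$ in front of $R$, which is then taken small. This is the content of Lemma~\ref{lemma: non degeneracy2}; you misidentify that lemma as ensuring the no-contact condition, which is actually handled by Lemma~\ref{lemma: non degeneracy1}. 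So the smallness of $\rho/\rho_B$ is needed already for self-mapping, not only for the contraction.

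A second, more technical point: for the contraction you propose pulling both solutions back to $\Omega(0)$ and subtracting. The paper instead pulls $\mbf{u}^2$ onto $\Omega^1(t)$ via the Piola transform $\mathcal{P}^{1,2}_t$, because a plain pullback destroys the divergence-free condition, and the $\mbf{H}^2$ estimate for the difference relies on the inequality $\|\mbf{u}\|_{\mbf{H}^2}\leq C\|P_\sigma\Delta\mbf{u}\|_{\mbf{L}^2}$ of \eqref{leray laplacian inequality}, which applies only to solenoidal fields. You flag the divergence constraint as an obstacle but do not say how to handle it; without a divergence-preserving change of variables the energy argument for the difference does not close at the $\mbf{H}^2$ level required to control the boundary stress by the trace theorem.
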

\begin{remark}
    Some basic numerical experiments (see Section~\ref{section: FSI numerics}) indicate that the smallness condition on the relative density, $\frac{\rho}{\rho_B}$, is necessary for our iterative approach to be convergent.
\end{remark}
The novelty of this present work is not in establishing the existence of strong solutions, as indeed this has been known since the early 2000s (cf.~\cite{grandmont2000existence, takahashi2003analysis}), but rather in the iterative method we use to prove this result.
We defer further discussion of our approach until Section~\ref{section:FSI iteration}.
In some sense our main result could equivalently be stated as a continuity-type result for solution maps related to the Navier--Stokes equations on domains with prescribed evolution, and one obtains Theorem~\ref{thm: FSI existence} as a corollary of this result.

Solutions to \eqref{eqn: FSI fluid}, \eqref{eqn: FSI rigid body} are often understood in a \emph{weak Leray--Hopf sense} --- see for instance the notions of weak solutions defined in \cite{al2018strong,vcanic2021moving,chemetov2017motion,chemetov2019weak}.
A solution in this sense is understood as a pair $(\mbf{u}, \mathscr{B})$, where $\mathscr{B}(\cdot,t)$ is an orientation preserving isometry defining the motion of the rigid body $B(t) := \mathscr{B}(t,B(0))$ which is compatible with the boundary conditions \eqref{eqn: FSI BCs}, and $\mbf{u}$ is a divergence-free velocity field solving \eqref{eqn: FSI fluid} in a Leray--Hopf sense.
We do not provide a proper definition, and instead refer to the above references, as the method we establish in this paper shall require strong solutions to the Navier--Stokes equations.
The existence of weak solutions has been previously established in \cite{desjardins1999existence} where the authors use DiPerna--Lions transport theory \cite{diperna1989ordinary} to show the existence of Eulerian quantities (defined on $\Omega$ rather than $\Omega(t)$).
Likewise strong solutions to the system \eqref{eqn: FSI fluid}, \eqref{eqn: FSI rigid body} have previously been established in \cite{geissert2013Lp,grandmont2000existence, takahashi2003analysis}.
We also note that an analogous system with a compressible fluid has also been studied in \cite{desjardins2000weak,feireisl2003motion,Hieber2015Compressible,Necasova2022Compressible}. 

Our approach to the existence of solutions is based on an iteration of solutions of the Navier--Stokes equations posed on a sequence of evolving domains with prescribed evolution.
The crux of this approach is quite intuitive: if we make a guess for the motion of the rigid body, we obtain a guess for the motion of the fluid, which in turn gives a new guess for the motion of the rigid body.
By iterating accordingly we hope to obtain a fixed point which, by construction, will solve the moving boundary problem.
For the sake of clarity we only consider a single rigid body, although our approach extends to multiple rigid bodies in a straightforward way, and likewise we do not consider a body force acting on the fluid.

The outline of this paper is as follows.
In Section \ref{section: FSI prelim} we discuss some preliminary material, such as function spaces and some useful inequalities.
Then in Section \ref{section: evolving NS} we recall known results on the Navier--Stokes equations posed on an evolving domain with \emph{prescribed evolution}, and prove energy estimates to be used throughout our analysis.
In Section \ref{section:FSI iteration} we introduce our iteration to construct solutions to \eqref{eqn: FSI fluid}, \eqref{eqn: FSI rigid body}, culminating in the proof of Theorem \ref{thm: FSI existence}.
Section \ref{section: FSI numerics} discusses a finite element implementation of this iterative method, and provides some numerical experiments using this numerical method.
Lastly we include some further details on a calculation used in our energy estimates in Appendix \ref{appendix: bihari use}, and some results on the Stokes problem on a prescribed evolving domain in Appendix \ref{appendix:stokes}.

\section{Preliminaries}
\label{section: FSI prelim}
\subsection{Function spaces}
\label{subsection: function spaces}
As our approach is based on a sequence of solutions to the Navier--Stokes equations posed on a sequence of evolving domains it is necessary for us to discuss the appropriate function spaces in which solutions shall live.
As such we shall use the theory of evolving function spaces, initiated by Alphonse, Elliott \& Stinner in~\cite{alphonse2015abstract,Alphonse2015SomeLinear}, which has been useful in the analysis of PDEs on evolving domains (see for instance~\cite{alphonse2023function}).
To this end we now briefly recall some of the theory of \cite{alphonse2023function,alphonse2015abstract}.
\begin{definition}\label{defn: compatible family}
Let $(X(t))_{t \in [0,T]}$ be a family of real Banach spaces indexed over $[0,T]$, such that for each $t \in [0,T]$ there exists a bounded, invertible, linear map
\[  \Phi_t :X(0) \rightarrow X(t),\]
with inverse
\[ \Phi_{-t}: X(t) \rightarrow X(0). \]
We say that the indexed pair $(X(t),\Phi_t)_{t \in [0,T]}$ is compatible if:
\begin{enumerate}
	\item $\Phi_0$ is the identity map on $X(0)$.
	\item There exists a constant, $C$, independent of $t \in [0,T]$ such that
	\begin{alignat*}{3}
		\| \Phi_t u\|_{X(t)} \leq C \|u\|_{X(0)},&& \quad &&\forall u \in X(0),\\
		\| \Phi_{-t} v\|_{X(0)} \leq C \|v\|_{X(t)},&& \quad &&\forall v \in X(t).
	\end{alignat*}
	\item For all $u \in X(0)$ the map $t \mapsto \|\Phi_t u\|_{X(t)}$ is measurable.
\end{enumerate}
\end{definition}

For a compatible family of time-dependent Banach spaces,~$(X(t),\Phi_t)_{t \in [0,T]}$, one defines time-dependent Bochner spaces as follows.

\begin{definition}
    \label{defn: evolving Bochner spaces}
	Let~$(X(t),\Phi_t)_{t \in [0,T]}$ be a compatible family of time-dependent Banach spaces.
	The space $L^p_X$ consists of (equivalence classes of) functions
	\begin{gather*}
		u:[0,T] \rightarrow \bigcup_{t \in [0,T]} X(t) \times \{ t \},\\
		t \mapsto (\overline{u}(t), t),
	\end{gather*}
	such that $ \Phi_{-(\cdot)} \overline{u}(\cdot) \in L^p(0,T;X(0))$.
	We identify $u$ with $\overline{u}$ throughout.
	This space is equipped with norm
	\[\|u\|_{L^p_X} := \begin{cases}
		\left( \int_0^T \|u(t)\|^p_{X(t)} \right)^{\frac{1}{p}}, & p \in [1,\infty),\\
		\esssup{t \in [0,T]}\|u(t)\|_{X(t)}, & p = \infty.
	\end{cases}\]
\end{definition}
It is known (see \cite{alphonse2023function,alphonse2015abstract}) that these are Banach spaces, and for $p=2$ and $X(t)$ a family of Hilbert spaces these are Hilbert spaces with inner product
\[(u,v)_{L^2_X} = \int_0^T (u(t),v(t))_{X(t)},\]
for $u,v \in L^2_X$.
In this case, as justified in \cite{alphonse2015abstract}, we make the identification of dual spaces $(L^2_{X})' \cong L^2_{X'}$.
To distinguish between two function spaces parametrised over the same initial space we shall include the parametrisation as an argument, i.e. $L^p_X(\Phi)$, which will help distinguish which evolving domain (and parametrisation) we are considering.

\subsection{Some useful results}

We now recall some results to be used throughout.
We begin with a crucial observation --- since the geometry of the boundary does not change (as we consider rigid body motion) one may obtain constants from elliptic regularity theory/Sobolev embeddings which are independent of $t \in [0,T]$.
This follows by observing that estimates obtained by ``straightening the boundary'', as in \cite[\S 6.2]{gilbarg1977elliptic}, are unchanged by translations/rotations of the rigid body.
Likewise the constants appearing in Sobolev embeddings are unchanged (see for instance \cite[\S 7.7]{gilbarg1977elliptic}).
This observation will be hidden throughout our analysis, and indeed any extension of our method to moving boundary problems with boundary deformation would require more careful usage of tools such as elliptic regularity theory.

Firstly we define the following divergence-free subspace of $\mbf{L}^2(\Omega(t))$,
\[ \mbf{H}_\sigma(\Omega(t)) := \left\{ \boldsymbol{\phi} \in \mbf{C}^\infty(\Omega(t)) \mid \Div \boldsymbol{\phi} = 0 \right\}^{\|\cdot\|_{\mbf{L}^2(\Omega(t))}}.  \]
Here the superscript denotes the closure in the $\mbf{L}^2(\Omega(t))$ norm.
We now state some well known results.

\begin{lemma}[Helmholtz decomposition]
	Let $\Omega(t) \subset \mbb{R}^3$ be a bounded, evolving domain with Lipschitz boundary.
	Then for all $\mbf{u} \in \mbf{L}^2(\Omega(t))$ there exist unique $\mbf{v} \in \mbf{H}_\sigma(\Omega(t)), \phi \in H_0^1(\Omega(t))$ such that
	\[ \mbf{u} = \mbf{v} + \grad \phi.\]
\end{lemma}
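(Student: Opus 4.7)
The plan is to establish the decomposition by an orthogonal projection argument in the Hilbert space $\mbf{L}^2(\Omega(t))$. For fixed $t \in [0,T]$ (which we suppress in the notation below), given $\mbf{u} \in \mbf{L}^2(\Omega)$ I would first construct $\phi$ as the unique minimiser of $\psi \mapsto \frac{1}{2}\|\mbf{u} - \grad \psi\|^2_{\mbf{L}^2(\Omega)}$ over $H^1_0(\Omega)$, equivalently the unique solution to the variational problem
\[ \int_{\Omega} \grad \phi \cdot \grad \psi = \int_{\Omega} \mbf{u} \cdot \grad \psi, \quad \forall \psi \in H^1_0(\Omega). \]
Existence and uniqueness follow from Lax--Milgram, since by Poincar\'e's inequality the bilinear form $(\grad \phi, \grad \psi)_{\mbf{L}^2(\Omega)}$ is coercive on $H^1_0(\Omega)$, and the right-hand side is a bounded linear functional of $\psi$.

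Setting $\mbf{v} := \mbf{u} - \grad \phi$, the variational identity becomes $(\mbf{v}, \grad \psi)_{\mbf{L}^2(\Omega)} = 0$ for every $\psi \in H^1_0(\Omega)$; specialising to $\psi \in C^\infty_c(\Omega)$ shows that $\Div \mbf{v} = 0$ in the distributional sense. To conclude $\mbf{v} \in \mbf{H}_\sigma(\Omega)$ I would invoke the standard density result that, on a bounded Lipschitz domain, every $\mbf{L}^2$ vector field with vanishing distributional divergence lies in the $\mbf{L}^2$-closure of $\{\boldsymbol{\phi} \in \mbf{C}^\infty(\Omega) : \Div \boldsymbol{\phi}=0\}$ (proved classically via a mollification argument together with a careful boundary localisation; see e.g.~Temam or Galdi). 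For uniqueness, suppose $\mbf{u} = \mbf{v}_1 + \grad \phi_1 = \mbf{v}_2 + \grad \phi_2$ with $\mbf{v}_i \in \mbf{H}_\sigma$ and $\phi_i \in H^1_0$, and set $\chi := \phi_2 - \phi_1$, so that $\mbf{v}_1 - \mbf{v}_2 = \grad \chi$. For any $\boldsymbol{\varphi} \in \mbf{C}^\infty(\Omega)$ with $\Div \boldsymbol{\varphi} = 0$, integration by parts combined with $\chi|_{\partial \Omega} = 0$ gives $(\boldsymbol{\varphi}, \grad \chi)_{\mbf{L}^2(\Omega)} = 0$; passing to the $\mbf{L}^2$-closure yields $(\mbf{v}_1 - \mbf{v}_2, \grad \chi)_{\mbf{L}^2(\Omega)} = 0$, so that $\|\grad \chi\|^2_{\mbf{L}^2(\Omega)}=0$ and Poincar\'e gives $\phi_1 = \phi_2$, whence $\mbf{v}_1 = \mbf{v}_2$.

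The substantive content of the argument is the density statement identifying $\mbf{H}_\sigma(\Omega)$ with $\{\mbf{v} \in \mbf{L}^2(\Omega) : \Div \mbf{v} = 0\}$; everything else is a direct Hilbert-space projection onto the closed subspace $\grad H^1_0(\Omega)$, whose closedness is an immediate consequence of the Poincar\'e inequality (which makes $\psi \mapsto \grad \psi$ an isomorphism from $H^1_0(\Omega)$ equipped with the Dirichlet seminorm onto its image). I expect the Lipschitz regularity of $\partial \Omega$ to suffice for the density step, and note that no explicit use of the evolving-domain structure is needed since the lemma is stated pointwise in $t$.
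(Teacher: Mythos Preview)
Your argument is correct. The paper itself does not prove this lemma --- it merely cites Theorem~2.16 of Robinson, Rodrigo and Sadowski --- so you have in fact supplied more than the paper does, namely the standard Lax--Milgram/orthogonal-projection construction together with the density characterisation of $\mbf{H}_\sigma$. One minor remark on the uniqueness step: the orthogonality $(\boldsymbol{\varphi},\grad\chi)_{\mbf{L}^2}=0$ for divergence-free $\boldsymbol{\varphi}\in\mbf{C}^\infty(\Omega)$ is most cleanly obtained by approximating $\chi\in H^1_0(\Omega)$ by $C^\infty_c(\Omega)$ functions (for which no boundary term appears) rather than by integrating by parts against $\boldsymbol{\varphi}$ directly, since $\boldsymbol{\varphi}$ need not have a boundary trace; this is purely cosmetic and does not affect your conclusion.
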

\begin{proof}
    See \cite[Theorem 2.16]{robinson2016three}.
\end{proof}

\begin{definition}
	For $\mbf{u} \in \mbf{L}^2(\Omega(t))$ we define the Leray projection of $\mbf{u}$ to be $P_\sigma \mbf{u} \in \mbf{H}_\sigma(\Omega(t))$
	\[ P_\sigma \mbf{u} := \mbf{v}, \]
	for $\mbf{v}$ as in the Helmholtz decomposition.
\end{definition}
\begin{lemma}  
    For $\mbf{u} \in \mbf{L}^2(\Omega(t))$ one has that
    \begin{align}
        \|P_\sigma \mbf{u}\|_{\mbf{L}^2(\Omega(t))} \leq \|\mbf{u}\|_{\mbf{L}^2(\Omega(t))}, \label{eqn: leray L2 bound}
    \end{align}
    and if one has further regularity, $\mbf{u} \in \mbf{H}^1(\Omega(t))$, then
    \begin{align}
        \|P_\sigma \mbf{u}\|_{\mbf{H}^1(\Omega(t))} \leq C\|\mbf{u}\|_{\mbf{H}^1(\Omega(t))}, \label{eqn: leray H1 bound}
    \end{align}
    for constants independent of $t \in [0,T]$.
\end{lemma}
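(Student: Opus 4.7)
The plan is to prove the two bounds separately. The $\mbf{L}^2$ estimate \eqref{eqn: leray L2 bound} follows essentially from orthogonality in the Helmholtz decomposition; the $\mbf{H}^1$ estimate \eqref{eqn: leray H1 bound} reduces to elliptic regularity for a Dirichlet Poisson problem, and the only real issue to address is the $t$-independence of the constant.

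First I would observe that the decomposition $\mbf{u} = P_\sigma \mbf{u} + \grad \phi$ is in fact $\mbf{L}^2$-orthogonal. For a smooth divergence-free $\mbf{v}$ and $\psi \in H_0^1(\Omega(t))$, integration by parts gives
\[ \int_{\Omega(t)} \mbf{v} \cdot \grad \psi \;=\; -\int_{\Omega(t)} \psi\, \Div \mbf{v} \;+\; \int_{\partial \Omega(t)} \psi\, \mbf{v} \cdot \boldsymbol{\nu} \;=\; 0, \]
and extending by density yields $\mbf{H}_\sigma(\Omega(t)) \perp \grad H_0^1(\Omega(t))$. Pythagoras then gives $\|\mbf{u}\|_{\mbf{L}^2}^2 = \|P_\sigma \mbf{u}\|_{\mbf{L}^2}^2 + \|\grad \phi\|_{\mbf{L}^2}^2$, which is \eqref{eqn: leray L2 bound}.

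Second, for \eqref{eqn: leray H1 bound}, testing the identity $\mbf{u} = P_\sigma \mbf{u} + \grad \phi$ against $\grad \psi$ for $\psi \in H_0^1(\Omega(t))$ and invoking the orthogonality just established identifies $\phi$ as the unique weak solution of the Dirichlet Poisson problem $\Delta \phi = \Div \mbf{u}$ in $\Omega(t)$, $\phi = 0$ on $\partial \Omega(t)$. When $\mbf{u} \in \mbf{H}^1(\Omega(t))$ one has $\Div \mbf{u} \in L^2(\Omega(t))$, and since $\Omega(t)$ is a $C^{2,1}$ domain, standard $H^2$-elliptic regularity for the Dirichlet Laplacian gives
\[ \|\phi\|_{H^2(\Omega(t))} \leq C\, \|\Div \mbf{u}\|_{L^2(\Omega(t))} \leq C\, \|\mbf{u}\|_{\mbf{H}^1(\Omega(t))}. \]
The triangle inequality applied to $P_\sigma \mbf{u} = \mbf{u} - \grad \phi$ then yields \eqref{eqn: leray H1 bound}.

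The only subtlety, and arguably the ``main obstacle'', is the claim that $C$ is independent of $t \in [0,T]$. This is precisely the observation the authors highlight in the paragraph immediately preceding the lemma: the elliptic regularity constant arises from straightening the boundary via local $C^{2,1}$-charts, and since $\partial \Omega(t) = \Gamma \cup \partial B(t)$ with $\partial B(t)$ obtained from $\partial B(0)$ by a rigid motion (i.e.\ an isometry), such charts can be transported along $t$ without any change in their derivative bounds. Hence the resulting $H^2$ estimate is uniform in $t$, and this is enough to close the argument.
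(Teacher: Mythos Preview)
Your proof is correct and follows essentially the same route as the paper. The paper's proof of \eqref{eqn: leray L2 bound} uses the orthogonality identity $\int_{\Omega(t)} \mbf{u} \cdot P_\sigma \mbf{u} = \|P_\sigma \mbf{u}\|_{\mbf{L}^2}^2$ (together with Cauchy--Schwarz), which is equivalent to your Pythagoras argument; for \eqref{eqn: leray H1 bound} the paper simply invokes ``standard elliptic regularity theory'' and the geometric observation on $t$-independence, which is precisely what you have unpacked by identifying $\phi$ as the Dirichlet solution of $\Delta \phi = \Div \mbf{u}$ and applying $H^2$ regularity.
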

\begin{proof}
    The first result follows from the fact that
    \[ \int_{\Omega(t)} \mbf{u} \cdot P_\sigma \mbf{u} = \int_{\Omega(t)} |P_\sigma \mbf{u}|^2. \]
    The second result follows from standard elliptic regularity theory, and our above observation that the constants obtained depend only on the geometry of $\Omega(0)$ due to the rigid body motion.
\end{proof}
The Leray projection is sometimes also stated as a pseudodifferential operator,
\[ P_\sigma \mbf{u} = \mbf{u} - \grad \Delta^{-1}(\Div \mbf{u}), \]
for an appropriate notion of the inverse Laplacian, $\Delta^{-1}$.
We refer the reader to \cite{robinson2016three,temam2001navier} for further details.
We note that for an evolving domain the Leray projection now varies in time.
\begin{lemma}[Agmon's inequality]
	Let $\Omega(t) \subset \mbb{R}^3$ be a bounded, evolving domain with Lipschitz boundary.
	Then for $\mbf{u} \in \mbf{H}^2(\Omega(t))$ one has
	\begin{align}
		\|\mbf{u}\|_{\mbf{L}^\infty(\Omega(t))} \leq C \|\mbf{u}\|_{\mbf{H}^1(\Omega(t))}^{\frac{1}{2}}\|\mbf{u}\|_{\mbf{H}^2(\Omega(t))}^{\frac{1}{2}},\label{agmon inequality}
	\end{align}
	for a constant, $C$, independent of $t \in [0,T]$.
\end{lemma}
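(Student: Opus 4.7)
My plan is to reduce the estimate to the standard Agmon inequality on $\mathbb{R}^3$ via extension and then carry out the usual Fourier-side interpolation. First I would invoke a bounded linear extension operator $E: \mathbf{H}^2(\Omega(t)) \to \mathbf{H}^2(\mathbb{R}^3)$ (for instance Stein's extension operator, which only requires a Lipschitz boundary). The key point for the $t$-independence of the constant $C$ is precisely the observation made at the start of this subsection: because $\Omega(t)$ is obtained from $\Omega(0)$ by a rigid body motion of $B(t)$ (i.e.\ an isometry acting on a fixed portion of the boundary), the local ``boundary-straightening'' constants are invariant in $t$. Hence $E$ can be chosen with operator norm independent of $t \in [0,T]$, and it suffices to prove the inequality for $\tilde{\mathbf{u}} := E\mathbf{u} \in \mathbf{H}^2(\mathbb{R}^3)$.

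Next I would pass to the Fourier side. Since $\tilde{\mathbf{u}}$ is continuous (by Sobolev embedding $\mathbf{H}^2(\mathbb{R}^3) \hookrightarrow \mathbf{C}(\mathbb{R}^3)$), the Fourier inversion formula together with the triangle inequality gives
\[
\|\tilde{\mathbf{u}}\|_{\mathbf{L}^\infty(\mathbb{R}^3)} \leq (2\pi)^{-3/2} \|\hat{\tilde{\mathbf{u}}}\|_{\mathbf{L}^1(\mathbb{R}^3)}.
\]
I would then split the frequency integral at a radius $R > 0$ to be chosen and apply Cauchy--Schwarz on each piece, using the weight $(1 + |\xi|^2)^{1/2}$ on the low-frequency part and $(1 + |\xi|^2)$ on the high-frequency part. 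The elementary bounds
\[
\int_{|\xi| < R} \frac{d\xi}{1 + |\xi|^2} \leq C R, \qquad \int_{|\xi| \geq R} \frac{d\xi}{(1 + |\xi|^2)^2} \leq C R^{-1}
\]
(valid in dimension three) yield, via Plancherel,
\[
\|\hat{\tilde{\mathbf{u}}}\|_{\mathbf{L}^1} \leq C R^{1/2} \|\tilde{\mathbf{u}}\|_{\mathbf{H}^1(\mathbb{R}^3)} + C R^{-1/2} \|\tilde{\mathbf{u}}\|_{\mathbf{H}^2(\mathbb{R}^3)}.
\]

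Finally I would optimise by choosing $R = \|\tilde{\mathbf{u}}\|_{\mathbf{H}^2(\mathbb{R}^3)}/\|\tilde{\mathbf{u}}\|_{\mathbf{H}^1(\mathbb{R}^3)}$, which equalises the two terms and produces
\[
\|\tilde{\mathbf{u}}\|_{\mathbf{L}^\infty(\mathbb{R}^3)} \leq C \|\tilde{\mathbf{u}}\|_{\mathbf{H}^1(\mathbb{R}^3)}^{1/2} \|\tilde{\mathbf{u}}\|_{\mathbf{H}^2(\mathbb{R}^3)}^{1/2}.
\]
Restricting back to $\Omega(t)$ and invoking continuity of $E$ then yields \eqref{agmon inequality}. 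There is no real obstacle; the only point that requires care (and is the main point specific to this paper) is justifying that the extension constant can be chosen uniformly in $t$, which is where the rigid-body-motion observation is used.
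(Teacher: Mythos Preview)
Your argument is correct and in fact considerably more detailed than the paper's own treatment: the paper simply cites \cite{agmon2010lectures} for the stationary-domain inequality and then invokes the same rigid-body observation you highlight to conclude that the constant is independent of $t$. Your Fourier-splitting proof is the standard route one would find in such a reference, and you have correctly isolated the only paper-specific point, namely the uniformity of the extension constant under isometries of the boundary.
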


\begin{proof}
	We refer the reader for to \cite{agmon2010lectures} for the proof of this on a stationary domain --- moreover the constant one obtains in this inequality depends only on the geometry of $\Omega(t)$ which is does not change as the boundaries do not deform.
\end{proof}

\begin{lemma}[Ladyzhenskaya's inequality]
	Let $\Omega(t) \subset \mbb{R}^3$ be a bounded, evolving domain with Lipschitz boundary.
	Then for $\mbf{u} \in \mbf{H}_0^1(\Omega(t))$ one has
	\begin{align}
		\|\mbf{u}\|_{\mbf{L}^4(\Omega(t))} \leq C \|\mbf{u}\|_{\mbf{H}^1(\Omega(t))}^{\frac{3}{4}}\|\mbf{u}\|_{\mbf{L}^2(\Omega(t))}^{\frac{1}{4}}, \label{ladyzhenskaya inequality}
	\end{align}
	for a constant, $C$, independent of $t \in [0,T]$.
\end{lemma}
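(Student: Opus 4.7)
The plan is to reduce the inequality to a classical interpolation estimate on the whole space $\mbb{R}^3$, thereby obtaining a constant that does not depend on $t$ or even on the geometry of $\Omega(t)$. The crucial point is that the statement is for $\mbf{u} \in \mbf{H}_0^1(\Omega(t))$, so $\mbf{u}$ has vanishing trace on the entire boundary $\partial\Omega(t) = \Gamma \cup \partial B(t)$. The zero extension $\tilde{\mbf{u}}$ to $\mbb{R}^3$ therefore lies in $\mbf{H}^1(\mbb{R}^3)$ with $\|\tilde{\mbf{u}}\|_{\mbf{L}^p(\mbb{R}^3)} = \|\mbf{u}\|_{\mbf{L}^p(\Omega(t))}$ for $p \in [1,\infty]$ and $\|\tilde{\mbf{u}}\|_{\mbf{H}^1(\mbb{R}^3)} = \|\mbf{u}\|_{\mbf{H}^1(\Omega(t))}$.

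Next I would apply the standard Hölder interpolation $\|\tilde{\mbf{u}}\|_{\mbf{L}^4(\mbb{R}^3)} \leq \|\tilde{\mbf{u}}\|_{\mbf{L}^2(\mbb{R}^3)}^{1/4}\|\tilde{\mbf{u}}\|_{\mbf{L}^6(\mbb{R}^3)}^{3/4}$, coming from the identity $\tfrac{1}{4} = \tfrac{1}{4}\cdot\tfrac{1}{2} + \tfrac{3}{4}\cdot\tfrac{1}{6}$. In three dimensions one has the Sobolev embedding $\mbf{H}^1(\mbb{R}^3) \hookrightarrow \mbf{L}^6(\mbb{R}^3)$ with an absolute constant $C_0$, so $\|\tilde{\mbf{u}}\|_{\mbf{L}^6(\mbb{R}^3)} \leq C_0\|\tilde{\mbf{u}}\|_{\mbf{H}^1(\mbb{R}^3)}$. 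Combining these two bounds and translating back to $\Omega(t)$ gives
\[ \|\mbf{u}\|_{\mbf{L}^4(\Omega(t))} \leq C_0^{3/4} \|\mbf{u}\|_{\mbf{L}^2(\Omega(t))}^{1/4}\|\mbf{u}\|_{\mbf{H}^1(\Omega(t))}^{3/4}, \]
which is exactly \eqref{ladyzhenskaya inequality}, with constant $C = C_0^{3/4}$ independent of $t$ (indeed independent of the domain altogether).

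There is no real obstacle here: the whole argument hinges on the vanishing trace, which converts the problem to one on $\mbb{R}^3$ where the Sobolev constant is universal. If one wished to prove the analogous inequality for $\mbf{H}^1(\Omega(t))$ functions without vanishing trace, one would need to invoke an extension operator and then worry about its norm uniformly in $t$; the observation made earlier in the section, that the boundary regularity of $\Omega(t)$ is preserved under the rigid-body motion of $B(t)$, would handle that case. For the present statement as written, however, zero extension suffices and no such discussion is needed.
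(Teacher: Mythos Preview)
Your argument is correct and is essentially the same approach as the paper's: the paper simply cites Temam and remarks that the proof, via the Gagliardo--Nirenberg inequality, yields a domain-independent constant. Your zero-extension plus H\"older interpolation between $\mbf{L}^2$ and $\mbf{L}^6$ followed by the Sobolev embedding $\mbf{H}^1(\mbb{R}^3)\hookrightarrow\mbf{L}^6(\mbb{R}^3)$ is precisely the standard proof of this special case of Gagliardo--Nirenberg, so you have written out explicitly what the paper leaves to the reference.
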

\begin{proof}
    We refer the reader to \cite[Lemma 3.3.5]{temam2001navier} which follows generally from the Gagliardo--Nirenberg inequality.
    Moreover this proof shows the constant is independent of choice of domain.
\end{proof}

\begin{lemma}[{\cite[Theorem 2.23]{robinson2016three}}]
	Let $\Omega(t) \subset \mbb{R}^3$ be a $C^{2}$ bounded, evolving domain.
	For $\mbf{u} \in \mbf{H}_\sigma(\Omega(t))\cap \mbf{H}_0^1(\Omega(t))\cap\mbf{H}^2(\Omega(t))$ one has that
	\begin{align}
		\| \mbf{u} \|_{\mbf{H}^2(\Omega(t))} \leq C \| P_\sigma \Delta \mbf{u} \|_{\mbf{L}^2(\Omega(t))}, \label{leray laplacian inequality}
	\end{align}
	where $P_\sigma$ is the Leray projection as defined above, and $C$ is a constant independent of $t$.
\end{lemma}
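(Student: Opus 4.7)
The plan is to recognise the statement as the standard $H^2$ regularity bound for the stationary Stokes problem, transported to our evolving domain setting via the observation that the constant is invariant under rigid motions.

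First I would take $\mbf{u} \in \mbf{H}_\sigma(\Omega(t)) \cap \mbf{H}_0^1(\Omega(t)) \cap \mbf{H}^2(\Omega(t))$ and apply the Helmholtz decomposition to $\Delta \mbf{u} \in \mbf{L}^2(\Omega(t))$, writing
\[ \Delta \mbf{u} = P_\sigma \Delta \mbf{u} + \grad p, \]
for some $p \in H^1(\Omega(t))$, unique up to a constant, which we normalise by $\int_{\Omega(t)} p = 0$. Rearranging, the pair $(\mbf{u}, p)$ solves the stationary Stokes system
\[ -\Delta \mbf{u} + \grad p = -P_\sigma \Delta \mbf{u}, \qquad \Div \mbf{u} = 0, \quad \text{in } \Omega(t), \qquad \mbf{u} = 0 \text{ on } \partial \Omega(t), \]
with right-hand side $\mbf{f} := -P_\sigma \Delta \mbf{u} \in \mbf{L}^2(\Omega(t))$. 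Note the divergence-free condition and the homogeneous Dirichlet boundary condition follow from the hypotheses $\mbf{u} \in \mbf{H}_\sigma(\Omega(t)) \cap \mbf{H}_0^1(\Omega(t))$.

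Next I would invoke the classical $H^2 \times H^1$ regularity theory for the Stokes problem on a $C^2$ bounded domain (e.g.\ Cattabriga/Solonnikov, or \cite[Ch.~IV]{galdi2002motion}), which yields
\[ \| \mbf{u} \|_{\mbf{H}^2(\Omega(t))} + \| p \|_{H^1(\Omega(t))/\mbb{R}} \leq C(\Omega(t)) \| \mbf{f} \|_{\mbf{L}^2(\Omega(t))} = C(\Omega(t)) \| P_\sigma \Delta \mbf{u}\|_{\mbf{L}^2(\Omega(t))}. \]
This immediately gives the desired inequality up to verifying that $C(\Omega(t))$ can be chosen independently of $t \in [0,T]$.

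The main (and essentially only) obstacle is the uniformity of the constant in time. This is precisely handled by the observation emphasised at the start of \S\ref{subsection: function spaces}: since the rigid body moves rigidly inside $\Omega$, at each time $t$ the domain $\Omega(t)$ is obtained from $\Omega(0)$ by replacing $B(0)$ with a rigid displacement of itself, so the local boundary charts used to straighten $\partial \Omega(t)$ (as in \cite[\S 6.2]{gilbarg1977elliptic}) are, up to a rotation and translation, identical to those for $\Omega(0)$. Since the constant in Stokes regularity theory depends only on the $C^2$-norms of these local charts, together with the diameter of the domain and the distance between the components of the boundary (bounded below by $\delta/2$ in Definition \ref{defn: strong soln fsi}), we may choose $C$ independent of $t \in [0,T]$. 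This completes the proof.
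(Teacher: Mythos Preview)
Your argument is correct and is essentially the same as the paper's: the paper simply refers to \cite[Theorem~2.23]{robinson2016three}, whose proof is precisely the reduction via the Helmholtz decomposition to the Stokes regularity estimate that you have written out, and then observes that the constant depends only on the geometry of $\Omega(0)$ because the boundary moves rigidly. You have just made explicit what the cited reference contains.
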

\begin{proof}
	We refer the reader to \cite[Theorem 2.23]{robinson2016three}, and note that as discussed in the proof the constant appears from regularity results for the Stokes problem, which in turn depend on the geometry of $\Omega(0)$.
\end{proof}

We also recall a nonlinear generalisation of the Gr\"onwall inequality, which will be used to establish energy estimates.

\begin{lemma}[Bihari--LaSalle inequality, \cite{bihari1956generalization}]
	\label{biharilasalle}
	Let $X, K : [0,T] \rightarrow \mbb{R}$, be non-negative continuous
	functions, $\lambda: \mbb{R}^+ \rightarrow \mbb{R}^+$ be a non-decreasing continuous function, and $k \geq 0$.
	Then if
	\[X(t) \leq k + \int_0^t K(s) \lambda(X(s)) \, \mathrm{d}s ,\]
	holds for $t \in [0,T]$, and one can choose $y_0 > 0$ such that
	\[\Lambda(k) + \int_0^T K(s) \, \mathrm{d}s \in \mathrm{dom}(\Lambda^{-1}), \quad \text{where} \quad \Lambda(y) := \int_{y_0}^y \frac{1}{\lambda(s)} \, \mathrm{d}s,\]
	then for $t \in [0,T]$
	\begin{align}
		X(t) \leq \Lambda^{-1}\left( \Lambda(k) + \int_0^t K(s) \, \mathrm{d}s \right). \label{biharilasalle2}
	\end{align}
	
\end{lemma}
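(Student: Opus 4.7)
The plan is to reduce the integral inequality to a differential inequality that can be separated. First I would introduce the auxiliary function
\[ Y(t) := k + \int_0^t K(s)\lambda(X(s))\,\mathrm{d}s, \]
so that by hypothesis $X(t) \leq Y(t)$ for all $t \in [0,T]$. Since $K$ and $\lambda \circ X$ are continuous, $Y$ is continuously differentiable with $Y'(t) = K(t)\lambda(X(t))$, and moreover $Y$ is non-decreasing (as $K,\lambda \geq 0$) with $Y(0) = k$. Using the monotonicity of $\lambda$ together with $X(t) \leq Y(t)$, I would then derive the pointwise estimate
\[ Y'(t) = K(t)\lambda(X(t)) \leq K(t)\lambda(Y(t)). \]

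Next I would separate variables. Assuming for the moment that $\lambda(Y(t)) > 0$, I would divide through and integrate from $0$ to $t$ to obtain
\[ \int_0^t \frac{Y'(s)}{\lambda(Y(s))}\,\mathrm{d}s \leq \int_0^t K(s)\,\mathrm{d}s. \]
The left-hand side is, by the chain rule and the definition of $\Lambda$ in the statement, precisely $\Lambda(Y(t)) - \Lambda(Y(0)) = \Lambda(Y(t)) - \Lambda(k)$. Since $\Lambda$ is strictly increasing (as $\lambda > 0$ where we need it), its inverse $\Lambda^{-1}$ is also monotone increasing, and the hypothesis $\Lambda(k) + \int_0^T K(s)\,\mathrm{d}s \in \operatorname{dom}(\Lambda^{-1})$ ensures that applying $\Lambda^{-1}$ to both sides is legitimate. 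This yields
\[ Y(t) \leq \Lambda^{-1}\!\left( \Lambda(k) + \int_0^t K(s)\,\mathrm{d}s \right), \]
and combined with $X(t) \leq Y(t)$ gives \eqref{biharilasalle2}.

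The only delicate point I anticipate is the case where either $k = 0$ or $\lambda$ vanishes at the boundary of its domain, which could make $\Lambda(k)$ improper and the division by $\lambda(Y(t))$ illegitimate on an initial interval. The standard workaround is to run the argument above with $Y_\varepsilon(t) := (k+\varepsilon) + \int_0^t K(s)\lambda(X(s))\,\mathrm{d}s$ for $\varepsilon > 0$, obtaining
\[ X(t) \leq \Lambda^{-1}\!\left( \Lambda(k+\varepsilon) + \int_0^t K(s)\,\mathrm{d}s \right), \]
and then passing to the limit $\varepsilon \to 0^+$ using continuity of $\Lambda$ and $\Lambda^{-1}$ on the relevant intervals. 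This is the step most likely to require a short argument to justify carefully; everything else is a direct separation-of-variables calculation.
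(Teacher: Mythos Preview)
The paper does not actually prove this lemma; it merely states it and cites \cite{bihari1956generalization}. Your argument is the standard separation-of-variables proof and is correct, including the $\varepsilon$-regularisation for the degenerate case $k=0$, so there is nothing to compare against here.
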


Lastly we recall the Reynolds transport theorem, and prove a version involving the gradients of functions on a time-dependent domain.
\begin{lemma}[Reynolds Transport Theorem]
\label{lemma: reynolds transport theorem}
Let $\mbf{u}, \mbf{w}$ be two sufficiently smooth functions defined on an evolving domain $\Omega(t)$ moving with velocity $\mbf{V}$.
Then
\begin{gather}
    \begin{split}\frac{\mathrm{d}}{\mathrm{d}t} \int_{\Omega(t)} \mbf{u} \cdot \mbf{w} &=\int_{\Omega(t)} \matdev_\mbf{V}{\mbf{u}} \cdot \mbf{w} + \int_{\Omega(t)} \mbf{u} \cdot \matdev_\mbf{V}{\mbf{w}} + \int_{\Omega(t)} (\mbf{u} \cdot \mbf{w}) (\Div \mbf{V})\\
    &= \int_{\Omega(t)} \ddt{\mbf{u}} \cdot \mbf{w} + \int_{\Omega(t)} \mbf{u} \cdot \ddt{\mbf{w}} + \int_{\partial\Omega(t)} (\mbf{u} \cdot \mbf{w}) (\mbf{V} \cdot \boldsymbol{\nu}),\\
    \end{split}\label{eqn: reynolds1}\\
    \frac{\mathrm{d}}{\mathrm{d}t} \int_{\Omega(t)} \grad\mbf{u} : \grad\mbf{w} = \int_{\Omega(t)} \grad\matdev_{\mbf{V}}{\mbf{u}} : \grad \mbf{w} + \int_{\Omega(t)} \grad \mbf{u} : \grad \matdev_{\mbf{V}}\mbf{w} + \int_{\Omega(t)} \mbb{B}(\mbf{V})\grad\mbf{u} : \grad\mbf{w}, \label{eqn: reynolds2}
\end{gather}
where we have used $\matdev_{\mbf{V}}$ as shorthand notation for the material derivative following the flow of $\mbf{V}$,
\[ \matdev_{\mbf{V}} \mbf{u} := \ddt{\mbf{u}} + \mbf{V} \cdot (\grad \mbf{u}), \]
where~$\mbb{B}(\mbf{V})$ denotes the deformation tensor
\[ \mbb{B}(\mbf{V}) := (\Div \mbf{V}) \mbb{I} - (\grad \mbf{V} + (\grad \mbf{V})^T). \]
\end{lemma}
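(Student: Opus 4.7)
The proof splits naturally into the three stated identities, the first two being alternative forms of the classical Reynolds transport theorem and the third being a gradient variant that reduces to the first by a commutator computation.

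For the first line of \eqref{eqn: reynolds1}, I would pull back to the reference configuration via the flow $\Psi_t$ of $\mbf{V}$, i.e.\ write $\int_{\Omega(t)} \mbf{u}\cdot\mbf{w} = \int_{\Omega(0)} (\mbf{u}\cdot\mbf{w})\circ\Psi_t \, J_t$, where $J_t = \det(\grad\Psi_t)$. Using the well-known Jacobi formula $\frac{\mathrm{d}}{\mathrm{d}t} J_t = J_t(\Div\mbf{V})\circ\Psi_t$, differentiating under the integral, and recognising $\frac{\mathrm{d}}{\mathrm{d}t}(f\circ\Psi_t) = (\matdev_\mbf{V} f)\circ\Psi_t$, gives the material-derivative form after pushing forward. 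The second form in \eqref{eqn: reynolds1} follows by expanding $\matdev_\mbf{V}(\mbf{u}\cdot\mbf{w}) = \partial_t(\mbf{u}\cdot\mbf{w}) + \mbf{V}\cdot\grad(\mbf{u}\cdot\mbf{w})$ and observing, by the divergence theorem, that
\[
\int_{\Omega(t)} \mbf{V}\cdot\grad(\mbf{u}\cdot\mbf{w}) + (\mbf{u}\cdot\mbf{w})\Div\mbf{V} = \int_{\Omega(t)} \Div\bigl((\mbf{u}\cdot\mbf{w})\mbf{V}\bigr) = \int_{\partial\Omega(t)} (\mbf{u}\cdot\mbf{w})(\mbf{V}\cdot\boldsymbol{\nu}).
\]

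For \eqref{eqn: reynolds2}, I would apply the material-derivative form of \eqref{eqn: reynolds1} to the scalar integrand $\grad\mbf{u}:\grad\mbf{w}$, obtaining
\[
\frac{\mathrm{d}}{\mathrm{d}t}\int_{\Omega(t)} \grad\mbf{u}:\grad\mbf{w} = \int_{\Omega(t)} \matdev_\mbf{V}(\grad\mbf{u}):\grad\mbf{w} + \int_{\Omega(t)} \grad\mbf{u}:\matdev_\mbf{V}(\grad\mbf{w}) + \int_{\Omega(t)} (\grad\mbf{u}:\grad\mbf{w})\Div\mbf{V}.
\]
The key step is then the commutator identity between $\matdev_\mbf{V}$ and $\grad$: for any scalar component $u_i$ of $\mbf{u}$, a direct index computation gives $\partial_j(\matdev_\mbf{V} u_i) - \matdev_\mbf{V}(\partial_j u_i) = (\partial_j V_k)(\partial_k u_i)$, so that componentwise
\[
\matdev_\mbf{V}(\grad u_i) = \grad(\matdev_\mbf{V} u_i) - (\grad\mbf{V})^{T}\grad u_i,
\]
and analogously for $\mbf{w}$. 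Substituting these two relations into the previous display converts $\matdev_\mbf{V}(\grad\mbf{u}):\grad\mbf{w}$ into $\grad(\matdev_\mbf{V}\mbf{u}):\grad\mbf{w}$ at the cost of an extra term $-((\grad\mbf{V})^T\grad\mbf{u}):\grad\mbf{w}$, and symmetrically for $\mbf{w}$; combined with the $(\Div\mbf{V})$ contribution from \eqref{eqn: reynolds1}, the leftover terms assemble precisely as $\mbb{B}(\mbf{V})\grad\mbf{u}:\grad\mbf{w}$.

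There is no real analytic obstacle here since $\mbf{u},\mbf{w}$ and $\mbf{V}$ are assumed sufficiently smooth; the main bookkeeping hurdle is keeping the matrix conventions consistent so that the two commutator corrections genuinely sum to $-(\grad\mbf{V} + (\grad\mbf{V})^T)\grad\mbf{u}:\grad\mbf{w}$ rather than something off by a transpose. Once that is verified, regrouping with the $(\Div\mbf{V})\mbb{I}$ term produces the deformation tensor $\mbb{B}(\mbf{V})$ as claimed.
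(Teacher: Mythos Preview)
Your argument is correct. For \eqref{eqn: reynolds1} your pullback argument is exactly what the paper has in mind (it simply cites the classical Reynolds transport theorem), and your passage to the boundary form via the divergence theorem is the standard one.

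For \eqref{eqn: reynolds2} your route differs from the paper's. The paper pulls back the whole integrand $\grad\mbf{u}:\grad\mbf{w}$ to $\Omega(0)$ using $(\grad f)\circ\Phi = (\grad\Phi)^{-T}\grad\widehat{f}$, then differentiates the resulting expression $\int_{\Omega(0)} J\,(\grad\Phi)^{-T}\grad\widehat{\mbf{u}}:(\grad\Phi)^{-T}\grad\widehat{\mbf{w}}$ directly, invoking $\partial_t J = J\,\widehat{\Div\mbf{V}}$ and $\partial_t(\grad\Phi)^{-T} = -(\grad\Phi)^{-T}(\grad\widehat{\mbf{V}})^{T}(\grad\Phi)^{-T}$, and finally pushes forward. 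You instead reduce \eqref{eqn: reynolds2} to \eqref{eqn: reynolds1} applied to the scalar $\grad\mbf{u}:\grad\mbf{w}$, and then compute the commutator $[\matdev_{\mbf{V}},\grad]$ componentwise. Both computations produce the same three correction terms $(\Div\mbf{V})\grad\mbf{u}:\grad\mbf{w}$, $-(\grad\mbf{V})^{T}\grad\mbf{u}:\grad\mbf{w}$ and $-\grad\mbf{u}:(\grad\mbf{V})^{T}\grad\mbf{w}$, which regroup into $\mbb{B}(\mbf{V})\grad\mbf{u}:\grad\mbf{w}$. Your approach is a bit more elementary since it needs only the scalar transport identity and a pointwise algebraic commutator, while the paper's direct pullback of gradients is more systematic and extends more readily to other transformed differential operators (as used later in the paper when comparing solutions on two different evolving domains).
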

\begin{proof}
    We only prove the second equality as the first is variation of the usual Reynolds transport theorem (see for instance \cite{gurtin1982introduction}), and indeed this proof is essentially the same as that of~\cite[Lemma 5.2]{Dziuk2013SurfaceFEM}.
    For the second equality we note that by pulling back onto $\Omega(0)$ we find that
    \[ \int_{\Omega(t)} \grad\mbf{u} : \grad\mbf{w} = \int_{\Omega(0)}J (\grad \Phi)^{-T}\grad\widehat{\mbf{u}} : (\grad \Phi)^{-T}\grad\widehat{\mbf{w}}, \]
    where $\Phi(t) : \Omega(0) \rightarrow \Omega(t)$ is the flow generated by $\mbf{V}$, and $J = \det(\grad \Phi)$, and hats denote the pullback by $\Phi$, i.e. $\widehat{\mbf{u}} = \mbf{u}\circ \Phi(t)$.
    This is discussed later in Lemma \ref{pullback lemma}.
    Differentiating the above in time yields
    \begin{align*}
        \frac{\mathrm{d}}{\mathrm{d}t}\int_{\Omega(t)} \grad\mbf{u} : \grad\mbf{w} &= \int_{\Omega(0)}\ddt{J} (\grad \Phi)^{-T}\grad\widehat{\mbf{u}} : (\grad \Phi)^{-T}\grad\widehat{\mbf{w}} + \int_{\Omega(0)}J \ddt{}(\grad \Phi)^{-T}\grad\widehat{\mbf{u}} : (\grad \Phi)^{-T}\grad\widehat{\mbf{w}}\\
        &+ \int_{\Omega(0)}J (\grad \Phi)^{-T}\grad\ddt{\widehat{\mbf{u}}} : (\grad \Phi)^{-T}\grad\widehat{\mbf{w}} + \int_{\Omega(0)}J (\grad \Phi)^{-T}\grad\widehat{\mbf{u}} : \ddt{}(\grad \Phi)^{-T}\grad\widehat{\mbf{w}}\\
        &+ \int_{\Omega(0)}J (\grad \Phi)^{-T}\grad\widehat{\mbf{u}} : (\grad \Phi)^{-T}\grad\ddt{\widehat{\mbf{w}}}.
    \end{align*}
    We then recall that 
    \begin{alignat*}{3}
        \ddt{J} = J \widehat{\Div \mbf{V}} && \quad \text{and} \quad && \ddt{} (\grad \Phi)^{-T} = - (\grad \Phi)^{-T}(\grad \widehat{\mbf{V}})^T (\grad \Phi)^{-T},
    \end{alignat*}
    from which one finds that
    \begin{align*}
        \frac{\mathrm{d}}{\mathrm{d}t}\int_{\Omega(t)} \grad\mbf{u} : \grad\mbf{w} &= \int_{\Omega(t)} \grad\matdev_{\mbf{V}}{\mbf{u}} : \grad \mbf{w} + \int_{\Omega(t)} \grad \mbf{u} : \grad \matdev_{\mbf{V}}\mbf{w} + \int_{\Omega(t)} \grad\mbf{u} : \grad\mbf{w} (\Div \mbf{V})\\
        &- \int_{\Omega(t)} (\grad \mbf{V})^T \grad\mbf{u} : \grad\mbf{w} - \int_{\Omega(t)} \grad\mbf{u} : (\grad \mbf{V})^T\grad\mbf{w},
    \end{align*}
    which one can readily rewrite as in \eqref{eqn: reynolds2}.
\end{proof}

\section{Navier--Stokes equations on a domain with prescribed evolution}
\label{section: evolving NS}
\subsection{Statement of the problem}
In this section we recall some results on the Navier--Stokes equations posed on a bounded evolving domain, $\Omega(t) \subset \mbb{R}^3$, with prescribed evolution.
Motivated by the moving boundary problem we consider spatial domains of the form $\Omega(t) = (\Omega \setminus B(t))^\circ$, where $B(t)$ is a rigid body moving with a prescribed velocity (i.e.~$\mbf{q}$ and~$\boldsymbol{\omega}$ are known functions).
We are interested in \emph{strong solutions} of the problem
\begin{align}
	\begin{split}
		\rho \left( \ddt{\mbf{u}} + (\mbf{u} \cdot \grad) \mbf{u} \right) &= - \grad p + \mu \Delta \mbf{u},\\
		\Div \mbf{u} &= 0,
	\end{split}
	\label{eqn: evolving NS}
\end{align}
on the non-cylindrical space-time domain $\bigcup_{t \in [0,T]} \Omega(t) \times \{ t \}$, with inhomogeneous boundary conditions
\begin{gather*}
	\mbf{u} = 0, \text{ on } \Gamma,\\
	\mbf{u} = \mbf{q}'(t) + \boldsymbol{\omega}(t) \times (\mbf{x} - \mbf{q}(t)), \text{ on } \partial B(t), 
\end{gather*}
and such that $\mbf{u}(0) = \mbf{u}_0$.
Here we crucially assume that the functions $\mbf{q}, \boldsymbol{\omega}$ are \emph{known a priori}, sufficiently regular (for our purposes $\mbf{q} \in \mbf{H}^2([0,T])$ and $\boldsymbol{\omega} \in \mbf{H}^1([0,T])$), and define the motion of $B(t)$.
In particular, this means that the space-time domain $\bigcup_{t \in [0,T]} \Omega(t) \times \{ t \}$ is prescribed, and evolves with a velocity field $\mbf{V}$ (which is an appropriate extension of the above boundary data for $\mbf{u}$).
This problem, and similar problems, have been studied previous in \cite{Bock77NavierStokes,djurdjevac2023evolving,farwig2022fujita,fujita1970existence,ladyzhenskaya1970initial,saal2006maximal,salvi1988navier} and so our presentation is largely formal.
We note that \cite{djurdjevac2023evolving} uses the same evolving function space framework as us, although it considers the Oseen equations rather than the Navier--Stokes equations.
We assume compatibility, in the sense of Definition~\ref{defn: compatible family}, of the pairs $(\mbf{W}^{k,q}(\Omega(t)), \Phi_t)$ for $k=0,1,2$ and $q \in [1,\infty]$ throughout this section.

We note that the boundary conditions are compatible with the divergence-free condition.
This follows by noting that
\[ \grad \left( \mbf{q}'(t) + \boldsymbol{\omega}(t) \times (\mbf{x} - \mbf{q}(t)) \right) = \begin{pmatrix}
	0 & -\omega_3 & \omega_2\\
	\omega_3 & 0 & -\omega_1\\
	-\omega_2 & \omega_1 & 0
\end{pmatrix}, \]
where $\omega_i$ are the components of $\boldsymbol{\omega}$, and so $\Div \left( \mbf{q}'(t) + \boldsymbol{\omega}(t) \times (\mbf{x} - \mbf{q}(t))\right) = 0$. 

\subsubsection{Transformation to homogeneous boundary conditions}
As this problem has inhomogeneous boundary data is it natural to study a transformed problem with homogeneous boundary data.
For each $t \in [0,T]$ we define $\widetilde{\mbf{u}}(t)$ to be the unique solution to the steady state Stokes problem on $\Omega(t)$:
\begin{equation}
    \begin{aligned}
        -\mu \Delta \widetilde{\mbf{u}} + \grad \widetilde{p} &= 0\\
    \Div \widetilde{\mbf{u}} &= 0
    \end{aligned} \quad \text{on } \Omega(t),
    \label{eqn: stokes eqn}
\end{equation}
such that
\[ \widetilde{\mbf{u}} = 0 \text{ on } \Gamma, \quad \widetilde{\mbf{u}} = \mbf{q}'(t) + \boldsymbol{\omega}(t)\times(\mbf{x} - \mbf{q}(t)) \text{ on } \partial B(t). \]
Existence of a weak solution follows from standard results on the Stokes equation, and we have a regularity result \eqref{stokes regularity1}.
We will also require that the time derivative of $\widetilde{\mbf{u}}$ is sufficiently smooth, namely an element of $L^2_{\mbf{H}^1}(\Phi)$.
Further discussion of the Stokes equation on an evolving domain is discussed in Appendix \ref{appendix:stokes}.

Given these $\widetilde{\mbf{u}}$ and $\widetilde{p}$ we now define $\overline{\mbf{u}} = \mbf{u} - \widetilde{\mbf{u}}$ and $\overline{p} = p - \widetilde{p}$ which formally solve
\begin{gather}
	\begin{gathered}
		\rho \left( \ddt{\overline{\mbf{u}}} + (\overline{\mbf{u}} \cdot \grad)\overline{\mbf{u}}\right) = - \grad \overline{p}+ \mu \Delta \overline{\mbf{u}} - \rho \left(\ddt{\widetilde{\mbf{u}}} + (\widetilde{\mbf{u}} \cdot \grad) \overline{\mbf{u}} + (\overline{\mbf{u}} \cdot \grad)\widetilde{\mbf{u}} + (\widetilde{\mbf{u}} \cdot \grad) \widetilde{\mbf{u}}\right),\\
		\Div \overline{\mbf{u}} = 0,
	\end{gathered}
	\label{eqn: evolving NS2}
\end{gather}
such that $\overline{\mbf{u}} = 0$ on $\partial \Omega(t)$.
Motivated by this, we study the weak formulation of the Navier--Stokes equation: find $\overline{\mbf{u}} \in L^2_{\mbf{H}^2}(\Phi) \cap H^1_{\mbf{H}_0^{1}}(\Phi)$ and $\overline{p} \in L^2_{H^1}(\Phi)$ solving
\begin{gather}
	\begin{aligned}
    &\rho \int_{\Omega(t)}\ddt{\overline{\mbf{u}}} \cdot  \boldsymbol{\eta} + \rho \int_{\Omega(t)} \left(\overline{\mbf{u}}\cdot \grad \overline{\mbf{u}}\right) \boldsymbol{\eta} + \rho \int_{\Omega(t)} \left((\widetilde{\mbf{u}} \cdot \grad \overline{\mbf{u}}) + (\overline{\mbf{u}} \cdot \grad\widetilde{\mbf{u}}) \right)\boldsymbol{\eta} + \mu \int_{\Omega(t)} \grad\overline{\mbf{u}} : \grad \boldsymbol{\eta}\\
    &- \int_{\Omega(t)} \overline{p} \Div \boldsymbol{\eta} = -\rho \int_{\Omega(t)} \left(\ddt{\widetilde{\mbf{u}}} \cdot \boldsymbol{\eta} + (\widetilde{\mbf{u}} \cdot \grad \widetilde{\mbf{u}}) \boldsymbol{\eta}\right),
    \end{aligned}\label{eqn: weakNS}\\
    \int_{\Omega(t)} q \Div \overline{\mbf{u}} = 0, \label{eqn: weakNS2}
\end{gather}
for all $\boldsymbol{\eta} \in \mbf{H}_0^1(\Omega(t)), q \in L^2(\Omega(t))$, and almost all $t \in [0,T]$, such that $\mbf{u}(0) = \mbf{u}_0$.
We note that here we do not concern ourselves with Leray--Hopf weak type solutions, since our approach will require strong solutions.

In the interest of brevity we introduce shorthand notation,
\begin{equation}
    \mbf{B}:= \rho \left(\frac{\partial \widetilde{\mbf{u}}}{\partial t} + (\grad \widetilde{\mbf{u}})^T \widetilde{\mbf{u}}\right), \label{eqn: body force defn}
\end{equation}
which can be interpreted as a body force due to the motion of the rigid body --- indeed if the rigid body were stationary then term vanishes.
Notice that
\begin{align}
    \|\mbf{B}\|_{\mbf{H}^1(\Omega(t))} &\leq \rho\left\|\frac{\partial \widetilde{\mbf{u}}}{\partial t}\right\|_{\mbf{H}^1(\Omega(t))} + \rho\|(\grad \widetilde{\mbf{u}})^T \widetilde{\mbf{u}}\|_{\mbf{H}^1(\Omega(t))},\notag \\
    &\leq C \left( \left\|\frac{\partial \widetilde{\mbf{u}}}{\partial t}\right\|_{\mbf{H}^1(\Omega(t))} + \|\widetilde{\mbf{u}}\|_{\mbf{W}^{1,4}(\Omega(t))} \|\widetilde{\mbf{u}}\|_{\mbf{W}^{2,4}(\Omega(t))}\right), \label{body force bound1}
\end{align}
and hence using the results from Appendix~\ref{appendix:stokes} we see that $\mbf{B} \in L^2_{\mbf{H}^1}(\Phi)$.

\subsection{Energy estimates}
\label{subsection: energy estimates}
We present some formal calculations establishing energy estimates for a solution of \eqref{eqn: weakNS}, \eqref{eqn: weakNS2}.
Although these calculations are formal, they can be rigorously justified by means of Faedo--Galerkin approximation.

\begin{lemma}
\label{lemma: fsi energy estimates}
	A weak solution of \eqref{eqn: weakNS} satisfies 
	\begin{align}
		\frac{\rho}{2} \|\overline{\mbf{u}}\|_{\mbf{L}^2(\Omega(t))}^2 + \mu \int_0^t\|\grad\overline{\mbf{u}}\|_{\mbf{L}^2(\Omega(s))}^2 \leq e^{\frac{T}{\rho\kappa}}\left( \frac{\rho}{2}\|\mbf{u}_0\|_{\mbf{L}^2(\Omega(0))}^2 + \frac{\kappa}{2}\int_0^T \|\mbf{B}\|_{\mbf{L}^2(\Omega(t))}^2 \right), \label{fsi energyestimate1}
	\end{align}
	for almost all $t \in [0,T]$.
	Moreover, for $t \in [0,\widetilde{T})$ for some $\widetilde{T}=\widetilde{T}(\mbf{V},\mbf{u}_0)$, the solution satisfies
	\begin{align}
		\frac{\rho}{2} \sup_{t \in [0,\widetilde{T}]} \int_{\Omega(t)} |\grad \overline{\mbf{u}}|^2 + \frac{\mu}{2} \int_0^{\widetilde{T}}\int_{\Omega(t)} |A_{\sigma} \overline{\mbf{u}}|^2 \leq  \mathcal{D}(\widetilde{T}), \label{fsi energyestimate2}
	\end{align}
	for
    \begin{gather*}
        \mathcal{D}(T) := \left(\frac{ \left(\rho^2 \|\grad \mbf{u}_0\|_{\mbf{L}^2(\Omega(0))}^4 + \left(\kappa \int_0^{T} \|\mbf{B}\|_{\mbf{H}^1(\Omega(t))}^2\right)^2\right) \mathcal{E}(T)}{4 - \left(\rho^2 \|\grad \mbf{u}_0\|_{\mbf{L}^2(\Omega(0))}^4 + \left( \kappa\int_0^{T} \|\mbf{B}\|_{\mbf{H}^1(\Omega(t))}^2\right)^2\right)(\mathcal{E}(T)-1)} \right)^\frac{1}{2},\\
        \mathcal{E}(T) := \exp\left(C\int_0^{T}\left(\frac{1}{\kappa^2} + \|\Div\mbf{V}\|_{L^\infty(\Omega(t))} + \|\mbf{V}\|_{\mbf{L}^\infty(\Omega(t))}^2 +\|\widetilde{\mbf{u}}\|_{\mbf{L}^4(\Omega(t))}^8 +\|\widetilde{\mbf{u}}\|_{\mbf{W}^{1,4}(\Omega(t))}^2\right)\right).
    \end{gather*}
    Here $\kappa \in (0,1)$ denotes an arbitrary constant to be fixed later, and $C$ denotes a constant depending on $\Omega(0)$, $\rho$ and $\mu$, and $A_{\sigma} := -P_{\sigma} \Delta$.
\end{lemma}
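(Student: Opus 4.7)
The plan is to test the weak formulation~\eqref{eqn: weakNS} with two specially chosen functions, convert the resulting identities into differential inequalities via the Reynolds transport theorem (Lemma~\ref{lemma: reynolds transport theorem}), and close the argument with Gr\"onwall for the $\mbf{L}^2$ bound and with the nonlinear Bihari--LaSalle inequality (Lemma~\ref{biharilasalle}) for the $\mbf{H}^1$ bound. All calculations below are formal but, as noted before the lemma, can be made rigorous through a Faedo--Galerkin procedure.

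For \eqref{fsi energyestimate1}, I would take $\boldsymbol{\eta} = \overline{\mbf{u}}$ in~\eqref{eqn: weakNS}. Because $\overline{\mbf{u}}$ is divergence-free with vanishing trace on $\partial\Omega(t)$, integration by parts eliminates the pressure integral, the self-convective term $\int(\overline{\mbf{u}}\cdot\grad)\overline{\mbf{u}}\cdot\overline{\mbf{u}}$, and the mixed term $\int(\widetilde{\mbf{u}}\cdot\grad)\overline{\mbf{u}}\cdot\overline{\mbf{u}}$. The Reynolds identity~\eqref{eqn: reynolds1} converts $\int\ddt{\overline{\mbf{u}}}\cdot\overline{\mbf{u}}$ into $\tfrac12\tfrac{d}{dt}\|\overline{\mbf{u}}\|_{\mbf{L}^2(\Omega(t))}^2$ with no boundary contribution, and the remaining $\widetilde{\mbf{u}}$--interaction together with the body force $-\int\mbf{B}\cdot\overline{\mbf{u}}$ are controlled by Cauchy--Schwarz followed by Young's inequality with weight $\kappa$. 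A standard Gr\"onwall application then delivers~\eqref{fsi energyestimate1}.

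For \eqref{fsi energyestimate2}, I would test with $\boldsymbol{\eta} = A_\sigma\overline{\mbf{u}} = -P_\sigma\Delta\overline{\mbf{u}}$. The viscous term contributes $\mu\|A_\sigma\overline{\mbf{u}}\|_{\mbf{L}^2(\Omega(t))}^2$, since $A_\sigma\overline{\mbf{u}}\in\mbf{H}_\sigma$ annihilates the gradient component of $-\Delta\overline{\mbf{u}}$, while the time derivative term, via~\eqref{eqn: reynolds2}, produces $\tfrac12\tfrac{d}{dt}\|\grad\overline{\mbf{u}}\|_{\mbf{L}^2(\Omega(t))}^2$ together with commutator terms that account for the $\|\Div\mbf{V}\|_{L^\infty(\Omega(t))}$ and $\|\mbf{V}\|_{\mbf{L}^\infty(\Omega(t))}^2$ factors inside $\mathcal{E}(T)$. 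Each of the four convective contributions is then controlled using Agmon~\eqref{agmon inequality}, Ladyzhenskaya~\eqref{ladyzhenskaya inequality}, and the Stokes regularity bound $\|\overline{\mbf{u}}\|_{\mbf{H}^2}\le C\|A_\sigma\overline{\mbf{u}}\|_{\mbf{L}^2}$ from~\eqref{leray laplacian inequality}; interpolation and Young's inequality absorb a small fraction of $\|A_\sigma\overline{\mbf{u}}\|^2$ into the left-hand side, while the two cross terms $\int(\widetilde{\mbf{u}}\cdot\grad)\overline{\mbf{u}}\cdot A_\sigma\overline{\mbf{u}}$ and $\int(\overline{\mbf{u}}\cdot\grad)\widetilde{\mbf{u}}\cdot A_\sigma\overline{\mbf{u}}$ generate precisely the $\|\widetilde{\mbf{u}}\|_{\mbf{L}^4(\Omega(t))}^8$ and $\|\widetilde{\mbf{u}}\|_{\mbf{W}^{1,4}(\Omega(t))}^2$ contributions in $\mathcal{E}(T)$.

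Collecting these bounds yields a differential inequality of the schematic form
\[
\frac{d}{dt}\|\grad\overline{\mbf{u}}\|_{\mbf{L}^2(\Omega(t))}^2 + \mu\|A_\sigma\overline{\mbf{u}}\|_{\mbf{L}^2(\Omega(t))}^2 \le K(t)\,\lambda\!\left(\|\grad\overline{\mbf{u}}\|_{\mbf{L}^2(\Omega(t))}^2\right),
\]
where $K(t)$ collects the time-integrable quantities hidden inside $\mathcal{E}$ and $\lambda$ is a superlinear polynomial inherited from the self-convective term $\int(\overline{\mbf{u}}\cdot\grad)\overline{\mbf{u}}\cdot A_\sigma\overline{\mbf{u}}$. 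The main obstacle is precisely this superlinearity, which precludes a direct Gr\"onwall argument. Instead, Bihari--LaSalle with this $\lambda$ yields a rational rather than exponential envelope, and the admissibility condition $\Lambda(k)+\int_0^{\widetilde T} K \in \mathrm{dom}(\Lambda^{-1})$ translates, after some algebra, into positivity of the denominator $4-(\cdots)(\mathcal{E}(T)-1)$ appearing in the formula for $\mathcal{D}(T)$, thereby defining the maximal existence time $\widetilde T$. This final Bihari--LaSalle computation is what the authors defer to Appendix~\ref{appendix: bihari use}.
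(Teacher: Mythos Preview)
Your plan is correct and follows the paper's approach: test with $\overline{\mbf{u}}$ then with $A_\sigma\overline{\mbf{u}}$, convert via the Reynolds identities, estimate the convective terms with Agmon/Ladyzhenskaya/\eqref{leray laplacian inequality}, and close with Gr\"onwall and Bihari--LaSalle (with $\lambda(s)=s+s^3$) respectively.

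Two technical points you skate over are handled explicitly in the paper and are worth flagging. First, on an evolving domain $\partial_t$ and $P_\sigma$ do not commute, so before you can invoke~\eqref{eqn: reynolds2} you must justify $\int_{\Omega(t)}\partial_t\overline{\mbf{u}}\cdot A_\sigma\overline{\mbf{u}} = \int_{\Omega(t)}\grad\partial_t\overline{\mbf{u}}:\grad\overline{\mbf{u}}$; the paper does this by noting that the non-solenoidal part $\grad\psi$ of $\partial_t\overline{\mbf{u}}$ satisfies $\int\grad\psi\cdot\Delta\overline{\mbf{u}}=\int\psi\,\Delta\Div\overline{\mbf{u}}=0$. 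Second, and more substantively, you do not mention the forcing term $\int_{\Omega(t)}\mbf{B}\cdot A_\sigma\overline{\mbf{u}}$ in the higher-order estimate at all. A naive Young split there would tie the constant to $\mu$ and yield only an $\mbf{L}^2$ norm of $\mbf{B}$; the paper instead integrates by parts, applies the trace theorem, and interpolates $\|\overline{\mbf{u}}\|_{\mbf{H}^{3/2}}\le C\|\overline{\mbf{u}}\|_{\mbf{H}^1}^{1/2}\|\overline{\mbf{u}}\|_{\mbf{H}^2}^{1/2}$. This more delicate treatment is precisely what produces the $\|\mbf{B}\|_{\mbf{H}^1}$ dependence in $\mathcal{D}(T)$ and the $1/\kappa^2$ contribution inside $\mathcal{E}(T)$, so without it you would obtain \emph{an} estimate but not the one stated.
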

\begin{proof}
	To show \eqref{fsi energyestimate1} we test \eqref{eqn: weakNS} with $\overline{\mbf{u}}$ and \eqref{eqn: weakNS2} with $\overline{p}$ so that
    \begin{multline*}
        \rho \int_{\Omega(t)} \ddt{\overline{\mbf{u}}} \cdot \overline{\mbf{u}} + \rho \int_{\Omega(t)} \left(\overline{\mbf{u}}\cdot \grad \overline{\mbf{u}}\right) \overline{\mbf{u}} + \mu \int_{\Omega(t)} |\grad \overline{\mbf{u}}|^2 + \rho \int_{\Omega(t)} \left((\widetilde{\mbf{u}} \cdot \grad) \overline{\mbf{u}} + (\overline{\mbf{u}} \cdot \grad)\widetilde{\mbf{u}} \right)\overline{\mbf{u}}\\
        + \int_{\Omega(t)} \mbf{B} \cdot \overline{\mbf{u}} = 0.
    \end{multline*}
	From Lemma~\ref{lemma: reynolds transport theorem} one finds that
	\[ \rho \int_{\Omega(t)} \ddt{\overline{\mbf{u}}} \cdot \overline{\mbf{u}} = \frac{\rho}{2} \frac{\mathrm{d}}{\mathrm{d}t} \int_{\Omega(t)} |\overline{\mbf{u}}|^2 - \frac{\rho}{2} \int_{\partial B(t)} |\overline{\mbf{u}}|^2 (\mbf{V} \cdot \boldsymbol{\nu}) = \frac{\rho}{2} \frac{\mathrm{d}}{\mathrm{d}t} \int_{\Omega(t)} |\overline{\mbf{u}}|^2, \]
    where we have used the fact that $\overline{\mbf{u}}$ vanishes at the boundary.
    Similarly using the divergence theorem one finds, in a standard calculation for the Navier--Stokes equations,
	\[ \rho \int_{\Omega(t)} \left(\overline{\mbf{u}} \cdot \grad \overline{\mbf{u}}\right) \overline{\mbf{u}} = 0,\]
	where we have used the homogeneous boundary condition on $\partial B(t)$.
	Hence one finds that
	\begin{align*}
		\frac{\rho}{2} \frac{\mathrm{d}}{\mathrm{d}t} \int_{\Omega(t)} |\overline{\mbf{u}}|^2 + \mu \int_{\Omega(t)} |\grad \overline{\mbf{u}}|^2 = - \rho \int_{\Omega(t)} \left((\widetilde{\mbf{u}} \cdot \grad \overline{\mbf{u}}) + (\overline{\mbf{u}} \cdot \grad\widetilde{\mbf{u}}) \right)\overline{\mbf{u}} -  \int_{\Omega(t)} \mbf{B} \cdot \overline{\mbf{u}}.
	\end{align*}
    For this final term we use Young's inequality for
    \[ \left| \int_{\Omega(t)} \mbf{B} \cdot \overline{\mbf{u}} \right| \leq \frac{\kappa}{2}\| \mbf{B}\|_{\mbf{L}^2(\Omega(t))}^2 + \frac{1}{2\kappa}\| \overline{\mbf{u}}\|_{\mbf{L}^2(\Omega(t))}^2,\]
    where $\kappa > 0$ is an arbitrary constant that we will fix later.
        We then calculate that, as $\widetilde{\mbf{u}}, \overline{\mbf{u}}$ are divergence-free
        \[ \int_{\Omega(t)} (\widetilde{\mbf{u}} \cdot \grad \overline{\mbf{u}})\overline{\mbf{u}} = - \int_{\Omega(t)} (\overline{\mbf{u}} \cdot \grad\widetilde{\mbf{u}})\overline{\mbf{u}} \]
	and so
        \[\frac{\rho}{2} \frac{\mathrm{d}}{\mathrm{d}t} \int_{\Omega(t)} |\overline{\mbf{u}}|^2 + \mu \int_{\Omega(t)} |\grad \overline{\mbf{u}}|^2 \leq \frac{\kappa}{2}\| \mbf{B}\|_{\mbf{L}^2(\Omega(t))}^2 + \frac{1}{2\kappa}\| \overline{\mbf{u}}\|_{\mbf{L}^2(\Omega(t))}^2,\]
        whence the Gr\"onwall inequality now yields \eqref{fsi energyestimate1}.
	
	To show \eqref{fsi energyestimate2} the argument is more involved.
	Our presentation is similar to that of \cite{salvi1988navier}.
	One tests \eqref{eqn: weakNS} with $A_{\sigma} \overline{\mbf{u}} := -P_\sigma \Delta \overline{\mbf{u}}$ to obtain
	\begin{align*}
		\rho \int_{\Omega(t)} \ddt{\overline{\mbf{u}}} \cdot A_{\sigma}\overline{\mbf{u}} + \left(\overline{\mbf{u}} \cdot \grad \overline{\mbf{u}}\right) A_{\sigma}\overline{\mbf{u}} + \mu \int_{\Omega(t)} \grad\overline{\mbf{u}}:\grad A_{\sigma} \overline{\mbf{u}}&= -\int_{\Omega(t)} \mbf{B} \cdot A_{\sigma} \overline{\mbf{u}}\\
        &-\rho \int_{\Omega(t)} \left((\widetilde{\mbf{u}} \cdot \grad \overline{\mbf{u}}) + (\overline{\mbf{u}} \cdot \grad\widetilde{\mbf{u}}) \right)A_{\sigma} \overline{\mbf{u}}.
	\end{align*}

        The difficulty now is that since the domain is evolving one finds that, in contrast to the stationary domain setting, $\ddt{\overline{\mbf{u}}} \neq P_\sigma \ddt{\overline{\mbf{u}}}$, and likewise the spatial and temporal derivatives do not commute --- this is in contrast to a stationary domain (see for instance the proof of \cite[Theorem 6.8]{robinson2016three}).
        However, we notice that from the Helmholtz decomposition
        \[ \int_{\Omega(t)} \left( \ddt{\overline{\mbf{u}}} - P_\sigma  \ddt{\overline{\mbf{u}}}\right) \cdot \Delta \overline{\mbf{u}} = \int_{\Omega(t)} \grad \psi \cdot \Delta \overline{\mbf{u}}, \]
        for some $\psi \in H_0^1(\Omega(t))$.
        Hence we find that, for sufficiently smooth $\bar{\mbf{u}}$,
        \[ \int_{\Omega(t)} \grad \psi \cdot \Delta \overline{\mbf{u}} = \int_{\Omega(t)}  \psi \Div \Delta \overline{\mbf{u}} = \int_{\Omega(t)} \psi \Delta \Div \overline{\mbf{u}} = 0,\]
        and this can be rigorously justified by an approximation argument.
        We also refer the reader to \cite[p.166]{salvi1988navier} for a related calculation.
        One thus obtains
        \[ \int_{\Omega(t)} \ddt{\overline{\mbf{u}}} \cdot A_{\sigma}\overline{\mbf{u}} = -\int_{\Omega(t)} P_\sigma \ddt{\overline{\mbf{u}}} \cdot \Delta \overline{\mbf{u}} = -\int_{\Omega(t)} \ddt{\overline{\mbf{u}}} \cdot \Delta \overline{\mbf{u}} = \int_{\Omega(t)} \grad\ddt{\overline{\mbf{u}}}: \grad \overline{\mbf{u}}. \]
	Now by using Lemma~\ref{lemma: reynolds transport theorem} we see that
	\begin{align*}
		\rho \int_{\Omega(t)} \grad\ddt{\overline{\mbf{u}}}: \grad \overline{\mbf{u}} = \frac{\rho}{2} \frac{\mathrm{d}}{\mathrm{d}t} \int_{\Omega(t)} |\grad \overline{\mbf{u}}|^2 - \rho \int_{\Omega(t)} \grad(\mbf{V} \cdot \grad \overline{\mbf{u}}): \grad \overline{\mbf{u}} - \frac{\rho}{2} \int_{\Omega(t)} \mbb{B}(\mbf{V}) \grad \overline{\mbf{u}} : \grad \overline{\mbf{u}}.
	\end{align*}
    Likewise from integration by parts, and the definition of the Leray projection we find
    \[ \mu \int_{\Omega(t)} \grad\overline{\mbf{u}}:\grad A_{\sigma} \overline{\mbf{u}} = -\mu \int_{\Omega(t)} \Delta\overline{\mbf{u}} \cdot A_{\sigma} \overline{\mbf{u}} = \mu \int_{\Omega(t)} |A_{\sigma} \overline{\mbf{u}}|^2. \]
	Hence we find that
	\begin{multline}
        \frac{\rho}{2} \frac{\mathrm{d}}{\mathrm{d}t} \int_{\Omega(t)} |\grad \overline{\mbf{u}}|^2 + \mu \int_{\Omega(t)} |A_{\sigma} \overline{\mbf{u}}|^2 = \rho \int_{\Omega(t)} \grad(\mbf{V} \cdot \grad \overline{\mbf{u}}): \grad \overline{\mbf{u}} + \frac{\rho}{2} \int_{\Omega(t)} \mbb{B}(\mbf{V}) \grad \overline{\mbf{u}} : \grad \overline{\mbf{u}}\\
        - \rho \int_{\Omega(t)} \left(\overline{\mbf{u}} \cdot \grad \overline{\mbf{u}}\right) A_{\sigma}\overline{\mbf{u}} - \int_{\Omega(t)} \mbf{B} \cdot A_{\sigma} \overline{\mbf{u}}
        -\rho \int_{\Omega(t)} \left((\widetilde{\mbf{u}} \cdot \grad \overline{\mbf{u}}) + (\overline{\mbf{u}} \cdot \grad\widetilde{\mbf{u}}) \right)A_{\sigma} \overline{\mbf{u}},
        \label{fsi energyestimate pf}
        \end{multline}
	where it remains to estimate the terms on the right-hand side.
	Firstly one finds that 
    \[ \left| \int_{\Omega(t)} \grad(\mbf{V} \cdot \grad \overline{\mbf{u}}): \grad \overline{\mbf{u}} \right| \leq \|\grad \overline{\mbf{u}}\|_{\mbf{L}^2(\Omega(t))}^2 \|\mbf{V}\|_{\mbf{W}^{1,\infty}(\Omega(t))} + \| \grad \overline{\mbf{u}}\|_{\mbf{L}^2(\Omega(t))} \|\overline{\mbf{u}}\|_{\mbf{H}^2(\Omega(t))}\|\mbf{V}\|_{\mbf{L}^\infty(\Omega(t))},\]
	and so by recalling \eqref{leray laplacian inequality} and using Young's inequality one finds
	\[ \left| \rho \int_{\Omega(t)} \grad(\mbf{V} \cdot \grad \overline{\mbf{u}}): \grad \overline{\mbf{u}} \right| \leq C \left(\|\mbf{V}\|_{\mbf{W}^{1,\infty}(\Omega(t))} + \|\mbf{V}\|_{\mbf{L}^\infty(\Omega(t))}^2 \right)\|\grad \overline{\mbf{u}}\|_{\mbf{L}^2(\Omega(t))}^2  + \frac{\mu}{10}\|A_{\sigma}\overline{\mbf{u}}\|_{\mbf{L}^2(\Omega(t))}^2.  \]
    We similarly have that
    \[ \left| \frac{\rho}{2} \int_{\Omega(t)} \mbb{B}(\mbf{V}) \grad \overline{\mbf{u}} : \grad \overline{\mbf{u}} \right| \leq C \|\grad \overline{\mbf{u}}\|_{\mbf{L}^2(\Omega(t))}^2 \|\mbf{V}\|_{\mbf{W}^{1,\infty}(\Omega(t))}. \]
	Next we use H\"older's inequality to see that
	\[ \left| \rho \int_{\Omega(t)} \left(\overline{\mbf{u}} \cdot \grad \overline{\mbf{u}}\right) A_{\sigma}\overline{\mbf{u}} \right| \leq \rho \|\overline{\mbf{u}}\|_{\mbf{L}^\infty(\Omega(t))} \|\grad \overline{\mbf{u}}\|_{\mbf{L}^2(\Omega(t))} \|A_{\sigma} \overline{\mbf{u}}\|_{\mbf{L}^2(\Omega(t))}, \]
	and then by Agmon's inequality \eqref{agmon inequality}, Poincar\'e's inequality, \eqref{leray laplacian inequality}, and Young's inequality one has
    \begin{align*}
        \left| \rho \int_{\Omega(t)} \left(\overline{\mbf{u}} \cdot \grad \overline{\mbf{u}}\right) A_{\sigma}\overline{\mbf{u}} \right| &\leq C\|\grad \overline{\mbf{u}}\|_{\mbf{L}^2(\Omega(t))}^\frac{3}{2} \|A_{\sigma} \overline{\mbf{u}}\|_{\mbf{L}^2(\Omega(t))}^\frac{3}{2}\\
        &\leq \frac{\mu}{10} \|A_{\sigma} \overline{\mbf{u}}\|_{\mbf{L}^2(\Omega(t))}^2 + C \|\grad \overline{\mbf{u}}\|_{\mbf{L}^2(\Omega(t))}^6.
    \end{align*}
    We estimate the ``force term'' using Young's inequality again, but here the treatment is more delicate than that of the previous estimate.
    By integrating by parts one finds that
    \[\int_{\Omega(t)} \mbf{B} \cdot A_{\sigma}\overline{\mbf{u}} = - \int_{\Omega(t)} P_\sigma \mbf{B} \cdot \Delta \overline{\mbf{u}} = \int_{\Omega(t)} \grad P_\sigma \mbf{B} : \grad \overline{\mbf{u}} - \int_{\partial \Omega(t)} ((P_\sigma \mbf{B} \cdot \grad) \overline{\mbf{u}}) \cdot \boldsymbol{\nu}, \]
    where this boundary integral is non-zero.
    To deal with this boundary integral we use the trace theorem (see \cite[Chapter 7]{adams2003sobolev}) to see that
    \[ \left| \int_{\partial \Omega(t)} ((P_\sigma \mbf{B} \cdot \grad) \overline{\mbf{u}}) \cdot \boldsymbol{\nu} \right| \leq C \|P_\sigma \mbf{B}\|_{\mbf{H}^1(\Omega(t))} \|\overline{\mbf{u}} \|_{\mbf{H}^{\frac{3}{2}}(\Omega(t))}, \]
    where $\mbf{H}^{\frac{3}{2}}(\Omega(t))$ is a Sobolev--Slobodeckij space.
    Now by using standard interpolation results (again we refer to \cite[Chapter 7]{adams2003sobolev}) we see that
    \[ \|\overline{\mbf{u}} \|_{\mbf{H}^{\frac{3}{2}}(\Omega(t))} \leq C \|\overline{\mbf{u}} \|_{\mbf{H}^{1}(\Omega(t))}^{\frac{1}{2}} \|\overline{\mbf{u}} \|_{\mbf{H}^{2}(\Omega(t))}^{\frac{1}{2}}, \]
    where the constant can be shown the be independent of $t$.
    Combining this with \eqref{eqn: leray H1 bound} one finds
    \[ \left| \int_{\Omega(t)} \mbf{B} \cdot A_{\sigma}\overline{\mbf{u}} \right| \leq C \| \mbf{B} \|_{\mbf{H}^1(\Omega(t))}\|\overline{\mbf{u}} \|_{\mbf{H}^{1}(\Omega(t))} + C \| \mbf{B} \|_{\mbf{H}^1(\Omega(t))}\|\overline{\mbf{u}} \|_{\mbf{H}^{1}(\Omega(t))}^{\frac{1}{2}} \|\overline{\mbf{u}} \|_{\mbf{H}^{2}(\Omega(t))}^{\frac{1}{2}}. \]
    Now by using \eqref{leray laplacian inequality} and Young's inequality suitably we obtain
    \[ \left| \int_{\Omega(t)} \mbf{B} \cdot A_{\sigma}\overline{\mbf{u}} \right| \leq  \frac{\kappa}{2}\| \mbf{B} \|_{\mbf{H}^1(\Omega(t))}^2 + \frac{C}{\kappa^2}\|\overline{\mbf{u}} \|_{\mbf{H}^{1}(\Omega(t))}^2 + \frac{\mu}{10} \|A_{\sigma} \overline{\mbf{u}}\|_{\mbf{L}^2(\Omega(t))}^2, \]
    where we have used the fact that $\kappa \in (0,1)$.
    For the final term we notice that
    \begin{multline}
    \left| \int_{\Omega(t)} \left((\widetilde{\mbf{u}} \cdot \grad \overline{\mbf{u}}) + (\overline{\mbf{u}} \cdot \grad\widetilde{\mbf{u}}) \right)A_{\sigma} \overline{\mbf{u}} \right|\\
    \leq \left(\| \widetilde{\mbf{u}}\|_{\mbf{L}^4(\Omega(t))} \| \overline{\mbf{u}}\|_{\mbf{W}^{1,4}(\Omega(t))} + \| \overline{\mbf{u}}\|_{\mbf{L}^4(\Omega(t))} \| \widetilde{\mbf{u}}\|_{\mbf{W}^{1,4}(\Omega(t)))}\right)\| A_{\sigma}\overline{\mbf{u}} \|_{\mbf{L}^2(\Omega(t))}. \label{fsi energyestimate pf2}
    \end{multline}
    We now use the $\mbf{H}^2$ analogue of \eqref{ladyzhenskaya inequality}, i.e.
    \[\|\mbf{u}\|_{\mbf{W}^{1,4}(\Omega(t))} \leq C \|\mbf{u}\|_{\mbf{H}^2(\Omega(t))}^{\frac{3}{4}}\|\mbf{u}\|_{\mbf{H}^1(\Omega(t))}^{\frac{1}{4}},\]
    as well as \eqref{leray laplacian inequality}, and Young's inequality for
    \begin{align*}
    \| \widetilde{\mbf{u}}\|_{\mbf{L}^4(\Omega(t))} \| \overline{\mbf{u}}\|_{\mbf{W}^{1,4}(\Omega(t))}\| A_{\sigma}\overline{\mbf{u}} \|_{\mbf{L}^2(\Omega(t))} &\leq C \| \widetilde{\mbf{u}}\|_{\mbf{L}^4(\Omega(t))} \| \overline{\mbf{u}}\|_{\mbf{H}^{1}(\Omega(t))}^{\frac{1}{4}}\| A_{\sigma}\overline{\mbf{u}} \|_{\mbf{L}^2(\Omega(t))}^{\frac{7}{4}}\\
    &\leq C \| \widetilde{\mbf{u}}\|_{\mbf{L}^4(\Omega(t))}^8 \| \grad \overline{\mbf{u}}\|_{\mbf{L}^{2}(\Omega(t))}^2 + \frac{\mu}{10}\| A_{\sigma}\overline{\mbf{u}} \|_{\mbf{L}^2(\Omega(t))}^2.
    \end{align*}
    Similarly, by using the Sobolev embedding $\mbf{H}^1(\Omega(t)) \hookrightarrow \mbf{L}^4(\Omega(t))$ and Young's inequality one finds
    \begin{align*}
        \| \overline{\mbf{u}}\|_{\mbf{L}^4(\Omega(t))} \| \widetilde{\mbf{u}}\|_{\mbf{W}^{1,4}(\Omega(t))}\| A_{\sigma}\overline{\mbf{u}} \|_{\mbf{L}^2(\Omega(t))}
        &\leq C\| \overline{\mbf{u}}\|_{\mbf{H}^1(\Omega(t))} \| \widetilde{\mbf{u}}\|_{\mbf{W}^{1,4}(\Omega(t))}\| A_{\sigma}\overline{\mbf{u}} \|_{\mbf{L}^2(\Omega(t))}\\
        &\leq C\|\grad \overline{\mbf{u}}\|_{\mbf{L}^2(\Omega(t))}^2 \| \widetilde{\mbf{u}}\|_{\mbf{W}^{1,4}(\Omega(t))}^2 + \frac{\mu}{10}\| A_{\sigma}\overline{\mbf{u}} \|_{\mbf{L}^2(\Omega(t))}^2.
    \end{align*}
    
	Hence we find that
	\begin{multline*}
		\frac{\rho}{2} \frac{\mathrm{d}}{\mathrm{d}t} \int_{\Omega(t)} |\grad \overline{\mbf{u}}|^2 + \frac{\mu}{2} \int_{\Omega(t)} |A_{\sigma} \overline{\mbf{u}}|^2 \leq \frac{\kappa}{2} \|\mbf{B}\|_{\mbf{H}^1(\Omega(t))}^2 + C\|\grad \mbf{u}\|_{\mbf{L}^2(\Omega(t))}^6\\
        + C \left(\frac{1}{\kappa^2} + \|\Div\mbf{V}\|_{L^\infty(\Omega(t))} + \|\mbf{V}\|_{\mbf{L}^\infty(\Omega(t))}^2 +\|\widetilde{\mbf{u}}\|_{\mbf{L}^4(\Omega(t))}^8 +\|\widetilde{\mbf{u}}\|_{\mbf{W}^{1,4}(\Omega(t))}^2\right)\|\grad \mbf{u}\|_{\mbf{L}^2(\Omega(t))}^2,
	\end{multline*}
	and one wants to use Lemma \ref{biharilasalle} with $\lambda(s) = s + s^3$.
	For a small, fixed $y_0 > 0$ one finds that 
	\[ \Lambda(y) = \int_{y_0}^y \frac{1}{s + s^3} \, \mathrm{d}s = \frac{1}{2}\log\left(\frac{y^2(1+y_0^2)}{y_0^2(1+y^2)}\right), \]
	and so 
	\[ \text{dom}(\Lambda^{-1}) = R(\Lambda) = \left(-\infty, \frac{1}{2}\log\left(\frac{1+y_0^2}{y_0^2}\right)\right). \]
	Hence one can verify that Lemma \ref{biharilasalle} holds for small $\widetilde{T}$ such that
	\begin{multline}
		\frac{1}{2} \log \left( \frac{\rho^2 \|\grad \mbf{u}_0\|_{\mbf{L}^2(\Omega(0))}^4 + C\left(\kappa \int_0^{\widetilde{T}} \|\mbf{B}\|_{\mbf{H}^1(\Omega(t))}^2 \right)^2}{4 + \rho^2 \|\grad \mbf{u}_0\|_{\mbf{L}^2(\Omega(0))}^4 + C\left(\kappa\int_0^{\widetilde{T}} \|\mbf{B}\|_{\mbf{H}^1(\Omega(t))}^2 \right)^2} \right)\\
        +\int_0^{\widetilde{T}}C \left( \frac{1}{\kappa^2} + \|\Div\mbf{V}\|_{L^\infty(\Omega(t))} + \|\mbf{V}\|_{\mbf{L}^\infty(\Omega(t))}^2 +\|\widetilde{\mbf{u}}\|_{\mbf{L}^4(\Omega(t))}^8 +\|\widetilde{\mbf{u}}\|_{\mbf{W}^{1,4}(\Omega(t))}^2\right) < 0, \label{smalltimecondition}
	\end{multline}
	where a tedious calculation then shows that \eqref{biharilasalle2} yields \eqref{fsi energyestimate2}, given that \eqref{smalltimecondition} holds --- we refer the reader to Appendix \ref{appendix: bihari use} for details.
    Such a $\widetilde{T}$ exists by continuity after noting that 
    \[ \frac{1}{2} \log \left( \frac{\rho^2 \|\grad \mbf{u}_0\|_{\mbf{L}^2(\Omega(0))}^4}{4 + \rho^2 \|\grad \mbf{u}_0\|_{\mbf{L}^2(\Omega(0))}^4} \right) < 0. \]
\end{proof}

In later arguments we shall choose $\kappa$ as a function of $\widetilde{T}$, namely $\kappa = \widetilde{T}^{\frac{1}{3}}$.

\begin{remark}
    \label{remark: nonzero terms}
    We note that the requirement \eqref{smalltimecondition} means that the right-hand side of \eqref{fsi energyestimate2} is finite as
    \[\mathcal{E}(\widetilde{T})\left(\rho^2 \|\grad \mbf{u}_0\|_{\mbf{L}^2(\Omega(0))}^4 + \left(\kappa \int_0^{\widetilde{T}} \|\mbf{B}\|_{\mbf{H}^1(\Omega(t))}^2\right)^2\right) < 4 + \rho^2 \|\grad \mbf{u}_0\|_{\mbf{L}^2(\Omega(0))}^4 + \left( \kappa\int_0^{\widetilde{T}} \|\mbf{B}\|_{\mbf{H}^1(\Omega(t))}^2\right)^2.\]
\end{remark}

We now use the estimates of Lemma~\ref{lemma: fsi energy estimates} to show the following.

\begin{lemma}
\label{lemma: fsi energy estimates2}
	On $[0,\widetilde{T}]$ for $\widetilde{T}$ as in the previous lemma, one has $\overline{\mbf{u}} \in H^1_{\mbf{L}^2}(\Phi)$, and there exists a unique pressure $\overline{p} \in L^2_{H^1 \cap L^2_0}(\Phi)$ such that
	\begin{equation}
        \begin{aligned}
		\int_{0}^{\widetilde{T}} \left(\left\| \ddt{\overline{\mbf{u}}} \right\|_{\mbf{L}^2(\Omega(t))}^2 + \|\overline{p}\|_{H^1(\Omega(t))}^2\right) &\leq \int_0^{\widetilde{T}} \|\mbf{B}\|_{\mbf{L}^2(\Omega(t))}^2\\
        &+C\left(1 + \int_0^{\widetilde{T}} \|\widetilde{\mbf{u}}\|_{\mbf{H}^2(\Omega(t))}^2 +  \sup_{t \in [0,\widetilde{T}]} \|\grad \widetilde{\mbf{u}}\|_{\mbf{L}^2(\Omega(t))}^2\right) \mathcal{D}(\widetilde{T})^2,
        \end{aligned}
        \label{fsi energyestimate3}
	\end{equation}
	for $\mathcal{D}(\cdot)$, $\mathcal{E}(\cdot)$ and $\kappa \in (0,1)$ as in the previous lemma.
\end{lemma}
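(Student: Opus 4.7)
The plan is to work directly with the strong form of the momentum equation, which is available since Lemma~\ref{lemma: fsi energy estimates} already provides $\overline{\mbf{u}}\in L^\infty_{\mbf{H}^1_0}(\Phi)\cap L^2_{\mbf{H}^2}(\Phi)$ on $[0,\widetilde T]$. First I would rewrite \eqref{eqn: evolving NS2} as
\[
\rho\,\ddt{\overline{\mbf{u}}} + \grad \overline p \;=\; \mbf{F}, \qquad
\mbf{F} := \mu\Delta\overline{\mbf{u}} - \rho(\overline{\mbf{u}}\cdot\grad)\overline{\mbf{u}} - \rho\bigl[(\widetilde{\mbf{u}}\cdot\grad)\overline{\mbf{u}} + (\overline{\mbf{u}}\cdot\grad)\widetilde{\mbf{u}}\bigr] - \mbf{B},
\]
which holds almost everywhere on the non-cylindrical space-time domain, and then separate $\ddt{\overline{\mbf{u}}}$ from $\grad\overline p$ by means of the Leray projection.

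The key step is to establish that $\ddt{\overline{\mbf{u}}}(t)\in \mbf{H}_\sigma(\Omega(t))$ for almost every $t$. On the one hand $\Div \ddt{\overline{\mbf{u}}}=0$ follows from $\Div\overline{\mbf{u}}=0$, while the boundary condition $\ddt{\overline{\mbf{u}}}\cdot\boldsymbol\nu = 0$ on $\partial\Omega(t)$ is obtained from $\matdev_\mbf{V}\overline{\mbf{u}}=0$ on $\partial\Omega(t)$ together with the fact that $(\partial_\nu \overline{\mbf{u}})\cdot\boldsymbol\nu = 0$ on $\partial\Omega(t)$, a consequence of $\Div\overline{\mbf{u}}=0$ and the vanishing of the tangential derivatives of $\overline{\mbf{u}}$ on the boundary. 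Applying $P_\sigma$ to the strong form then yields, pointwise in time,
\[
\rho\,\ddt{\overline{\mbf{u}}} = P_\sigma \mbf{F}, \qquad \grad \overline p = (I-P_\sigma)\mbf{F},
\]
and the Helmholtz decomposition together with the Ne\v cas inequality (whose constant is independent of $t$ by the rigid-motion observation of Section~\ref{subsection: function spaces}) gives a unique $\overline p\in H^1\cap L^2_0$ at each time with $\|\overline p\|_{H^1(\Omega(t))} \leq C \|\mbf{F}\|_{\mbf{L}^2(\Omega(t))}$; the same contraction bound on $P_\sigma$ controls $\ddt{\overline{\mbf{u}}}$ in $\mbf{L}^2$.

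It then suffices to bound $\int_0^{\widetilde T} \|\mbf{F}\|_{\mbf{L}^2(\Omega(t))}^2$. The $\mbf{B}$-contribution yields the first term on the right of \eqref{fsi energyestimate3}, and the Laplacian contribution is controlled by $\int_0^{\widetilde T}\|A_\sigma\overline{\mbf{u}}\|_{\mbf{L}^2}^2 \leq C\mathcal{D}(\widetilde T)^2$ via \eqref{fsi energyestimate2}. For the trilinear self-convection term I would combine Agmon's inequality \eqref{agmon inequality} with \eqref{leray laplacian inequality} to estimate $\|(\overline{\mbf{u}}\cdot\grad)\overline{\mbf{u}}\|_{\mbf{L}^2}^2 \leq C\|\grad\overline{\mbf{u}}\|_{\mbf{L}^2}^3 \|A_\sigma\overline{\mbf{u}}\|_{\mbf{L}^2}$, and then integrate in time using $\sup_t\|\grad\overline{\mbf{u}}\|_{\mbf{L}^2} \leq C\mathcal{D}(\widetilde T)$ together with Cauchy--Schwarz on $\|A_\sigma\overline{\mbf{u}}\|_{\mbf{L}^2}$. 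The cross terms involving $\widetilde{\mbf{u}}$ are handled by H\"older together with the Sobolev embeddings $\mbf{H}^2\hookrightarrow\mbf{L}^\infty$ and $\mbf{H}^1\hookrightarrow\mbf{L}^4$ in three dimensions, producing exactly the factors $\int_0^{\widetilde T}\|\widetilde{\mbf{u}}\|_{\mbf{H}^2}^2$ and $\sup_{[0,\widetilde T]}\|\grad\widetilde{\mbf{u}}\|_{\mbf{L}^2}^2$ appearing in \eqref{fsi energyestimate3}.

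The main obstacle is the rigorous justification of the Leray-projection step on a moving domain: one must make sense of $\ddt{\overline{\mbf{u}}}\in\mbf{H}_\sigma(\Omega(t))$ in the evolving function-space framework of Section~\ref{subsection: function spaces}. This is most cleanly done either by a Faedo--Galerkin approximation in a time-dependent basis of $\mbf{H}_\sigma(\Omega(t))$ (as already used for Lemma~\ref{lemma: fsi energy estimates}), or by pulling back to the fixed domain $\Omega(0)$ via $\Phi$ where $\ddt{}$ becomes the material derivative and the boundary is stationary. The $t$-uniformity of all functional-analytic constants, already noted in Section~\ref{subsection: function spaces}, then ensures nothing in the final estimate depends on the geometry of $\Omega(t)$ beyond that of $\Omega(0)$.
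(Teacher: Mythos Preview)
Your overall strategy matches the paper's: write the strong momentum equation, apply the Leray projection to separate $\partial_t\overline{\mbf{u}}$ from $\grad\overline p$, and bound the right-hand side term by term using the $L^\infty_{\mbf{H}^1}\cap L^2_{\mbf{H}^2}$ control from Lemma~\ref{lemma: fsi energy estimates}. The trilinear estimates you sketch are essentially the paper's (the paper uses the cruder embedding $\mbf{H}^2\hookrightarrow\mbf{L}^\infty$ in place of Agmon, but either suffices).

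The one substantive difference is your claim that $\partial_t\overline{\mbf{u}}\in\mbf{H}_\sigma(\Omega(t))$, i.e.\ that $P_\sigma\partial_t\overline{\mbf{u}}=\partial_t\overline{\mbf{u}}$. The paper takes the opposite view: in the proof of Lemma~\ref{lemma: fsi energy estimates} it asserts $\ddt{\overline{\mbf{u}}}\neq P_\sigma\ddt{\overline{\mbf{u}}}$ on a moving domain, and the footnote in the present proof repeats that $P_\sigma$ does not commute with $\partial_t$. Accordingly the paper first bounds $P_\sigma\partial_t\overline{\mbf{u}}$ and then handles what it calls the ``non-solenoidal component of $\ddt{\overline{\mbf{u}}}$'' together with $\grad\overline p$ via the Helmholtz decomposition and Poincar\'e. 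Your formal argument for $\partial_t\overline{\mbf{u}}\cdot\boldsymbol\nu=0$ on $\partial\Omega(t)$ (using $\matdev_{\mbf{V}}\overline{\mbf{u}}=0$ there, vanishing tangential derivatives of $\overline{\mbf{u}}$, and $\partial_\nu\overline{\mbf{u}}\cdot\boldsymbol\nu=0$ from $\Div\overline{\mbf{u}}=0$) looks correct once $\overline{\mbf{u}}(t)\in\mbf{H}^2$ so that $\grad\overline{\mbf{u}}$ has a trace; if so it yields a cleaner decomposition than the paper's. Either route gives the same estimate, but since the paper explicitly avoids your shortcut you should justify those trace identities carefully rather than leave them at the level of a formal computation.
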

\begin{proof}
	For almost all $t \in [0,\widetilde{T}]$ one finds that
    \[\rho P_\sigma \ddt{\overline{\mbf{u}}} = -\rho(\overline{\mbf{u}} \cdot \grad) \overline{\mbf{u}} - \mu A_{\sigma} \overline{\mbf{u}} - \rho (\widetilde{\mbf{u}} \cdot \grad)\overline{\mbf{u}} - \rho(\overline{\mbf{u}} \cdot \grad)\widetilde{\mbf{u}} - P_\sigma \mbf{B},\]
    and so
    \begin{multline*}
        \int_0^{\widetilde{T}} \left\| P_\sigma \ddt{\overline{\mbf{u}}} \right\|_{\mbf{L}^2(\Omega(t))}^2\\
        \leq C\int_0^{\widetilde{T}} \left(\|(\overline{\mbf{u}} \cdot \grad) \overline{\mbf{u}}\|_{\mbf{L}^2(\Omega(t))}^2 + \|A_{\sigma} \overline{\mbf{u}}\|_{\mbf{L}^2(\Omega(t))}^2 + \| (\widetilde{\mbf{u}} \cdot \grad)\overline{\mbf{u}}\|_{\mbf{L}^2(\Omega(t))}^2 + \| (\overline{\mbf{u}} \cdot \grad)\widetilde{\mbf{u}}\|_{\mbf{L}^2(\Omega(t))}^2 + \|P_\sigma \mbf{B}\|_{\mbf{L}^2(\Omega(t))}^2 \right),
    \end{multline*}
    where we note that
	\begin{align*}
		\int_0^T \|(\overline{\mbf{u}} \cdot \grad) \overline{\mbf{u}}\|_{\mbf{L}^2(\Omega(t))}^2 &\leq C\int_{0}^T \|\overline{\mbf{u}}\|_{\mbf{L}^\infty(\Omega(t))}^2 \|\grad \overline{\mbf{u}}\|_{\mbf{L}^2(\Omega(t))}^2\\
		&\leq C \left( \sup_{t \in [0,\widetilde{T}]} \|\grad \overline{\mbf{u}}\|_{\mbf{L}^2(\Omega(t))}^2 \right) \int_0^{\widetilde{T}} \|\overline{\mbf{u}}\|_{\mbf{H}^2(\Omega(t))}^2,
	\end{align*}
    where we have used the embedding $\mbf{H}^2(\Omega(t)) \hookrightarrow \mbf{L}^\infty(\Omega(t))$.
    We have similar estimates for $\int_0^{\widetilde{T}} \| (\widetilde{\mbf{u}} \cdot \grad)\overline{\mbf{u}}\|_{\mbf{L}^2(\Omega(t))}^2$ and $\int_0^{\widetilde{T}} \| (\overline{\mbf{u}} \cdot \grad)\widetilde{\mbf{u}}\|_{\mbf{L}^2(\Omega(t))}^2$.
	Thus by using \eqref{fsi energyestimate2} and \eqref{eqn: leray L2 bound} one finds that
    \begin{equation*}
        \int_0^{\widetilde{T}} \left\| P_\sigma \ddt{\overline{\mbf{u}}} \right\|_{\mbf{L}^2(\Omega(t))}^2 \leq \int_0^{\widetilde{T}} \|\mbf{B}\|_{\mbf{L}^2(\Omega(t))}^2 +C\left(1 + \int_0^{\widetilde{T}} \|\widetilde{\mbf{u}}\|_{\mbf{H}^2(\Omega(t))}^2 +  \sup_{t \in [0,\widetilde{T}]} \|\grad \widetilde{\mbf{u}}\|_{\mbf{L}^2(\Omega(t))}^2\right)\mathcal{D}(\widetilde{T})^2.
    \end{equation*}
	Likewise, for almost all $t \in [0,\widetilde{T}]$ one has that 
     \[\rho P_\sigma \ddt{\overline{\mbf{u}}} + \rho(\overline{\mbf{u}} \cdot \grad) \overline{\mbf{u}} - \mu A_{\sigma} \overline{\mbf{u}} + \rho (\widetilde{\mbf{u}} \cdot \grad)\overline{\mbf{u}} + \rho(\overline{\mbf{u}} \cdot \grad) \widetilde{\mbf{u}} - P_\sigma \mbf{B} \in \mbf{L}^2(\Omega(t)),\]
    where we use the Helmholtz decomposition to see this equals\footnote{Here we have written the non-solenoidal component as the pressure term, $\grad p$, and a sum of contributions from the terms involving $\overline{\mbf{u}}, \mbf{B}$.
    This is due to the fact that $P_\sigma$ does not commute with $\ddt{}$ or $\Delta$.}
    \[\grad \overline{p} + \rho\left(P_\sigma \ddt{\overline{\mbf{u}}} - \ddt{\overline{\mbf{u}}}\right) + (\mbf{B} - P_\sigma\mbf{B}) - \mu(\Delta \overline{\mbf{u}} + A_{\sigma}\overline{\mbf{u}}).\]
	The $L^2_{H^1}(\Phi)$ estimate on $\overline{p}$, and $L^2_{\mbf{L}^2}(\Phi)$ estimate for the non-solenoidal component of $\ddt{\overline{\mbf{u}}}$ now follow from using Poincar\'e's inequality and similar arguments to the above.
\end{proof}

By combining the results in this section along with those in Appendix \ref{appendix:stokes} we can show suitable $H^1_{\mbf{L}^2}(\Phi) \cap L^2_{\mbf{H}^2}(\Phi)$ and $L^2_{H^1}(\Phi)$ bounds for $\mbf{u}$ and $p$ respectively, which solve \eqref{eqn: evolving NS} with inhomogeneous boundary conditions.

\begin{remark}
\label{remark: eqn: evolving NS uniqueness}
It is well known that solutions of the three dimensional Navier--Stokes equations (on a stationary domain) which satisfy a Prodi--Serrin condition \cite{prodi1959teorema, serrin1962interior} have a weak-strong uniqueness property --- see for instance \cite[Theorem 8.19]{robinson2016three}.
On a prescribed evolving domain this remains true, and a Prodi--Serrin condition of the form
\[ \mbf{u} \in L^r_{\mbf{L}^s}(\Phi), \quad \frac{3}{s} + \frac{2}{r} = 1, \, s > 3,\]
implies the uniqueness of strong solutions in the wider class of Leray--Hopf type solutions, cf.~\cite{Bock77NavierStokes,salvi1988navier}.
As such, since the strong solutions we have discussed satisfy a Prodi--Serrin condition they are unique.
We refer the reader to \cite{maity2024uniqueness,maity2025regularity,muha2021uniqueness} for recent discussion on the implications of the Prodi--Serrin condition in fluid-structure interaction problems.
\end{remark}

\subsection{Construction of the velocity field}
\label{subsection: construction}
In this subsection we now give a concrete construction of the velocity field, $\mbf{V}$, to be used in our iterative argument along with the estimates established in \S\ref{subsection: construction}.
We assume that we are given rigid body quantities,~$(\mbf{q}, \boldsymbol{\omega}) \in \mbf{H}^2([0,T]) \times \mbf{H}^1([0,T])$, which are compatible with the initial data \eqref{FSI IC} in the following sense.
\begin{definition}
    \label{defn: compatible with IC}
    We say that functions $(\mbf{q}, \boldsymbol{\omega}) \in \mbf{H}^2([0,T]) \times \mbf{H}^1([0,T])$ are compatible with the initial data if:
    \begin{enumerate}
        \item \[ \mbf{q}(0) = \mbf{q}_0, \quad \mbf{q}'(0) = \mbf{v}_0, \quad \boldsymbol{\omega}(0)= \boldsymbol{\omega}_0.  \]
        \item The centre of mass $\mbf{q}$ and angular velocity $\boldsymbol{\omega}$ define the motion of a rigid body, $B(t)$, by evolving the points in $\mbf{x}(t) \in B(t)$ via
        \begin{equation}
            \frac{\mathrm{d}\mbf{x}}{\mathrm{d}t} = \frac{\mathrm{d}\mbf{q}}{\mathrm{d}t} + \boldsymbol{\omega} \times (\mbf{x} - \mbf{q}(t)), \quad \mbf{x}(0) = \mbf{x}_0 \quad \forall \mbf{x}_0 \in B(0). \label{eqn: points ODE}
        \end{equation}
        Moreover, this rigid body is such that, for all $t \in [0,T]$,
        \[ \mathrm{dist}(B(t), \Gamma) > \frac{\delta}{2}, \]
        where $\delta > 0$ is some constant such that \(\mathrm{dist}(B(0), \Gamma) > \delta\).
        
    \end{enumerate}
    
\end{definition}
We note that $\mbf{q}$ and $\boldsymbol{\omega}$ are sufficiently smooth so that the point evaluations in this first condition are well defined.
This second condition is required so that there is no collision of the rigid body with the fixed portion of boundary $\Gamma$.
An example of functions $\mbf{q}, \boldsymbol{\omega}$ which are compatible with the initial data in the above sense are the functions
\begin{gather*}
    \mbf{q}(t) := \mbf{q}_0 + t \mbf{v}_0 \qquad \boldsymbol{\omega}(t) := \boldsymbol{\omega}_0,
\end{gather*}
on some sufficiently small time interval which is determined solely by the initial data.

Given admissible functions in the sense of Definition \ref{defn: compatible with IC} this determines $B(t)$ by evolving points via~\eqref{eqn: points ODE}.
This in turn defines the evolving domain, $\Omega(t) := (\Omega\setminus B(t))^\circ$, in which we solve the fluid equations using the evolving function space framework discussed in \S\ref{subsection: function spaces}.
To use these evolving function spaces we require a parametrisation of the domain $\Omega(t)$, which we now construct via an appropriately defined velocity field.

For a given point, $\mbf{x}_0 \in \partial B(0)$ we know that this point evolves by \eqref{eqn: points ODE}, and similarly for a point $\mbf{x}_0 \in \Gamma$ we find that it remains fixed under the evolution of the domain.
We want to extend this flow to all points in $\overline{\Omega(0)}$ to define a parametrisation of $\overline{\Omega(t)}$.
One such way to do this to do this is to consider the harmonic extension of the boundary data on $\Omega(t)$.
That is to say, for each $t \in [0,T]$, we define $\mbf{V}(t) : \Omega(t) \rightarrow \mbb{R}^3$ to be the unique solution of
\begin{gather}
	\begin{cases}
	-\Delta \mbf{V}(t) = 0, &\text{on } \Omega(t),\\
	\mbf{V}(\mbf{x},t) = \frac{\mathrm{d}\mbf{q}}{\mathrm{d}t} + \boldsymbol{\omega} \times (\mbf{x} - \mbf{q}(t)), &\text{on } \partial B(t),\\
	\mbf{V}(\mbf{x},t) = 0, &\text{on } \Gamma,
\end{cases}\label{eqn: velocity harmonic extension}
\end{gather}
where $-\Delta$ is understood componentwise.
By standard results for the Laplace equation (see for instance \cite{gilbarg1977elliptic}) one finds that
\begin{align}
	\|\mbf{V}\|_{\mbf{C}^{2,1}(\Omega(t))} \leq C \left\| \frac{\mathrm{d}\mbf{q}}{\mathrm{d}t} + \boldsymbol{\omega} \times (\mbf{x} - \mbf{q}(t)) \right\|_{\mbf{C}^{2,1}(\Omega(t))}, \label{eqn: harmonic extension bound}
\end{align}
where the constant $C$ depends only on the geometry of $\Gamma$ and $\partial B(0)$.
This choice of extension is essentially arbitrary, provided one has a bound like \eqref{eqn: harmonic extension bound}.
Perhaps a more suitable approach would be to consider an extension given by solving the Stokes problem (see Appendix \ref{appendix:stokes}), but we do not consider other choices of velocity field here.
From the identification of $\mbf{C}^{2,1}(\Omega(t)) = \mbf{W}^{3,\infty}(\Omega(t))$ we note that the third order spatial derivatives of $\mbf{V}$ exist almost everywhere and are essentially bounded.

Continuity in the time argument is harder to establish, as it is not clear what continuity means in this case since the functions $\mbf{V}(t), \mbf{V}(s)$ are defined on different domains for $t \neq s$.
To understand continuity here, we consider an arbitrary family of diffeomorphisms $(\Phi(t))_{t \in [0,T]}$ such that $\Phi(t): \Omega(0) \rightarrow \Omega(t)$ is a $C^3$ diffeomorphism, and $\Phi(0) = \mathrm{id}$.
We then understand continuity of $t \mapsto \mbf{V}(t)$ to mean that the pullback $t \mapsto \Phi_{-t} \mbf{V}(t)$ is continuous in the usual sense for all such families $(\Phi(t))_{t \in [0,T]}$.
With this in mind, one can then establish continuity of $t \mapsto \Phi_{-t} \mbf{V}(t)$ by considering the weak formulation of \eqref{eqn: velocity harmonic extension} pulled back onto $\Omega(0)$.
We omit the details of this calculation as it is tedious and yields no valuable insight --- we refer to \cite[Appendix A]{elliott2024navier} for details on a similar calculation.
In particular, from this one may deduce the measurability of $t \mapsto \mbf{V}(t)$, and thus one finds that $\mbf{V}$ is a Carath\'eodory function.

\subsubsection{The pushforward map}
Since $\mbf{V}$ is a Carath\'eodory function we may use the Carath\'eodory existence theorem (see for instance \cite{hale2009ordinary}) for ordinary differential equations to see that
\begin{gather}
	\begin{aligned}
		\frac{\mathrm{d} \Phi}{\mathrm{d}t}(\mbf{p},t) &= \mbf{V}(\Phi(\mbf{p},t),t),\\
		\Phi(\mbf{p},0) &= \mbf{p},
	\end{aligned}
	\label{eqn: parametrisation defn}
\end{gather}
admits a solution on $[0,T]$, for all $\mbf{p} \in \overline{\Omega(0)}$.
Moreover the Lipschitz continuity of $\mbf{V}(\cdot, t)$ implies the solution to this ODE is in fact unique.
By construction one finds that $\Phi(\overline{\Omega(0)},t) = \overline{\Omega(t)}$, and moreover one finds that $\Phi$ is such that $\Phi(t): \Omega(0) \rightarrow \Omega(t)$ is invertible for all $t \in [0, T]$.

Owing to the regularity of $\mbf{V}$ in space, one can show further spatial regularity of $\Phi$ (see for instance \cite[\S 32]{arnold1992ordinary}).
By differentiating the above ODE in space one finds that
\begin{gather*}
	\begin{aligned}
		\frac{\mathrm{d} \grad \Phi}{\mathrm{d}t}(\mbf{p},t) &= \grad\mbf{V}(\Phi(\mbf{p},t),t) \grad \Phi(\mbf{p},t),\\
		\grad \Phi(\mbf{p},0) &= \mbb{I},
	\end{aligned}
\end{gather*}
and a straightforward application of Gr\"onwall's inequality yields
\[ \|\grad \Phi(\mbf{p},t)\|_{\mbf{C}^{1,1}(\Omega(0))} \leq \exp \left(C \int_0^t \left\| \grad \mbf{V}(\Phi(\mbf{p},s),s) \right\|_{\mbf{C}^{1,1}(\Omega(s))} \, \mathrm{d}s \right), \]

and using \eqref{eqn: harmonic extension bound} one finds then that
\begin{align}
	\|\grad \Phi(\mbf{p},t)\|_{\mbf{C}^{1,1}(\Omega(0))} \leq \exp \left( C \int_0^t \left\| \frac{\mathrm{d}\mbf{q}}{\mathrm{d}t} + \boldsymbol{\omega} \times (\mbf{x} - \mbf{q}(s)) \right\|_{\mbf{C}^{2,1}(\Omega(s))} \, \mathrm{d}s \right). \label{grad bound}
\end{align}
Similarly, noting that the Jacobian determinant satisfies the ODE
\[\frac{\mathrm{d}}{\mathrm{d}t} \det(\grad \Phi(\mbf{p}, t)) = \det(\grad \Phi(\mbf{p}, t)) (\Div \mbf{V})(\Phi(\mbf{p}, t),t), \quad \det(\grad \Phi(\mbf{p},0)) = 1,\]
one can show that
\begin{align}
	|\det(\grad \Phi(\mbf{p}, t))| \geq \exp \left(-C \int_0^t \left\| \frac{\mathrm{d}\mbf{q}}{\mathrm{d}t} + \boldsymbol{\omega} \times (\mbf{x} - \mbf{q}(s)) \right\|_{\mbf{C}^{2,1}(\Omega(s))} \, \mathrm{d}s \right) >0, \label{grad bound2}
\end{align}
and since $\det(\grad \Phi(\mbf{p},0)) = 1$ it follows that $\det(\grad \Phi(\mbf{p}, t)) > 0$, and $\grad \Phi$ is invertible.

\subsubsection{Compatibility of function spaces}
To use the evolving function space framework discussed in \S\ref{subsection: function spaces} we must first show compatibility of the evolving function space pairs, as in Definition~\ref{defn: compatible family}.
For this, we begin with the following lemma.
\begin{lemma}
	\label{transformation lemma}
	Let $\Phi: \Omega(0) \rightarrow \Omega(t)$ and $f: \Omega(0) \rightarrow \mbb{R}$ be sufficiently smooth, we define $\widehat{f} := f \circ \Phi : \Omega(t) \rightarrow \mbb{R}$.
	Then the gradient and Hessian of $f$ transform as
	\begin{gather}
		(\grad f) \circ \Phi = (\grad \Phi)^{-T} \grad \widehat{f},\label{transformed grad}\\
		(\grad^2 f) \circ \Phi = (\grad \Phi^1)^{-T} \grad \big((\grad \Phi)^{-T} \grad \widehat{f}\big) \label{transformed hessian},
	\end{gather}
	and the Laplacian transforms as
	\begin{align}
		(\Delta f) \circ \Phi = \frac{1}{\sqrt{g}} \Div (\sqrt{g} G^{-1} \grad \widehat{f}), \label{transformed laplacian}
	\end{align}
	where $G = (\grad \Phi)^T \grad \Phi, g = \det(G) = \det(\grad \Phi)^2$.
	Similarly if $f:\Omega(0) \times [0,T] \rightarrow \mbb{R}$ is sufficiently smooth then one has
	\begin{align}
		\left( \ddt{f}\right) \circ \Phi = \ddt{\widehat{f}} - (\grad \Phi)^{-1} \frac{\mathrm{d} \Phi}{\mathrm{d}t} \cdot \grad \widehat{f} = \ddt{\widehat{f}} - (\grad \Phi)^{-1} \mbf{V} \cdot \grad \widehat{f}.\label{transformed ddt}
	\end{align}
\end{lemma}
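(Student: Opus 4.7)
All four identities follow from the chain rule applied to $\widehat{f} = f \circ \Phi$, together with standard determinant/inverse-matrix computations; the main work is bookkeeping.

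For \eqref{transformed grad}, differentiating the identity $\widehat{f}(\mbf{p},t) = f(\Phi(\mbf{p},t),t)$ in $\mbf{p}$ gives, by the chain rule,
\[ \grad \widehat{f} = (\grad \Phi)^T (\grad f)\circ \Phi, \]
and since $\grad \Phi$ is invertible (by \eqref{grad bound2}), inverting yields \eqref{transformed grad}.

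For \eqref{transformed hessian}, I would apply \eqref{transformed grad} a second time, this time to the components of $\grad f$. Writing $\grad f$ as a vector-valued function and noting that the pullback of $\grad f$ by $\Phi$ is precisely $(\grad \Phi)^{-T}\grad \widehat{f}$, the chain rule gives
\[ (\grad^2 f)\circ \Phi = (\grad \Phi)^{-T} \grad \bigl( (\grad \Phi)^{-T} \grad \widehat{f} \bigr), \]
which is \eqref{transformed hessian}. For \eqref{transformed laplacian} my plan is the classical Riemannian geometry derivation: $G = (\grad \Phi)^T \grad \Phi$ is the pulled-back Euclidean metric, with inverse $G^{-1}$ and volume element $\sqrt{g}$ where $g = \det G = (\det \grad \Phi)^2$. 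Applying the change-of-variables formula to $\int_{\Omega(t)}(\Delta f) \varphi = -\int_{\Omega(t)} \grad f \cdot \grad \varphi$ for arbitrary test $\varphi$ supported in $\Omega(t)$ and using \eqref{transformed grad}, I obtain
\[ \int_{\Omega(0)} \bigl((\Delta f)\circ \Phi\bigr) \widehat{\varphi}\, \sqrt{g} = -\int_{\Omega(0)} G^{-1}\grad \widehat{f}\cdot \grad \widehat{\varphi}\, \sqrt{g}, \]
and integrating by parts on $\Omega(0)$ gives \eqref{transformed laplacian}. (Equivalently one can take the trace of \eqref{transformed hessian} directly; both routes lead to the same expression and the Riemannian one makes the structure more transparent.)

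Finally, for \eqref{transformed ddt}, I would differentiate the defining identity $\widehat{f}(\mbf{p},t) = f(\Phi(\mbf{p},t),t)$ in $t$ to get
\[ \ddt{\widehat{f}} = (\ddt{f})\circ \Phi + \bigl((\grad f)\circ \Phi\bigr) \cdot \frac{\mathrm{d}\Phi}{\mathrm{d}t}, \]
and then substitute $\frac{\mathrm{d}\Phi}{\mathrm{d}t} = \mbf{V}\circ \Phi$ from \eqref{eqn: parametrisation defn} together with \eqref{transformed grad} to rewrite the last term as $\mbf{V}\cdot (\grad \Phi)^{-T}\grad \widehat{f} = (\grad \Phi)^{-1}\mbf{V}\cdot \grad \widehat{f}$, yielding \eqref{transformed ddt}. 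The only subtlety worth flagging is the ``sufficiently smooth'' hypothesis; the spatial regularity of $\Phi$ in \eqref{grad bound}--\eqref{grad bound2} ensures that all compositions and inversions above are well-defined pointwise, so no essential obstacle arises.
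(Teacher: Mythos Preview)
Your proof is correct and follows essentially the same route as the paper: chain rule for \eqref{transformed grad}, reapplying \eqref{transformed grad} componentwise for \eqref{transformed hessian}, and chain rule plus \eqref{transformed grad} and \eqref{eqn: parametrisation defn} for \eqref{transformed ddt}. For \eqref{transformed laplacian} the paper simply cites an external reference, whereas you give the standard weak-formulation/Riemannian derivation; this is the same underlying computation, just written out.
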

\begin{proof}
	\eqref{transformed grad} and \eqref{transformed laplacian} follow almost identically to the proofs of \cite[Lemma 3.3]{church2020domain} and \cite[Lemma 3.4]{church2020domain} respectively.
	The proof of \eqref{transformed hessian} follows a straightforward calculation using \eqref{transformed grad} with $f$ replaced by $\frac{\partial f}{\partial x_i}$ for $i= 1,2,3$.
	
	The proof for \eqref{transformed ddt} follows by using the chain rule, \eqref{transformed grad}, and the definition of $\Phi$ so that
	\[ \frac{\mathrm{d}}{\mathrm{d}t} f(\Phi(x,t),t) = \left( \ddt{f} \right) \circ \Phi + \frac{\mathrm{d} \Phi}{\mathrm{d}t} \cdot(\grad {f}) \circ \Phi = \left( \ddt{f} \right) \circ \Phi + \mbf{V} \cdot (\grad \Phi)^{-T} \grad \widehat{f}. \]
\end{proof}
This result extends to vector-valued functions in the obvious way.
With this preliminary lemma one can show the following compatibility result.
\begin{lemma}
	\label{pullback lemma}
	For $\Phi,f, \widehat{f}$ as in the previous lemma, one has
	\[ \| f \|_{\mbf{W}^{k,p}(\Omega(0))} \leq C(\mbf{q}, \boldsymbol{\omega})\| \widehat{f} \|_{\mbf{W}^{k,p}(\Omega(t))},  \]
	for $k=0,1,2$ and $p \in [1,\infty]$.
	Here $C(\mbf{q}, \boldsymbol{\omega})$ is a constant depending only on $\mbf{q}$, $\mbf{\omega}$, and $\Omega(0)$.
\end{lemma}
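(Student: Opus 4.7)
The plan is to change variables $y=\Phi(\mbf{p},t)$ on $\Omega(0)$ and absorb every resulting geometric factor into a constant depending only on $\mbf{q}$ and $\boldsymbol{\omega}$ by means of the a priori bounds \eqref{grad bound}, \eqref{grad bound2} already established for $\grad\Phi$ and $\det\grad\Phi$. Throughout I interpret the statement with the type-consistent convention that $f$ lives on $\Omega(0)$ and $\widehat f$ on $\Omega(t)$, related through $f=\widehat f\circ\Phi$, so that the chain-rule identities of Lemma~\ref{transformation lemma} supply dual formulas expressing derivatives of $f$ in terms of derivatives of $\widehat f$ evaluated along $\Phi$.

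For $k=0$ I would apply the substitution $y=\Phi(\mbf{p},t)$ directly to obtain, for $p\in[1,\infty)$,
\[
\|f\|_{\mbf{L}^p(\Omega(0))}^p=\int_{\Omega(t)}|\widehat f(y)|^p\bigl|\det(\grad\Phi)(\Phi^{-1}(y))\bigr|^{-1}\,dy\le\|(\det\grad\Phi)^{-1}\|_{L^\infty}\,\|\widehat f\|_{\mbf{L}^p(\Omega(t))}^p,
\]
where \eqref{grad bound2} bounds the essential supremum of $(\det\grad\Phi)^{-1}$ by a quantity depending only on $\mbf{q},\boldsymbol{\omega}$. The case $p=\infty$ is immediate from bijectivity of $\Phi(\cdot,t)$.

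For $k=1$ I would use the chain rule $\grad f=(\grad\Phi)^{T}(\grad\widehat f)\circ\Phi$, extract $\|\grad\Phi\|_{\mbf{L}^\infty(\Omega(0))}$ by \eqref{grad bound}, and then reduce to the $k=0$ case applied to $\grad\widehat f$. For $k=2$, differentiating once more produces a term involving two copies of $\grad\Phi$ contracted with $(\grad^2\widehat f)\circ\Phi$, plus a lower-order term involving $\grad^2\Phi$ contracted with $(\grad\widehat f)\circ\Phi$; both $\grad\Phi$ and $\grad^2\Phi$ are uniformly bounded in $\mbf{L}^\infty$ via the $\mbf{C}^{1,1}$-estimate \eqref{grad bound}, after which the $k=0$ substitution is applied term by term.

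There is no genuine obstacle here, only bookkeeping: one collects the powers $\|\grad\Phi\|_{\mbf{L}^\infty}^{k}$, $\|\grad^2\Phi\|_{\mbf{L}^\infty}$ and $\|(\det\grad\Phi)^{-1}\|_{L^\infty}^{1/p}$, observes that the last is bounded uniformly in $p\in[1,\infty]$ by $\max(1,\|(\det\grad\Phi)^{-1}\|_{L^\infty})$, and invokes \eqref{grad bound}, \eqref{grad bound2} once to absorb the resulting product into a single constant $C(\mbf{q},\boldsymbol{\omega})$.
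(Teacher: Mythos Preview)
Your proposal is correct and follows essentially the same route as the paper: change variables via $\Phi$, express derivatives of $f$ through derivatives of $\widehat f$ using the chain rule, and absorb all geometric factors into $C(\mbf{q},\boldsymbol{\omega})$ via \eqref{grad bound} and \eqref{grad bound2}. The only cosmetic difference is that the paper works with the formulas of Lemma~\ref{transformation lemma} directly (so $(\grad\Phi)^{-T}$ and $\grad(\grad\Phi)^{-T}$ appear and are bounded using the cofactor representation together with \eqref{grad bound2}), whereas you differentiate $f=\widehat f\circ\Phi$ and get $(\grad\Phi)^{T}$ and $\grad^2\Phi$, bounded directly by \eqref{grad bound}; this reflects the opposite choice of which function is the pullback, and both conventions lead to the same estimate.
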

\begin{proof}
	We show the result for $k =2$, and $p \neq \infty$.
	By a straightforward change of variables and using \eqref{transformed grad}, \eqref{transformed hessian} one has
	\begin{align*}
		\| f \|_{\mbf{W}^{2,p}(\Omega(0))}^p = \int_{\Omega(t)} J\left(|\widehat{f}|^p + |(\grad \Phi)^{-T} \grad \widehat{f}|^p + |(\grad \Phi)^{-T} \grad \big((\grad \Phi)^{-T} \grad \widehat{f}\big)|^p\right),
	\end{align*}
	for $J = \det(\grad \Phi)$.
	From \eqref{grad bound} it is clear that
	\[ \|J\|_{L^\infty(\Omega(t))} \leq \exp \left( C \int_0^t \left\| \frac{\mathrm{d}\mbf{q}}{\mathrm{d}t} + \boldsymbol{\omega} \times (\mbf{x} - \mbf{q}(t)) \right\|_{\mbf{C}^{2,1}(\Omega(s))} \, \mathrm{d}s \right), \]
	and from \eqref{grad bound2} it is a straightforward calculation to show that
	\[ \| (\grad \Phi)^{-T} \|_{\mbf{L}^\infty(\Omega(t))} \leq \exp\left( C \int_0^t \left\| \frac{\mathrm{d}\mbf{q}}{\mathrm{d}t} + \boldsymbol{\omega} \times (\mbf{x} - \mbf{q}(t)) \right\|_{\mbf{C}^{2,1}(\Omega(s))} \, \mathrm{d}s \right). \] 
	Lastly one uses the fact that $\grad (\grad \Phi)^{-T} = (\grad \Phi)^{-T} \grad (\grad \Phi)^T (\grad \Phi)^{-T}$ to see that
    \begin{align*}
        |(\grad \Phi)^{-T} \grad \big((\grad \Phi)^{-T} \grad \widehat{f}\big)|^p &\leq \|(\grad \Phi)^{-T}\|_{\mbf{L}^\infty(\Omega(t))}^{3p}\|\grad (\grad \Phi)^T\|_{\mbf{L}^\infty(\Omega(t))}^p |\grad \widehat{f}|^p\\
        &+ \|(\grad \Phi)^{-T}\|_{\mbf{L}^\infty(\Omega(t))}^{2p}|\grad^2 \widehat{f}|^p,
    \end{align*}
	from which the result readily follows.
	The case of $p = \infty$ uses similar arguments and is omitted.
\end{proof}

From the regularity of $\mbf{q}$ and $\boldsymbol{\omega}$ one finds that $C \left\| \frac{\mathrm{d}\mbf{q}}{\mathrm{d}t} + \boldsymbol{\omega} \times (\mbf{x} - \mbf{q}(t)) \right\|_{\mbf{C}^{2,1}(\Omega(t))}$ can be bounded independent of $t$, and so this translates into a uniform in time bound for $C(\mbf{q}, \boldsymbol{\omega})$.
One can similarly show an inequality of the form
\[ \| \widehat{f} \|_{\mbf{W}^{k,p}(\Omega(t))} \leq \widetilde{C}(\mbf{q}, \boldsymbol{\omega})\| {f} \|_{\mbf{W}^{k,p}(\Omega(0))},  \]
for some constant $\widetilde{C}(\mbf{q}, \boldsymbol{\omega})$ independent of $t$ but possibly depending on $\mbf{q}, \boldsymbol{\omega}$.
From this it is straightforward to show compatibility (as in Definition~\ref{defn: compatible family}) of the pairs $(\mbf{W}^{k,p}(\Omega(t)), \Phi(t))$ for $k=0,1,2$ and $p \in [1,\infty]$ --- we omit further details.
With this preliminary set up, we may now use the evolving function spaces detailed in \cite{alphonse2023function,alphonse2015abstract}.

\section{The iterative argument}
\label{section:FSI iteration}

We now expound upon our iterative process.
Given functions $(\mbf{q}, \boldsymbol{\omega}) \in \mbf{H}^2([0,T]) \times \mbf{H}^1([0,T])$ which are compatible with the initial data in the sense of Definition \ref{defn: compatible with IC} one can define an evolving domain $\bigcup_{t \in [0,T]}\Omega(t) \times \{t \} \subset \mbb{R}^3 \times [0,T] $, which is parametrised by the flow map $\Phi(\cdot, t)$ constructed in \S\ref{subsection: construction}.
With this we use the evolving function space framework discussed in \S\ref{subsection: function spaces} to solve the evolving Navier--Stokes equations on $\Omega(t)$ for \emph{strong solutions}
\[\mbf{u} \in H^1_{\mbf{L}^2}(\Phi) \cap L^2_{\mbf{H}^2}(\Phi), \qquad p \in  L^2_{H^1}(\Phi),\]
as in Section~\ref{section: evolving NS}.
With these strong solutions we define functions $(\mbf{Q}, \mbf{W}) \in \mbf{H}^2([0,\widetilde{T}]) \times \mbf{H}^1([0,\widetilde{T}])$, for $\widetilde{T}$ as in Lemma~\ref{lemma: fsi energy estimates}, to be the unique solutions to
\begin{subequations}
\label{eqn: updated rigid body}
\begin{gather}
	m \frac{\mathrm{d}^2 \mbf{Q}}{\mathrm{d}t^2} = - \int_{\partial B(t)} \mbb{T} \boldsymbol{\nu},\label{eqn: updated position}\\
	\frac{\mathrm{d}}{\mathrm{d}t} \left( \mbb{J} \mbf{W} \right) = - \int_{\partial B(t)} (\mbf{x} - \mbf{q}(t)) \times \mbb{T} \boldsymbol{\nu}, \label{eqn: updated angular}
\end{gather}
\end{subequations}
such that
\[ \mbf{Q}(0) = \mbf{q}_0, \quad \mbf{Q}'(0) = \mbf{v}_0, \quad \mbf{W}(0) = \boldsymbol{\omega}_0, \]
and
\[ \mbb{J} := \int_{B(t)} \rho_B \left( |\mbf{x}-\mbf{q}(t)|^2 \mbb{I} - (\mbf{x} - \mbf{q}(t)) \otimes (\mbf{x} - \mbf{q}(t)) \right) .\]
We recall that $m = \rho_B |\Omega(0)|$ is the constant mass of the rigid body $B(t)$, and $\rho_B$ is the constant density of the rigid body.
Note that in contrast to~\eqref{eqn: FSI rigid body} the integral terms in \eqref{eqn: updated rigid body} are now given.

We summarise this construction as a map $\mathscr{F}: D(\mathscr{F}) \rightarrow \mbf{H}^2([0,\widetilde{T}]) \times \mbf{H}^1([0,\widetilde{T}])$, given by
\[ \mathscr{F} : (\mbf{q}, \boldsymbol{\omega}) \mapsto (\mbf{Q}, \mbf{W}). \]
Here the domain,~$D(\mathscr{F})$, is an appropriate subset of $\mbf{H}^2([0,T]) \times \mbf{H}^1([0,T])$ which we shall define properly in \S\ref{subsection: fixed point prelims}.
This map is highly non-local due to the integrals in \eqref{eqn: updated rigid body} involving the pair $(\mbf{u}, p)$ obtained by solving the Navier--Stokes equations on the domain defined by $(\mbf{q}, \boldsymbol{\omega})$.

\begin{remark}
\label{remark: construction}
At this stage it is not clear that $\mbf{q}$ and $\boldsymbol{\omega}$ being compatible with the initial data (in the sense of Definition \ref{defn: compatible with IC}) implies that $\mbf{Q}$ and $\boldsymbol{W}$ are compatible with the initial data.
In particular, it may be the case that $\mbf{Q}$ and $\mbf{W}$ allow for collisions with the boundary before time $T$ --- this will be a point we revisit later.
\end{remark}

\begin{remark}
    \label{remark: regularity}
    From the details of this iteration it is clear why we required strong solutions, as to make sense of the integrals in~\eqref{eqn: updated position} and~\eqref{eqn: updated angular} we understand $\grad \mbf{u} $ and $p$ in the trace sense which further regularity than Leray--Hopf type solutions allow.
        Subsequently since one has that the strong solutions $(\mbf{u},p) $ exist on a time interval $[0,\widetilde{T}] \subseteq [0,T]$ there is an issue regarding the time interval on which we may use this construction --- this will be resolved with the above issue concerning the compatibility of the pair $(\mbf{Q}, \mbf{W})$.
        It is likely that one can extend the approach in this paper by using weaker function spaces, provided one can show regularity $p|_{\partial B} \in L^2_{L^1}(\Phi)$ and $\grad \mbf{u}|_{\partial B} \in L^2_{L^1}(\Phi)$.
        For example, this is the case if one considers $p \in L^r_{B^{1/s}_{s,1}}(\Phi)$, for $B^s_{p,q}(\Omega(t))$ denoting a Besov space on $\Omega(t)$, cf.~\cite[Chapter 7]{adams2003sobolev}.
        This approach may let one consider less regular initial data than we have here, similar to the approach in \cite{geissert2013Lp}.
\end{remark}

The nature of our construction now means that the existence of solutions to \eqref{eqn: FSI fluid}, \eqref{eqn: FSI rigid body} becomes a matter of proving that the map $\mathscr{F}$ admits a fixed point.
To do this one must resolve the issues discussed in Remark~\ref{remark: construction} --- namely that we need $\widetilde{T} = T$ so that the domain and codomain of $\mathscr{F}$ are the same space.
This issue arises since strong solutions to \eqref{eqn: evolving NS} are only local in time, and one must also avoid collision\footnote{For no slip boundary conditions it is known that no such collision can occur~\cite{hillairet2007lack,hillairet2009collisions,san2002global}, but this is not the case for slip boundary conditions~\cite{chemetov2017motion}.} of the rigid body with the fixed boundary $\Gamma$.
We resolve this issue in following lemma which shows one can choose a time interval that avoids this issue.

\begin{lemma}
	\label{lemma: non degeneracy1}
    Let $(\mbf{q}, \boldsymbol{\omega}) \in \mbf{H}^2([0,T]) \times \mbf{H}^1([0,T])$ be compatible with the initial data in the sense of Definition \ref{defn: compatible with IC}, and such that
    \[ \|\mbf{q}\|_{\mbf{H}^2([0,T])} + \|\boldsymbol{\omega}\|_{\mbf{H}^1([0,T])} \leq R, \]
    for some given $R > 0$, determined by the initial data, which is sufficiently large.
    Then there exists some sufficiently small $T$, independent of $\mbf{q}$ and $\boldsymbol{\omega}$, such that $(\mbf{Q}, \mbf{W}) := \mathscr{F}(\mbf{q}, \boldsymbol{\omega})$ exists on $[0,T]$.
\end{lemma}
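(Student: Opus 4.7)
The plan is to show that, for $T$ small enough depending only on $R$ and the initial data, the strong Navier--Stokes solution of Section~\ref{section: evolving NS} exists on the whole of $[0,T]$, and then that the rigid-body ODE system~\eqref{eqn: updated rigid body} is solvable with the required regularity. Compatibility of $(\mbf{q}, \boldsymbol{\omega})$ with the initial data already guarantees that the evolving domain $\Omega(t)$, the harmonic extension $\mbf{V}$ from~\eqref{eqn: velocity harmonic extension}, and the parametrisation $\Phi$ of \S\ref{subsection: construction} are well-defined on $[0,T]$, so no additional non-collision argument is required here.

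First, I would extract uniform geometric bounds. Restricting attention to $T \leq 1$ and using the one-dimensional Sobolev embeddings $\mbf{H}^2 \hookrightarrow \mbf{C}^1$ and $\mbf{H}^1 \hookrightarrow \mbf{C}^{1/2}$, the bound $\|\mbf{q}\|_{\mbf{H}^2} + \|\boldsymbol{\omega}\|_{\mbf{H}^1} \leq R$ yields $\bigl\| \mbf{q}'(t) + \boldsymbol{\omega}(t) \times (\,\cdot\, - \mbf{q}(t)) \bigr\|_{L^\infty([0,T]; \mbf{C}^{2,1}(\Omega(t)))} \leq C(R)$, and hence by~\eqref{eqn: harmonic extension bound} also $\|\mbf{V}\|_{L^\infty([0,T]; \mbf{C}^{2,1}(\Omega(t)))} \leq C(R)$. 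The Stokes-lift results of Appendix~\ref{appendix:stokes} together with~\eqref{body force bound1} then give $\|\widetilde{\mbf{u}}\|_{L^\infty_t \mbf{W}^{2,4}_x}$ and $\|\mbf{B}\|_{L^2_t \mbf{H}^1_x}$ bounded by $C(R)$.

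The main step is verifying the smallness condition~\eqref{smalltimecondition} with $\widetilde{T} = T$ and the choice $\kappa = T^{1/3}$. With the uniform bounds above, $\kappa \int_0^T \|\mbf{B}\|^2_{\mbf{H}^1} \leq C(R) T^{1/3} \to 0$, so the first (logarithmic) term in~\eqref{smalltimecondition} converges, as $T \to 0$, to the strictly negative constant $\tfrac{1}{2} \log\!\bigl(\rho^2 \|\grad \mbf{u}_0\|_{\mbf{L}^2(\Omega(0))}^4 / (4 + \rho^2 \|\grad \mbf{u}_0\|_{\mbf{L}^2(\Omega(0))}^4)\bigr)$. The integral term in~\eqref{smalltimecondition} splits into the $\kappa^{-2}$ piece, contributing $\int_0^T \kappa^{-2} = T^{1/3}$, and four remaining integrands that are pointwise bounded by $C(R)$ on $[0,T]$, contributing $C(R) T$. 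Hence the integral term is $O(T^{1/3})$, and so for $T$ less than some $T_\ast = T_\ast(R, \mbf{u}_0)$ the condition~\eqref{smalltimecondition} holds strictly. Lemmas~\ref{lemma: fsi energy estimates} and~\ref{lemma: fsi energy estimates2} then produce a strong solution $(\mbf{u}, p) \in (H^1_{\mbf{L}^2}(\Phi) \cap L^2_{\mbf{H}^2}(\Phi)) \times L^2_{H^1}(\Phi)$ on all of $[0,T]$.

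Given this strong solution, the trace theorem yields that the surface integrals on the right-hand side of~\eqref{eqn: updated rigid body} lie in $L^2(0,T;\mbb{R}^3)$, since
\[ \int_0^T \left\| \int_{\partial B(t)} \mbb{T}\boldsymbol{\nu}\right\|^2 + \left\| \int_{\partial B(t)} (\mbf{x} - \mbf{q}(t))\times \mbb{T}\boldsymbol{\nu}\right\|^2 \leq C(R) \int_0^T \left(\|\mbf{u}\|^2_{\mbf{H}^2(\Omega(t))} + \|p\|^2_{H^1(\Omega(t))}\right) < \infty. \]
The inertia tensor $\mbb{J}(t)$, determined by $\mbf{q}(t)$ and the fixed shape of $B(0)$, is symmetric, uniformly positive definite on $[0,T]$ (its smallest eigenvalue being bounded below by a purely geometric constant of $B(0)$), and lies in $\mbf{H}^1([0,T])$ with norm controlled by $R$; so~\eqref{eqn: updated angular} rearranges to a linear first-order ODE for $\mbf{W}$ with $L^2$ forcing, yielding $\mbf{W} \in \mbf{H}^1([0,T])$, while a double integration of~\eqref{eqn: updated position} gives $\mbf{Q} \in \mbf{H}^2([0,T])$. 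The choice $T = \min(1, T_\ast(R, \mbf{u}_0))$ completes the proof. The main obstacle is the second step: the $\kappa^{-2}$ contribution inside $\mathcal{E}(T)$ must be controlled by the smallness of $T$ while simultaneously $\kappa$ must be small enough that $(\kappa \int_0^T \|\mbf{B}\|_{\mbf{H}^1}^2)^2$ is dominated by the fixed quantity $\rho^2 \|\grad \mbf{u}_0\|_{\mbf{L}^2(\Omega(0))}^4$ in $\mathcal{D}(T)$, and the scaling $\kappa = T^{1/3}$ is the natural balance between these two requirements.
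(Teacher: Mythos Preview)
Your argument is correct and follows the same strategy as the paper: use the $R$-bound and the Sobolev embeddings to control $\mbf{V}$, $\widetilde{\mbf{u}}$, and $\mbf{B}$ uniformly in $(\mbf{q},\boldsymbol{\omega})$, then verify~\eqref{smalltimecondition} with the choice $\kappa = T^{1/3}$ so that the strong Navier--Stokes solution of Lemma~\ref{lemma: fsi energy estimates} lives on all of $[0,T]$, and finally observe that the ODE system~\eqref{eqn: updated rigid body} has $L^2$ forcing and uniformly invertible $\mbb{J}$. The paper carries out exactly this, recasting~\eqref{smalltimecondition} as~\eqref{smalltimecondition2} and introducing the auxiliary functions $F_1(\widetilde T,R)$, $F_2(\widetilde T,R)$ to make the limits explicit; your treatment of the $\kappa^{-2}$ term versus the $\kappa\int\|\mbf{B}\|^2$ term is precisely the balance the paper is exploiting.

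One substantive difference: the paper's proof also shows that the \emph{new} rigid body governed by $(\mbf{Q},\mbf{W})$ satisfies $\mathrm{dist}(\widetilde B(t),\Gamma)>\delta/2$, by bounding $|\mbf{Q}(t)-\mbf{q}_0|\leq Ct$ via the $L^2_{\mbf{H}^2}\times L^2_{H^1}$ estimates on $(\mbf{u},p)$. Your dismissal of a collision argument is correct for the input domain $\Omega(t)$ --- compatibility of $(\mbf{q},\boldsymbol{\omega})$ handles that --- and the literal statement of the lemma (existence of $(\mbf{Q},\mbf{W})$ on $[0,T]$) does not require non-collision for the output body. The paper includes it here because the proof of Lemma~\ref{lemma: non degeneracy2} opens by asserting that $(\mbf{Q},\mbf{W})$ is compatible with the initial data, and the second clause of Definition~\ref{defn: compatible with IC} is precisely this distance bound. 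So nothing in your argument is wrong, but be aware that the paper is silently packing this extra conclusion into Lemma~\ref{lemma: non degeneracy1}. A minor imprecision: $\mbb{J}(t)$ depends on $\boldsymbol{\omega}$ (through the rotation $\mbb{O}(t)$) as well as on $\mbf{q}$, though your positive-definiteness claim survives since $\mbb{J}(t)=\mbb{O}(t)\mbb{J}(0)\mbb{O}(t)^T$.
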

\begin{proof}
    Recall from Lemma \ref{lemma: fsi energy estimates} that $\mbf{Q}$ and $\mbf{W}$ are defined on $[0,\widetilde{T}]$, where there are two situations which may cause this time interval to be a strict subset of $[0,T]$: a collision of the rigid body with $\Gamma$, and the interval on which strong solutions to \eqref{eqn: evolving NS} shrinking due to the condition \eqref{smalltimecondition}.
	We consider this second case first, as it will help our treatment of the first case.
	To this end, for our given functions $(\mbf{q}, \boldsymbol{\omega})$, we show that $\widetilde{T}$ as in Lemma~\ref{lemma: fsi energy estimates} is independent of $\mbf{q}$ and $\boldsymbol{\omega}$.
    Recall that $\widetilde{T}$ was chosen so that
	\begin{align*}
		&\frac{1}{2} \log \left( \frac{\rho^2 \|\grad \mbf{u}_0\|_{\mbf{L}^2(\Omega(0))}^4 + C\left(\kappa \int_0^{\widetilde{T}} \|\mbf{B}\|_{\mbf{H}^1(\Omega(t))}^2 \right)^2}{4 + \rho^2 \|\grad \mbf{u}_0\|_{\mbf{L}^2(\Omega(0))}^4 + C\left(\kappa\int_0^{\widetilde{T}} \|\mbf{B}\|_{\mbf{H}^1(\Omega(t))}^2 \right)^2} \right)\\
        &+\int_0^{\widetilde{T}}C \left(\frac{1}{\kappa^2} + \|\Div\mbf{V}\|_{L^\infty(\Omega(t))} + \|\mbf{V}\|_{\mbf{L}^\infty(\Omega(t))}^2 +\|\widetilde{\mbf{u}}\|_{\mbf{L}^4(\Omega(t))}^8 +\|\widetilde{\mbf{u}}\|_{\mbf{W}^{1,4}(\Omega(t))}^2\right)< 0.
	\end{align*}
	This is equivalent to
	\begin{align}
		\rho^2 \| \grad \mbf{u}_0\|_{\mbf{L}^2(\Omega(0))}^4 \mathcal{E}(\widetilde{T}) + C (\mathcal{E}(\widetilde{T}) -1) \mathcal{B}(\widetilde{T}) < 4 + \rho^2 \| \grad \mbf{u}_0\|_{\mbf{L}^2(\Omega(0))}^4 ,\label{smalltimecondition2}
	\end{align}
    where we have defined
    \begin{gather*}
            \mathcal{E}(\widetilde{T}) := \exp\left(C\int_0^{\widetilde{T}} \left(\frac{1}{\kappa^2} + \|\Div\mbf{V}\|_{L^\infty(\Omega(t))} + \|\mbf{V}\|_{\mbf{L}^\infty(\Omega(t))}^2 +\|\widetilde{\mbf{u}}\|_{\mbf{L}^4(\Omega(t))}^8 +\|\widetilde{\mbf{u}}\|_{\mbf{W}^{1,4}(\Omega(t))}^2\right)\right),\\
            \mathcal{B}(\widetilde{T}) := \left(\kappa \int_0^{\widetilde{T}} \|\mbf{B}\|_{\mbf{H}^1(\Omega(t))}^2\right)^2.
        \end{gather*}
    We recall the body force, $\mbf{B}$, as defined in~\eqref{eqn: body force defn}, is determined by $\mbf{q}$ and $\boldsymbol{\omega}$.
    From here on we fix $\kappa = \widetilde{T}^{\frac{1}{3}}$.
	From \eqref{eqn: harmonic extension bound} and \eqref{stokes regularity1} we observe that one has
	\begin{align*}
		\|\mbf{V}\|_{\mbf{C}^{2,1}(\Omega(t))} &\leq C \left\| \frac{\mathrm{d}\mbf{q}}{\mathrm{d}t} + \boldsymbol{\omega} \times (\mbf{x} - \mbf{q}(t)) \right\|_{\mbf{C}^{2,1}(\Omega(t))},\\
        \|\widetilde{\mbf{u}}\|_{\mbf{H}^2(\Omega(t))} &\leq C \left\| \frac{\mathrm{d}\mbf{q}}{\mathrm{d}t} + \boldsymbol{\omega} \times (\mbf{x} - \mbf{q}(t)) \right\|_{\mbf{H}^{2}(\Omega(t))},
	\end{align*}
    respectively.
    Using these we find that
    \begin{align*}
        \mathcal{E}(\widetilde{T}) &\leq \exp\left(C\widetilde{T}^{\frac{1}{3}} + C \int_0^{\widetilde{T}} \left\| \frac{\mathrm{d}\mbf{q}}{\mathrm{d}t} + \boldsymbol{\omega} \times (\mbf{x} - \mbf{q}(t)) \right\|_{\mbf{C}^{2,1}(\Omega(t))}^8 \right)\\
        &\leq \exp\left(C \widetilde{T}^{\frac{1}{3}} + C \widetilde{T} R^8\right) =: F_1(\widetilde{T}, R),
    \end{align*}
    where we have used the Sobolev embeddings\footnote{An important observation here is that in one dimension these Sobolev embeddings hold with a constant independent of $T$.} $\mbf{H}^2([0,T]) \hookrightarrow \mbf{C}^{1,\frac{1}{2}}([0,T])$ and $\mbf{H}^1([0,T]) \hookrightarrow \mbf{C}^{0,\frac{1}{2}}([0,T])$, and the fact that $\kappa = \widetilde{T}^{\frac{1}{3}}$.
    We need to obtain a similar bound for $\mathcal{B}(\widetilde{T})$ by combining \eqref{body force bound1}, \eqref{stokes regularity4} so that
    \begin{align*}
        &\mathcal{B}(\widetilde{T}) = \widetilde{T}^{\frac{2}{3}}\left(\int_0^{\widetilde{T}} \|\mbf{B}\|_{\mbf{L}^2(\Omega(t))}^2\right)^2\\
        &\leq C\widetilde{T}^{\frac{2}{3}} \int_0^{\widetilde{T}} \left\| \frac{\mathrm{d}\mbf{q}}{\mathrm{d}t} + \boldsymbol{\omega} \times (\mbf{x} - \mbf{q}) \right\|_{\mbf{C}^{2,1}(\Omega(t))}^8 \exp \left( C \int_0^t \left\| \frac{\mathrm{d}\mbf{q}}{\mathrm{d}t} + \boldsymbol{\omega} \times (\mbf{x} - \mbf{q}) \right\|_{\mbf{C}^{2,1}(\Omega(s))} \, \mathrm{d}s \right)\\
        &+ C\widetilde{T}^{\frac{2}{3}}\int_0^{\widetilde{T}} \left\| \frac{\mathrm{d}^2\mbf{q}}{\mathrm{d}t^2} + \frac{\mathrm{d}\boldsymbol{\omega}}{\mathrm{d}t}\times(\mbf{x} - \mbf{q}) - \boldsymbol{\omega}(t) \times \frac{\mathrm{d}\mbf{q}}{\mathrm{d}t} \right\|_{\mbf{H}^1(\Omega(t))}^4 \exp \left( C \int_0^t \left\| \frac{\mathrm{d}\mbf{q}}{\mathrm{d}t} + \boldsymbol{\omega} \times (\mbf{x} - \mbf{q}) \right\|_{\mbf{C}^{2,1}(\Omega(s))} \, \mathrm{d}s \right).
    \end{align*}
    Using the boundedness of $\mbf{q}$ and $\boldsymbol{\omega}$ we find that
    \[ \mathcal{B}(\widetilde{T}) \leq C\widetilde{T}^{\frac{8}{3}} R^8 \exp(C\widetilde{T}R) + C\widetilde{T}^{\frac{2}{3}} R^4 \exp(C \widetilde{T}R) =: F_2(\widetilde{T}, R). \]

    Hence one finds that
    \begin{multline*}
        \rho^2 \| \grad \mbf{u}_0\|_{\mbf{L}^2(\Omega(0))}^4 \mathcal{E}(\widetilde{T}) + C (\mathcal{E}(\widetilde{T}) -1) \mathcal{B}(\widetilde{T})\\
        \begin{aligned}
        \leq \rho^2 \| \grad \mbf{u}_0\|_{\mbf{L}^2(\Omega(0))}^4 F_1(\widetilde{T}, R)+ C \left( F_1(\widetilde{T}, R) - 1 \right)F_2(\widetilde{T}, R),
        \end{aligned}
    \end{multline*}
    where one observes that for a fixed $R$,
    \[ \lim_{\widetilde{T} \rightarrow 0^+} F_1(\widetilde{T}, R) = 1, \qquad \lim_{\widetilde{T} \rightarrow 0^+} F_2(\widetilde{T}, R) = 0. \]
    Thus one can now take $\widetilde{T}$ sufficiently small in terms of $R$ and the initial data so that \eqref{smalltimecondition2} holds independent of $\mbf{q} $ and $ \boldsymbol{\omega}$.
    Hence one has $L^2$ maximal regularity of \eqref{eqn: evolving NS} on a time interval independent of $\mbf{q}$ and $\boldsymbol{\omega}$.

    We now outline how we avoid collisions with the boundary.
    From \eqref{eqn: updated position} we see that
    \[\mbf{Q}(t) = \mbf{q}_0 + t\mbf{v}_0- \frac{1}{m}\int_0^t\int_0^s\int_{\partial B(r)} \mbb{T} \boldsymbol{\nu} \, \mathrm{d}r \, \mathrm{d}s,\]
    where $m$ is the mass of the rigid body.
    As such we find that 
    \[ |\mbf{Q}(t) - \mbf{q}_0| \leq Ct\left(1 + \int_{0}^{\widetilde{T}} \|p\|_{H^1(\Omega(t))} + \|\mbf{u}\|_{\mbf{H}^2(\Omega(t))}\right) \leq C t,\]
    where we have used \eqref{fsi energyestimate2},~\eqref{fsi energyestimate3}, and the previously established bounds on $\mathcal{E}(\widetilde{T})$ and $\mathcal{B}(\widetilde{T})$ for the last inequality.
    Recall that when we pose the problem (see the discussion in Section \ref{section: FSI Intro}) we suppose the existence of some $\delta > 0$ such that
	\[ \mathrm{dist}(B(0), \Gamma) > \delta > 0, \]
	and hence the bound on $|\mbf{Q}(t) - \mbf{q}_0|$ implies that there exists $\widetilde{T}$ depending only on $R$ and the initial data such that
	\[ \mathrm{dist}(\widetilde{B}(t), \Gamma) > \frac{\delta}{2} > 0, \]
    where $\widetilde{B}$ is the rigid body governed by $\mbf{Q} $ and $ \mbf{W}$, for all $t \in [0, \widetilde{T}]$.
    Note that this argument is only an outline of the proof, as one also requires a similar argument applied to the angular velocity, $\mbf{W}$, but we omit further details as it is more or less the same calculation.
\end{proof}

The preceding lemma lets us now unambiguously consider $(\mbf{q}, \boldsymbol{\omega})$ and $(\mbf{Q}, \mbf{W})$ on the same time interval.
We now show that this the new centre of mass, $\mbf{Q}$, and angular momentum, $\mbf{W}$, inherit the same bounds as $\mbf{q}$ and $\boldsymbol{\omega}$.
\begin{lemma}
	\label{lemma: non degeneracy2}
    Let $(\mbf{q}, \boldsymbol{\omega}) \in \mbf{H}^2([0,T]) \times \mbf{H}^1([0,T])$ be compatible with the initial data in the sense of Definition \ref{defn: compatible with IC}, and such that
    \[ \|\mbf{q}\|_{\mbf{H}^2([0,T])} + \|\boldsymbol{\omega}\|_{\mbf{H}^1([0,T])} \leq R, \]
    for some given $R > 0$, determined by the initial data, which is sufficiently large.
    If the relative density $\frac{\rho}{\rho_B}$ is sufficiently small then $\mbf{Q}, \mbf{W}$ are such that
    \[ \|\mbf{Q}\|_{\mbf{H}^2([0,T])} + \|\mbf{W}\|_{\mbf{H}^1([0,T])} \leq R. \]
    Here the smallness of $\frac{\rho}{\rho_B}$ is determined by $\Omega(0)$ and $\mathrm{diam}(B(0))$.
\end{lemma}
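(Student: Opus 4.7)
The plan is to estimate $\|\mbf{Q}\|_{\mbf{H}^2([0,T])}$ and $\|\mbf{W}\|_{\mbf{H}^1([0,T])}$ directly from the ODEs \eqref{eqn: updated rigid body}, exploiting the fact that both $m = \rho_B |B(0)|$ and $\mbb{J} = \rho_B \widetilde{\mbb{J}}$ (where $\widetilde{\mbb{J}}$ is the purely geometric inertia tensor) carry an explicit factor of $\rho_B$, while the fluid stress $\mbb{T}$ on the right-hand side is a function only of $(\mbf{u},p)$ and therefore does not see $\rho_B$. This yields a factor $1/\rho_B$ on the fluid contribution, which is the mechanism by which the smallness of $\rho/\rho_B$ enters.

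From \eqref{eqn: updated position} and the trace theorem applied componentwise to $\mbb{T}=-p\mbb{I}+\mu(\grad\mbf{u}+(\grad\mbf{u})^T)$ one obtains $\|\mbb{T}\boldsymbol{\nu}\|_{\mbf{L}^1(\partial B(t))} \leq C(\|p\|_{H^1(\Omega(t))} + \mu\|\mbf{u}\|_{\mbf{H}^2(\Omega(t))})$ with $C$ depending only on the unchanged geometry of $\partial B(0)$, so
\[ \|\mbf{Q}''\|_{\mbf{L}^2([0,T])}^2 \leq \frac{C}{\rho_B^2|B(0)|^2}\int_0^T\bigl(\|p\|_{H^1(\Omega(t))}^2 + \mu^2\|\mbf{u}\|_{\mbf{H}^2(\Omega(t))}^2\bigr)\,\mathrm{d}t. \]
The integral is controlled by Lemmas \ref{lemma: fsi energy estimates} and \ref{lemma: fsi energy estimates2}, combined with the $F_1, F_2$-type estimates from the proof of Lemma~\ref{lemma: non degeneracy1}; these express the Stokes corrector norms $\|\widetilde{\mbf{u}}\|$ and the velocity-field quantities $\mathcal{E}(T), \mathcal{B}(T)$ purely in terms of $\rho,\mu,T$, the initial data, and the a priori bound $\|\mbf{q}\|_{\mbf{H}^2}+\|\boldsymbol{\omega}\|_{\mbf{H}^1}\leq R$. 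The outcome is a bound $G(\rho,\mu,R,T)$ on the integral that is finite and, crucially, independent of $\rho_B$. The lower-order pieces of $\|\mbf{Q}\|_{\mbf{H}^2}^2$ follow from $\mbf{Q}(t) = \mbf{q}_0 + t\mbf{v}_0 + \int_0^t(t-s)\mbf{Q}''(s)\,\mathrm{d}s$ and Cauchy--Schwarz.

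For $\mbf{W}$ one expands $\frac{\mathrm{d}}{\mathrm{d}t}(\mbb{J}\mbf{W}) = \mbb{J}\mbf{W}' + \mbb{J}'\mbf{W}$ and inverts $\mbb{J} = \rho_B\widetilde{\mbb{J}}$ to obtain
\[ \mbf{W}' = -\frac{1}{\rho_B}\widetilde{\mbb{J}}^{-1}\int_{\partial B(t)}(\mbf{x}-\mbf{q})\times \mbb{T}\boldsymbol{\nu}\ -\ \widetilde{\mbb{J}}^{-1}\widetilde{\mbb{J}}'\mbf{W}. \]
In the first term $|\mbf{x}-\mbf{q}| \leq \mathrm{diam}(B(0))$ and the same trace argument applies; in the second the $\rho_B$'s in $\mbb{J}^{-1}$ and $\mbb{J}'$ cancel, so that $\|\widetilde{\mbb{J}}^{-1}\widetilde{\mbb{J}}'\|_{L^\infty}$ is controlled purely by $R$ and the geometry of $B(0)$ through the $\boldsymbol{\omega}$-driven rotation of the body. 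Gronwall then yields
\[ \|\mbf{W}\|_{\mbf{H}^1([0,T])}^2 \leq e^{C(R)T}\Bigl(\|\boldsymbol{\omega}_0\|^2 + \frac{\mathrm{diam}(B(0))^2}{\rho_B^2}G(\rho,\mu,R,T)\Bigr). \]

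The main obstacle is the final bookkeeping at which the hypothesis on $\rho/\rho_B$ is used. One first chooses $R$ large enough that the initial-data contributions (involving $\mbf{q}_0,\mbf{v}_0,\boldsymbol{\omega}_0$, independent of $\rho_B$) account for at most $R^2/2$; this only requires $R$ to exceed a constant depending on the initial data and on the $T$ fixed by Lemma~\ref{lemma: non degeneracy1}. With $R$ now fixed, the remaining fluid contribution has the form $\rho_B^{-2}(1+\mathrm{diam}(B(0))^2)G(\rho,\mu,R,T)$, and writing $\rho_B^{-2} = \rho^{-2}(\rho/\rho_B)^2$ makes it $\leq R^2/2$ as soon as $\rho/\rho_B$ is sufficiently small, the required smallness being determined by $\Omega(0)$ and $\mathrm{diam}(B(0))$ through the structural constants in $G$, exactly as announced in the footnote to Theorem~\ref{thm: FSI existence}. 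The essential verification underlying the whole argument is that every appearance of $\rho_B$ in the estimates enters solely through the $m$ and $\mbb{J}$ prefactors, a consequence of the Navier--Stokes system for $(\mbf{u},p)$ involving only $\rho$ and $\mu$, never $\rho_B$.
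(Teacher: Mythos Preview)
Your overall strategy matches the paper's: bound $\mbf{Q}''$ and $\mbf{W}'$ via the trace theorem and the fluid energy estimates of Lemmas~\ref{lemma: fsi energy estimates}--\ref{lemma: fsi energy estimates2}, then exploit the explicit factor of $\rho_B$ in $m$ and $\mbb{J}$. The treatment of $\mbb{J}^{-1}\mbb{J}'$ and the use of $|\mbf{x}-\mbf{q}|\leq\mathrm{diam}(B(0))$ are also in line with the paper.

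The gap is in your final bookkeeping. You lump the entire fluid contribution into a single $G(\rho,\mu,R,T)$ and then write $\rho_B^{-2}G=(\rho/\rho_B)^2\rho^{-2}G$, asserting that the smallness of $\rho/\rho_B$ so obtained depends only on $\Omega(0)$ and $\mathrm{diam}(B(0))$. But $\rho^{-2}G$ depends on $R$, $\mu$, $\mbf{u}_0$, and $T$, so your argument as written only gives a threshold for $\rho/\rho_B$ that depends on all of these, which is weaker than the lemma claims. The paper resolves this by \emph{not} treating the fluid bound monolithically: it singles out the one term that does not vanish as $T\to 0$, namely $\tfrac{1}{m^2}\int_0^T\|\mbf{B}\|_{\mbf{L}^2}^2$, and observes that the body force $\mbf{B}$ carries an explicit factor of $\rho$ (see~\eqref{eqn: body force defn}), so that this term is exactly $\widetilde{C_B}(\rho/\rho_B)^2 R^2$ with $\widetilde{C_B}$ purely geometric. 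All remaining terms are then absorbed by taking $R$ large (depending on the initial data) and $T$ small (depending on $R$), in that order; your phrasing ``$R$ to exceed a constant depending on \ldots\ the $T$ fixed by Lemma~\ref{lemma: non degeneracy1}'' reverses this dependence and is circular, since the $T$ from Lemma~\ref{lemma: non degeneracy1} itself depends on $R$. The paper also handles $\mbf{W}$ by first integrating the ODE for $\mbb{J}\mbf{W}$ and using $\mbb{J}(t)=\mbb{O}(t)\mbb{J}(0)\mbb{O}(t)^T$ to get a time-uniform bound on $\mbb{J}^{-1}$, rather than applying Gr\"onwall directly to $\mbf{W}'$; your Gr\"onwall step is workable but would need the same isolation of the $\rho^2$-term to recover the stated dependence of the smallness threshold.
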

\begin{proof}
    From Lemma~\ref{lemma: non degeneracy1} we find that if $(\mbf{q},\boldsymbol{\omega})$, defined on a sufficiently small time interval $[0,T]$, is compatible with the initial data in the sense of Definition \ref{defn: compatible with IC} then $(\mbf{Q},\mbf{W}) := \mathscr{F}(\mbf{q},\boldsymbol{\omega})$ is defined on $[0, T]$ and is compatible with the initial data.
    We wish to show that $(\mbf{Q}, \mbf{W})$ inherits the same bound as $(\mbf{q}, \boldsymbol{\omega})$.
    To this end we observe that
    \[ \int_0^{T} \left|\frac{\mathrm{d}^2 \mbf{Q}}{\mathrm{d}t^2}\right|^2 \leq \frac{1}{m^2} \int_0^{T} \left( \int_{\partial B(t)} \mbb{T} \boldsymbol{\nu} \right)^2 \leq  \frac{C_B}{m^2} \int_0^{T} \|p\|_{H^1(\Omega(t))}^2 + \|\mbf{u}\|_{\mbf{H}^2(\Omega(t))}^2, \]
    where $C_B$ denotes a constant, depending on $\Omega(0)$, appearing due to use of the trace theorem.
    By using \eqref{fsi energyestimate2},~\eqref{fsi energyestimate3}, and the above bounds for $\mathcal{E}(T)$ and $\mathcal{B}(T)$ one finds that
    \begin{align*}
        \int_0^{T} \left|\frac{\mathrm{d}^2 \mbf{Q}}{\mathrm{d}t^2}\right|^2 &\leq \frac{C_B}{m^2} \int_{0}^{T} \|\mbf{B}\|_{\mbf{L}^2(\Omega(t))}^2 + C \int_0^{T}\| \mbf{q}'(t) + \boldsymbol{\omega}(t)\times(\mbf{x} - \mbf{q}(t)) \|_{\mbf{H}^{2}(\Omega(t))}^2\\
        &+C\left(1 + \int_0^{T} \|\widetilde{\mbf{u}}\|_{\mbf{H}^2(\Omega(t))}^2 +  \sup_{t \in [0,T]} \|\grad \widetilde{\mbf{u}}\|_{\mbf{L}^2(\Omega(t))}^2\right)\mathcal{D}(T)^2.
    \end{align*}
    The aim is to show this right-hand side is appropriately bounded so that we may conclude that \(\|\mbf{Q}\|_{\mbf{H}^2([0,T])} \leq \frac{R}{2}\).
    One would hope that we can fix $R$ to be sufficiently large (in terms of the initial data) and $T$ to be sufficiently small (in terms of $R$ and the initial data) so that this is true, however the first term, $\frac{C_B}{m^2} \int_{0}^{T} \|\mbf{B}\|_{\mbf{L}^2(\Omega(t))}^2$, makes this problematic.
    This is because the estimates in Appendix~\ref{appendix:stokes} yield
    \[ \int_{0}^{T} \|\mbf{B}\|_{\mbf{L}^2(\Omega(t))}^2  = \mathcal{O}(\rho^2 R^2),\]
    where we cannot make the constant vanish as $R \rightarrow \infty $ or $T \rightarrow 0$.
    Instead, we notice that the definition of $\mbf{B}$ contains a factor of $\rho$, and we obtain a factor of $\frac{1}{\rho_B}$ from $\frac{1}{m}$ so that we may control this term by only considering a relative density, $\frac{\rho}{\rho_B}$, which is sufficiently small.
    The desired smallness of $\frac{\rho}{\rho_B}$ depends only on $\Omega(0)$ and $\mathrm{diam}(B(0))$.
    In this case we may choose $T$ to be sufficiently small so that
    \[ \frac{C_B}{m^2} \int_{0}^{T} \|\mbf{B}\|_{\mbf{L}^2(\Omega(t))}^2 \leq \widetilde{C_B}\frac{\rho^2 R^2}{\rho_B^2}< \frac{R^2}{8},\]
    where $\widetilde{C_B}$ is a constant depending only on the geometry of $\Omega(0)$.
    One can then take $R$ sufficiently large, and $T$ sufficiently small, so that the other terms are also bounded by $\frac{R^2}{8}$ as well --- we omit further details on these calculations as they are more or less the same as those used above.
    This outlines how one controls the highest order term in $\|\mbf{Q}\|_{\mbf{H}^2([0,T])}$, and the control on the lower order terms follows similarly, so that one concludes that
    \[ \|\mbf{Q}\|_{\mbf{H}^2([0,T])} \leq \frac{R}{2}, \]
    for some fixed sufficiently large $R > 0$, and small $T$ depending on $R$ and the initial data.

    We now consider similar estimates for $\mbf{W}$, which we recall is such that
    \[\frac{\mathrm{d}}{\mathrm{d}t} \left( \mbb{J} \mbf{W} \right) = - \int_{\partial B(t)} (\mbf{x} - \mbf{q}(t)) \times \mbb{T} \boldsymbol{\nu}.\]
    Clearly one has that
    \[ \mbb{J} \frac{\mathrm{d}\mbf{W}}{\mathrm{d}t} = -\frac{\mathrm{d} \mbb{J}}{\mathrm{d}t} \mbf{W} - \int_{\partial B(t)} (\mbf{x} - \mbf{q}(t)) \times \mbb{T} \boldsymbol{\nu}, \]
    where one can use the Reynolds transport theorem to calculate that
    \begin{align*}
        \frac{\mathrm{d}}{\mathrm{d}t}\mbb{J} &= \frac{\mathrm{d}}{\mathrm{d}t}\int_{B(t)} \rho_B \left( |\mbf{x}-\mbf{q}|^2 \mbb{I} - (\mbf{x} - \mbf{q}) \otimes (\mbf{x} - \mbf{q}) \right)\\
		&=\int_{B(t)} \rho_B \left( 2\left(\mbf{x}-\mbf{q}\right)\cdot\left(\mbf{x}-\frac{\mathrm{d}\mbf{q}}{\mathrm{d}t}\right) \mbb{I} - \left(\mbf{x}-\frac{\mathrm{d}\mbf{q}}{\mathrm{d}t}\right) \otimes (\mbf{x} - \mbf{q}) - \left(\mbf{x}-\frac{\mathrm{d}\mbf{q}}{\mathrm{d}t}\right)\otimes \left(\mbf{x}-\mbf{q}\right) \right)\\
		&+ \int_{\partial B(t)} \rho_B \left( |\mbf{x}-\mbf{q}|^2 \mbb{I} - (\mbf{x} - \mbf{q}) \otimes (\mbf{x} - \mbf{q}) \right) \left( \frac{\mathrm{d}\mbf{q}}{\mathrm{d}t} + \boldsymbol{\omega} \times (\mbf{x} - \mbf{q}) \right) \cdot \boldsymbol{\nu}, 
    \end{align*}
    where this final term uses the fact that $\mbf{V}|_{\partial B(t)} =\frac{\mathrm{d}\mbf{q}}{\mathrm{d}t} + \boldsymbol{\omega} \times (\mbf{x} - \mbf{q}) . $
    From this calculation we see that 
    \[ \sup_{t \in [0,T]} \left| \frac{\mathrm{d} \mbb{J}}{\mathrm{d}t} \right| \leq C R, \]
    where we have used that
    \[ \| \mbf{q} \|_{\mbf{C}^1([0,T])} + \| \boldsymbol{\omega} \|_{\mbf{C}^0([0,T])} \leq C (\| \mbf{q} \|_{\mbf{H}^2([0,T])} + \| \boldsymbol{\omega} \|_{\mbf{H}^1([0,T])}) \leq C R,  \]
    for a constant independent of $T$ and $R$.

    Now we also recall from classical mechanics that the inertial tensor, $\mbb{J}$, is positive definite and hence invertible.
    Thus we may write
    \[\frac{\mathrm{d}\mbf{W}}{\mathrm{d}t} = -\mbb{J}^{-1}\frac{\mathrm{d} \mbb{J}}{\mathrm{d}t} \mbf{W} - \mbb{J}^{-1}\int_{\partial B(t)} (\mbf{x} - \mbf{q}(t)) \times \mbb{T} \boldsymbol{\nu},\]
    and since integrating the ODE for $\mbf{W}$ yields
    \[ \mbb{J}(t) \mbf{W}(t) = \mbb{J}(0) \boldsymbol{\omega}_0 - \int_0^t \int_{\partial B(s)} (\mbf{x} - \mbf{q}(s)) \times \mbb{T} \boldsymbol{\nu} \, \mathrm{d}s \]
    we find that
    \[ \frac{\mathrm{d}\mbf{W}}{\mathrm{d}t} = -\mbb{J}^{-1}\frac{\mathrm{d} \mbb{J}}{\mathrm{d}t} \mbb{J}^{-1}\left(\mbb{J}(0) \boldsymbol{\omega}_0 - \int_0^t \int_{\partial B(s)} (\mbf{x} - \mbf{q}(s)) \times \mbb{T} \boldsymbol{\nu} \, \mathrm{d}s\right) - \mbb{J}^{-1}\int_{\partial B(t)} (\mbf{x} - \mbf{q}(t)) \times \mbb{T} \boldsymbol{\nu}. \]

    It is now clear we need to see that $\sup_{t \in [0,T]} |\mbb{J}^{-1}|$ is bounded appropriately.
    For this we observe that the rigid body motion of $B(t)$ implies that each $\mbf{x}(t) \in  B(t)$ can be written as
    \[ \mbf{x}(t) = \mbf{q}(t) + \mbb{O}(t)(\mbf{y} - \mbf{q}_0), \]
    for some $\mbf{y} \in B(0)$, and $\mbb{O}(t)$ is the orthogonal matrix such that
    \[\frac{\mathrm{d}}{\mathrm{d}t} \mbb{O}(t)\mbf{z}_0 = \boldsymbol{\omega}(t) \times \mbb{O}(t) \mbf{z}_0,\]
    for all $\mbf{z}_0 \in \mbb{R}^3$ and $\mbb{O}(0) = \mbb{I}$.
    A change of coordinates now yields that
    \[ \mbb{J}(t) = \int_{B(0)} \rho_B \left(|\mbb{O}(t)(\mbf{y}-\mbf{q}_0)|^2 \mbb{I} - \mbb{O}(t)(\mbf{y}-\mbf{q}_0) \otimes \mbb{O}(t)(\mbf{y}-\mbf{q}_0)\right), \]
    from which one finds that
    \[ \mbb{J}(t) = \mbb{O}(t) \mbb{J}(0) \mbb{O}(t)^T. \]
    Thus one finds that for any eigenvector, $\mbf{z}$, of $\mbb{J}(0)$ we have that $\mbb{O}(t)\mbf{z}$ is an eigenvector $\mbb{J}(t)$ with the same eigenvalue.
    Hence the largest eigenvalue of $\mbb{J}(t)^{-1}$ is bounded independent of $t$ and $R$, and hence norm equivalence readily translates this into a bound for $\sup_{t \in [0,T]}|\mbb{J}^{-1}(t)|$.

    As we saw for the centre of mass, $\mbf{Q}$, we cannot immediately show the bound for $\mbf{W}$ by taking $R$ large and $T$ small.
    In this case the problematic term is now $\mbb{J}^{-1}\int_{\partial B(t)} (\mbf{x} - \mbf{q}(t)) \times \mbb{T} \boldsymbol{\nu}$, which we control by only allowing sufficiently small relative densities --- as we did in showing the bound for $\mbf{Q}$.
    In this case, we observe that the entries of $\mbb{J}$ depend linearly on $\rho_B$ so that the entries of $\mbb{J}^{-1}$ depend linearly on $\frac{1}{\rho_B}$.
    Hence we only allow a relative density, $\frac{\rho}{\rho_B}$, which is sufficiently small so that one finds
    \[\int_0^{T}\left|\mbb{J}^{-1}\int_{\partial B(t)} (\mbf{x} - \mbf{q}(t)) \times \mbb{T} \boldsymbol{\nu}\right|^2 \leq C_B^2\frac{\rho^2 R^2}{\rho_B^2} \leq \frac{R^2}{8},\]
    as we saw above, where again $C_B$ is a constant arising from the use of the trace theorem.
    We omit further details on this calculation.
    
    Thus by arguing along the same lines as we did for the bound on $\frac{\mathrm{d}^2 \mbf{Q}}{\mathrm{d}t^2}$ we find that \( \int_0^{T} \left| \frac{\mathrm{d} \mbf{W}}{\mathrm{d}t} \right|^2 \leq \frac{R^2}{4}, \)
    and the bound for the lower order term follows, from which we conclude that by taking $T$ sufficiently small one has
    \[ \|\mbf{W}\|_{\mbf{H}^1([0,T])} \leq \frac{R}{2}, \]
    for some sufficiently small $R > 0$.
    Thus we have shown that
    \[ \|\mbf{q}\|_{\mbf{H}^2([0,T])} + \|\boldsymbol{\omega}\|_{\mbf{H}^1([0,T])} \leq R \Rightarrow \|\mbf{Q}\|_{\mbf{H}^2([0,T])} + \|\mbf{W}\|_{\mbf{H}^1([0,T])} \leq R,\]
    provided that $\frac{\rho}{\rho_B}$ is sufficiently small.
\end{proof}
These lemmas show that $\mathscr{F}$ maps some small ball in $\mbf{H}^2([0,T]) \times \mbf{H}^1([0,T])$ onto itself, and hence we may consider the use of fixed point arguments.

\subsection{Some preliminary calculations}
\label{subsection: fixed point prelims}
Owing to Lemma~\ref{lemma: non degeneracy1} and Lemma~\ref{lemma: non degeneracy2} one finds that if the relative density, $\frac{\rho}{\rho_B}$, is sufficiently small we may uncontroversially consider $\mathscr{F}$ as a map from some suitably defined subset of $\mbf{H}^2([0,T]) \times \mbf{H}^1([0,T])$ onto itself.
As such we define the domain of $\mathscr{F}$ to be given by
\begin{gather*}
   D(\mathscr{F}) :=\left\{ (\mbf{q}, \boldsymbol{\omega}) \in B_R(\mbf{H}^2 \times \mbf{H}^1) \mid (\mbf{q}, \boldsymbol{\omega}) \text{ are compatible in the sense of Definition}~\ref{defn: compatible with IC}\right\},\\
   B_R(\mbf{H}^2 \times \mbf{H}^1) := \left\{(\mbf{q}, \boldsymbol{\omega}) \in \mbf{H}^2([0,T]) \times \mbf{H}^1([0,T]) \mid  \|\mbf{q}\|_{\mbf{H}^2([0,T])} + \|\boldsymbol{\omega}\|_{\mbf{H}^1([0,T])} \leq R \right\}.
\end{gather*}
The plan for proving Theorem \ref{thm: FSI existence} is to show that $\mathscr{F}$ is a contraction when $T$ is sufficiently small.
It is straightforward to verify that the domain $D(\mathscr{F})$ is a complete metric space, and so the use of the Banach fixed point theorem is indeed justified.
In order to show that $\mathscr{F}$ is a contraction we firstly require some preliminary results.
From here on we shall consider two pairs, $(\mbf{q}^i, \boldsymbol{\omega}^i) \in D(\mathscr{F})$, such that $\mathscr{F}(\mbf{q}^i, \boldsymbol{\omega}^i) =: (\mbf{Q}^i, \mbf{W}^i)$ for $i=1,2$.
We denote the associated velocity field as $\mbf{V}^i$, the associated parametrisation as $\Phi^i$, and the associated strong solution of the Navier--Stokes equations on $\Omega^i(t)$ as $(\mbf{u}^i, p^i) \in H^1_{\mbf{L}^2}(\Phi^i) \cap {L}^2_{\mbf{H}^2}(\Phi^i) \times L^2_{{H}^1}(\Phi^i)$.
It is important to note that $\Omega^1(t) \neq \Omega^2(t)$ for $t \neq 0$ --- that is to say $(\mbf{u}^1, p^1)$ and $(\mbf{u}^2, p^2)$ are defined on different evolving domains.
This will be the main difficulty for our subsequent analysis.

We begin with the following lemma relating to control of $\grad \Phi^i$ to the difference of the pairs $(\mbf{q}^i, \boldsymbol{\omega}^i)$.
\begin{lemma}
\label{lemma: gradient difference}
    For $(\mbf{q}^i, \boldsymbol{\omega}^i) \in D(\mathscr{F})$, for $i=1,2$, one has that
	\begin{align*}
    &\sup_{t \in [0,T]} \|\grad \Phi^1(t) - \grad \Phi^2(t)\|_{\mbf{C}^{1,1}(\Omega(0))}\\
    &\leq C \int_0^T \left\| \frac{\mathrm{d}(\mbf{q}^1-\mbf{q}^2)}{\mathrm{d}t} + (\boldsymbol{\omega}^1 - \boldsymbol{\omega}^2)\times \mbf{x} - \boldsymbol{\omega}^1 \times (\mbf{q}^1 - \mbf{q}^2) - (\boldsymbol{\omega}^1 - \boldsymbol{\omega}^2)\times \mbf{q}^2  \right\|_{\mbf{C}^{2,1}(\Omega(0))},
    \end{align*}
    for a constant $C$ independent of $\mbf{q}^1$, $\mbf{q}^2$, $\boldsymbol{\omega}^1$ and $\boldsymbol{\omega}^2$.
\end{lemma}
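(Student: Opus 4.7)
The starting point is the defining ODE $\partial_t \Phi^i(\mbf{p},t) = \mbf{V}^i(\Phi^i(\mbf{p},t),t)$ from \eqref{eqn: parametrisation defn}. Differentiating in $\mbf{p}$, setting $\widehat{\mbf{V}}^i(\mbf{p},t) := \mbf{V}^i(\Phi^i(\mbf{p},t),t)$, and subtracting the two resulting equations gives
\[
\partial_t \grad(\Phi^1 - \Phi^2) = (\grad \mbf{V}^1)(\Phi^1)\,\grad(\Phi^1 - \Phi^2) + \bigl[(\grad \mbf{V}^1)(\Phi^1) - (\grad \mbf{V}^2)(\Phi^2)\bigr]\grad\Phi^2,
\]
with $\grad(\Phi^1-\Phi^2)(\cdot,0)=0$. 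Thanks to \eqref{eqn: harmonic extension bound}, \eqref{grad bound} and the uniform bound $\|\mbf{q}^i\|_{\mbf{H}^2}+\|\boldsymbol{\omega}^i\|_{\mbf{H}^1}\le R$, both the first coefficient and $\grad\Phi^2$ are uniformly bounded in $\mbf{C}^{1,1}(\Omega(0))$. A Grönwall argument in the Banach algebra $\mbf{C}^{1,1}(\Omega(0))$ therefore reduces the claim to a bound of the form
\[
\sup_{t\in[0,T]}\|\grad\Phi^1-\grad\Phi^2\|_{\mbf{C}^{1,1}(\Omega(0))} \leq C \int_0^T \|\widehat{\mbf{V}}^1 - \widehat{\mbf{V}}^2\|_{\mbf{C}^{2,1}(\Omega(0))}.
\]

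The main obstacle is that $\mbf{V}^1$ and $\mbf{V}^2$ live on \emph{different} evolving domains $\Omega^1(t)$ and $\Omega^2(t)$, so their difference only makes sense after being transported to the common reference domain $\Omega(0)$. Using Lemma~\ref{transformation lemma}, each pullback $\widehat{\mbf{V}}^i$ satisfies an elliptic boundary value problem on $\Omega(0)$ obtained by pushing the Laplacian in \eqref{eqn: velocity harmonic extension} through $\Phi^i$; the coefficients of this problem depend smoothly on $\grad\Phi^i$ and $\det(\grad\Phi^i)$, which are uniformly bounded above and below by \eqref{grad bound}--\eqref{grad bound2}. Subtracting the two pulled-back problems, $\widehat{\mbf{V}}^1 - \widehat{\mbf{V}}^2$ solves a linear elliptic equation on $\Omega(0)$ whose source is a coefficient difference acting on $\widehat{\mbf{V}}^2$ and whose Dirichlet data is the difference of the two boundary data. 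Standard Schauder estimates then give
\[
\|\widehat{\mbf{V}}^1 - \widehat{\mbf{V}}^2\|_{\mbf{C}^{2,1}(\Omega(0))} \leq C\Bigl(\|\grad\Phi^1-\grad\Phi^2\|_{\mbf{C}^{1,1}(\Omega(0))} + \|\text{boundary-data difference}\|_{\mbf{C}^{2,1}(\Omega(0))}\Bigr),
\]
where the first term is handled by taking $T$ small so that it can be absorbed into the left-hand side of the Grönwall estimate.

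It remains to identify the boundary-data difference. On $\partial B(0)$ the boundary data of $\widehat{\mbf{V}}^i$ is the restriction to $\partial B(0)$ of the affine field $\mbf{q}^{i\prime}(t) + \boldsymbol{\omega}^i(t)\times(\mbf{x}-\mbf{q}^i(t))$ (composed with $\Phi^i$, which preserves its $\mbf{C}^{2,1}$ norm up to uniform constants), and it is zero on $\Gamma$. A direct rearrangement gives the algebraic identity
\[
\mbf{q}^{1\prime} + \boldsymbol{\omega}^1\times(\mbf{x}-\mbf{q}^1) - \mbf{q}^{2\prime} - \boldsymbol{\omega}^2\times(\mbf{x}-\mbf{q}^2) = \tfrac{\mathrm{d}(\mbf{q}^1-\mbf{q}^2)}{\mathrm{d}t}+(\boldsymbol{\omega}^1-\boldsymbol{\omega}^2)\times\mbf{x} - \boldsymbol{\omega}^1\times(\mbf{q}^1-\mbf{q}^2) - (\boldsymbol{\omega}^1-\boldsymbol{\omega}^2)\times\mbf{q}^2,
\]
which is precisely the expression on the right-hand side of the claim. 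Substituting back through the Grönwall estimate completes the proof. The subtlest step is the second paragraph: carefully verifying that the pulled-back elliptic problems have coefficients with constants independent of $\mbf{q}^i,\boldsymbol{\omega}^i$ in the ball of radius $R$, so that the Schauder constant $C$ in the final bound is likewise independent of the two pairs.
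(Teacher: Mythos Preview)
Your proposal is correct and follows essentially the same route as the paper: differentiate the flow ODE, pull the harmonic-extension problems back to $\Omega(0)$, use Schauder estimates to control the pulled-back velocity difference in terms of the coefficient difference (which feeds back a $\|\grad\Phi^1-\grad\Phi^2\|_{\mbf{C}^{1,1}}$ term) and the boundary-data difference, and close with Gr\"onwall. The only organizational differences are that the paper (i) writes the linear ODE with $\grad\mbf{V}^2(\Phi^2)$ as coefficient and $\grad\Phi^1$ multiplying the bracket (you chose the symmetric decomposition), and (ii) introduces an explicit intermediate function $\widetilde{\mbf{V}^{1,2}}$ solving the elliptic problem with the coefficients of $\Phi^2$ but the boundary data of $(\mbf{q}^1,\boldsymbol{\omega}^1)$, so as to split cleanly into a ``same boundary data, different coefficients'' piece and a ``same coefficients, different boundary data'' piece---whereas you subtract the two pulled-back problems directly. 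These are equivalent bookkeeping devices. One small point: where you write ``taking $T$ small so that it can be absorbed'', the paper instead feeds the $\|\grad\Phi^1-\grad\Phi^2\|$ contribution straight back into the integral inequality and applies Gr\"onwall once at the end; this avoids any explicit smallness of $T$ in the lemma itself (though the paper then remarks that the Gr\"onwall constant depends on $T$, which is harmless since $T$ is small later anyway).
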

\begin{proof}
	To begin, we observe that
	\begin{gather*}
		\frac{\mathrm{d}}{\mathrm{d}t}(\grad \Phi^1(\mbf{x},t) - \grad \Phi^2(\mbf{x},t)) = \grad\mbf{V}^1(\Phi^1(\mbf{x},t),t) \grad \Phi^1(\mbf{x},t) - \grad\mbf{V}^2(\Phi^2(\mbf{x},t),t) \grad \Phi^2(\mbf{x},t),
	\end{gather*}
	with initial condition $\grad \Phi^1(\mbf{x},0) - \grad \Phi^2(\mbf{x},0) = 0$.
	One rewrites this ODE as
	\begin{align*}
		\frac{\mathrm{d}}{\mathrm{d}t}(\grad \Phi^1(\mbf{x},t) - \grad \Phi^2(\mbf{x},t)) &= (\grad\mbf{V}^1(\Phi^1(\mbf{x},t),t) - \grad\mbf{V}^2(\Phi^2(\mbf{x},t),t)) \grad \Phi^1(\mbf{x},t)\\
		&+ \grad\mbf{V}^2(\Phi^2(\mbf{x},t),t) (\grad \Phi^1(\mbf{x},t) -  \grad \Phi^2(\mbf{x},t)),
	\end{align*}
	from which one can readily observe that
	\begin{equation}
    \begin{aligned}
    &\|\grad \Phi^1(t) - \grad \Phi^2(t)\|_{\mbf{C}^{1,1}(\Omega(0))}\\
        &\leq \int_0^t \|\grad\mbf{V}^2(\Phi^2(s),s)\|_{\mbf{C}^{1,1}(\Omega(0))} \|\grad \Phi^1(s) -  \grad \Phi^2(s)\|_{\mbf{C}^{1,1}(\Omega(0))} \, \mathrm{d}s\\
		&+\int_0^t \|(\grad\mbf{V}^1(\Phi^1(s),s) - \grad\mbf{V}^2(\Phi^2(s),s))\|_{\mbf{C}^{1,1}(\Omega(0))} \|\grad \Phi^1(s)\|_{\mbf{C}^{1,1}(\Omega(0))} \, \mathrm{d}s.
        \end{aligned}\label{jacobian pf1}
	\end{equation}
	It is now clear that using Gr\"onwall's inequality yields the result, provided that one has an appropriate bound on $\|(\grad\mbf{V}^1(\Phi^1(s),s) - \grad\mbf{V}^2(\Phi^2(s),s))\|_{\mbf{C}^{1,1}(\Omega(0))}$.
	To this end we recall \eqref{eqn: velocity harmonic extension} defining $\mbf{V}^i$.
	By pulling back to $\Omega(0)$ and using Lemma \ref{pullback lemma} we find that the functions $\widehat{\mbf{V}^i}(t) := \mbf{V}^i(t)\circ \Phi^i(t)$ solve
	\begin{gather*}
		\begin{cases}
			-\Div \left(\det(\grad\Phi^i(t)) \grad\Phi^i(t) (\grad\Phi^i(t))^{T} \grad \widehat{\mbf{V}^i}(t) \right) = 0,&\text{ on }\ \Omega(0),\\
			\widehat{\mbf{V}^i}(\mbf{x},t) = \frac{\mathrm{d}\mbf{q}^i}{\mathrm{d}t} + \boldsymbol{\omega}^i \times (\mbf{x} - \mbf{q}^i), &\text{ on } \partial B(0),\\
			\widehat{\mbf{V}^i}(\mbf{x},t) = 0, &\text{ on } \Gamma,
		\end{cases}
	\end{gather*}
	respectively in a weak sense, for $i = 1,2$.
	In order to study the difference of these functions we now introduce an intermediate function $\widetilde{\mbf{V}^{1,2}}$ which is the unique solution to the PDE
	\begin{gather*}
	\begin{cases}
		-\Div \left(\det(\grad\Phi^2(t)) \grad\Phi^2(t) (\grad\Phi^2(t))^{T} \grad \widetilde{\mbf{V}^{1,2}}(t) \right) = 0,& \text{ on }\ \Omega(0),\\
		\widetilde{\mbf{V}^{1,2}}(\mbf{x},t) = \frac{\mathrm{d}\mbf{q}^1}{\mathrm{d}t} + \boldsymbol{\omega}^1 \times (\mbf{x} - \mbf{q}^1),& \text{ on } \partial B(0),\\
		\widetilde{\mbf{V}^{1,2}}(\mbf{x},t) = 0,& \text{ on } \Gamma.
	\end{cases}
	\end{gather*}
	Now one has
	\begin{align*}\left\|\widehat{\mbf{V}^1}(t) - \widehat{\mbf{V}^2}(t) \right\|_{\mbf{C}^{2,1}(\Omega(0))} \leq \left\|\widetilde{\mbf{V}^{1,2}}(t) - \widehat{\mbf{V}^2}(t) \right\|_{\mbf{C}^{2,1}(\Omega(0))} + \left\|\widehat{\mbf{V}^1}(t) - \widetilde{\mbf{V}^{1,2}}(t) \right\|_{\mbf{C}^{2,1}(\Omega(0))},
	\end{align*}
	and it is straightforward to verify, using standard elliptic regularity results \cite{gilbarg1977elliptic}, that
	\begin{align*}
		&\left\|\widetilde{\mbf{V}^{1,2}}(t) - \widehat{\mbf{V}^2}(t) \right\|_{\mbf{C}^{2,1}(\Omega(0))}\\
        &\leq C \left\| \frac{\mathrm{d}(\mbf{q}^1-\mbf{q}^2)}{\mathrm{d}t} + (\boldsymbol{\omega}^1 - \boldsymbol{\omega}^2)\times \mbf{x} - \boldsymbol{\omega}^1 \times (\mbf{q}^1 - \mbf{q}^2) - (\boldsymbol{\omega}^1 - \boldsymbol{\omega}^2)\times \mbf{q}^2  \right\|_{\mbf{C}^{2,1}(\Omega(0))},
	\end{align*}
	and
	\begin{align*}
		&\left\|\widehat{\mbf{V}^1}(t) - \widetilde{\mbf{V}^{1,2}}(t) \right\|_{\mbf{C}^{2,1}(\Omega(0))}\\
		&\leq C \left\|\det(\grad\Phi^2(t)) \grad\Phi^2(t) (\grad\Phi^2(t))^{T} - \det(\grad\Phi^1(t)) \grad\Phi^1(t)(\grad\Phi^1(t))^{T} \right\|_{\mbf{C}^{1,1}(\Omega(0))}\\
		&\leq C \left\| \grad\Phi^2(t) - \grad\Phi^1(t) \right\|_{\mbf{C}^{1,1}(\Omega(0))},
	\end{align*}
	where the final inequality is a consequence of the fact that $(\mbf{q}^i, \boldsymbol{\omega}^i) \in B_R(\mbf{H}^2 \times \mbf{H}^1)$ for $i = 1,2$.
	Now by noting that
	\begin{align*}
		&\left\|\grad\mbf{V}^1(\Phi^1(s),s) - \grad\mbf{V}^2(\Phi^2(s),s) \right\|_{\mbf{C}^{1,1}(\Omega(0))} \leq C\left\|\widehat{\mbf{V}^1}(s) - \widehat{\mbf{V}^2}(s)  \right\|_{\mbf{C}^{2,1}(\Omega(0))}\\
		&\leq C \left\| \frac{\mathrm{d}(\mbf{q}^1-\mbf{q}^2)}{\mathrm{d}t} + (\boldsymbol{\omega}^1 - \boldsymbol{\omega}^2)\times \mbf{x} - \boldsymbol{\omega}^1 \times (\mbf{q}^1 - \mbf{q}^2) - (\boldsymbol{\omega}^1 - \boldsymbol{\omega}^2)\times \mbf{q}^2  \right\|_{\mbf{C}^{2,1}(\Omega(0))}\\
        &+ C \left\| \grad\Phi^2(t) - \grad\Phi^1(t) \right\|_{\mbf{C}^{1,1}(\Omega(0))},
	\end{align*}
	it is clear that the result follows by using Gr\"onwall's inequality.
\end{proof}

One may notice that our use of the Gr\"onwall inequality means that the constant appearing in the previous lemma depends on $T$.
However, since we will be interested in taking $T$ to be sufficiently small we may ignore this issue and assume without loss of generality that the constant is independent of $T$.
Next we show a lemma regarding the control on the time derivative of $\Phi^1 - \Phi^2$.

\begin{lemma}
\label{lemma: gradient difference ddt}
For $(\mbf{q}^i, \boldsymbol{\omega}^i) \in D(\mathscr{F})$ for $i=1,2$ one has that
	\begin{align*}
    &\sup_{t \in [0,T]}\left\|\frac{\mathrm{d}}{\mathrm{d}t}(\Phi^1 - \Phi^2) \right\|_{\mbf{C}^{2,1}(\Omega(0))}\\ 
	&\leq C \sup_{t \in [0,T]}\left\| \frac{\mathrm{d}(\mbf{q}^1-\mbf{q}^2)}{\mathrm{d}t} + (\boldsymbol{\omega}^1 - \boldsymbol{\omega}^2)\times \mbf{x} - \boldsymbol{\omega}^1 \times (\mbf{q}^1 - \mbf{q}^2) - (\boldsymbol{\omega}^1 - \boldsymbol{\omega}^2)\times \mbf{q}^2  \right\|_{\mbf{C}^{2,1}(\Omega(0))}\\
    &+ C \int_0^T \left\| \frac{\mathrm{d}(\mbf{q}^1-\mbf{q}^2)}{\mathrm{d}t} + (\boldsymbol{\omega}^1 - \boldsymbol{\omega}^2)\times \mbf{x} - \boldsymbol{\omega}^1 \times (\mbf{q}^1 - \mbf{q}^2) - (\boldsymbol{\omega}^1 - \boldsymbol{\omega}^2)\times \mbf{q}^2  \right\|_{\mbf{C}^{2,1}(\Omega(0))},
	\end{align*}
    for a constant $C$ independent of $\mbf{q}^1$, $\mbf{q}^2$, $\boldsymbol{\omega}^1$ and $\boldsymbol{\omega}^2$.
\end{lemma}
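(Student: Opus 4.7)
The plan is to exploit the fact that $\Phi^i$ solves the ODE
\[ \frac{\mathrm{d}\Phi^i}{\mathrm{d}t}(\mbf{p},t) = \mbf{V}^i(\Phi^i(\mbf{p},t),t) = \widehat{\mbf{V}^i}(\mbf{p},t), \]
so that at every $t \in [0,T]$,
\[ \frac{\mathrm{d}}{\mathrm{d}t}(\Phi^1 - \Phi^2)(\mbf{p},t) = \widehat{\mbf{V}^1}(\mbf{p},t) - \widehat{\mbf{V}^2}(\mbf{p},t). \]
Consequently the quantity we wish to bound is nothing other than $\|\widehat{\mbf{V}^1}(t) - \widehat{\mbf{V}^2}(t)\|_{\mbf{C}^{2,1}(\Omega(0))}$, and the proof reduces to producing a pointwise-in-time $\mbf{C}^{2,1}$ estimate of the pulled-back velocity difference.

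To do this I would recycle the argument already used in the proof of Lemma \ref{lemma: gradient difference}. Namely, I would introduce the intermediate function $\widetilde{\mbf{V}^{1,2}}$ solving the pulled-back elliptic problem with the coefficients determined by $\Phi^2$ but with the boundary data coming from $(\mbf{q}^1,\boldsymbol{\omega}^1)$, split
\[ \|\widehat{\mbf{V}^1}(t) - \widehat{\mbf{V}^2}(t)\|_{\mbf{C}^{2,1}(\Omega(0))} \leq \|\widehat{\mbf{V}^1}(t) - \widetilde{\mbf{V}^{1,2}}(t)\|_{\mbf{C}^{2,1}(\Omega(0))} + \|\widetilde{\mbf{V}^{1,2}}(t) - \widehat{\mbf{V}^2}(t)\|_{\mbf{C}^{2,1}(\Omega(0))}, \]
and apply the two elliptic regularity estimates established there. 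The second term is controlled directly by the $\mbf{C}^{2,1}(\Omega(0))$ norm of the boundary data difference
\[ \frac{\mathrm{d}(\mbf{q}^1-\mbf{q}^2)}{\mathrm{d}t} + (\boldsymbol{\omega}^1 - \boldsymbol{\omega}^2)\times \mbf{x} - \boldsymbol{\omega}^1 \times (\mbf{q}^1 - \mbf{q}^2) - (\boldsymbol{\omega}^1 - \boldsymbol{\omega}^2)\times \mbf{q}^2, \]
which will give the supremum term on the right-hand side of the claimed inequality. The first term, on the other hand, is controlled by $C\|\grad \Phi^1(t) - \grad\Phi^2(t)\|_{\mbf{C}^{1,1}(\Omega(0))}$, using the same Lipschitz dependence of the coefficients $\det(\grad\Phi^i)\grad\Phi^i(\grad\Phi^i)^T$ on $\grad\Phi^i$ that was exploited before (again leveraging the uniform bound $\|\grad\Phi^i\|_{\mbf{C}^{1,1}(\Omega(0))} \leq C(R)$ obtained from \eqref{grad bound}, \eqref{grad bound2} and the bound on $(\mbf{q}^i,\boldsymbol{\omega}^i)$ in $B_R(\mbf{H}^2\times\mbf{H}^1)$).

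Finally I would feed the estimate of Lemma \ref{lemma: gradient difference} into this last piece to rewrite $\|\grad\Phi^1(t) - \grad\Phi^2(t)\|_{\mbf{C}^{1,1}(\Omega(0))}$ as a time integral of the boundary-data difference, which yields exactly the integral term appearing on the right-hand side of the claim. Taking the supremum over $t \in [0,T]$ then produces the stated inequality. I do not anticipate any genuinely new obstacle: all the work was really done in the previous lemma, and the only mild subtlety is to ensure that the constants from elliptic regularity and from the implicit use of Gr\"onwall's inequality are uniform on the bounded set $B_R(\mbf{H}^2\times\mbf{H}^1)$ for $T$ small, which follows as in the remark after Lemma \ref{lemma: gradient difference}.
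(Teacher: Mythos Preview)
Your proposal is correct and follows essentially the same approach as the paper: the paper's proof simply observes that $\frac{\mathrm{d}}{\mathrm{d}t}(\Phi^1 - \Phi^2) = \mbf{V}^1(\Phi^1(t),t) - \mbf{V}^2(\Phi^2(t),t)$ and then defers to the calculations in the proof of Lemma~\ref{lemma: gradient difference}, which are exactly the splitting and elliptic estimates you describe. You have merely spelled out explicitly how those calculations combine to produce the two terms on the right-hand side.
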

\begin{proof}
	By definition of $\Phi^i$ one immediately finds that
    \[ \frac{\mathrm{d}}{\mathrm{d}t}(\Phi^1 - \Phi^2) = \mbf{V}^1(\Phi^1(t),t) - \mbf{V}^2(\Phi^2(t),t), \]
    and the result thus follows as a consequence of the calculations in the proof of Lemma \ref{lemma: gradient difference}.
\end{proof}

\subsubsection{Pullbacks and the Piola transform}
Before using Lemma \ref{lemma: gradient difference} and Lemma \ref{lemma: gradient difference ddt} to show that $\mathscr{F}$ is a contraction we need a way to preserve the divergence-free property of $\mbf{u}^i$ under some coordinate transformation.
This will help us to control the pressure terms which arise in the calculations to show that $\mathscr{F}$ is a contraction.
To do this we use the Piola transform, which is often used in finite element approximations of equations of continuum mechanics (see \cite{aznaran2022transformations,ern2021finite,rognes2010efficient}), and more recently in the analysis of fluid equations on evolving domains (see \cite{djurdjevac2023evolving,elliott2024navier,olshanskii2022tangential}).
We begin by defining maps $\Phi^{2,1}(t) : \Omega^2(t) \rightarrow \Omega^1(t)$ given by
\[ \Phi^{2,1}(t) := \Phi^1(t) \circ \Phi^2(t)^{-1} .\]
We shall also denote the inverse of $\Phi^{2,1}(t)$ as
\[ \Phi^{1,2}(t) = \Phi^2(t) \circ \Phi^1(t)^{-1} : \Omega^1(t) \rightarrow \Omega^2(t). \]
We consider the associated Piola transform to be defined as
\[\mathcal{P}_t^{1,2} \boldsymbol{\psi}(\mbf{x}) = {\det(\grad\Phi^{1,2}(t)(\mbf{x}))} (\grad\Phi^{2,1}(t) \boldsymbol{\psi} )\circ \Phi^{1,2}(t) \]
for a function $\boldsymbol{\psi}: \Omega^2(t) \rightarrow \mbb{R}^3$.
A useful relation that we state for later use is that
\begin{align}
    \boldsymbol{\psi} \circ \Phi^{1,2}(t) = \frac{1}{\det(\grad\Phi^{1,2})} (\grad\Phi^{1,2}) \mathcal{P}_t^{1,2}\boldsymbol{\psi}. \label{eqn: piola pullback relation}
\end{align}
The Piola transform defined above maps functions in $\mbf{W}^{k,p}(\Omega^2(t))$ onto functions in $\mbf{W}^{k,p}(\Omega^1(t))$, for a result in this direction we refer the reader to \cite[Appendix A]{djurdjevac2023evolving}.
One can define an inverse Piola transform in the same manner, but we do not consider this here.
For further details on the Piola transform, as well as its applications in mathematical elasticity,  we refer the reader to \cite{ciarlet2021mathematical}.
We now recall the following property of the Piola transform which will be useful in our fixed point argument.

\begin{lemma}[{\cite[Lemma 2.8]{djurdjevac2023evolving}}]
	For $t \in [0,T]$, let $\boldsymbol{\psi} \in C^1(\Omega^2(t); \mbb{R}^3)$.
	Then we have
	\begin{align}
		\Div \mathcal{P}^{1,2}_t \boldsymbol{\psi} = \det(\grad\Phi^{1,2}(t))(\Div \boldsymbol{\psi}) \circ \Phi^{1,2}(t). \label{piola divergence}
	\end{align}
    In particular the Piola transform preserves divergence-free functions.
\end{lemma}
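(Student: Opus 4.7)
The plan is to verify the identity by a direct componentwise calculation that combines the classical Piola identity with the chain rule. The first step is to make the Piola transform more explicit: since $\Phi^{2,1}$ inverts $\Phi^{1,2}$, the inverse function theorem gives $\grad\Phi^{2,1}(\Phi^{1,2}(\mbf{x})) = (\grad\Phi^{1,2}(\mbf{x}))^{-1}$, so that
\[ \mathcal{P}_t^{1,2}\boldsymbol{\psi}(\mbf{x}) = \det(\grad\Phi^{1,2}(\mbf{x}))\bigl(\grad\Phi^{1,2}(\mbf{x})\bigr)^{-1}\boldsymbol{\psi}(\Phi^{1,2}(\mbf{x})). \]

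Writing $F := \grad\Phi^{1,2}$ and using Einstein summation, I would apply the product rule to obtain
\[ [\Div \mathcal{P}_t^{1,2}\boldsymbol{\psi}](\mbf{x}) = \partial_i\bigl(\det(F)\, (F^{-1})_{ij}\bigr)\,\psi_j(\Phi^{1,2}(\mbf{x})) + \det(F)\,(F^{-1})_{ij}\,\partial_i\bigl(\psi_j\circ\Phi^{1,2}\bigr)(\mbf{x}). \]
The first term vanishes by the classical Piola identity $\partial_i(\det(\grad G)\,[(\grad G)^{-1}]_{ij}) = 0$ applied with $G = \Phi^{1,2}$, which is available at our regularity since $\Phi^{1,2}$ inherits $C^3$ regularity from the flow of the velocity field $\mbf{V}$ constructed in \S\ref{subsection: construction}. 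For the second term, the chain rule gives $\partial_i(\psi_j \circ \Phi^{1,2})(\mbf{x}) = (\partial_k \psi_j)(\Phi^{1,2}(\mbf{x}))\,F_{ki}(\mbf{x})$, and the contraction $(F^{-1})_{ij} F_{ki} = \delta_{kj}$ collapses the double sum to $(\Div \boldsymbol{\psi})\circ \Phi^{1,2}(\mbf{x})$, which after multiplication by the surviving $\det(F)$ factor delivers the claimed identity.

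The divergence-free assertion is then immediate: if $\Div \boldsymbol{\psi} \equiv 0$ on $\Omega^2(t)$ then $\Div \mathcal{P}_t^{1,2}\boldsymbol{\psi} \equiv 0$ on $\Omega^1(t)$, since $\det(\grad\Phi^{1,2})$ never vanishes by \eqref{grad bound2}. There is no substantive obstacle here; the only things to watch are the orientation of indices in the contraction $(F^{-1})_{ij} F_{ki} = \delta_{kj}$ and the fact that $\grad\Phi^{1,2}$ is evaluated in the reference variable $\mbf{x}$ while $\boldsymbol{\psi}$ and its gradient are evaluated at $\Phi^{1,2}(\mbf{x})$. An equivalent but more leisurely derivation would proceed by a test-function argument: pair $\mathcal{P}_t^{1,2}\boldsymbol{\psi}$ with $\grad \phi$ for $\phi \in C_c^\infty(\Omega^1(t))$, change variables to $\Omega^2(t)$ so that the Jacobian cancels the $\det(F)$ prefactor and the chain-rule factor $F^T$ cancels $(F^{-1})$, integrate by parts on $\Omega^2(t)$, and change variables back; that route avoids invoking the Piola identity explicitly and may be preferable if one later needs to weaken the regularity on $\boldsymbol{\psi}$.
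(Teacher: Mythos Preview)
Your argument is correct. The paper does not supply its own proof of this lemma; it simply cites \cite[Lemma 2.8]{djurdjevac2023evolving} and moves on, so there is nothing to compare against beyond noting that your direct computation via the Piola identity $\partial_i\bigl(\det(\grad G)\,[(\grad G)^{-1}]_{ij}\bigr)=0$ together with the chain rule is the standard route. One small remark: you assert $C^3$ regularity for $\Phi^{1,2}$, whereas the paper only establishes $C^{2,1}$ spatial regularity (see \eqref{eqn: harmonic extension bound} and the discussion after it); this is still ample for the Piola identity, which needs only $C^2$, so the slip is harmless.
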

Using the established bounds on $\Phi^i$ one can readily show the compatibility of the evolving function spaces on $\Omega^2(t)$ with the Piola transform in the sense of \cite{alphonse2023function,alphonse2015abstract} by adapting the arguments in Lemma \ref{pullback lemma}.
With these preliminaries we now prove $\mathscr{F}$ is a contraction.

\subsection{The fixed point argument}
\label{subsection: fixed point}
    As we have done throughout we assume that~$(\mbf{q}^i, \boldsymbol{\omega}^i) \in D(\mathscr{F})$, for $i = 1,2$, and we write~$\mathscr{F}(\mbf{q}^i, \boldsymbol{\omega}^i) = (\mbf{Q}^i, \mbf{W}^i)$.
    Our aim is to show that
    \[ \|\mbf{Q}^1 - \mbf{Q}^2\|_{\mbf{H}^2([0,T])} +  \|\mbf{W}^1 - \mbf{W}^2\|_{\mbf{H}^1([0,T])} \leq L \left(\|\mbf{q}^1 - \mbf{q}^2\|_{\mbf{H}^2([0,T])} +  \|\boldsymbol{\omega}^1 - \boldsymbol{\omega}^2\|_{\mbf{H}^1([0,T])}\right),  \]
    for a constant $0 < L < 1$.
    To begin we observe that $\mbf{Q}^1 - \mbf{Q}^2$ solves the ODE
    \[ m \frac{\mathrm{d}^2}{\mathrm{d}t^2} (\mbf{Q}^1 - \mbf{Q}^2) = \int_{\partial B^2(t)} \mbb{T}^2 \boldsymbol{\nu}^2 - \int_{\partial B^1(t)} \mbb{T}^1 \boldsymbol{\nu}^1, \]
    pointwise for almost all $t \in [0,T]$ and such that
    \[ (\mbf{Q}^1 - \mbf{Q}^2)(0) = 0, \qquad (\mbf{Q}^1 - \mbf{Q}^2)'(0) = 0. \]
    By pulling the first integral back onto $\partial B^1(t)$ one finds that
    \begin{align}
    m \frac{\mathrm{d}^2}{\mathrm{d}t^2} (\mbf{Q}^1 - \mbf{Q}^2) = \int_{\partial B^1(t)} \left( \det(\grad \Phi^{1,2})\widehat{\mbb{T}^2} - {\mbb{T}^1} \right) \boldsymbol{\nu}^1, \label{contraction pf1}
    \end{align}
    where $\widehat{\mbb{T}^2} = \mbb{T}^2 \circ \Phi^{1,2}(t)$.
    Using Lemma \ref{transformation lemma} and \eqref{eqn: piola pullback relation} one finds that
    \begin{multline*}
        \int_{\partial B^1(t)} \det(\grad \Phi^{1,2})\widehat{\mbb{T}^2} \boldsymbol{\nu}^1= -\int_{\partial B^1(t)} \det(\grad \Phi^{1,2}) \widehat{p^2} \boldsymbol{\nu}^1\\
        \begin{aligned}
        &+ \mu \int_{\partial B^1(t)} \left( (\grad \Phi^{1,2})^{-T} \grad \Phi^{1,2} \grad \widehat{\mbf{u}^2} +   (\grad \widehat{\mbf{u}^2})^T(\grad \Phi^{1,2})^{T} (\grad \Phi^{1,2})^{-1}  \right) \boldsymbol{\nu}^1\\
        &+\mu \int_{\partial B^1(t)}\det(\grad\Phi^{1,2})(\grad \Phi^{1,2})^{-T}\grad \left( \frac{1}{\det(\grad\Phi^{1,2})} \grad \Phi^{1,2} \right)\widehat{\mbf{u}^2} \boldsymbol{\nu}^1\\
        &+\mu \int_{\partial B^1(t)}\det(\grad\Phi^{1,2})\left((\grad \Phi^{1,2})^{-T}\grad \left( \frac{1}{\det(\grad\Phi^{1,2})} \grad \Phi^{1,2} \right)\widehat{\mbf{u}^2} \right)^T\boldsymbol{\nu}^1,
        \end{aligned}
    \end{multline*}
    where
    \[ \widehat{p^2} := p^2 \circ \Phi^{1,2}, \qquad \widehat{\mbf{u}^2} := \mathcal{P}_t^{1,2} \mbf{u}^2. \]

    We now use this expression in \eqref{contraction pf1} to see that
    \begin{align}
        m \frac{\mathrm{d}^2}{\mathrm{d}t^2}(\mbf{Q}^1 - \mbf{Q}^2) &= \int_{\partial B^1(t)} (p^1 - \widehat{p^2}) \boldsymbol{\nu}^1 + \mu \int_{\partial B^1(t)} \grad (\widehat{\mbf{u}^2} -\mbf{u}^1) + (\grad (\widehat{\mbf{u}^2} -\mbf{u}^1))^T \notag\\
        &+\int_{\partial B^1(t)} (1-\det(\grad \Phi^{1,2})) \widehat{p^2} \boldsymbol{\nu}^1\notag\\
        &+ \mu \int_{\partial B^1(t)} \left((\grad \Phi^{1,2})^{-T} \grad \Phi^{1,2} - \mbb{I}\right) \grad \widehat{\mbf{u}^2} \boldsymbol{\nu}^1\notag\\
        &+ \mu \int_{\partial B^1(t)} (\grad \widehat{\mbf{u}^2})^T\left((\grad \Phi^{1,2})^{T} (\grad \Phi^{1,2})^{-1} - \mbb{I} \right) \boldsymbol{\nu}^1 \label{contraction pf2}\\
        &+\mu \int_{\partial B^1(t)}\det(\grad\Phi^{1,2})(\grad \Phi^{1,2})^{-T}\grad \left( \frac{1}{\det(\grad\Phi^{1,2})} \grad \Phi^{1,2}\right)\widehat{\mbf{u}^2} \boldsymbol{\nu}^1\notag\\
        &+\mu \int_{\partial B^1(t)}\det(\grad\Phi^{1,2})\left((\grad \Phi^{1,2})^{-T}\grad \left( \frac{1}{\det(\grad\Phi^{1,2})} \grad \Phi^{1,2} \right)\widehat{\mbf{u}^2} \right)^T\boldsymbol{\nu}^1. \notag
    \end{align}
    The plan now is to bound the first two terms in \eqref{contraction pf2} by establishing bounds analogous to those in Lemma~\ref{lemma: fsi energy estimates} and Lemma~\ref{lemma: fsi energy estimates2}, and bounding the other terms by using the uniform bounds on $\widehat{p^2}, \widehat{\mbf{u}^2}$ in combination with Lemma \ref{lemma: gradient difference}.
    The issue now is that we must emulate the proof of Lemma \ref{lemma: fsi energy estimates} to establish appropriate bounds for $p^1 - \widehat{p^2}$ and $\mbf{u}^1 - \widehat{\mbf{u}^2}$ in $L^2_{H^1}(\Phi^1)$ and $L^2_{\mbf{H}^2}(\Phi^1)$ respectively.
    For this we firstly must find the equations which $\widehat{\mbf{u}^2}, \widehat{p^2}$ solve, which we do by using Lemma \ref{transformation lemma} and \eqref{eqn: piola pullback relation} again.

    As before, we write $\mbf{u}^i = \overline{\mbf{u}}^i + \widetilde{\mbf{u}}^i$ and $p^i = \overline{p}^i + \widetilde{p}^i$, where $(\overline{\mbf{u}}^i,\overline{p}^i)$ solves a system like \eqref{eqn: weakNS}, \eqref{eqn: weakNS2} with homogeneous boundary conditions, and $\widetilde{\mbf{u}}^i$ solves a Stokes problem with inhomogeneous boundary conditions like in Appendix \ref{appendix:stokes}.
    The difficulty now comes from bounding the $\overline{\mbf{u}}^i, \overline{p}^i$ as in Lemma \ref{lemma: fsi energy estimates}, and this shall be the majority of the proof.

    \begin{remark}
        Our use of the Piola transform is due to the fact that recreate the proof of Lemma~\ref{lemma: fsi energy estimates} we wish to use \eqref{leray laplacian inequality} to control the second order terms and this is only valid for divergence-free functions.
        We note that this is also where our higher regularity assumption on the boundary of $\partial \Omega(0)$ comes into play as $\Phi^{1,2}$ must be sufficiently smooth so that
        \[ \mathcal{P}_t^{1,2} : \mbf{W}^{2,q}(\Omega^2(t)) \rightarrow \mbf{W}^{2,q}(\Omega^1(t)), \quad q \in [1,\infty]. \]
        If one can remove the use of the Piola transform then we imagine our approach would apply to domains with a $C^{1,1}$ boundary.
    \end{remark}
    
    It is a routine calculation to verify that the functions $(\utwo, \ptwo)$ defined by
    \[\utwo := \mathcal{P}_t^{1,2} \overline{\mbf{u}}^2, \qquad \ptwo := \overline{p}^2 \circ \Phi^{1,2}(t)  ,\]
    solve
    \begin{align}
        &\rho \int_{\Omega^1(t)} \grad \Phi^{1,2} \ddt{\utwo} \cdot \boldsymbol{\eta} + \rho \int_{\Omega^1(t)} \det(\grad \Phi^{1,2}) \ddt{}\left( \frac{1}{\det(\grad \Phi^{1,2})} \grad \Phi^{1,2}\right) \utwo \cdot \boldsymbol{\eta} \notag\\
        &- \rho\int_{\Omega^1(t)}\left((\grad \Phi^{1,2})^{-1}\ddt{\Phi^{1,2}} \cdot \grad \Phi^{1,2} \grad \utwo\right) \boldsymbol{\eta}\notag\\
        &- \rho\int_{\Omega^1(t)}\det(\grad \Phi^{1,2})\left((\grad \Phi^{1,2})^{-1}\ddt{\Phi^{1,2}} \cdot \grad \left( \frac{1}{\det(\grad \Phi^{1,2})} \grad \Phi^{1,2} \right) \utwo \right)\cdot \boldsymbol{\eta}\notag\\
        &+ \rho \int_{\Omega^1(t)} \frac{1}{\det(\grad \Phi^{1,2})} (\grad \Phi^{1,2} )^T \utwo \cdot \grad \utwo \boldsymbol{\eta}\notag\\
        &+ \rho \int_{\Omega^1(t)} \utwo \cdot \grad \left( \frac{1}{\det(\grad \Phi^{1,2})} \grad \Phi^{1,2} \right) \utwo \cdot \boldsymbol{\eta} + \int_{\Omega^1(t)} \det(\grad \Phi^{1,2}) (\grad \Phi^{1,2})^{-T}\grad\ptwo \cdot \boldsymbol{\eta} \label{contraction pf3}\\
        &+ \mu\int_{\Omega^1(t)} (\grad \Phi^{1,2})^{-1} (\grad \Phi^{1,2})^{-T} \grad \Phi^{1,2} \grad \utwo : \grad \boldsymbol{\eta}\notag\\
        &+ \mu \int_{\Omega^1(t)} \det(\grad\Phi^{1,2}) (\grad \Phi^{1,2})^{-1} (\grad \Phi^{1,2})^{-T} \grad \left(\frac{1}{\det(\grad \Phi^{1,2})} \grad \Phi^{1,2} \right) \utwo : \grad \boldsymbol{\eta}\notag\\
        &+ \rho \int_{\Omega^1(t)} ((\grad \Phi^{1,2})^{T}(\grad \Phi^{1,2})^{-1}\widetilde{\mbf{u}}^2 \cdot \grad \utwo) \boldsymbol{\eta}\notag\\
        &+ \rho \int_{\Omega^1(t)} \det(\grad \Phi^{1,2}) \left((\grad \Phi^{1,2})^{-1} \widetilde{\mbf{u}}^2 \cdot \grad \left(\frac{1}{\det(\grad \Phi^{1,2})} \grad \Phi^{1,2}\right) \utwo \right)\boldsymbol{\eta}\notag\\
        &+ \rho \int_{\Omega^1(t)} \utwo \cdot \grad \widetilde{\mbf{u}}^2 \boldsymbol{\eta} + \int_{\Omega^1(t)} \det(\grad \Phi^{1,2})\widehat{\mbf{B}^2} \cdot \boldsymbol{\eta} = 0, \notag
    \end{align}
    \begin{align}
        \int_{\Omega^1(t)} q \Div \utwo = 0, \label{contraction pf4}
    \end{align}
    for all $q \in L^2(\Omega^1(t)), \boldsymbol{\eta} \in \mbf{H}_0^1(\Omega^1(t))$ and almost all $t \in [0,T]$.
   Here we are also using notation
   \[ \widehat{\widetilde{\mbf{u}}^2} := \widetilde{\mbf{u}}^2 \circ \Phi^{1,2}(t), \qquad \widehat{\mbf{B}^2} := \mbf{B}^2 \circ \Phi^{1,2}(t). \]
   Thus we may combine \eqref{eqn: weakNS}, \eqref{eqn: weakNS2} for $(\mbf{u}^1, p^1)$ with \eqref{contraction pf3}, \eqref{contraction pf4} to see that
   \begin{align}
       &\rho \int_{\Omega^1(t)} \ddt{(\overline{\mbf{u}}^1 - \utwo)} \cdot \boldsymbol{\eta} + \rho \int_{\Omega^1(t)} \overline{\mbf{u}}^1 \cdot \grad\overline{\mbf{u}}^1 \boldsymbol{\eta} - \rho \int_{\Omega^1(t)} \utwo \cdot \grad \utwo \boldsymbol{\eta} \notag\\
       & + \int_{\Omega^1(t)} \grad(\overline{p}^1 - \ptwo) \cdot \boldsymbol{\eta} + \mu \int_{\Omega^1(t)} \grad (\overline{\mbf{u}}^1 - \utwo): \grad \boldsymbol{\eta} \label{contraction pf5}\\
       & + \rho \int_{\Omega^1(t)}  \left((\widetilde{\mbf{u}}^1 \cdot \grad \overline{\mbf{u}}^1) + (\overline{\mbf{u}}^1 \cdot \grad\widetilde{\mbf{u}}^1) \right)\boldsymbol{\eta} - \rho \int_{\Omega^1(t)}  \left((\widehat{\widetilde{\mbf{u}}^2} \cdot \grad \utwo) + (\utwo \cdot \grad\widehat{\widetilde{\mbf{u}}^2}) \right)\boldsymbol{\eta} \notag \\
       &+ \int_{\Omega^1(t)} (\mbf{B}^1 - \widehat{\mbf{B}^2}) \cdot \boldsymbol{\eta}= \sum_{k=1}^{12} I_k[\boldsymbol{\eta}], \notag
   \end{align}
   \begin{align}
   \int_{\Omega^1(t)} q \Div (\overline{\mbf{u}}^1 - \utwo) = 0, \label{contraction pf6}
   \end{align}
   for all $q \in L^2(\Omega^1(t)), \boldsymbol{\eta} \in \mbf{H}_0^1(\Omega^1(t))$ and almost all $t \in [0,T]$.
   Here the $I_k[\boldsymbol{\eta}]$ are some linear functionals to be bounded, given by:
   \begin{align*}
       I_1[\boldsymbol{\eta}] &:= \rho \int_{\Omega^1(t)} (\grad \Phi^{1,2} - \mbb{I}) \ddt{\utwo} \cdot \boldsymbol{\eta},\\
       I_2[\boldsymbol{\eta}] &:= \rho \int_{\Omega^1(t)} \det(\grad \Phi^{1,2}) \ddt{}\left( \frac{1}{\det(\grad \Phi^{1,2})} \grad \Phi^{1,2}\right) \utwo \cdot \boldsymbol{\eta} ,\\
       I_3[\boldsymbol{\eta}] &:= - \rho\int_{\Omega^1(t)}\left((\grad \Phi^{1,2})^{-1}\ddt{\Phi^{1,2}} \cdot \grad \Phi^{1,2} \grad \utwo\right) \boldsymbol{\eta},\\
       I_4[\boldsymbol{\eta}] &:= - \rho\int_{\Omega^1(t)}\det(\grad \Phi^{1,2})\left((\grad \Phi^{1,2})^{-1}\ddt{\Phi^{1,2}} \cdot \grad \left( \frac{1}{\det(\grad \Phi^{1,2})} \grad \Phi^{1,2} \right) \utwo \right)\cdot \boldsymbol{\eta},\\
       I_5[\boldsymbol{\eta}] &:= \rho \int_{\Omega^1(t)} \left(\frac{1}{\det(\grad \Phi^{1,2})} (\grad \Phi^{1,2} )^T - \mbb{I}\right)\utwo \cdot \grad \utwo \boldsymbol{\eta} ,\\
       I_6[\boldsymbol{\eta}] &:= \rho \int_{\Omega^1(t)} \utwo \cdot \grad \left( \frac{1}{\det(\grad \Phi^{1,2})} \grad \Phi^{1,2} \right) \utwo \cdot \boldsymbol{\eta},\\
       I_7[\boldsymbol{\eta}] &:= \int_{\Omega^1(t)} \left( \det(\grad \Phi^{1,2})  (\grad \Phi^{1,2})^{-1} - \mbb{I} \right)\grad \ptwo \cdot \boldsymbol{\eta},\\
       I_8[\boldsymbol{\eta}] &:= \mu\int_{\Omega^1(t)} \left((\grad \Phi^{1,2})^{-1} (\grad \Phi^{1,2})^{-T} \grad \Phi^{1,2} - \mbb{I}\right) \grad \utwo : \grad \boldsymbol{\eta},\\
       I_9[\boldsymbol{\eta}] &:= \mu \int_{\Omega^1(t)} \det(\grad\Phi^{1,2}) (\grad \Phi^{1,2})^{-1} (\grad \Phi^{1,2})^{-T} \grad \left(\frac{1}{\det(\grad \Phi^{1,2})} \grad \Phi^{1,2} \right) \utwo : \grad \boldsymbol{\eta},\\
       I_{10}[\boldsymbol{\eta}] &:= \rho \int_{\Omega^1(t)} ((\grad \Phi^{1,2})^{T}(\grad \Phi^{1,2})^{-1}- \mbb{I})\widehat{\widetilde{\mbf{u}}^2} \cdot \grad \utwo \boldsymbol{\eta},\\
       I_{11}[\boldsymbol{\eta}] &:= \rho \int_{\Omega^1(t)} \det(\grad \Phi^{1,2}) \left((\grad \Phi^{1,2})^{-1} \widehat{\widetilde{\mbf{u}}^2} \cdot \grad \left(\frac{1}{\det(\grad \Phi^{1,2})} \grad \Phi^{1,2}\right) \utwo \right)\boldsymbol{\eta},\\
       I_{12}[\boldsymbol{\eta}] &:= \int_{\Omega^1(t)} (\det(\grad \Phi^{1,2}) - 1)\widehat{\mbf{B}^2} \cdot \boldsymbol{\eta}.
   \end{align*}
   One can think of these terms as ``consistency errors'' arising from the fact that $(\utwo, \ptwo)$ are not the solution of~\eqref{eqn: evolving NS2} on $\Omega^1(t)$, and instead solve a perturbed version of the system due to the different domain.
   The plan from here is to test \eqref{contraction pf5} with $A_{\sigma}(\overline{\mbf{u}}^1 - \utwo)$ and repeat the arguments from \eqref{lemma: fsi energy estimates}, provided the functions $I_k[\cdot]$ are suitably bounded.
   As such we firstly bound these functionals in the following lemma.

   \begin{lemma}
   \label{lemma: fsi functional bounds}
       The functionals $I_k[\cdot]$, as defined above, for $k=1,\ldots,12$ are bounded as follows:
       \begin{align*}
           \left|I_1[\boldsymbol{\eta}]\right| &\leq C T \left( \|\mbf{q}^1-\mbf{q}^2\|_{\mbf{H}^2([0,T])} + \|\boldsymbol{\omega}^1-\boldsymbol{\omega}^2\|_{\mbf{H}^1([0,T])} \right) \left\|\ddt{\utwo}\right\|_{\mbf{L}^2(\Omega^1(t))} \|\boldsymbol{\eta}\|_{\mbf{L}^2(\Omega^1(t))},\\
           \left|I_2[\boldsymbol{\eta}]\right| &\leq C \left( \|\mbf{q}^1-\mbf{q}^2\|_{\mbf{H}^2([0,T])} + \|\boldsymbol{\omega}^1-\boldsymbol{\omega}^2\|_{\mbf{H}^1([0,T])} \right) \|\utwo\|_{\mbf{L}^2(\Omega^1(t))} \|\boldsymbol{\eta}\|_{\mbf{L}^2(\Omega^1(t))},\\
           \left|I_3[\boldsymbol{\eta}]\right| &\leq C \left( \|\mbf{q}^1-\mbf{q}^2\|_{\mbf{H}^2([0,T])} + \|\boldsymbol{\omega}^1-\boldsymbol{\omega}^2\|_{\mbf{H}^1([0,T])} \right) \left\|\utwo\right\|_{\mbf{H}^1(\Omega^1(t))} \|\boldsymbol{\eta}\|_{\mbf{L}^2(\Omega^1(t))},\\
           \left|I_4[\boldsymbol{\eta}]\right| &\leq C \left( \|\mbf{q}^1-\mbf{q}^2\|_{\mbf{H}^2([0,T])} + \|\boldsymbol{\omega}^1-\boldsymbol{\omega}^2\|_{\mbf{H}^1([0,T])} \right) \left\|\utwo\right\|_{\mbf{L}^2(\Omega^1(t))} \|\boldsymbol{\eta}\|_{\mbf{L}^2(\Omega^1(t))},\\
           \left|I_5[\boldsymbol{\eta}]\right| &\leq C T \left( \|\mbf{q}^1-\mbf{q}^2\|_{\mbf{H}^2([0,T])} + \|\boldsymbol{\omega}^1-\boldsymbol{\omega}^2\|_{\mbf{H}^1([0,T])} \right) \|\utwo\|_{\mbf{L}^4(\Omega^1(t))} \|\utwo\|_{\mbf{W}^{1,4}(\Omega^1(t))}\|\boldsymbol{\eta}\|_{\mbf{L}^2(\Omega^1(t))},\\
           \left|I_6[\boldsymbol{\eta}]\right| &\leq C T \left( \|\mbf{q}^1-\mbf{q}^2\|_{\mbf{H}^2([0,T])} + \|\boldsymbol{\omega}^1-\boldsymbol{\omega}^2\|_{\mbf{H}^1([0,T])} \right) \|\utwo\|_{\mbf{L}^4(\Omega^1(t))}^2 \|\boldsymbol{\eta}\|_{\mbf{L}^2(\Omega^1(t))},\\
           \left|I_7[\boldsymbol{\eta}]\right| &\leq C T \left( \|\mbf{q}^1-\mbf{q}^2\|_{\mbf{H}^2([0,T])} + \|\boldsymbol{\omega}^1-\boldsymbol{\omega}^2\|_{\mbf{H}^1([0,T])} \right) \|\ptwo\|_{H^1(\Omega^1(t))} \|\boldsymbol{\eta}\|_{\mbf{L}^2(\Omega^1(t))},\\
           \left|I_8[\boldsymbol{\eta}]\right| &\leq C T \left( \|\mbf{q}^1-\mbf{q}^2\|_{\mbf{H}^2([0,T])} + \|\boldsymbol{\omega}^1-\boldsymbol{\omega}^2\|_{\mbf{H}^1([0,T])} \right) \|\utwo\|_{\mbf{H}^2(\Omega^1(t))} \|\boldsymbol{\eta}\|_{\mbf{L}^2(\Omega^1(t))},\\
           \left|I_9[\boldsymbol{\eta}]\right| &\leq C T \left( \|\mbf{q}^1-\mbf{q}^2\|_{\mbf{H}^2([0,T])} + \|\boldsymbol{\omega}^1-\boldsymbol{\omega}^2\|_{\mbf{H}^1([0,T])} \right) \|\utwo\|_{\mbf{H}^1(\Omega^1(t))} \|\boldsymbol{\eta}\|_{\mbf{L}^2(\Omega^1(t))},\\
           \left|I_{10}[\boldsymbol{\eta}]\right| &\leq C T \left( \|\mbf{q}^1-\mbf{q}^2\|_{\mbf{H}^2([0,T])} + \|\boldsymbol{\omega}^1-\boldsymbol{\omega}^2\|_{\mbf{H}^1([0,T])} \right) \|\widehat{\widetilde{\mbf{u}}^2}\|_{\mbf{L}^{4}(\Omega^1(t))}\|\utwo\|_{\mbf{W}^{1,4}(\Omega^1(t))} \|\boldsymbol{\eta}\|_{\mbf{L}^2(\Omega^1(t))},\\
           \left|I_{11}[\boldsymbol{\eta}]\right| &\leq C T \left( \|\mbf{q}^1-\mbf{q}^2\|_{\mbf{H}^2([0,T])} + \|\boldsymbol{\omega}^1-\boldsymbol{\omega}^2\|_{\mbf{H}^1([0,T])} \right) \|\widehat{\widetilde{\mbf{u}}^2}\|_{\mbf{L}^{4}(\Omega^1(t))}\|\utwo\|_{\mbf{L}^{4}(\Omega^1(t))} \|\boldsymbol{\eta}\|_{\mbf{L}^2(\Omega^1(t))},\\
           \left|I_{12}[\boldsymbol{\eta}]\right| &\leq C T \left( \|\mbf{q}^1-\mbf{q}^2\|_{\mbf{H}^2([0,T])} + \|\boldsymbol{\omega}^1-\boldsymbol{\omega}^2\|_{\mbf{H}^1([0,T])} \right) \|\widehat{\mbf{B}}^2\|_{\mbf{L}^2(\Omega^1(t))} \|\boldsymbol{\eta}\|_{\mbf{L}^2(\Omega^1(t))},
       \end{align*}
       where $C$ denotes a constant independent of $\mbf{q}^1$, $\mbf{q}^2$, $\boldsymbol{\omega}^1$, $\boldsymbol{\omega}^2$ and $T$.
   \end{lemma}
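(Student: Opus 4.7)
The plan is to apply Hölder's inequality to each functional $I_k[\boldsymbol{\eta}]$, placing $\boldsymbol{\eta}$ in $\mbf{L}^2(\Omega^1(t))$, and to control the remaining matrix-valued coefficient --- which depends only on $\grad \Phi^{1,2}$ and its spatial/temporal derivatives --- by the quantity
\[ \mathcal{D} := \|\mbf{q}^1 - \mbf{q}^2\|_{\mbf{H}^2([0,T])} + \|\boldsymbol{\omega}^1 - \boldsymbol{\omega}^2\|_{\mbf{H}^1([0,T])}, \]
possibly with an extra factor of $T$. The remaining factors built from $\utwo, \ptwo, \widehat{\widetilde{\mbf{u}}^2}, \widehat{\mbf{B}^2}$ are then paired with $\boldsymbol{\eta}$ in Hölder's inequality to recover the stated norms.

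The first step is to record the core auxiliary bounds on $\Phi^{1,2} = \Phi^2 \circ (\Phi^1)^{-1}$. Since $\Phi^{1,2}(0) = \mathrm{id}$, combining the fundamental theorem of calculus with the chain rule and Lemma~\ref{lemma: gradient difference} gives
\[ \sup_{t \in [0,T]}\bigl(\|\grad \Phi^{1,2}(t) - \mbb{I}\|_{\mbf{C}^{1,1}(\Omega^1(t))} + \|\det(\grad \Phi^{1,2}(t)) - 1\|_{C^{1,1}(\Omega^1(t))}\bigr) \leq C T \mathcal{D}, \]
where the $T$-independent one-dimensional Sobolev embeddings $\mbf{H}^2([0,T]) \hookrightarrow \mbf{C}^1([0,T])$ and $\mbf{H}^1([0,T]) \hookrightarrow \mbf{C}^0([0,T])$ bound the integrand in Lemma~\ref{lemma: gradient difference} pointwise by $\mathcal{D}$. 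In parallel, Lemma~\ref{lemma: gradient difference ddt} gives
\[ \sup_{t \in [0,T]}\left(\left\|\frac{\mathrm{d}}{\mathrm{d}t}\grad \Phi^{1,2}(t)\right\|_{\mbf{C}^{1,1}(\Omega^1(t))} + \left\|\frac{\mathrm{d}\Phi^{1,2}}{\mathrm{d}t}(t)\right\|_{\mbf{C}^{2,1}(\Omega^1(t))}\right) \leq C \mathcal{D}, \]
with no factor of $T$, accounting for the absence of $T$ in the bounds claimed for $I_2, I_3, I_4$. Uniform $R$-dependent bounds $\|\grad \Phi^{1,2}\|_{\mbf{C}^{1,1}} + \|(\grad \Phi^{1,2})^{-1}\|_{\mbf{C}^{1,1}} + |\det(\grad \Phi^{1,2})|^{\pm 1} \leq C(R)$ follow from \eqref{grad bound} and \eqref{grad bound2}.

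With these in hand, each bound is a direct application of Hölder's inequality after recognising the coefficient as a smooth function of $\bigl(\grad \Phi^{1,2}, \grad^2 \Phi^{1,2}, \frac{\mathrm{d}\Phi^{1,2}}{\mathrm{d}t}, \frac{\mathrm{d}}{\mathrm{d}t}\grad \Phi^{1,2}\bigr)$ which either vanishes when the first two arguments equal $(\mbb{I}, 0)$ (producing $CT\mathcal{D}$ via a mean-value argument) or is linear in the last two arguments (producing $C\mathcal{D}$). Hölder's inequality is then applied with the natural $\mbf{L}^\infty \times \mbf{L}^p \times \mbf{L}^2$ split, using $p = 4$ on both non-$\boldsymbol{\eta}$ factors for the quadratic terms $I_5, I_{10}, I_{11}$ and $p = 2$ otherwise, to recover the stated $\mbf{L}^2$, $\mbf{L}^4$, $\mbf{H}^1$, $\mbf{H}^2$, or $H^1$ norms of $\utwo, \ptwo, \widehat{\widetilde{\mbf{u}}^2}, \widehat{\mbf{B}^2}$.

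The principal obstacle is the case-by-case verification that each coefficient genuinely has the claimed vanishing structure. The trickiest instances are the factors $\grad\bigl(\tfrac{1}{\det(\grad \Phi^{1,2})}\grad \Phi^{1,2}\bigr)$ appearing in $I_6, I_9, I_{11}$; expanding these using Jacobi's formula $\grad \det(A) = \det(A)\tr{A^{-1}\grad A}$ together with the product rule displays them as smooth functions of $(\grad \Phi^{1,2}, \grad^2 \Phi^{1,2})$ which equal zero when $\grad^2 \Phi^{1,2} = 0$, and the first auxiliary estimate above then yields the $CT\mathcal{D}$ bound. Once this identification is in place, the twelve estimates reduce to mechanical bookkeeping.
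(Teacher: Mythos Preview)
Your outline matches the paper's approach closely: both reduce each $I_k$ to a H\"older estimate after isolating a coefficient built from $\grad\Phi^{1,2}$ and its derivatives, and both exploit the dichotomy that such coefficients either vanish at $(\grad\Phi^{1,2},\grad^2\Phi^{1,2})=(\mbb{I},0)$ (yielding the $CT\mathcal{D}$ factor via Lemma~\ref{lemma: gradient difference}) or are linear in a time derivative (yielding $C\mathcal{D}$ via Lemma~\ref{lemma: gradient difference ddt}). That said, two points in your write-up are not quite complete.

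First, for $I_8$ and $I_9$ the integrand pairs against $\grad\boldsymbol{\eta}$ rather than $\boldsymbol{\eta}$, so your ``$\mbf{L}^\infty\times\mbf{L}^p\times\mbf{L}^2$'' H\"older split can only deliver $\|\boldsymbol{\eta}\|_{\mbf{H}^1}$, not the $\|\boldsymbol{\eta}\|_{\mbf{L}^2}$ the lemma demands. The paper first integrates by parts (using $\boldsymbol{\eta}\in\mbf{H}^1_0$) to move the derivative off $\boldsymbol{\eta}$ and onto the coefficient--$\utwo$ product; this is precisely why the bound for $I_8$ involves $\|\utwo\|_{\mbf{H}^2}$ rather than $\|\utwo\|_{\mbf{H}^1}$, and why $I_9$ requires control of $\grad^3\Phi^{1,2}$ (the paper expands this third derivative explicitly, which is where the $C^{2,1}$ boundary regularity assumption is actually used). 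You should flag this step, since the intended test function $A_\sigma(\overline{\mbf{u}}^1-\utwo)$ is not in $\mbf{H}^1_0$ and only its $\mbf{L}^2$ norm is controlled.

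Second, Lemma~\ref{lemma: gradient difference ddt} bounds $\partial_t(\Phi^1-\Phi^2)$ on $\Omega(0)$, not $\partial_t\Phi^{1,2}$ or $\partial_t\grad\Phi^{1,2}$ on $\Omega^1(t)$; these are related but not identical objects. The paper handles this by computing $(\partial_t\Phi^{1,2})\circ\Phi^1$ and $(\partial_t\grad\Phi^{1,2})\circ\Phi^1$ directly in terms of $\mbf{V}^i(\Phi^i)$ and $\grad\Phi^i$, then reducing to the velocity-difference estimates established within the proof of Lemma~\ref{lemma: gradient difference}. Your invocation of Lemma~\ref{lemma: gradient difference ddt} is therefore a shortcut; the conclusion you state is correct, but it needs this intermediate computation rather than a direct citation.
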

   \begin{proof}
    For $I_1[\cdot]$ one immediately finds that
    \[ \left|I_1[\boldsymbol{\eta}]\right| \leq \rho \left\| \grad \Phi^{1,2} - \mbb{I}\right\|_{\mbf{L}^\infty(\Omega^1(t))}\left\|\ddt{\utwo}\right\|_{\mbf{L}^2(\Omega^1(t))} \|\boldsymbol{\eta}\|_{\mbf{L}^2(\Omega^1(t))},  \]
       where one can calculate that
       \[\grad \Phi^{1,2}(t) = \grad (\Phi^2(t) \circ \Phi^{1}(t)^{-1}) = \left(\grad \Phi^2(t)(\grad \Phi^1(t))^{-1} \right) \circ \Phi^1(t)^{-1}.\]
       Then clearly one has that
       \begin{align*}
           \|\grad \Phi^{1,2} - \mbb{I}\|_{\mbf{L}^{\infty}(\Omega^1(t))} &\leq C \|\grad \Phi^{2} (\grad \Phi^1)^{-1}  - \mbb{I}\|_{\mbf{L}^{\infty}(\Omega(0))}\\
           &\leq C \|\grad \Phi^{2} -\grad \Phi^1 \|_{\mbf{L}^{\infty}(\Omega(0))} \|(\grad \Phi^1)^{-1} \|_{\mbf{L}^{\infty}(\Omega(0))},
       \end{align*}
       whence the bound follows from using Lemma \ref{lemma: gradient difference} and the uniform bounds on $\|\grad \Phi^i\|_{\mbf{L}^\infty(\Omega(0))}$.
       
       For $I_2[\cdot]$ one immediately finds that
       \begin{align*}
           \left|I_2[\boldsymbol{\eta}]\right| \leq C \left\|\ddt{}\left( \frac{1}{\det(\grad \Phi^{1,2})} \grad \Phi^{1,2}\right) \right\|_{\mbf{L}^\infty(\Omega^1(t))} \|\utwo\|_{\mbf{L}^2(\Omega^1(t))} \|\boldsymbol{\eta}\|_{\mbf{L}^2(\Omega^1(t))},
       \end{align*}
       where we have used the uniform bound for $\grad \Phi^{1,2}$.
       Clearly the issue is bounding the $\mbf{L}^\infty$ term, and to control this term we firstly find an expression for $\ddt{}\grad \Phi^{1,2}$.
       Firstly we observe that since
       \[\grad \Phi^{1,2}(t) = \left(\grad \Phi^2(t)(\grad \Phi^1(t))^{-1} \right) \circ \Phi^1(t)^{-1},\]
        one has that
       \[  \grad \Phi^2 (\grad \Phi^1)^{-1} = (\grad\Phi^{1,2}) \circ \Phi^1. \]
       Differentiating this equation in time yields
       \[ \grad \ddt{\Phi^2} (\grad \Phi^1)^{-1} + \grad \Phi^2 \ddt{}\left( \grad \Phi^1 \right)^{-1}  = \left(\ddt{} \grad \Phi^{1,2}\right)\circ \Phi^1 + ((\grad^2 \Phi^{1,2}) \circ \Phi^1) \ddt{\Phi^1}, \]
       which simplifies to
       \begin{align*}
           \left(\ddt{} \grad \Phi^{1,2}\right)\circ \Phi^1 = \grad \mbf{V}^2(\Phi^2) \grad \Phi^2 (\grad \Phi^1)^{-1} -  \grad \Phi^2 (\grad \Phi^1)^{-1} \grad \mbf{V}^1(\Phi^1) - ((\grad^2 \Phi^{1,2}) \circ \Phi^1) \mbf{V}^1(\Phi^1),
       \end{align*}
       where we understand $\mbf{V}^i(\Phi^i)$ to mean $\mbf{V}^i(\Phi^i(\cdot,t),t)$.
       Hence one finds that 
       \begin{align*}
           &\left\|\ddt{} (\grad \Phi^{1,2}) \right\|_{\mbf{L}^\infty(\Omega^1(t))}\\
           &\leq C \left\|\grad \mbf{V}^2(\Phi^2) \grad \Phi^2 (\grad \Phi^1)^{-1} -  \grad \Phi^2 (\grad \Phi^1)^{-1} \grad \mbf{V}^1(\Phi^1) - ((\grad^2 \Phi^{1,2}) \circ \Phi^1) \mbf{V}^1(\Phi^1)\right\|_{\mbf{L}^\infty(\Omega(0))}.
       \end{align*}
       To bound this term we see that
       \begin{align*}
           &\grad \mbf{V}^2(\Phi^2) \grad \Phi^2 (\grad \Phi^1)^{-1} -  \grad \Phi^2 (\grad \Phi^1)^{-1} \grad \mbf{V}^1(\Phi^1)\\
           &= \grad \Phi^2 \left( (\grad \Phi^2)^{-1} \grad \mbf{V}^2(\Phi^2) \grad \Phi^2 - (\grad \Phi^1)^{-1} \grad \mbf{V}^1(\Phi^1) \grad \Phi^1\right) (\grad \Phi^1)^{-1},
       \end{align*}
       and hence by using Lemma \ref{lemma: gradient difference} and Lemma \ref{lemma: gradient difference ddt} one finds that
       \begin{multline*}
       \left\| \grad \mbf{V}^2(\Phi^2) \grad \Phi^2 (\grad \Phi^1)^{-1} -  \grad \Phi^2 (\grad \Phi^1)^{-1} \grad \mbf{V}^1(\Phi^1) \right \|_{\mbf{L}^\infty(\Omega(0))}\\
        \leq C (1 + T) \left( \|\mbf{q}^1-\mbf{q}^2\|_{\mbf{H}^2([0,T])} + \|\boldsymbol{\omega}^1-\boldsymbol{\omega}^2\|_{\mbf{H}^1([0,T])} \right).
       \end{multline*}
       For the higher order term we firstly calculate
       \begin{align*}
           \grad^2 \Phi^{1,2} &= \grad \left( (\grad \Phi^2(\grad \Phi^1)^{-1}) \circ (\Phi^1)^{-1}\right)\\
           &= \left(\grad^2 \Phi^2 (\grad \Phi^1)^{-2} - \grad \Phi^2 (\grad \Phi^1)^{-1} \grad^2 \Phi^1 (\grad \Phi^1)^{-2}\right)\circ (\Phi^1)^{-1},
       \end{align*}
       and similarly to the above one writes
       \begin{multline*}
           \left(\grad^2 \Phi^2 (\grad \Phi^1)^{-2} - \grad \Phi^2 (\grad \Phi^1)^{-1} \grad^2 \Phi^1 (\grad \Phi^1)^{-2}\right)\\
           = \grad \Phi^2\left( (\grad \Phi^2)^{-1} \grad^2 \Phi^2  -  (\grad \Phi^1)^{-1} \grad^2 \Phi^1\right)(\grad \Phi^1)^{-2}.
       \end{multline*}
       Thus by using our uniform bounds and Lemma \ref{lemma: gradient difference} one finds that
       \[ \left\| ((\grad^2 \Phi^{1,2}) \circ \Phi^1) \mbf{V}^1(\Phi^1)\right\|_{\mbf{L}^\infty(\Omega(0))} \leq C T \left( \|\mbf{q}^1-\mbf{q}^2\|_{\mbf{H}^2([0,T])} + \|\boldsymbol{\omega}^1-\boldsymbol{\omega}^2\|_{\mbf{H}^1([0,T])} \right). \]
       The bound for $I_2[\cdot]$ follows after noting the time derivative of the determinant term may be bounded similarly since
       \[\ddt{} \left(\frac{1}{\det(\grad \Phi^{1,2})}\right) = -\frac{1}{\det(\grad \Phi^{1,2})} \tr{(\grad \Phi^{1,2})^{-1} \ddt{}\grad \Phi^{1,2}},\]
       from Jacobi's formula.

       To bound $I_3[\cdot]$ one can calculate that
       \[ \ddt{}\Phi^{1,2} = \mbf{V}^2(\Phi^{1,2}) - \left(\grad \Phi^2 (\grad \Phi^1)^{-1}\right) \circ (\Phi^1)^{-1} \mbf{V}^1, \]
       and hence 
       \[ \left(\ddt{}\Phi^{1,2} \right) \circ \Phi^1 = \grad \Phi^2 \left( (\grad \Phi^2)^{-1} \mbf{V}^2(\Phi^2) - (\grad \Phi^1)^{-1} \mbf{V}^1(\Phi^1)\right), \]
        so that the result follows in the obvious way.

        One can show the bound for $I_4[\cdot]$ after using the above expressions for $\ddt{} \Phi^{1,2}$ and $\grad^2 \Phi^{1,2}$ and arguing as we have above.
        For $I_5[\cdot]$ we have that
        \[ \frac{1}{\det(\grad \Phi^{1,2})} (\grad \Phi^{1,2})^T = \left(\frac{\det(\grad \Phi^1)}{\det(\grad \Phi^2)}(\grad \Phi^1)^{-T} (\grad \Phi^2)^T\right) \circ (\Phi^1)^{-1}, \]
        and hence by pulling back onto $\Omega(0)$
        \begin{align*}
        &\left\|\frac{1}{\det(\grad \Phi^{1,2})} (\grad \Phi^{1,2} )^T - \mbb{I}\right\|_{\mbf{L}^\infty(\Omega^1(0))} \leq C \left\| \frac{\det(\grad \Phi^1)}{\det(\grad \Phi^2)}(\grad \Phi^1)^{-T} (\grad \Phi^2)^T - \mbb{I} \right\|_{\mbf{L}^\infty(\Omega(0))}\\
        &\leq  C\left\| \det(\grad \Phi^1)(\grad \Phi^1)^{-T}\right\|_{\mbf{L}^\infty(\Omega(0))} \left\| \frac{1}{\det(\grad \Phi^2)} (\grad \Phi^2)^T - \frac{1}{\det(\grad \Phi^1)} (\grad \Phi^1)^T \right\|_{\mbf{L}^\infty(\Omega(0))}\\
        &\leq C\left\| \grad \Phi^1 - \grad \Phi^2 \right\|_{\mbf{L}^\infty(\Omega(0))},
        \end{align*}
        so that the result will readily follow from Lemma \ref{lemma: gradient difference} after noting that
        \[\left\|\utwo \cdot \grad \utwo \boldsymbol{\eta}\right\|_{\mbf{L}^1(\Omega^1(t))} \leq \|\utwo\|_{\mbf{L}^4(\Omega^1(t))} \|\utwo\|_{\mbf{W}^{1,4}(\Omega^1(t))}\|\boldsymbol{\eta}\|_{\mbf{L}^2(\Omega^1(t))}. \]
        The bounds for $I_6[\cdot], I_7[\cdot]$ are straightforward after using the above expressions for $\grad^2 \Phi^{1,2}$ and $\grad \Phi^{1,2}$ respectively.

        Since we will want to choose $\boldsymbol{\eta} = A_{\sigma}(\overline{\mbf{u}}^1 - \utwo)$ we cannot bound $\boldsymbol{\eta}$ in the $\mbf{H}^1$ norm, and so to bound $I_8[\cdot]$ we firstly integrate by parts to see that
        \begin{multline*}
            \mu\int_{\Omega^1(t)} \left((\grad \Phi^{1,2})^{-1} (\grad \Phi^{1,2})^{-T} \grad \Phi^{1,2} - \mbb{I}\right) \grad \utwo : \grad \boldsymbol{\eta}\\
            = -\mu\int_{\Omega^1(t)} \Div\left(\left((\grad \Phi^{1,2})^{-1} (\grad \Phi^{1,2})^{-T} \grad \Phi^{1,2} - \mbb{I}\right) \grad \utwo \right) \cdot \boldsymbol{\eta}.
        \end{multline*}
        We then observe that
        \begin{align*}
            (\grad \Phi^{1,2})^{-1} (\grad \Phi^{1,2})^{-T} \grad \Phi^{1,2} - \mbb{I} &= \left( \grad \Phi^1 (\grad \Phi^2)^{-1} (\grad \Phi^2)^{-T} (\grad \Phi^1)^T \grad \Phi^2 (\grad \Phi^1)^{-1} \right) \circ (\Phi^1)^{-1} - \mbb{I}\\
            &= \grad \Phi^1 (\grad \Phi^2)^{-1} (\grad \Phi^2)^{-T} \left( (\grad \Phi^1)^T- (\grad \Phi^2)^T\right)\grad \Phi^2 (\grad \Phi^1)^{-1},
        \end{align*}
        which we can bound in the usual way.
        One may also repeat a similar argument for the higher order term $\Div\left((\grad \Phi^{1,2})^{-1} (\grad \Phi^{1,2})^{-T} \grad \Phi^{1,2} - \mbb{I}\right)$ by using the expression for $\grad^2 \Phi^{1,2}$.
        In both cases one concludes by using Lemma \ref{lemma: gradient difference} once again.
       The bound for $I_9[\cdot]$ is similar, where one first integrates by parts for
       \begin{multline*}
           \mu \int_{\Omega^1(t)} \det(\grad\Phi^{1,2}) (\grad \Phi^{1,2})^{-1} (\grad \Phi^{1,2})^{-T} \grad \left(\frac{1}{\det(\grad \Phi^{1,2})} \grad \Phi^{1,2} \right) \utwo : \grad \boldsymbol{\eta}\\= -\mu \int_{\Omega^1(t)} \Div\left(\det(\grad\Phi^{1,2}) (\grad \Phi^{1,2})^{-1} (\grad \Phi^{1,2})^{-T} \grad \left(\frac{1}{\det(\grad \Phi^{1,2})} \grad \Phi^{1,2} \right) \utwo\right) \cdot\boldsymbol{\eta},
       \end{multline*}
       where we note that this will involve a third order derivative of $\Phi^{1,2}$, which is defined almost everywhere since the $\Phi^i$ are $\mbf{C}^{2,1}$ in space.
       To control this we first calculate $\grad^3 \Phi^{1,2}$ as
       \[ \grad^3 \Phi^{1,2} = \grad \left(\left(\grad^2 \Phi^2 (\grad \Phi^1)^{-2} - \grad \Phi^2 (\grad \Phi^1)^{-1} \grad^2 \Phi^1 (\grad \Phi^1)^{-2}\right)\circ (\Phi^1)^{-1}\right), \]
       which one can expand to find
       \begin{align*}
           &\grad \left(\left(\grad^2 \Phi^2 (\grad \Phi^1)^{-2} - \grad \Phi^2 (\grad \Phi^1)^{-1} \grad^2 \Phi^1 (\grad \Phi^1)^{-2}\right)\circ (\Phi^1)^{-1}\right)\\
           &= \left(\grad^3 \Phi^2 (\grad \Phi^1)^{-3}\right) \circ (\Phi^1)^{-1}\\
           &- \left(\grad^2 \Phi^2 (\grad \Phi^1)^{-1} \grad^2 \Phi^1 (\grad \Phi^1)^{-3}\right)\circ (\Phi^1)^{-1}\\
           &- \left( \grad^2 \Phi^2(\grad \Phi^1)^{-2} \grad^2 \Phi^1 (\grad \Phi^1)^{-2} \right) \circ (\Phi^1)^{-1}\\
           & - \left(\grad^2 \Phi^2 (\grad \Phi^1)^{-1} \grad^2 \Phi^1 (\grad \Phi^1)^{-3} \right) \circ (\Phi^1)^{-1}\\
           & + \left( \grad \Phi^2 (\grad \Phi^1)^{-1} \grad^2 \Phi^1 (\grad \Phi^1)^{-1}\grad^2 \Phi^1 (\grad \Phi^1)^{-3} \right) \circ (\Phi^1)^{-1}\\
           & - \left(\grad \Phi^2 (\grad \Phi^1)^{-1} \grad^3 \Phi^1 (\grad \Phi^1)^{-3}\right)\circ (\Phi^1)^{-1}\\
           & + \left(\grad \Phi^2 (\grad \Phi^1)^{-1} \grad^2 \Phi^1 (\grad \Phi^1)^{-1} \grad^2 \Phi^1  (\grad \Phi^1)^{-3}\right)\circ (\Phi^1)^{-1}\\
           & + \left(\grad \Phi^2 (\grad \Phi^1)^{-1} \grad^2 \Phi^1 (\grad \Phi^1)^{-2} \grad^2 \Phi^1  (\grad \Phi^1)^{-2}\right)\circ (\Phi^1)^{-1}.
       \end{align*}
       By appropriately grouping these terms together one finds
       \begin{align*}
           (\grad^3 \Phi^{1,2}) \circ \Phi^1 &= \grad \Phi^2 \left[ (\grad \Phi^2)^{-1}\grad^3 \Phi^2 - (\grad \Phi^1)^{-1}\grad^3 \Phi^1\right] (\grad \Phi^1)^{-3}\\
           &+2 \grad \Phi^2 \left[ (\grad \Phi^2)^{-1}\grad^2 \Phi^2 - (\grad \Phi^1)^{-1}\grad^2 \Phi^1\right] (\grad \Phi^1)^{-1} \grad^2 \Phi^1 (\grad \Phi^1)^{-3}\\
           &+ \grad \Phi^2 \left[ (\grad \Phi^1)^{-1} \grad^2 \Phi^1 - (\grad \Phi^2)^{-1} \grad^2 \Phi^2 \right] (\grad \Phi^1)^{-2} \grad^2 \Phi^1 (\grad^2 \Phi^1)^{-2},
       \end{align*}
       where the terms in the square brackets can be suitably bounded by using Lemma \ref{lemma: gradient difference}, and the terms outside the brackets are uniformly bounded.
       It is then straightforward, albeit tedious, to show that this gives an appropriate bound on the divergence term, as well as the determinant term (by similar logic to the bound for $I_2[\cdot]$) we omit further details.
       Likewise the remaining bounds for $I_{10}[\cdot], I_{11}[\cdot], I_{12}[\cdot]$ are dealt with by similar arguments to those above and so we omit these calculations too.
   \end{proof}
   \begin{remark}
       It may seem that the bounds for $I_2[\cdot], I_3[\cdot], I_4[\cdot]$ are problematic as they do not gain a power of $T$, however this is compensated for by the $L^\infty$ in time bounds for the corresponding fluid terms appearing in inequalities, which allows one to gain a power of $T$ after integrating in time.
   \end{remark}

With this lemma we now show that $\mathscr{F}: D(\mathscr{F}) \rightarrow D(\mathscr{F})$ is a contraction, under suitable smallness assumptions as in Lemma~\ref{lemma: non degeneracy2}.

\begin{lemma}
\label{lemma: fsi contraction}
    Given some sufficiently large $R > 0$, and a sufficiently small relative density, $\frac{\rho}{\rho_B}$, as in Lemma~\ref{lemma: non degeneracy2}, there exists some small $T > 0$ (determined by $R$ and the initial data) such that $\mathscr{F}$ is a contraction.
\end{lemma}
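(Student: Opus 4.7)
The plan is to imitate, at the level of differences, the chain of energy estimates developed in Lemma~\ref{lemma: fsi energy estimates}, Lemma~\ref{lemma: fsi energy estimates2} and Lemma~\ref{lemma: non degeneracy2}, applied now to the PDE \eqref{contraction pf5}--\eqref{contraction pf6} satisfied by $\overline{\mbf{u}}^1 - \utwo$ and $\overline{p}^1 - \ptwo$ on $\Omega^1(t)$, together with the ODEs governing $\mbf{Q}^1 - \mbf{Q}^2$ and $\mbf{W}^1 - \mbf{W}^2$.

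First I would estimate the right-hand side of \eqref{contraction pf2}. The last five integrals all carry a prefactor of the form $\det(\grad\Phi^{1,2})-1$, $(\grad\Phi^{1,2})^{-T}\grad\Phi^{1,2}-\mbb{I}$, or $\grad(\det(\grad\Phi^{1,2})^{-1}\grad\Phi^{1,2})$, each of which, by Lemma~\ref{lemma: gradient difference} and the uniform bounds on $\grad\Phi^i$, is controlled by $CT(\|\mbf{q}^1-\mbf{q}^2\|_{\mbf{H}^2([0,T])} + \|\boldsymbol{\omega}^1-\boldsymbol{\omega}^2\|_{\mbf{H}^1([0,T])})$. The remaining fluid factors $\widehat{p^2}$, $\widehat{\mbf{u}^2}$ are bounded uniformly in the norms coming from Lemma~\ref{lemma: fsi energy estimates} and Lemma~\ref{lemma: fsi energy estimates2} (transferred to $\Omega^1(t)$ via the Piola transform), so after taking trace and integrating in time these five boundary integrals contribute a factor of the form $CT^{\alpha}R(\|\mbf{q}^1-\mbf{q}^2\|_{\mbf{H}^2} + \|\boldsymbol{\omega}^1-\boldsymbol{\omega}^2\|_{\mbf{H}^1})$ to $\|\mbf{Q}^1-\mbf{Q}^2\|_{\mbf{H}^2([0,T])}$.

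The main obstacle is the first two integrals in \eqref{contraction pf2}, which require estimates for $\mbf{u}^1 - \widehat{\mbf{u}^2} = \overline{\mbf{u}}^1 - \utwo + (\widetilde{\mbf{u}}^1 - \widehat{\widetilde{\mbf{u}}^2})$ and $p^1 - \widehat{p^2} = \overline{p}^1 - \ptwo + (\widetilde{p}^1 - \widehat{\widetilde{p}^2})$ in $L^2_{\mbf{H}^2}(\Phi^1)\cap H^1_{\mbf{L}^2}(\Phi^1)$ and $L^2_{H^1}(\Phi^1)$ respectively. For the Stokes contributions $\widetilde{\mbf{u}}^1-\widehat{\widetilde{\mbf{u}}^2}$ and $\widetilde{p}^1-\widehat{\widetilde{p}^2}$ I would run the Stokes argument from Appendix~\ref{appendix:stokes} on the difference problem: the inhomogeneous boundary data differ by $\frac{\mathrm{d}(\mbf{q}^1-\mbf{q}^2)}{\mathrm{d}t} + (\boldsymbol{\omega}^1-\boldsymbol{\omega}^2)\times(\mbf{x}-\mbf{q}^1) + \boldsymbol{\omega}^2\times(\mbf{q}^2-\mbf{q}^1)$ plus ``geometric'' errors controlled by Lemma~\ref{lemma: gradient difference}, yielding an $L^2_{\mbf{H}^2}\times L^2_{H^1}$ bound of the desired form. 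For $(\overline{\mbf{u}}^1-\utwo,\overline{p}^1-\ptwo)$, I would test \eqref{contraction pf5} with $\overline{\mbf{u}}^1-\utwo$ first, and then with $A_\sigma(\overline{\mbf{u}}^1-\utwo)$, exactly as in the proofs of Lemma~\ref{lemma: fsi energy estimates} and Lemma~\ref{lemma: fsi energy estimates2}. The nonlinear difference $\overline{\mbf{u}}^1\cdot\grad\overline{\mbf{u}}^1 - \utwo\cdot\grad\utwo$ is split as $(\overline{\mbf{u}}^1-\utwo)\cdot\grad\overline{\mbf{u}}^1 + \utwo\cdot\grad(\overline{\mbf{u}}^1-\utwo)$ and handled by Agmon, Ladyzhenskaya and Young as before, using the uniform bound on $\overline{\mbf{u}}^i$; the consistency errors $I_1,\ldots,I_{12}$ are bounded by Lemma~\ref{lemma: fsi functional bounds} and absorbed in $\mu\|A_\sigma(\overline{\mbf{u}}^1-\utwo)\|_{\mbf{L}^2}^2$ via Young's inequality; and then Bihari--LaSalle closes the estimate.

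Combining all of this, and running the ODE argument for $\mbf{W}^1-\mbf{W}^2$ as in the second half of Lemma~\ref{lemma: non degeneracy2} (with the same treatment of $\mbb{J}^{-1}$ via the orthogonal representation $\mbb{J}(t)=\mbb{O}(t)\mbb{J}(0)\mbb{O}(t)^T$, so $|\mbb{J}^{-1}|$ is independent of $T$ and depends on $\rho_B^{-1}$), one obtains
\[
    \|\mbf{Q}^1-\mbf{Q}^2\|_{\mbf{H}^2([0,T])} + \|\mbf{W}^1-\mbf{W}^2\|_{\mbf{H}^1([0,T])} \leq \Bigl(C_1(R)\,T^{\alpha} + C_2(R)\,\tfrac{\rho}{\rho_B}\Bigr)\Bigl(\|\mbf{q}^1-\mbf{q}^2\|_{\mbf{H}^2([0,T])} + \|\boldsymbol{\omega}^1-\boldsymbol{\omega}^2\|_{\mbf{H}^1([0,T])}\Bigr)
\]
for some $\alpha>0$. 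As in Lemma~\ref{lemma: non degeneracy2}, the $\frac{\rho}{\rho_B}$ term comes from the integrals involving $\mbb{T}$ on $\partial B^1(t)$ after passing through the $\frac{1}{m}$ and $\mbb{J}^{-1}$ prefactors, and it cannot be absorbed into a power of $T$. The hard part will be the bookkeeping to verify that every ``non-geometric'' contribution either carries a positive power of $T$ or is scaled by $\frac{\rho}{\rho_B}$; once this is done, choosing $\frac{\rho}{\rho_B}$ small enough that $C_2(R)\frac{\rho}{\rho_B}<\tfrac{1}{4}$, and then $T$ small enough that $C_1(R)T^\alpha<\tfrac{1}{4}$, gives the contraction constant $L\leq \tfrac{1}{2}<1$, completing the proof.
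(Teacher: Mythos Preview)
Your outline matches the paper's proof closely: you correctly identify that the last five integrals in \eqref{contraction pf2} are handled via Lemma~\ref{lemma: gradient difference}, that the Stokes parts $\widetilde{\mbf{u}}^1-\widehat{\widetilde{\mbf{u}}^2}$ come from Appendix~\ref{appendix:stokes}, that one tests \eqref{contraction pf5} with $A_\sigma(\overline{\mbf{u}}^1-\utwo)$ and uses Lemma~\ref{lemma: fsi functional bounds} for the $I_k$, and that the $\mbf{W}$-estimate requires additionally controlling $\mbb{J}^1-\mbb{J}^2$ and its time derivative through $\mbb{O}^1-\mbb{O}^2$.

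Two points where the paper is sharper. First, once you split the nonlinearity as $(\overline{\mbf{u}}^1-\utwo)\cdot\grad\overline{\mbf{u}}^1 + \utwo\cdot\grad(\overline{\mbf{u}}^1-\utwo)$ and use the uniform bounds on $\overline{\mbf{u}}^1,\utwo$, no cubic term $\|\grad(\overline{\mbf{u}}^1-\utwo)\|^6$ appears; the resulting differential inequality is \emph{linear} in $\|\grad(\overline{\mbf{u}}^1-\utwo)\|^2$, so the paper closes with Gr\"onwall rather than Bihari--LaSalle. Your invocation of Bihari--LaSalle is harmless but obscures this simplification. Second, in your final inequality the coefficient you call $C_2(R)$ multiplying $\rho/\rho_B$ must in fact be \emph{independent of $R$} (depending only on $\Omega(0)$ and $\mathrm{diam}(B(0))$): this is what allows the smallness condition on $\rho/\rho_B$ to be purely geometric, as in Lemma~\ref{lemma: non degeneracy2}. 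The paper traces this carefully --- the only term that cannot gain a power of $T$ is $\int_0^T\|\mbf{B}^1-\widehat{\mbf{B}^2}\|_{\mbf{L}^2}^2$, and the highest-order part of \eqref{eqn: body force defn} applied to the difference yields a geometric constant times $\rho(\|\mbf{q}^1-\mbf{q}^2\|_{\mbf{H}^2}+\|\boldsymbol{\omega}^1-\boldsymbol{\omega}^2\|_{\mbf{H}^1})$, while the lower-order remainders carry powers of $T$.
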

\begin{proof}
To streamline this proof we omit many of the calculations and instead refer back to earlier proofs which provide details on related calculations --- namely Lemma~\ref{lemma: fsi energy estimates} and Lemma~\ref{lemma: fsi energy estimates2}.
By taking $\boldsymbol{\eta} = A_{\sigma}(\overline{\mbf{u}}^1 - \utwo)$ in \eqref{contraction pf5} one obtains
\begin{align}
\begin{aligned}
        &\rho \int_{\Omega^1(t)} \ddt{(\overline{\mbf{u}}^1 - \utwo)} \cdot A_{\sigma}(\overline{\mbf{u}}^1 - \utwo) + \mu \int_{\Omega^1(t)} |A_{\sigma}(\overline{\mbf{u}}^1 - \utwo)|^2\\
        &+ \rho \int_{\Omega^1(t)} \overline{\mbf{u}}^1 \cdot \grad\overline{\mbf{u}}^1 A_{\sigma}(\overline{\mbf{u}}^1 - \utwo)
        - \rho \int_{\Omega^1(t)} \utwo \cdot \grad \utwo A_{\sigma}(\overline{\mbf{u}}^1 - \utwo)\\
       &+ \rho \int_{\Omega^1(t)}  \left((\widetilde{\mbf{u}}^1 \cdot \grad \overline{\mbf{u}}^1) + (\overline{\mbf{u}}^1 \cdot \grad\widetilde{\mbf{u}}^1) \right)A_{\sigma}(\overline{\mbf{u}}^1 - \utwo) \\
       &- \rho \int_{\Omega^1(t)}  \left((\widehat{\widetilde{\mbf{u}}^2} \cdot \grad \utwo) + (\utwo \cdot \grad\widehat{\widetilde{\mbf{u}}^2}) \right)A_{\sigma}(\overline{\mbf{u}}^1 - \utwo)\\
       &+ \int_{\Omega^1(t)} (\mbf{B}^1 - \widehat{\mbf{B}^2}) \cdot A_{\sigma}(\overline{\mbf{u}}^1 - \utwo) = \sum_{k=1}^{12} I_k[A_{\sigma}(\overline{\mbf{u}}^1 - \utwo)].
\end{aligned}\label{contraction pf7}
\end{align}
While $A_{\sigma}(\overline{\mbf{u}}^1 - \utwo)$ is not an element of $\mbf{H}_0^1(\Omega^1(t))$, the above equality can be justified by an approximation argument choosing $\boldsymbol{\eta}_k \in \mbf{H}^1_0(\Omega^1(t))$ such that $\boldsymbol{\eta}_k \rightarrow A_{\sigma}(\overline{\mbf{u}}^1 - \utwo)$ in $\mbf{L}^2(\Omega^1(t))$ as $k \rightarrow \infty$.

To control the advective terms we observe that
\begin{align*}
\overline{\mbf{u}}^1 \cdot \grad\overline{\mbf{u}}^1 A_{\sigma}(\overline{\mbf{u}}^1 - \utwo) - \utwo \cdot \grad \utwo A_{\sigma}(\overline{\mbf{u}}^1 - \utwo) &= (\overline{\mbf{u}}^1 - \utwo) \cdot \grad\overline{\mbf{u}}^1 A_{\sigma}(\overline{\mbf{u}}^1 - \utwo)\\
&+ \utwo \cdot \grad (\overline{\mbf{u}}^1 - \utwo) A_{\sigma}(\overline{\mbf{u}}^1 - \utwo),
\end{align*}
and so by using Young's inequality one finds
\begin{align*}
    \left| \int_{\Omega^1(t)} \overline{\mbf{u}}^1 \cdot \grad\overline{\mbf{u}}^1 A_{\sigma}(\overline{\mbf{u}}^1 - \utwo) -  \int_{\Omega^1(t)} \utwo \cdot \grad \utwo A_{\sigma}(\overline{\mbf{u}}^1 - \utwo)\right| &\leq \frac{\mu}{16} \|A_{\sigma}(\overline{\mbf{u}}^1 - \utwo)\|_{\mbf{L}^2(\Omega^1(t))}^2\\
    &+ C \|\grad \overline{\mbf{u}}^1\|_{\mbf{L}^4(\Omega^1(t))}^2 \|\overline{\mbf{u}}^1 - \utwo\|_{\mbf{L}^4(\Omega^1(t))}^2\\
    &+ C\|\utwo\|_{\mbf{L}^\infty(\Omega^1(t))}^2 \|\grad(\overline{\mbf{u}}^1 - \utwo)\|_{\mbf{L}^2(\Omega^1(t))}^2,
\end{align*}
where we use the Sobolev embedding $\mbf{H}^1(\Omega^1(t)) \hookrightarrow \mbf{L}^4(\Omega^1(t))$ and Poincar\'e's inequality for
\[ \|\grad \overline{\mbf{u}}^1\|_{\mbf{L}^4(\Omega^1(t))}^2 \|\overline{\mbf{u}}^1 - \utwo\|_{\mbf{L}^4(\Omega^1(t))}^2 \leq \|\grad \overline{\mbf{u}}^1\|_{\mbf{L}^4(\Omega^1(t))}^2 \|\grad(\overline{\mbf{u}}^1 - \utwo)\|_{\mbf{L}^2(\Omega^1(t))}^2 \]
Notice that, in contrast to Lemma \ref{lemma: fsi energy estimates}, we have not gained a term like $\|\grad(\overline{\mbf{u}}^1 - \utwo)\|_{\mbf{L}^2(\Omega^1(t))}^6$, and so we will be able to apply Gr\"onwall's inequality, rather than Lemma \ref{biharilasalle}, for our estimate.
The same argument applies to the terms involving $\widetilde{\mbf{u}}^1, \widehat{\widetilde{\mbf{u}}^2}$, for example
\begin{align*}
    \widetilde{\mbf{u}}^1 \cdot \grad \overline{\mbf{u}}^1 A_{\sigma}(\overline{\mbf{u}}^1 - \utwo) -\widehat{\widetilde{\mbf{u}}^2} \cdot \grad \utwo A_{\sigma}(\overline{\mbf{u}}^1 - \utwo) &= (\widetilde{\mbf{u}}^1 - \widehat{\widetilde{\mbf{u}}^2}) \cdot \grad \overline{\mbf{u}}^1 A_{\sigma}(\overline{\mbf{u}}^1 - \utwo)\\
    &+ \widehat{\widetilde{\mbf{u}}^2} \cdot \grad (\overline{\mbf{u}}^1-\utwo) A_{\sigma}(\overline{\mbf{u}}^1 - \utwo),
\end{align*}
which is bounded similarly to the above.

From here on one may now argue as in Lemma \ref{lemma: fsi energy estimates} to obtain a bound of the form
\begin{align*}
    \frac{\rho}{2} \frac{\mathrm{d}}{\mathrm{d}t} \int_{\Omega^1(t)} |\grad (\overline{\mbf{u}}^1 - \utwo)|^2 + \frac{\mu}{2} \int_{\Omega^1(t)} |A_{\sigma} (\overline{\mbf{u}}^1 - \utwo)|^2 &\leq \frac{1}{2}T^\frac{1}{3} \|\mbf{B}^1 - \widehat{\mbf{B}^2}\|_{\mbf{H}^1(\Omega^1(t))}^2\\
        &+ K(t)\|\grad \mbf{u}\|_{\mbf{L}^2(\Omega^1(t))}^2 + \sum_{k=1}^{12} |I_k[A_{\sigma}(\overline{\mbf{u}}^1 - \utwo)]|,
\end{align*}
where $K(t)$ is some integrable function, and we have again taken $\kappa = T^{\frac{1}{3}}$.
Clearly one obtains a suitable bound on $\overline{\mbf{u}}^1 - \utwo$ in $L^2_{\mbf{H}^2}(\Phi^1)$ by using the bounds from Lemma \ref{lemma: fsi functional bounds}, an appropriate bound on $\|\mbf{B}^1 - \widehat{\mbf{B}^2}\|_{\mbf{H}^1(\Omega(t))}$, Young's inequality, and concludes by using Gr\"onwall's inequality.
The only ingredient here we have not discussed is the required bound on~$\mbf{B}^1 - \widehat{\mbf{B}^2}$ in $L^2_{\mbf{H}^1}(\Phi^1)$.
This term is bounded by arguing along the same lines as we have in this proof, we refer the reader to \S \ref{subsection: stokes perturbation} for some of the details on this calculation.
After a dense and tedious calculation one finds that
\begin{align}
    \|\mbf{B}^1 - \widehat{\mbf{B}^2}\|_{\mbf{H}^1(\Omega^1(t))} \leq C \left| \frac{\mathrm{d}^2 \mbf{q}^1}{\mathrm{d}t^2} - \frac{\mathrm{d}^2 \mbf{q}^2}{\mathrm{d}t^2}\right| + C\left| \frac{\mathrm{d} \boldsymbol{\omega}^1}{\mathrm{d}t} - \frac{\mathrm{d} \boldsymbol{\omega}^2}{\mathrm{d}t}\right| + \text{ lower order terms}, \label{eqn: body force difference}
\end{align}
which is sufficient for our purposes, and lets one conclude an appropriate bound for $\overline{\mbf{u}}^1 - \utwo$ in $L^2_{\mbf{H}^2}(\Phi^1)$.

The bound for $\overline{p}^1 - \ptwo$ in $L^2_{H^1}(\Phi^1)$ is more involved, for which one emulates the proof of Lemma \ref{lemma: fsi energy estimates2}.
In particular one finds that
\[ \int_0^T \| \overline{p}^1 - \ptwo \|_{H^1(\Omega^1(t))}^2 \leq \int_0^T \|\mbf{B}^1 - \widehat{\mbf{B}^2}\|_{\mbf{L}^2(\Omega^1(t))} + C T^s \left( \|\mbf{q}^1-\mbf{q}^2\|_{\mbf{H}^2([0,T])} + \|\boldsymbol{\omega}^1-\boldsymbol{\omega}^2\|_{\mbf{H}^1([0,T])} \right),  \]
for some $s > 0$.
We now face the same difficulty as in the proof of Lemma \ref{lemma: non degeneracy2}, namely that there is no power of $T$ involved in the first term on the right-hand side, and hence~\eqref{eqn: body force difference} may not be sufficient to prove that~$\mathscr{F}$ is a contraction.
We resolve this in the same way as in the proof of Lemma \ref{lemma: non degeneracy2}, where we observe that one gains a factor of $\rho$ from the definition of $\mbf{B}^i$, and one will gain a factor of $\frac{1}{\rho_B}$ from the $m$ on the left-hand side of \eqref{contraction pf2}.
In particular, we will be able to show that $\mathscr{F}$ is a contraction if the relative density $\frac{\rho}{\rho_B}$ is sufficiently small.
We omit further details on the bound for $\overline{p}^1 - \ptwo$.

Once one obtains the bounds for $\overline{\mbf{u}}^1 - \utwo$ and $ \overline{p}^1 - \ptwo$ in $L^2_{\mbf{H}^2}(\Phi^1)$ and $L^2_{H^1}(\Phi^1)$ respectively it is then straightforward to show bounds for the functions ${\mbf{u}}^1 - \widehat{\mbf{u}^2}$ and ${p}^1 - \widehat{p^2}$ by arguing along the lines of Appendix \ref{appendix:stokes} to show that $\widetilde{\mbf{u}}^1 - \widehat{\widetilde{\mbf{u}}^2}$ and $\widetilde{p}^1 - \widehat{\widetilde{p}^2}$ are bounded appropriately (see \eqref{stokes regularity5} in particular).
Finally, we use our bounds in \eqref{contraction pf2} to control the first two integrals on the right-hand side, and one controls the other terms by essentially the same arguments those in the proof of Lemma \ref{lemma: fsi functional bounds}.
All in all, this gives us a bound of the form
\begin{align}
\begin{aligned}
    \|\mbf{Q}^1 - \mbf{Q}^2\|_{\mbf{H}^2([0,T])}^2 &\leq C_1\frac{\rho^2}{\rho_B^2} \left( \|\mbf{q}^1-\mbf{q}^2\|_{\mbf{H}^2([0,T])}^2 + \|\boldsymbol{\omega}^1-\boldsymbol{\omega}^2\|_{\mbf{H}^1([0,T])}^2 \right)\\
    &+ C_2T^{2s} \left( \|\mbf{q}^1-\mbf{q}^2\|_{\mbf{H}^2([0,T])}^2 + \|\boldsymbol{\omega}^1-\boldsymbol{\omega}^2\|_{\mbf{H}^1([0,T])}^2 \right),
    \end{aligned}
    \label{contraction pf8}
\end{align}
for some $s > 0$.
Here the constant $C_1$ depends only on $\Omega(0)$ and $\mathrm{diam}(B(0))$, but $C_2$ depends on $R$ (which defines $D(\mathscr{F})$) and the initial data.
From here it is almost apparent that if the relative density, $\frac{\rho}{\rho_B}$, is sufficiently small one may choose $T$ to be sufficiently small (determined by the initial data) so that $\mathscr{F}$ is a contraction.
Of course for this to be the case we also require an analogous bound on the difference $\|\mbf{W}^1 - \mbf{W}^2\|_{\mbf{H}^1([0,T])}$.

We now outline how one shows such a bound.
Firstly, it is clear that
\begin{align}
    \frac{\mathrm{d}}{\mathrm{d}t}\left( \mbb{J}^1 \mbf{W}^1 - \mbb{J}^2 \mbf{W}^2 \right) = \int_{\partial B^2(t)} (\mbf{x} - \mbf{q}^2(t)) \times \mbb{T}^2 \boldsymbol{\nu}^2 - \int_{\partial B^1(t)} (\mbf{x} - \mbf{q}^1(t)) \times \mbb{T}^1 \boldsymbol{\nu}^1,
    \label{contraction pf9}
\end{align}
with initial data
\[ (\mbf{W}^1 - \mbf{W}^2)(0) = 0. \]
The idea here is that one can write the integrals on the right-hand side in a similar way to \eqref{contraction pf2}, but we now have the added complication of dealing with two distinct inertial tensors on the left-hand side.
For this we observe that
\begin{align}
\begin{aligned}
\frac{\mathrm{d}}{\mathrm{d}t}\left( \mbb{J}^1 \mbf{W}^1 - \mbb{J}^2 \mbf{W}^2 \right) &= \mbb{J}^1 \frac{\mathrm{d} (\mbf{W}^1 - \mbf{W}^2)}{\mathrm{d}t} + (\mbb{J}^1 - \mbb{J}^2) \frac{\mathrm{d} \mbf{W}^2}{\mathrm{d}t}\\
&+ \frac{\mathrm{d}}{\mathrm{d}t}\mbb{J}^1  (\mbf{W}^1 - \mbf{W}^2) + \frac{\mathrm{d}}{\mathrm{d}t}(\mbb{J}^1 - \mbb{J}^2) \mbf{W}^2,
\end{aligned}
\label{contraction pf10}
\end{align}
and hence it is clear we require bounds on $(\mbb{J}^1 - \mbb{J}^2)$ and $\frac{\mathrm{d}}{\mathrm{d}t}(\mbb{J}^1 - \mbb{J}^2)$.
We recall from the proof of Lemma \ref{lemma: non degeneracy2} that we may write
\[ \mbb{J}^i(t) =  \int_{B(0)} \rho_B \left(|\mbb{O}^i(t)(\mbf{y}-\mbf{q}_0)|^2 \mbb{I} - \mbb{O}^i(t)(\mbf{y}-\mbf{q}_0) \otimes \mbb{O}^i(t)(\mbf{y}-\mbf{q}_0)\right) \]
where $\mbb{O}^i(t)$ is the orthogonal matrix such that
\[\frac{\mathrm{d}}{\mathrm{d}t} \mbb{O}^i(t)\mbf{z}_0 = \boldsymbol{\omega}^i(t) \times \mbb{O}^i(t) \mbf{z}_0,\]
for all $\mbf{z}_0 \in \mbb{R}^3$ and $\mbb{O}^i(0) = \mbb{I}$.
Thus one obtains that
\[ \mbb{J}^1(t) - \mbb{J}^2(t) = \int_{B(0)} \rho_B \left(\mbb{O}^2(t)(\mbf{y}-\mbf{q}_0) \otimes \mbb{O}^2(t)(\mbf{y}-\mbf{q}_0) - \mbb{O}^1(t)(\mbf{y}-\mbf{q}_0) \otimes \mbb{O}^1(t)(\mbf{y}-\mbf{q}_0)\right).  \]
It is now clear that we must bound $\mbb{O}^1(t) - \mbb{O}^2(t)$ which we observe is such that
\[ \frac{\mathrm{d}}{\mathrm{d}t} (\mbb{O}^1(t) - \mbb{O}^2(t))\mbf{z}_0 = (\boldsymbol{\omega}^1(t) - \boldsymbol{\omega}^2(t)) \times \mbb{O}^1(t) \mbf{z}_0 + \boldsymbol{\omega}^2(t) \times (\mbb{O}^1(t) - \mbb{O}^2(t))\mbf{z}_0. \]
Thus applying the Gr\"onwall inequality one obtains
\[ \left| (\mbb{O}^1(t) - \mbb{O}^2(t))\mbf{z}_0 \right| \leq |\mbf{z}_0|\left(\int_0^t |\boldsymbol{\omega}^1(s) - \boldsymbol{\omega}^2(s)| \, \mathrm{d}s \right) \exp\left( \int_0^t |\boldsymbol{\omega}^2(s)| \, \mathrm{d}s\right), \]
and we may use the embedding $\mbf{H}^1([0,T]) \hookrightarrow \mbf{C}^0([0,T])$ so that
\[ \left| (\mbb{O}^1(t) - \mbb{O}^2(t))\mbf{z}_0 \right| \leq Ct|\mbf{z}_0|\|\boldsymbol{\omega}^1 -\boldsymbol{\omega}^2\|_{\mbf{H}^1([0,T])}\exp\left( \int_0^t |\boldsymbol{\omega}^2(s)| \, \mathrm{d}s\right), \]
It is an immediate consequence to observe that
\[ \left| \frac{\mathrm{d}}{\mathrm{d}t} (\mbb{O}^1(t) - \mbb{O}^2(t))\mbf{z}_0  \right| \leq |(\boldsymbol{\omega}^1(t) - \boldsymbol{\omega}^2(t))| |\mbf{z}_0| + Ct|\mbf{z}_0|\|\boldsymbol{\omega}^1 -\boldsymbol{\omega}^2\|_{\mbf{H}^1([0,T])}\exp\left( \int_0^t |\boldsymbol{\omega}^2(s)| \, \mathrm{d}s\right). \]
These readily translate into bounds for $(\mbb{J}^1 - \mbb{J}^2)$ and $\frac{\mathrm{d}}{\mathrm{d}t}(\mbb{J}^1 - \mbb{J}^2)$.

We now use \eqref{contraction pf9} and \eqref{contraction pf10} to see that
\begin{equation}
\begin{aligned}
    \frac{\mathrm{d} (\mbf{W}^1 - \mbf{W}^2)}{\mathrm{d}t} &=  -(\mbb{J}^1)^{-1}(\mbb{J}^1 - \mbb{J}^2) \frac{\mathrm{d} \mbf{W}^2}{\mathrm{d}t} - (\mbb{J}^1)^{-1}\frac{\mathrm{d}}{\mathrm{d}t}\mbb{J}^1  (\mbf{W}^1 - \mbf{W}^2)\\
    &- (\mbb{J}^1)^{-1}\frac{\mathrm{d}}{\mathrm{d}t}(\mbb{J}^1 - \mbb{J}^2) \mbf{W}^2 + (\mbb{J}^1)^{-1} \int_{\partial B^2(t)} (\mbf{x} - \mbf{q}^2(t)) \times \mbb{T}^2 \boldsymbol{\nu}^2\\
    &- (\mbb{J}^1)^{-1}\int_{\partial B^1(t)} (\mbf{x} - \mbf{q}^1(t)) \times \mbb{T}^1 \boldsymbol{\nu}^1, 
\end{aligned}
\label{contraction pf11}
\end{equation}
where we recall from the proof of Lemma \ref{lemma: non degeneracy2} that $(\mbb{J}^1)^{-1}$ is uniformly bounded in $\mbf{L}^\infty([0,T])$.
It is clear how most of these terms are controlled, but we briefly explain how one controls the term involving $\frac{\mathrm{d}}{\mathrm{d}t}(\mbb{J}^1 - \mbb{J}^2)$.
Firstly we use the $\mbf{L}^\infty([0,T])$ bound on $(\mbb{J}^1)^{-1}$ to see that
\begin{align*}
    \int_0^T \left| (\mbb{J}^1)^{-1}\frac{\mathrm{d}}{\mathrm{d}t}(\mbb{J}^1 - \mbb{J}^2) \mbf{W}^2 \right|^2 &\leq C\int_0^T \left|\frac{\mathrm{d}}{\mathrm{d}t}(\mbb{J}^1 - \mbb{J}^2) \mbf{W}^2\right|^2\\
    &\leq C\int_0^T \left|\frac{\mathrm{d}}{\mathrm{d}t}(\mbb{J}^1 - \mbb{J}^2)\right|^2,
\end{align*}
where we have used the fact that $\mbf{W}^2$ is uniformly bounded in $\mbf{H}^1([0,T])$ (due to Lemma \ref{lemma: non degeneracy2}) and the embedding $\mbf{H}^1([0,T]) \hookrightarrow \mbf{C}^0([0,T])$ for the second inequality.
One then uses the above bound for $\frac{\mathrm{d}}{\mathrm{d}t} (\mbb{O}^1(t) - \mbb{O}^2(t))$ to deduce a bound for $\frac{\mathrm{d}}{\mathrm{d}t} (\mbb{J}^1(t) - \mbb{J}^2(t))$ so that one obtains a bound of the form
\begin{align*}
    \int_0^T \left|\frac{\mathrm{d}}{\mathrm{d}t}(\mbb{J}^1 - \mbb{J}^2)\right|^2 &\leq C \int_0^T \left(|\boldsymbol{\omega}^1 - \boldsymbol{\omega}^2|^2 + t \|\boldsymbol{\omega}^1 - \boldsymbol{\omega}^2\|_{\mbf{H}^1([0,T])}^2\right)\\
    & \leq CT \|\boldsymbol{\omega}^1 - \boldsymbol{\omega}^2\|_{\mbf{H}^1([0,T])}^2,
\end{align*}
where we have once again used the embedding $\mbf{H}^1([0,T]) \hookrightarrow \mbf{C}^0([0,T])$.
One proceeds in this manner for the other terms, and controls the integral terms in \eqref{contraction pf11} in the same manner as we did for the bounds on $\mbf{Q}^1 - \mbf{Q}^2$.
After repeating these arguments one ultimately obtains a bound of the form
\begin{align}
\begin{aligned}
    \|\mbf{W}^1 - \mbf{W}^2\|_{\mbf{H}^1([0,T])}^2 &\leq C_3\frac{\rho^2}{\rho_B^2} \left( \|\mbf{q}^1-\mbf{q}^2\|_{\mbf{H}^2([0,T])}^2 + \|\boldsymbol{\omega}^1-\boldsymbol{\omega}^2\|_{\mbf{H}^1([0,T])}^2 \right)\\
    &+ C_4 T^{2s} \left( \|\mbf{q}^1-\mbf{q}^2\|_{\mbf{H}^2([0,T])}^2 + \|\boldsymbol{\omega}^1-\boldsymbol{\omega}^2\|_{\mbf{H}^1([0,T])}^2 \right),
    \end{aligned}
    \label{contraction pf12}
\end{align}
for some $s > 0$, where $C_3$ depends only on $\Omega(0)$ and $\mathrm{diam}(B(0))$, and $C_3$ depends on $R$ and the initial data.
By combining \eqref{contraction pf8} and \eqref{contraction pf12} is clear that if the relative density, $\frac{\rho}{\rho_B}$, is sufficiently small in terms of the geometry of the problem, and $T$ is sufficiently small in terms of the initial data, then the map $\mathscr{F}$ is a contraction.
\end{proof}

\begin{proof}[Proof of Theorem \ref{thm: FSI existence}]
It is straightforward to show that $D(\mathscr{F})$ is a complete metric space, and hence using the Banach fixed point theorem, one finds that~$\mathscr{F}$ admits a unique fixed point, $(\mbf{q}, \boldsymbol{\omega}) \in D(\mathscr{F})$.
Defining $(\mbf{u},p)$ to be the unique strong solution to \eqref{eqn: evolving NS} defined by the fixed point, $(\mbf{q}, \boldsymbol{\omega})$, we find that the quadruple $(\mbf{q}, \boldsymbol{\omega}, \mbf{u}, p)$ is a strong solution to \eqref{eqn: FSI fluid}, \eqref{eqn: FSI rigid body} in the sense of Definition \ref{defn: strong soln fsi}.
Since any strong solution (in the sense of Definition \ref{defn: strong soln fsi}) to \eqref{eqn: FSI fluid}, \eqref{eqn: FSI rigid body} yields a fixed point of $\mathscr{F}$, and $\mathscr{F}$ has a unique fixed point, it is clear that such a strong solution is unique.
All in all we have shown Theorem \ref{thm: FSI existence}.
\end{proof}

\begin{remark}
We have shown that strong solutions, in the sense of \eqref{defn: strong soln fsi}, are unique under suitable assumptions on $T$ and $\frac{\rho}{\rho_B}$.
In the literature there are stronger uniqueness results for \eqref{eqn: FSI fluid}, \eqref{eqn: FSI rigid body}.
For example in \cite{muha2021uniqueness} a weak-strong uniqueness result is proven for strong solutions satisfying a Prodi--Serrin condition.
Similar weak-strong uniqueness results have also been established for related problems in \cite{chemetov2019weak,disser2016inertial,glass2015uniqueness,kreml2020weak,maity2024uniqueness}.
Note however that the choice of $\Phi$ defining the function spaces for the fluid quantities in Theorem \ref{thm: FSI existence} is clearly not unique, and depends on the choice of extension of the boundary data.
\end{remark}

\subsection{An alternate iteration}
\label{subsection: alternate iteration}

We now briefly discuss an alternate approach to our iterative argument which may be useful in the analysis of numerical methods, as evaluating the boundary integrals in~\eqref{eqn: updated position} and~\eqref{eqn: updated angular} may be problematic for the numerical analysis.
As such one may want to reformulate this iteration using only integral in the bulk domain, $\Omega(t)$.
For this we choose $\zeta(t): \Omega(t) \rightarrow \mbb{R}$ to be a sufficiently smooth function such that $\zeta(t) = 1$ on $\partial B(t)$ and $\zeta(t) = 0$ on $\Gamma$.
One such example of $\zeta(t)$ is given by the harmonic extension of this boundary data.
For this one realizes that the boundary integral in~\eqref{eqn: updated position} is
\[ \int_{\partial B(t)} \mbb{T} \boldsymbol{\nu} = \int_{\partial \Omega(t)} \zeta \mbb{T} \boldsymbol{\nu} = \int_{\Omega(t)} \Div \left( \zeta \mbb{T} \right) = \int_{\Omega(t)} (\grad \zeta)^T \mbb{T} + \int_{\Omega(t)} \zeta \Div \mbb{T} . \]
Recalling that $\Div \mbb{T} = \rho \left( \ddt{\mbf{u}} + (\mbf{u} \cdot \grad) \mbf{u}\right)$, one can now rewrite this final integral by observing, from Lemma~\ref{lemma: reynolds transport theorem} and the fact that $\mbf{V} = \mbf{u}$ on $\partial \Omega(t)$,
\begin{align*}
    \rho \frac{\mathrm{d}}{\mathrm{d}t} \int_{\Omega(t)} \zeta \mbf{u} = \rho \int_{\Omega(t)} \zeta \left( \ddt{\mbf{u}} + (\mbf{u} \cdot \grad) \mbf{u} \right)  + \rho \int_{\Omega(t)} \ddt{\zeta} \mbf{u} + (\mbf{u} \cdot \grad \zeta) \mbf{u}.
\end{align*}
Hence one can alternatively write~\eqref{eqn: updated position} as
\begin{multline*}
    m \frac{\mathrm{d} \mbf{Q}}{\mathrm{d}t} = m \mbf{v}_0 - \rho \int_{\Omega(t)} \zeta \mbf{u} + \rho \int_{\Omega(0)} \zeta(0) \mbf{u}_0 + \rho \int_0^t \int_{\Omega(s)} \ddt{\zeta} \mbf{u} + (\mbf{u} \cdot \grad \zeta) \mbf{u} \, \mathrm{d}s\\
    - \int_0^t \int_{\Omega(s)} (\grad \zeta)^T \mbb{T} \, \mathrm{d}s.
\end{multline*}
One may perform similar calculations to rewrite~\eqref{eqn: updated angular} as
\begin{align*}
    \frac{\mathrm{d}}{\mathrm{d}t} (\mbb{J} \mbf{W}) = -\int_{\Omega(t)} (\grad \zeta)^T (\mbf{x} - \mbf{q}) \times \mbb{T} + \int_{\Omega(t)} \zeta (\grad \times \mbb{T}) \cdot (\mbf{x} - \mbf{q}).
\end{align*}

\section{A numerical implementation}
\label{section: FSI numerics}
\subsection{An evolving finite element method}
The novelty of our proof of Theorem~\ref{thm: FSI existence} is in its constructive nature.
As such it is natural to use this construction to influence the design of numerical methods solving \eqref{eqn: FSI fluid}, \eqref{eqn: FSI rigid body}.
We now consider such a numerical method, by discretising in space and time, and using evolving finite element methods, cf.~\cite{elliott2021unified}, for which we shall denote our timestep size as $\tau$ and our spatial mesh size as $h$.
We will discretise in space using $\underline{\mbb{P}^2}$--$\mbb{P}^1$ finite elements, cf.~\cite[Section 8.8]{Boffi13Book}), for the velocity-pressure pairs where the underline denotes vector-valued elements.
Similar problems have been studied by using a semi-Lagrangian approach as in
~\cite{Danilov2017NavierStokesMovingDomain}, an Eulerian approach as in~\cite{Burman2020NitscheFSI,Burman2022StokesTimeDependentDomains,Neilan2024EulerianFEMNavierStokes,vonWahl2022StokesMovingDomains}, or an arbitrary Lagrangian--Eulerian approach as in~\cite{Gao2025ALEMethodForFSI,Liu2013ALEStokes,Rao2025ALEStokes} --- we shall take this latter approach.

For the following numerical method we assume that one begins with data 
\[(\mbf{q}^k_{n,h})_{n=0,\ldots,N_T} \subset \mbb{R}^3, \quad (\mbf{v}^k_{n,h})_{n=0,\ldots,N_T} \subset \mbb{R}^3 , \quad (\boldsymbol{\omega}^k_{n,h})_{n=0,\ldots,N_T} \subset \mbb{R}^3, \quad \mbf{u}_{0,h} \in \underline{\mbb{P}^2}(\Omega_{0,h}),\]
which are approximations of the centre of mass, velocity of the rigid body, angular momentum, and initial fluid velocity.
Here $\Omega_{0,h}$ is some triangulated domain approximating the known domain $\Omega(0)$, which will be used as the initial domain in each iteration, i.e. $\Omega_{0,h}^k = \Omega_{0,h}^{k-1} = \ldots = \Omega_{0,h}$, and we shall denote the nodes of $\Omega_h(0)$ as $(\mbf{x}^0_{0,i})_{i=1,\ldots, N_h} \subset \mbb{R}^3$.
Likewise we denote the number of timesteps at $N_T := T/\tau$ and the number of degrees of freedom as $N_h$.
We note that the superscript, $(\cdot)^k$, refers to the iteration we are at, and the subscripts $(\cdot)_{n,h}$ refer to the timestep $n$, and the dependence on the mesh size $h$.

Given this data the algorithm is as follows:
\begin{enumerate}
    \item Given some known spatial domain, $\Omega_{n-1,h}^k$, one must compute the next spatial domain, $\Omega_{n,h}^k$, which in turn requires computation of a discrete harmonic extension, $\mbf{V}_{n-1,h}^k \in \underline{\mbb{P}^2}(\Omega_{n-1,h}^k)$.
    This is computed as the $\underline{\mbb{P}^2}(\Omega_{n-1,h}^k)$ finite element solution of the vector-valued Laplace problem:
    \begin{equation}
        \begin{cases}
			-\Delta \mbf{V}_{n-1,h}^k = 0, &\text{ on }\ \Omega_{n-1,h}^k,\\
			\mbf{V}_{n-1,h}^k = \mbf{v}^k_{n-1} + \boldsymbol{\omega}^k_{n-1} \times (\mbf{x} - \mbf{q}^k_{n-1}), &\text{ on } \partial B_{n-1,h}^k,\\
			\mbf{V}_{n-1,h}^k = 0, &\text{ on } \Gamma_h,
		\end{cases}
        \label{eqn: finite element harmonic extension}
    \end{equation}
    where here spatial domain has disjoint boundary $\partial \Omega_{n-1,h}^k = \Gamma_h \cup \partial B_{n-1,h}^k$, where $\Gamma_h$ denotes the fixed boundary and $\partial B_{n-1,h}^k$ the moving boundary.
    One computes a finite element approximation of the pushforward map, $\Phi^k_{n,h} \in \underline{\mbb{P}^2}(\Omega_{n-1,h}^k)$, by approximating~\eqref{eqn: parametrisation defn} as
    \begin{equation}
        \frac{\Phi^k_{n,h} - \mbf{x}}{\tau} = \mbf{V}_{n-1,h}^k,
        \label{eqn: finite element nodal update}
    \end{equation}
    where $\mbf{x}$ is the identity on $\Omega_{n-1,h}^k$.
    One now updates spatial domain via $\Omega_{n,h}^k := \Phi_{n,h}^k(\Omega_{n-1,h}^k)$, i.e. we define new nodes $\mbf{x}^k_{n,i} := \Phi_{n,h}^k(\mbf{x}^k_{n-1,i})$ for all $i \in \{1,\ldots, N_h\}$.
    \item We now solve the Navier--Stokes equations on $\Omega_{n,h}^k$.
    That is to say, we look for functions $(\mbf{u}_{n,h}^k, p_{n,h}^k) \in \underline{\mbb{P}^2}(\Omega_{n,h}^k) \times \mbb{P}^1(\Omega_{n,h}^k)$ solving the saddle-point problem
    \begin{subequations}
        \begin{gather}
        \begin{aligned}
            \frac{\rho}{\tau} \left(\int_{\Omega_{n,h}^k} \mbf{u}_{n,h}^k \cdot \boldsymbol{\phi}_{n,h}^k - \int_{\Omega_{n-1,h}^k} \mbf{u}_{n-1,h}^k \cdot \boldsymbol{\phi}_{n-1,h}^k \right) - \rho \int_{\Omega_{n,h}^k} (\mbf{u}_{n,h}^k \cdot \grad \boldsymbol{\phi}_{n,h}^k) \mbf{u}_{n,h}^k\\
            + \rho \int_{\Omega_{n,h}^k} (\mbf{V}_{n,h}^k \cdot \grad \boldsymbol{\phi}_{n,h}^k) \mbf{u}_{n,h}^k + \mu \int_{\Omega_h} \grad \mbf{u}_{n,h}^k : \grad \boldsymbol{\phi}_{n,h}^k = 0,
        \end{aligned}\label{eqn: finite element navier stokes1}\\
            \int_{\Omega_{n,h}^k} \psi_{n,h}^k \Div \mbf{u}_{n,h}^k = 0, \label{eqn: finite element navier stokes2}
        \end{gather}
        \label{eqn: finite element navier stokes}
    \end{subequations}
    for all $(\boldsymbol{\phi}_{n,h}^k, \psi_{n,h}^k) \in \underline{\mbb{P}^2}(\Omega_{n,h}^k) \times \mbb{P}^1(\Omega_{n,h}^k)$ and such that
    \begin{equation}
        \begin{gathered}
            \mbf{u}_{n,h}^k = \mbf{v}^k_{n} + \boldsymbol{\omega}^k_{n} \times (\mbf{x} - \mbf{q}^k_{n}), \quad \text{on } \partial B_{n,h}^k,\\
            \mbf{u}_{n,h}^k = 0, \quad \text{on } \Gamma_h.\\
        \end{gathered}
        \label{eqn: finite element boundary conditions}
    \end{equation}
    Here $\boldsymbol{\phi}_{n-1,h}^k \in \underline{\mbb{P}^2}(\Omega_{n-1,h}^k)$ denotes the finite element function with the same nodal values as the test function $\boldsymbol{\phi}_{n,h}^k \in \underline{\mbb{P}^2}(\Omega_{n,h}^k)$ but defined on the previous spatial domain, $\Omega_{n-1,h}^k$.
    \item Finally we create new guesses for the centre of mass, rigid body velocity, and angular momentum
    \[(\mbf{q}^{k+1}_{n,h})_{n=0,\ldots,N_T} \subset \mbb{R}^3, \quad (\mbf{v}^k_{n,h})_{n=0,\ldots,N_T} \subset \mbb{R}^3 , \quad (\boldsymbol{\omega}^{k+1}_{n,h})_{n=0,\ldots,N_T} \subset \mbb{R}^3,\]
    by approximating~\eqref{eqn: updated position} and~\eqref{eqn: updated angular} by
    \begin{subequations}
    \label{eqn: finite element ODEs}
        \begin{gather}
        m\left(\frac{\mbf{q}_{n,h}^{k+1} - \mbf{q}_{n-1,h}^{k+1}}{\tau}\right) = \mbf{v}_{n,h}^{k+1}, \label{eqn: finite element mass ODE}\\
        \frac{\mbf{v}_{n,h}^{k+1} - \mbf{v}_{n-1,h}^{k+1}}{\tau} = -\int_{\partial B_{n,h}^k} \mbb{T}_{n,h}^k \boldsymbol{\nu}_{n,h}^k, \label{eqn: finite element velocity ODE}\\
        \frac{\mbb{J}_{n,h}^{k}\boldsymbol{\omega}_{n,h}^{k+1} - \mbb{J}_{n-1,h}^k\boldsymbol{\omega}_{n-1,h}^{k+1}}{\tau} = -\int_{\partial B_{n,h}^k} (\mbf{x} - \mbf{q}_{n,h}^k) \times \mbb{T}_{n,h}^k \boldsymbol{\nu}_{n,h}^k, \label{eqn: finite element angular ODE}
    \end{gather}
    \end{subequations}
    where
    \begin{gather*}
        \mbb{T}_{n,h}^k := -p_{n,h}^k\mbb{I} + \mu\left( \grad \mbf{u}_{n,h}^k + (\grad \mbf{u}_{n,h}^k)^T \right),\\
        \mbb{J}_{n,h}^k := \int_{B_{n,h}^k} \rho_B \left( |\mbf{x} - \mbf{q}_{n,h}^k|^2 \mbb{I} - (\mbf{x} - \mbf{q}_{n,h}^k) \otimes (\mbf{x} - \mbf{q}_{n,h}^k) \right).
    \end{gather*}
    Note that here the initial conditions known since
    \[ \mbf{q}^{k+1}_{0,h} = \mbf{q}^{k}_{0,h}, \quad \mbf{v}^{k+1}_{0,h} = \mbf{v}^{k}_{0,h}, \quad \boldsymbol{\omega}^{k+1}_{0,h} = \boldsymbol{\omega}^{k}_{0,h}. \]
    \item Repeat this iteration until some desired stopping criteria.
\end{enumerate}

\begin{remark}
    \label{remark: finite element advective term}
    The advective terms appearing in~\eqref{eqn: finite element navier stokes1} appear from the use of Reynold's transport theorem since, at the continuous level,
    \begin{align*}
    \frac{\mathrm{d}}{\mathrm{d}t} \int_{\Omega(t)} \mbf{u} \cdot \boldsymbol{\phi} &= \int_{\Omega(t)} \ddt{\mbf{u}} \cdot \boldsymbol{\phi} + \int_{\Omega(t)} {\mbf{u}} \cdot \ddt{\boldsymbol{\phi}} + \int_{\partial \Omega(t)} \mbf{u} \cdot \boldsymbol{\phi} (\mbf{V} \cdot \boldsymbol{\nu})\\
    &= \int_{\Omega(t)} \ddt{\mbf{u}} \cdot \boldsymbol{\phi} +  \int_{\Omega(t)} {\mbf{u}} \cdot \ddt{\boldsymbol{\phi}} + \int_{\partial \Omega(t)} \mbf{u} \cdot \boldsymbol{\phi} (\mbf{u} \cdot \boldsymbol{\nu})\\
    &= \int_{\Omega(t)} \ddt{\mbf{u}} \cdot \boldsymbol{\phi} + \int_{\Omega(t)} (\mbf{u} \cdot \grad \mbf{u}) \boldsymbol{\phi} + \int_{\Omega(t)} (\mbf{u} \cdot \grad \boldsymbol{\phi}) \mbf{u} - \int_{\Omega(t)} (\mbf{V}\cdot \grad \boldsymbol{\phi}) \mbf{u},
    \end{align*}
    for all functions $\boldsymbol{\phi}$ of the form $\boldsymbol{\phi} = \boldsymbol{\phi}_0 \circ \Phi(t)^{-1}$ for functions $\boldsymbol{\phi}_0$ defined on $\Omega(0)$ where we have used that $\matdev_{\mbf{V}} \boldsymbol{\phi} = 0$ and hence $\ddt{\boldsymbol{\phi}} = -\mbf{V} \cdot \grad \boldsymbol{\phi}$.
    This is precisely the form of the finite element basis functions we consider.
\end{remark}

\begin{remark}
    \label{remark: finite element advection implementation}
    For practical numerical methods one may alternatively write the quasilinear term, $\int_{\Omega_{n,h}^k} (\mbf{u}_{n,h}^k \cdot \grad \boldsymbol{\phi}_{n,h}^k) \mbf{u}_{n,h}^k$, as $\frac{1}{2}\int_{\Omega_{n,h}^k} (\mbf{u}_{n,h}^k \cdot \grad \boldsymbol{\phi}_{n,h}^k) \mbf{u}_{n,h}^k - \int_{\Omega_{n,h}^k} (\mbf{u}_{n,h}^k \cdot \grad \mbf{u}_{n,h}^k)\boldsymbol{\phi}_{n,h}^k$ to preserve the anti-symmetric property of the associated trilinear form.
    We refer the reader to~\cite{John2016FEMforIncompressibleFlow} for related discussion.
\end{remark}

Notice that the above algorithm requires one to store the data for the mesh, the initial data, and the rigid body data: $(\mbf{q}^k_{n,h})_{n=0,\ldots,N_T}$, $(\mbf{v}^k_{n,h})_{n=0,\ldots,N_T}$, and $(\boldsymbol{\omega}^k_{n,h})_{n=0,\ldots,N_T}$.
If $\tau$ is taken to be sufficiently small then this may be very memory-intensive, and so an alternative method would be to perform these iterations over each timestep, for some fixed amount of iterations $k_{\max}$, so that one only requires the data from the previous timestep.
We illustrate the dependencies of these methods in Figure~\ref{fig: scheme1 dependencies} and Figure~\ref{fig: scheme2 dependencies}.
Notice that one benefit of the method we have discussed above is that it allows for parallel computation, as illustrated in Figure~\ref{fig: scheme1 dependencies}.

\begin{figure}[ht]
	\centering
	\begin{tikzpicture}
		\draw[->,thick] (0,1.5) to (8, 1.5);
		\node at (4, 2) {Timestep $n \rightarrow N_T$};
		\draw[->, thick] (-1.5,0) to (-1.5, -8);
		\node at (-2,-4) {{\rotatebox{90}{Iteration $k \rightarrow \infty$}}};
		
		\draw[color = black](0,0) circle(1);
		\node at (0, 0.2) {$k=1$};
		\node at (0, -0.2) {$n=1$};
		\draw[->,thick] (1,0) to (2,0);
		\draw[color = black](3,0) circle(1);
		\node at (3, 0.2) {$k=1$};
		\node at (3, -0.2) {$n=2$};
		\draw[->,thick] (4,0) to (5,0);
		\draw[color = black](6,0) circle(1);
		\node at (6, 0.2) {$k=1$};
		\node at (6, -0.2) {$n=3$};
		\draw[->,thick] (7,0) to (8,0);
		\node at (8.5,0) {\ldots};
		\draw[color = black](0,-3) circle(1);
		\node at (0, -2.8) {$k=2$};
		\node at (0, -3.2) {$n=1$};
		\draw[->,thick] (1,-3) to (2,-3);
		\draw[color = black](3,-3) circle(1);
		\node at (3, -2.8) {$k=2$};
		\node at (3, -3.2) {$n=2$};
		\draw[->,thick] (4,-3) to (5,-3);
		\draw[color = black](6,-3) circle(1);
		\node at (6, -2.8) {$k=2$};
		\node at (6, -3.2) {$n=3$};
		\draw[->,thick] (7,-3) to (8,-3);
		\node at (8.5,-3) {\ldots};
		\draw[color = black](0,-6) circle(1);
		\node at (0, -5.8) {$k=3$};
		\node at (0, -6.2) {$n=1$};
		\draw[->,thick] (1,-6) to (2,-6);
		\draw[color = black](3,-6) circle(1);
		\node at (3, -5.8) {$k=3$};
		\node at (3, -6.2) {$n=2$};
		\draw[->,thick] (4,-6) to (5,-6);
		\draw[color = black](6,-6) circle(1);
		\node at (6, -5.8) {$k=3$};
		\node at (6, -6.2) {$n=3$};
		\draw[->,thick] (7,-6) to (8,-6);
		\node at (8.5,-6) {\ldots};
		\node at (0,-7.5) {\rotatebox{90}{\ldots}};
		\node at (3,-7.5) {\rotatebox{90}{\ldots}};
		\node at (6,-7.5) {\rotatebox{90}{\ldots}};
		\draw[->, thick] (0,-1) to (0,-2);
		\draw[->, thick] (3,-1) to (3,-2);
		\draw[->, thick] (6,-1) to (6,-2);
		\draw[->, thick] (0,-4) to (0,-5);
		\draw[->, thick] (3,-4) to (3,-5);
		\draw[->, thick] (6,-4) to (6,-5);
	\end{tikzpicture}
	\caption{Diagram showing the dependencies for the presented method.
    Notice that quantities on a diagonal line may be computed in parallel, e.g. $(k=1, n=2)$ and $(k=2, n=1)$.}
    \label{fig: scheme1 dependencies}
\end{figure}
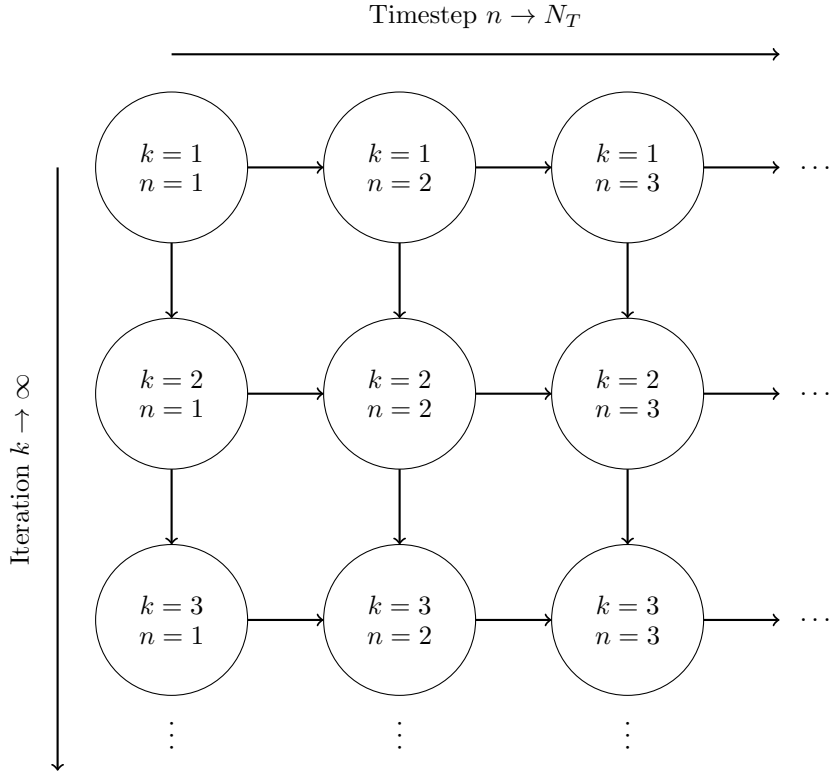

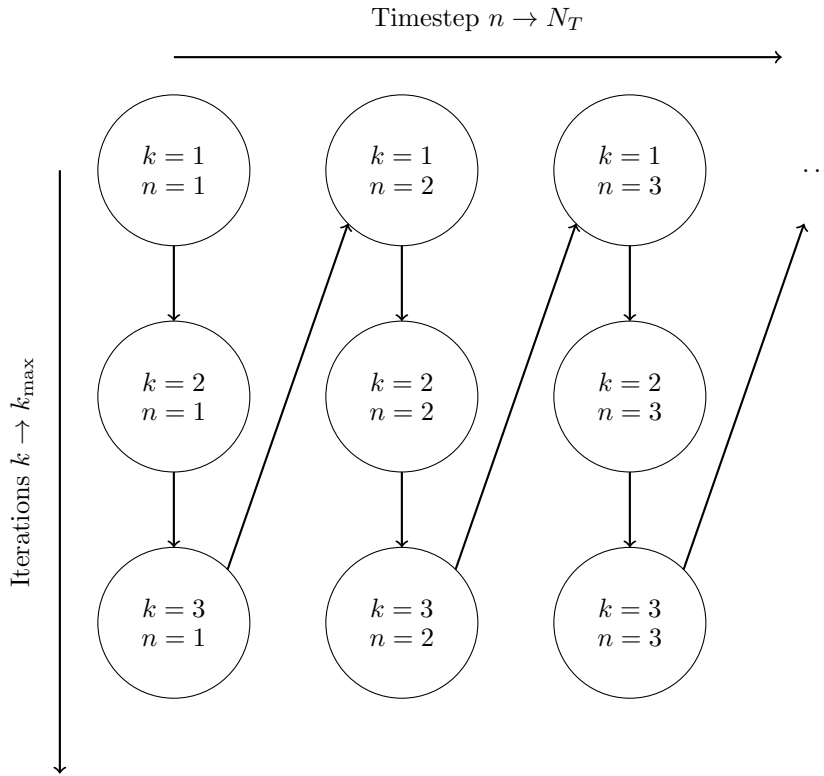
\begin{figure}[ht]
	\centering
	\begin{tikzpicture}
		\draw[->,thick] (0,1.5) to (8, 1.5);
		\node at (4, 2) {Timestep $n \rightarrow N_T$};
		\draw[->, thick] (-1.5,0) to (-1.5, -8);
		\node at (-2,-4) {{\rotatebox{90}{Iterations $k \rightarrow k_{\max}$}}};
		\draw[color = black](0,0) circle(1);
		\node at (0, 0.2) {$k=1$};
		\node at (0, -0.2) {$n=1$};
		\draw[color = black](3,0) circle(1);
		\node at (3, 0.2) {$k=1$};
		\node at (3, -0.2) {$n=2$};
		\draw[color = black](6,0) circle(1);
		\node at (6, 0.2) {$k=1$};
		\node at (6, -0.2) {$n=3$};
		\node at (8.5,0) {\ldots};
		\draw[color = black](0,-3) circle(1);
		\node at (0, -2.8) {$k=2$};
		\node at (0, -3.2) {$n=1$};
		\draw[color = black](3,-3) circle(1);
		\node at (3, -2.8) {$k=2$};
		\node at (3, -3.2) {$n=2$};
		\draw[color = black](6,-3) circle(1);
		\node at (6, -2.8) {$k=2$};
		\node at (6, -3.2) {$n=3$};
		\draw[color = black](0,-6) circle(1);
		\node at (0, -5.8) {$k=3$};
		\node at (0, -6.2) {$n=1$};
		\draw[color = black](3,-6) circle(1);
		\node at (3, -5.8) {$k=3$};
		\node at (3, -6.2) {$n=2$};
		\draw[color = black](6,-6) circle(1);
		\node at (6, -5.8) {$k=3$};
		\node at (6, -6.2) {$n=3$};
		\draw[->,thick] (0,-1) to (0, -2);
		\draw[->,thick] (0,-4) to (0, -5);
		\draw[->,thick] (3,-1) to (3, -2);
		\draw[->,thick] (3,-4) to (3, -5);
		\draw[->,thick] (6,-1) to (6, -2);
		\draw[->,thick] (6,-4) to (6, -5);
		\draw[->,thick] (0.707,-5.293) to (2.293, -0.707);
		\draw[->,thick] (3.707,-5.293) to (5.293, -0.707);
		\draw[->,thick] (6.707,-5.293) to (8.293, -0.707);
	\end{tikzpicture}
	\caption{Diagram showing the dependencies for the less memory-intensive method.
    Notice that the dependencies require this method to be computed in serial.}
	\label{fig: scheme2 dependencies}
\end{figure}

\subsection{Example with a ball falling under gravity}
We illustrate this numerical method with some two dimensional examples considering a ball, with radius $r = 0.1$, initially at rest in a fluid (with density $\rho = 1$ and viscosity $\mu = 1$) which is also initially at rest, moving under the influence of gravity.
Our finite element method was implemented using Firedrake \cite{rathgeber2016firedrake}, and we choose a timestep size $\tau = 5 \cdot 10^{-4}$ with a mesh size $h \approx 6.284 \cdot 10^{-2}$.
We choose our initial guess for the motion of the ball to be given by the equations
\[ \mbf{q}(t) = -\frac{9.8t^2}{2}\begin{pmatrix}
    0\\
    1
\end{pmatrix},
\quad 
\mbf{v}(t) = -9.8t\begin{pmatrix}
    0\\
    1
\end{pmatrix} \qquad \omega(t) = 0, \]
which one obtains in the case when there is no fluid.
Note that the angular velocity is a scalar quantity since we are in two dimensions.
We consider two examples: one where the ball is very dense ($\rho_B=\frac{200}{\pi}$) so that the relative density is small and Lemma \ref{lemma: fsi contraction} should hold, and one where the ball is less dense $(\rho_B = \frac{10}{\pi})$ to investigate the necessity of the smallness assumption on $\frac{\rho}{\rho_B}$.
For the ball with density $\rho_B = \frac{200}{\pi}$ we provide some numerical evidence of convergence in Table \ref{table: fsi convergence}, and show plots of the magnitude of the fluid in Figure \ref{fig: fsi plots}.
For the ball with density $\rho_B = \frac{10}{\pi}$ we provide some numerical evidence indicating a failure of convergence in Table~\ref{table: fsi failure}, Table~\ref{table: fsi failure refined}, and Figure~\ref{fig: fsi nonconvergence}.
\begin{table}[ht]
    \centering
    \begin{tabular}{|c|c|c|c|}
    \hline
        Iteration & $\mbf{q}$(T) & $\mbf{v}(T)$ & $\omega(T)$\\
        \hline
        0 &  $(0.5,0.4566)$ & $(-1.061\cdot 10^{-4},-8.144\cdot 10^{-1})$ & $-3.523\cdot 10^{-4}$\\
        \hline
        1 &  $(0.5,0.4561)$ & $(-8.824\cdot 10^{-5},-8.358\cdot 10^{-1})$ & $8.886\cdot 10^{-4}$\\
        \hline
        2 &  $(0.5,0.4561)$ & $(-8.842\cdot 10^{-5},-8.338\cdot 10^{-1})$ & $6.804\cdot 10^{-4}$\\
        \hline
        3 &  $(0.5,0.4561)$ & $(-8.861\cdot 10^{-5},-8.34\cdot 10^{-1})$ & $7.107\cdot 10^{-4}$\\
        \hline
        4 &  $(0.5,0.4561)$ & $(-8.861\cdot 10^{-5},-8.339\cdot 10^{-1})$ & $7.068\cdot 10^{-4}$\\
        \hline
    \end{tabular}
    \caption{Table of values of the position, velocity, and angular velocity of the ball (with density $\rho_B = \frac{200}{\pi}$) at time $T = 0.1$ for each iteration.}
    \label{table: fsi convergence}
\end{table}

\begin{figure}[p]
    \centering
    \begin{subfigure}[t]{0.3\linewidth}
        \centering
        \includegraphics[width =\linewidth]{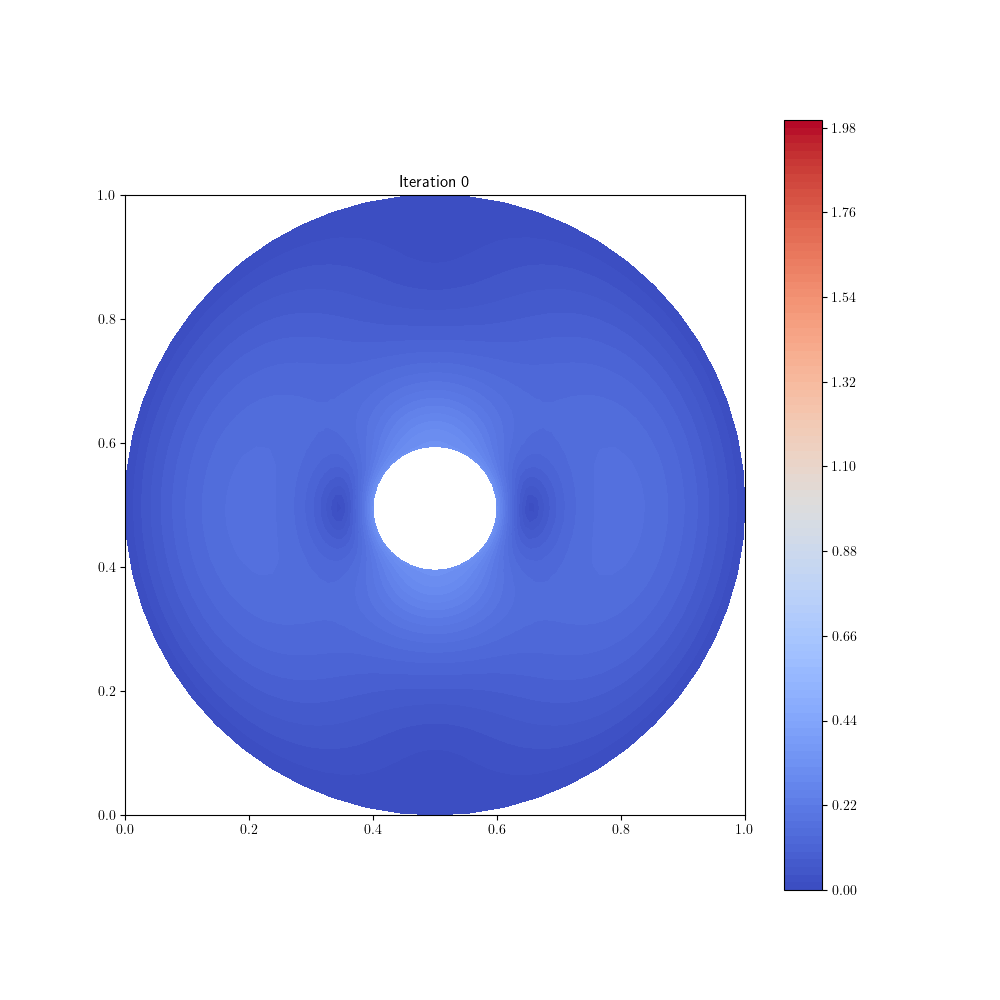}
        \caption{Iteration 0\\
        $t = 0.033$.}
    \end{subfigure}%
    ~
    \begin{subfigure}[t]{0.3\linewidth}
        \centering
        \includegraphics[width =\linewidth]{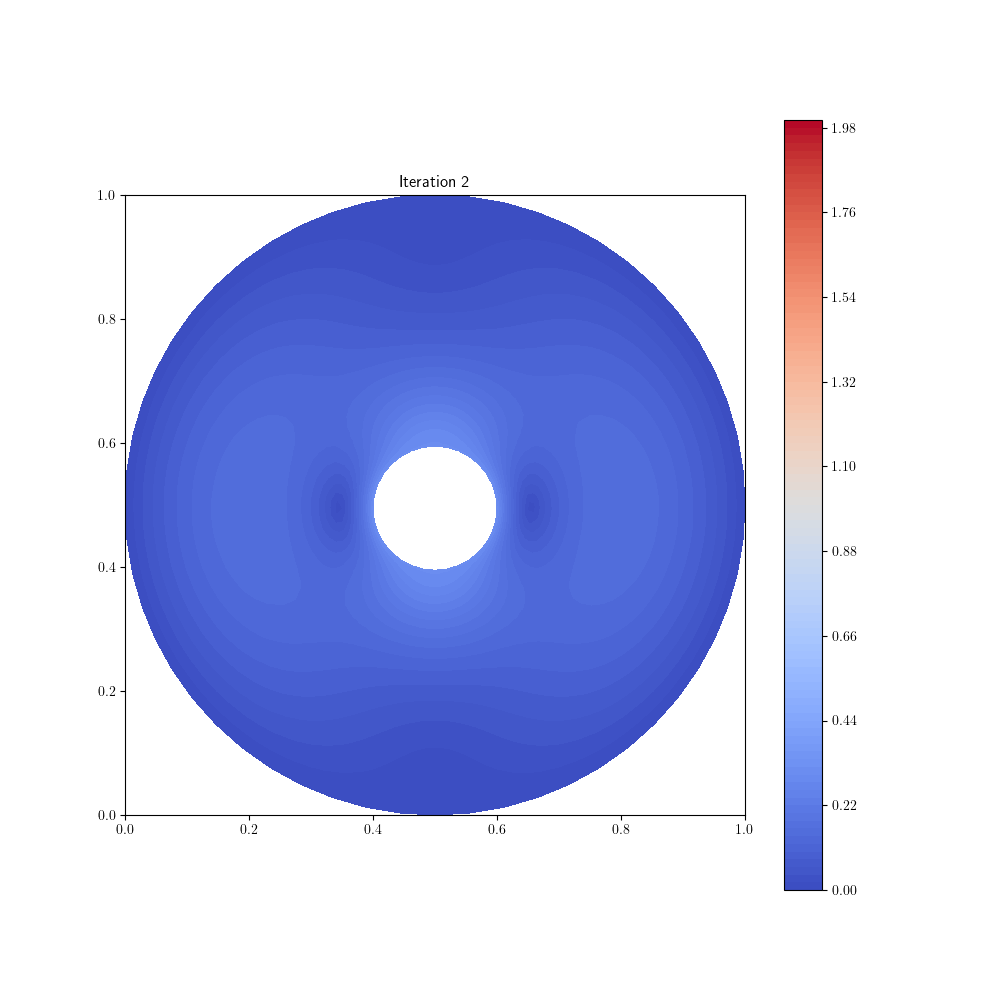}
        \caption{Iteration 2\\
        $t = 0.033$.}
    \end{subfigure}%
    ~
    \begin{subfigure}[t]{0.3\linewidth}
        \centering
        \includegraphics[width =\linewidth]{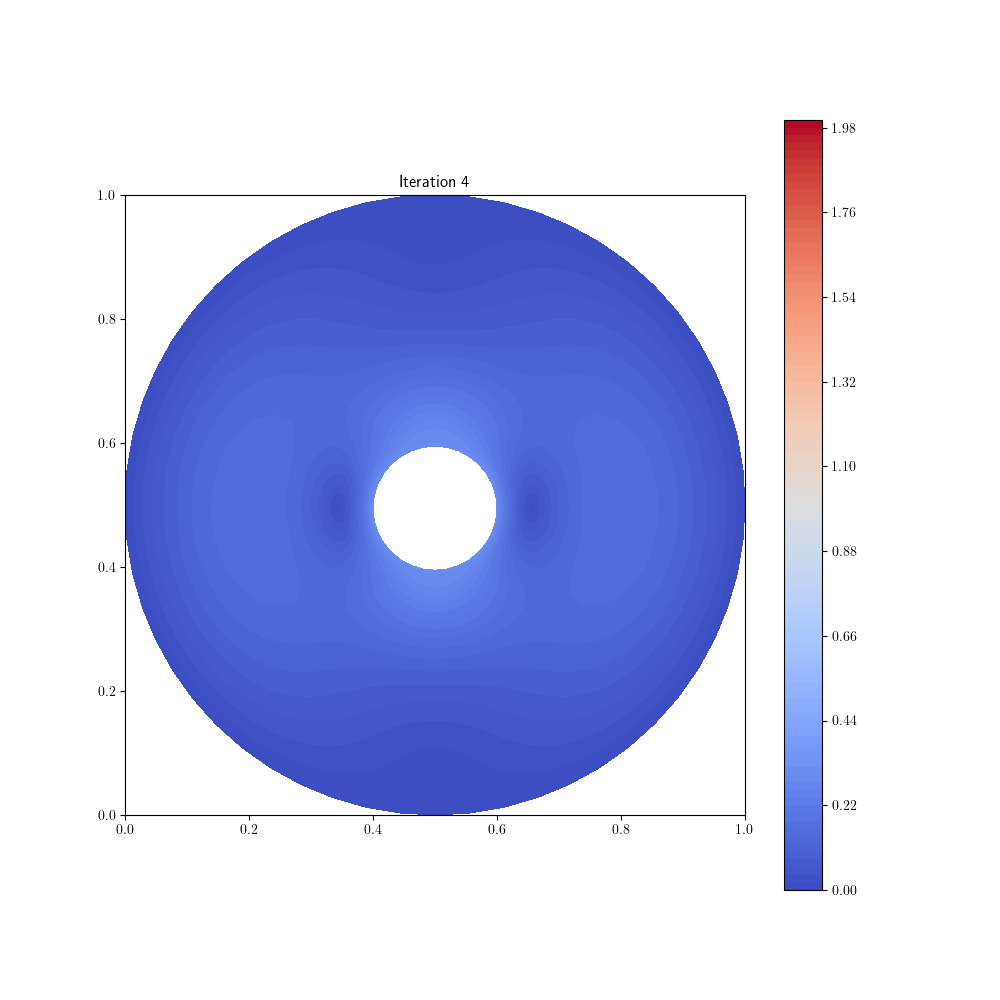}
        \caption{Iteration 4\\
        $t = 0.033$.}
    \end{subfigure}%
    \newline
    \begin{subfigure}[t]{0.3\linewidth}
        \centering
        \includegraphics[width =\linewidth]{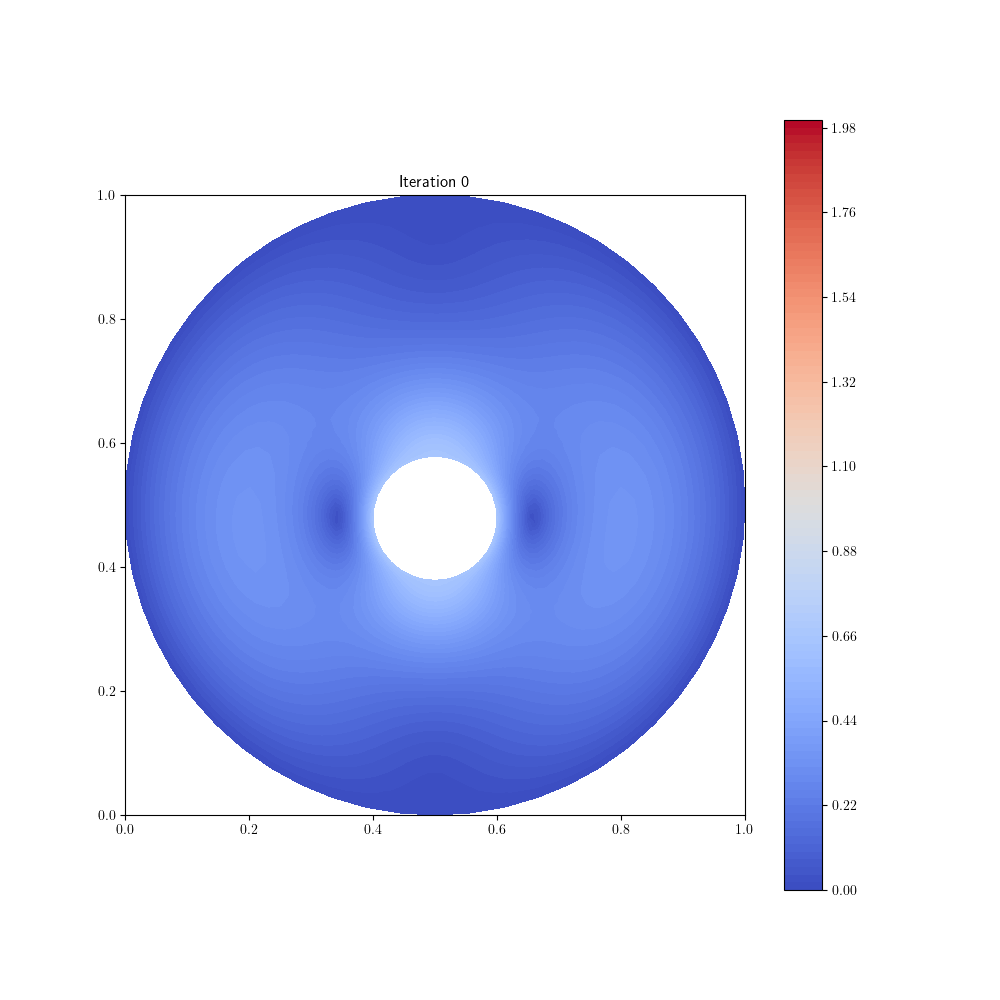}
        \caption{Iteration 0\\
        $t = 0.066$.}
    \end{subfigure}%
    ~
    \begin{subfigure}[t]{0.3\linewidth}
        \centering
        \includegraphics[width =\linewidth]{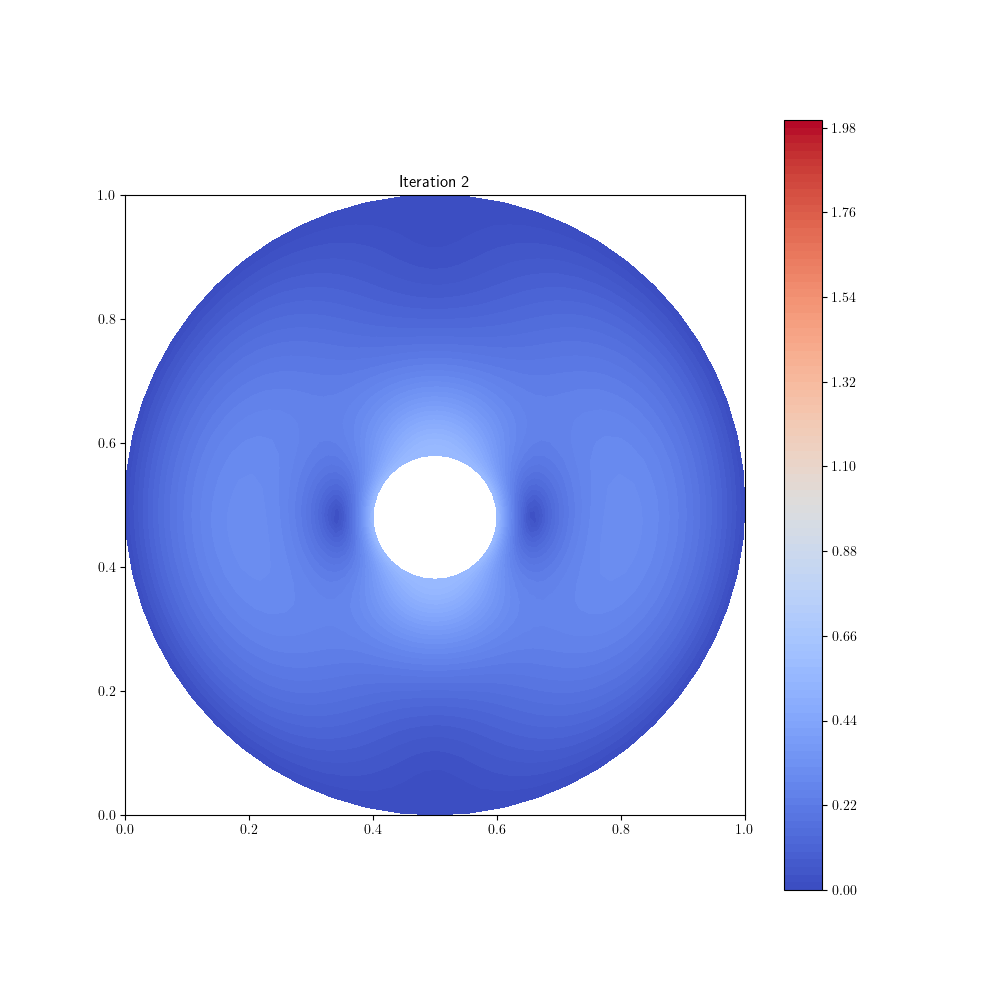}
        \caption{Iteration 2\\
        $t = 0.066$.}
    \end{subfigure}%
    ~
    \begin{subfigure}[t]{0.3\linewidth}
        \centering
        \includegraphics[width =\linewidth]{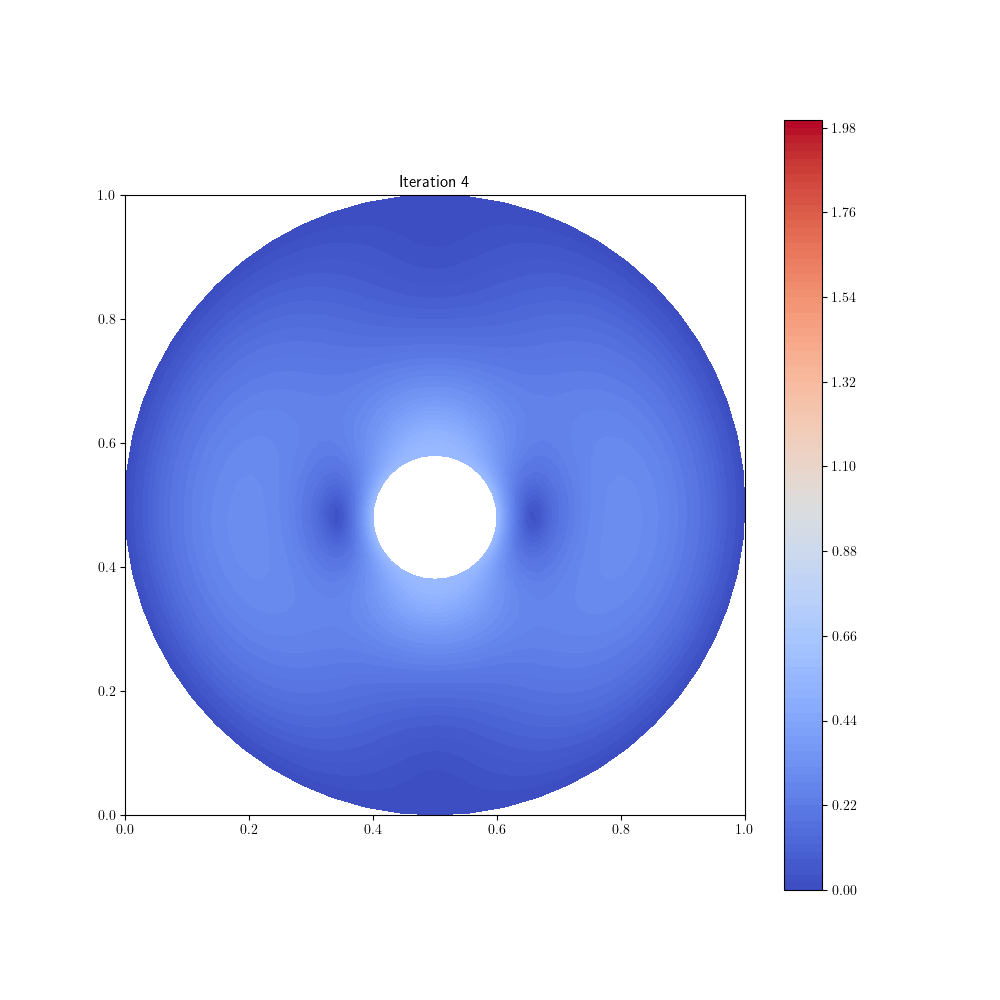}
        \caption{Iteration 4\\
        $t = 0.066$.}
    \end{subfigure}%
    \newline
    \begin{subfigure}[t]{0.3\linewidth}
        \centering
        \includegraphics[width =\linewidth]{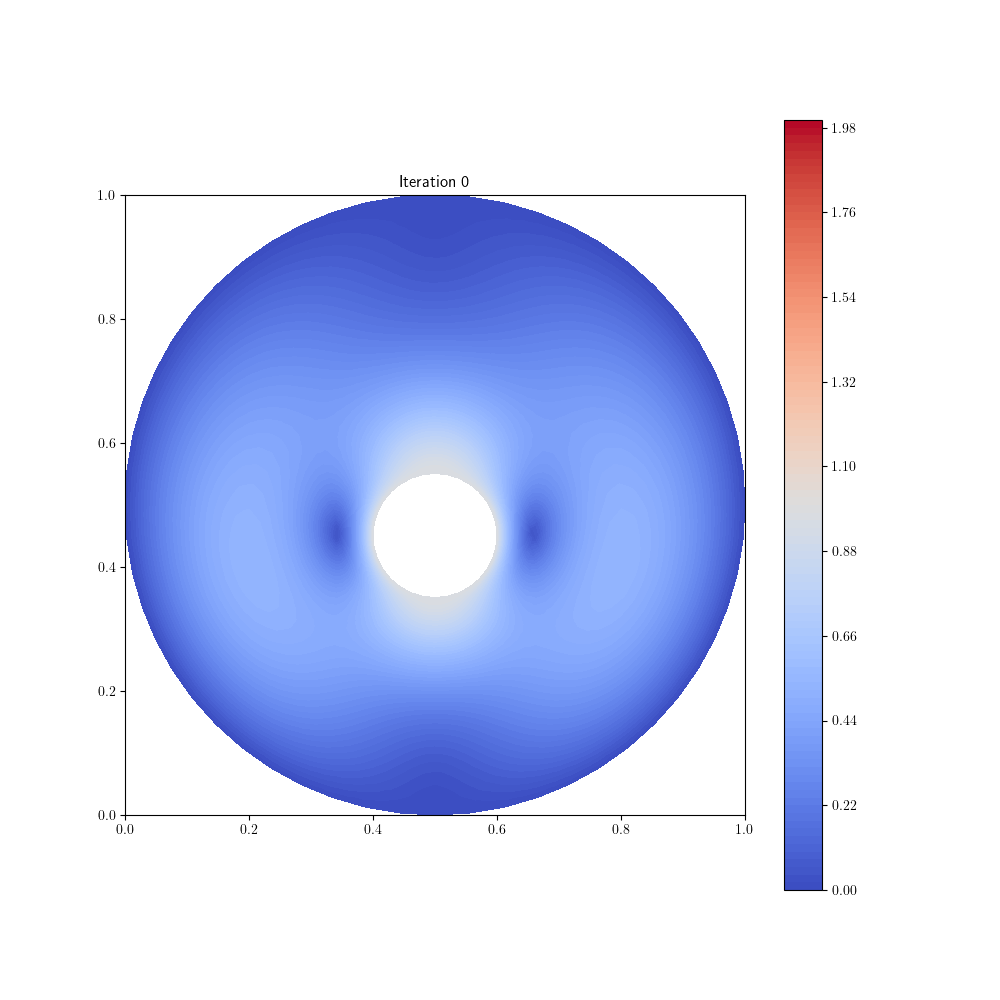}
        \caption{Iteration 0\\
        $t = 0.1$.}
    \end{subfigure}%
    ~
    \begin{subfigure}[t]{0.3\linewidth}
        \centering
        \includegraphics[width =\linewidth]{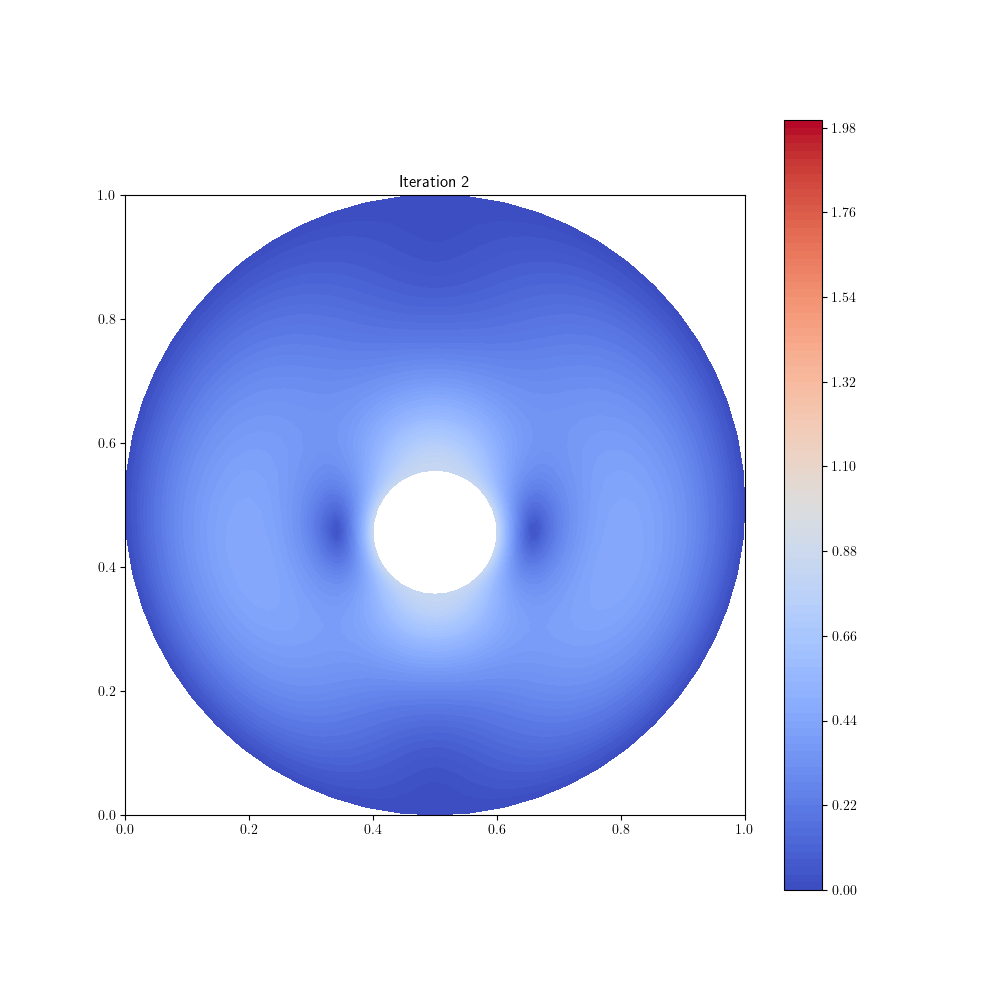}
        \caption{Iteration 2\\
        $t = 0.1$.}
    \end{subfigure}%
    ~
    \begin{subfigure}[t]{0.3\linewidth}
        \centering
        \includegraphics[width =\linewidth]{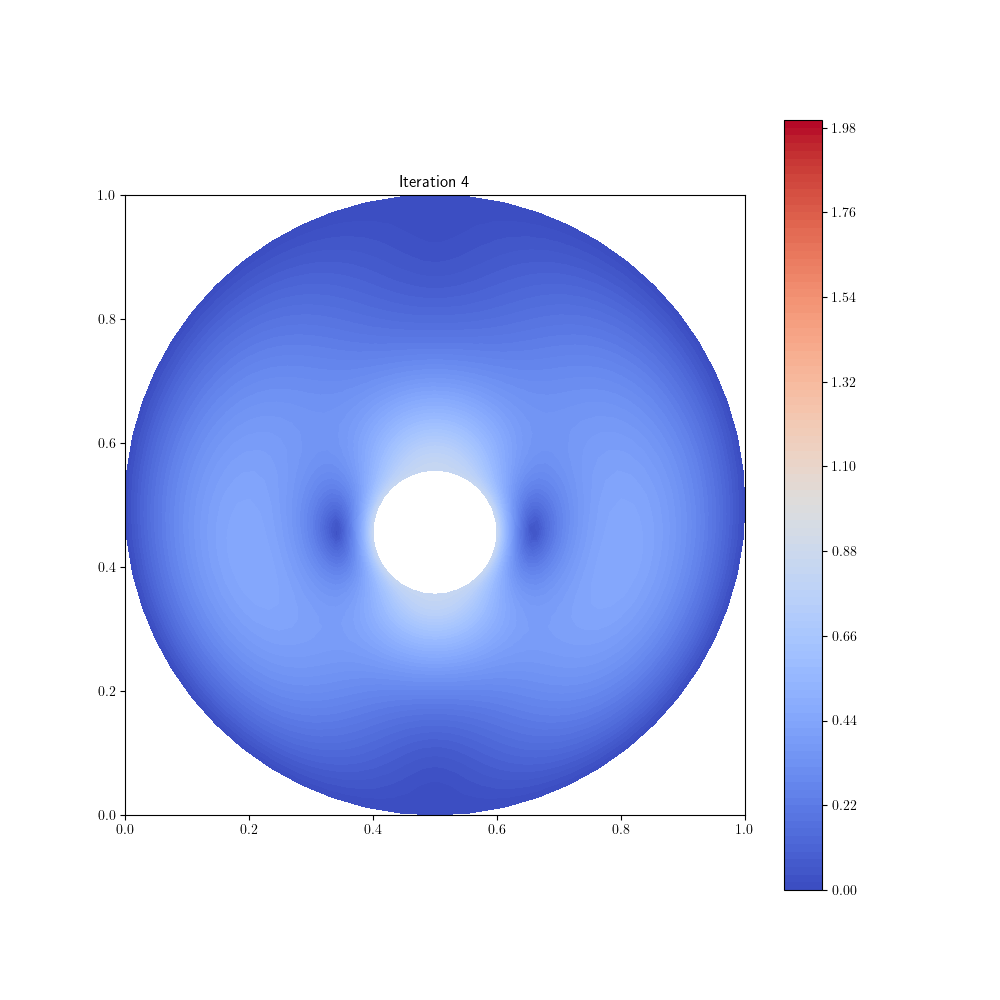}
        \caption{Iteration 4\\
        $t = 0.1$.}
    \end{subfigure}%
    \caption{Plots of the magnitude of the fluid velocity with a rigid body with density $\rho_B = \frac{200}{\pi}$.
    Row corresponds to $t = 0.033$, $t = 0.066$, and $t = 0.1$ respectively, and each column corresponds to the $0$th, $2$nd and $4$th iteration respectively.}
    \label{fig: fsi plots}
\end{figure}

This illustrates how our construction may be used in the design of a numerical method to solve \eqref{eqn: FSI fluid}, \eqref{eqn: FSI rigid body}, provided that the relative density is sufficiently small.
The rigorous numerical analysis is the topic for future work --- in particular it is likely one will encounter mesh degeneration which must be appropriately resolved for a reliable numerical method, for example by using introducing choosing the advective velocity as in~\cite{Elliott2016DeTurck}.
For the numerical experiments with a ball with small density ($\rho_B = \frac{10}{\pi}$) one observes that the iterations do indeed fail to converge, as illustrated in Table~\ref{table: fsi failure}, Table~\ref{table: fsi failure refined} and Figure~\ref{fig: fsi nonconvergence}.
This suggests that our smallness assumption on the relative density is in fact crucial for the convergence of our iterations.
As seen in the proof of Lemma~\ref{lemma: non degeneracy2} this issue seems to stem from the computation of the integral of the pressure over $\partial B(t)$.
It remains to be seen whether one can alter the iteration presented in this paper in such a way to remove the computation of this boundary integral, which should hopefully allow for arbitrary relative densities.
\begin{table}[ht]
    \centering
    \begin{tabular}{|c|c|c|c|}
    \hline
        Iteration & $\mbf{q}(T)$ & $\mbf{v}(T)$ & $\omega(T)$\\
        \hline
        0 & $(0.5,0.505)$ &   $(-6.158\cdot 10^{-4},4.379\cdot 10^{-1})$ & $3.225\cdot 10^{-3}$\\
        \hline
        1 & $(0.5,0.4854)$ & $(1.146\cdot 10^{-3},-8.709\cdot 10^{-1})$ & $-1.539\cdot 10^{-2}$\\
        \hline
        2 & $(0.5,0.5051)$ & $(-2.221\cdot 10^{-3},6.832\cdot 10^{-1})$ & $4.853\cdot 10^{-2}$\\
        \hline
        3 & $(0.5,0.4871)$ & $(2.423\cdot 10^{-3},-9.444\cdot 10^{-1})$ & $-1.161\cdot 10^{-1}$\\
        \hline
        4 & $(0.5,0.5023)$ & $(-3.963\cdot 10^{-3},6.012\cdot 10^{-1})$ & $2.388\cdot 10^{-1}$\\
        \hline
        5 & $(0.5,0.4902)$ & $(3.16\cdot 10^{-3},-7.541\cdot 10^{-1})$ & $-4.331\cdot 10^{-1}$\\
        \hline
        6 & $(0.5,0.4993)$ & $(-5.704\cdot 10^{-3},3.576\cdot 10^{-1})$ & $7.125\cdot 10^{-1}$\\
        \hline
        7 & $(0.5,0.4928)$ & $(4.418\cdot 10^{-3},-5.038\cdot 10^{-1})$ & $-1.077$\\
        \hline
        8 & $(0.5,0.4973)$ & $(-7.897\cdot 10^{-3},1.313\cdot 10^{-1})$ & $1.513$\\
        \hline
        9 & $(0.5,0.4943)$ & $(7.204\cdot 10^{-3},-3.168\cdot 10^{-1})$ & $-1.991$\\
        \hline
    \end{tabular}
    \caption{Table of values of the position, velocity, and angular velocity of the ball (with density $\rho_B = \frac{10}{\pi}$) at time $T = 0.05$ for each iteration.
    We note the apparent non-convergence in the $x$-component of the velocity, and in the angular velocity.}
    \label{table: fsi failure}
\end{table}

Lastly, choosing a finer mesh ($h = 4.879 \cdot 10^{-2}$) and smaller timestep ($\tau = 2.5 \cdot 10^{-5}$), one still seems to exhibit the same convergence (respectively non-convergence) behaviour for $\rho_B = \frac{200}{\pi}$ (respectively $\rho_B = \frac{10}{\pi}$).
This is illustrated in Table~\ref{table: fsi convergence refined} and Table~\ref{table: fsi failure refined}.
We similarly obtain the same behaviour when fixing $h$ and taking $\tau$ to be smaller, but we do not include these results here.

\begin{table}[ht]
    \centering
    \begin{tabular}{|c|c|c|c|}
    \hline
        Iteration & $\mbf{q}(T)$ & $\mbf{v}(T)$ & $\omega(T)$\\
        \hline
        0 & $(0.5,0.4567)$ &   $(-7.177\cdot 10^{-5},-8.149\cdot 10^{-1})$ & $-1.442\cdot 10^{-3}$\\
        \hline
        1 & $(0.5,0.4561)$ & $(-6.126\cdot 10^{-5},-8.362\cdot 10^{-1})$ & $-3.383\cdot 10^{-4}$\\
        \hline
        2 & $(0.5,0.4562)$ & $(-6.027\cdot 10^{-5},-8.342\cdot 10^{-1})$ & $-5.8\cdot 10^{-4}$\\
        \hline
        3 & $(0.5,0.4562)$ & $(-6.07\cdot 10^{-5},-8.344\cdot 10^{-1})$ & $-5.345\cdot 10^{-4}$\\
        \hline
        4 & $(0.5,0.4562)$ & $(-6.07\cdot 10^{-5},-8.344\cdot 10^{-1})$ & $-5.421\cdot 10^{-4}$\\
        \hline
    \end{tabular}
    \caption{Table of values of the position, velocity, and angular velocity of the ball (with density $\rho_B = \frac{200}{\pi}$) at time $T = 0.1$ for each iteration.
    Here $\tau = 2.5\cdot 10^{-5}$ and $h \approx 4.879\cdot 10^{-2}$.}
    \label{table: fsi convergence refined}
\end{table}

\begin{table}[ht]
    \centering
    \begin{tabular}{|c|c|c|c|}
    \hline
        Iteration & $\mbf{q}(T)$ & $\mbf{v}(T)$ & $\omega(T)$\\
        \hline
        0 & $(0.5,0.5046)$ &   $(-4.065\cdot 10^{-4},-3.978\cdot 10^{-1})$ & $4.324\cdot 10^{-1}$\\
        \hline
        1 & $(0.5,0.4857)$ & $(7.761\cdot 10^{-4},-8.586\cdot 10^{-1})$ & $1.998\cdot 10^{-2}$\\
        \hline
        2 & $(0.5,0.5047)$ & $(-1.138\cdot 10^{-3},6.603\cdot 10^{-1})$ & $-5.574\cdot 10^{-2}$\\
        \hline
        3 & $(0.5,0.4875)$ & $(1.558\cdot 10^{-3},-9.134\cdot 10^{-1})$ & $1.249\cdot 10^{-1}$\\
        \hline
        4 & $(0.5,0.5018)$ & $(-8.834\cdot 10^{-4},5.629\cdot 10^{-1})$ & $-2.467\cdot 10^{-1}$\\
        \hline
        5 & $(0.5,0.4907)$ & $(1.187\cdot 10^{-3},-7.144\cdot 10^{-1})$ & $4.284\cdot 10^{-1}$\\
        \hline
    \end{tabular}
    \caption{Table of values of the position, velocity, and angular velocity of the ball (with density $\rho_B = \frac{10}{\pi}$) at time $T = 0.05$ for each iteration.
    Here $\tau = 2.5\cdot 10^{-5}$ and $h \approx 4.879\cdot 10^{-2}$.}
    \label{table: fsi failure refined}
\end{table}

\begin{figure}[ht]
    \centering
    \begin{subfigure}[t]{0.3\linewidth}
        \centering
        \includegraphics[width =\linewidth]{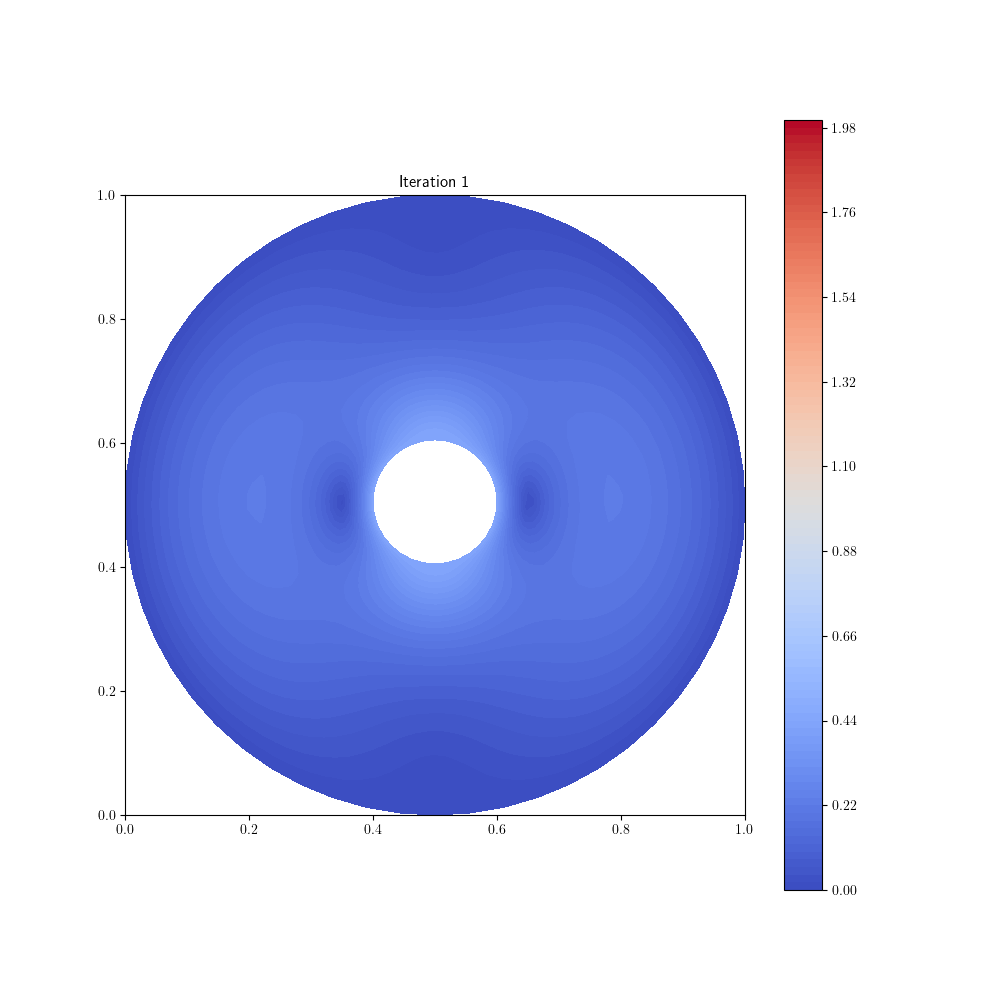}
        \caption{Iteration 1.}
    \end{subfigure}%
    ~
    \begin{subfigure}[t]{0.3\linewidth}
        \centering
        \includegraphics[width =\linewidth]{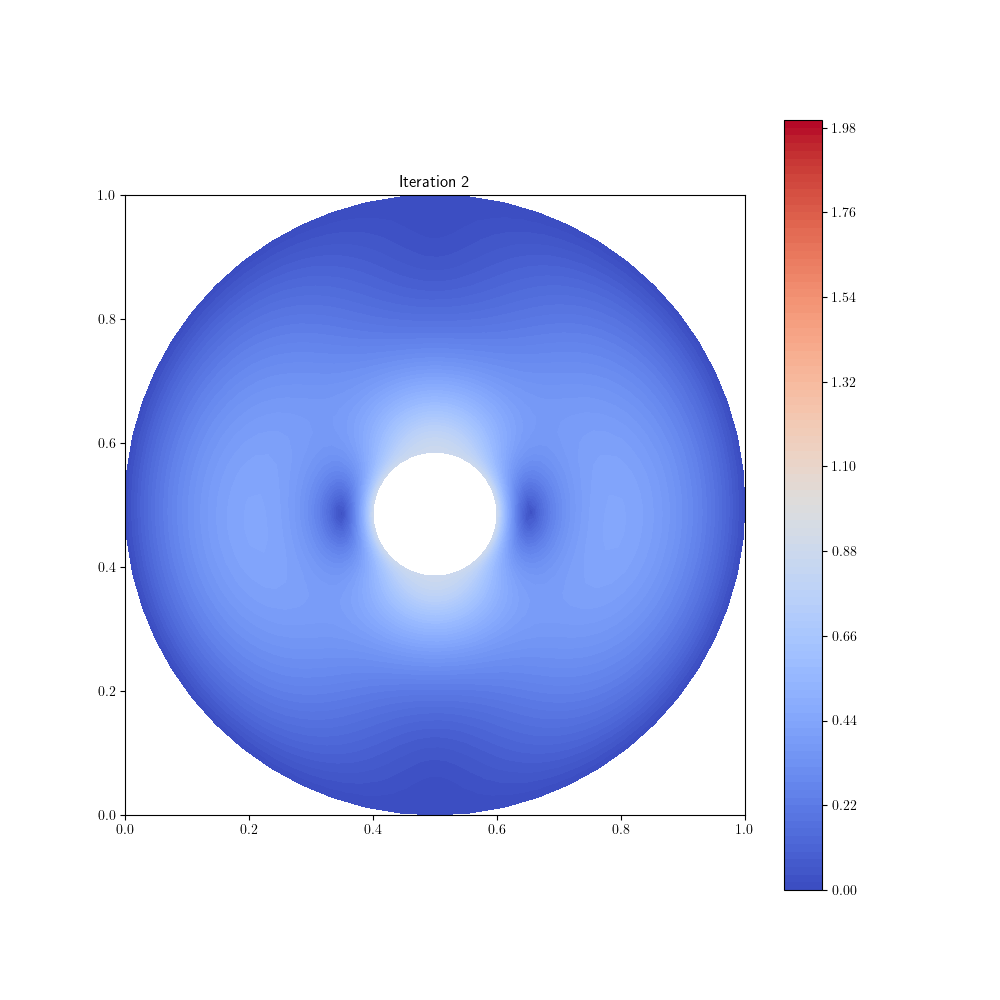}
        \caption{Iteration 2.}
    \end{subfigure}%
    ~
    \begin{subfigure}[t]{0.3\linewidth}
        \centering
        \includegraphics[width =\linewidth]{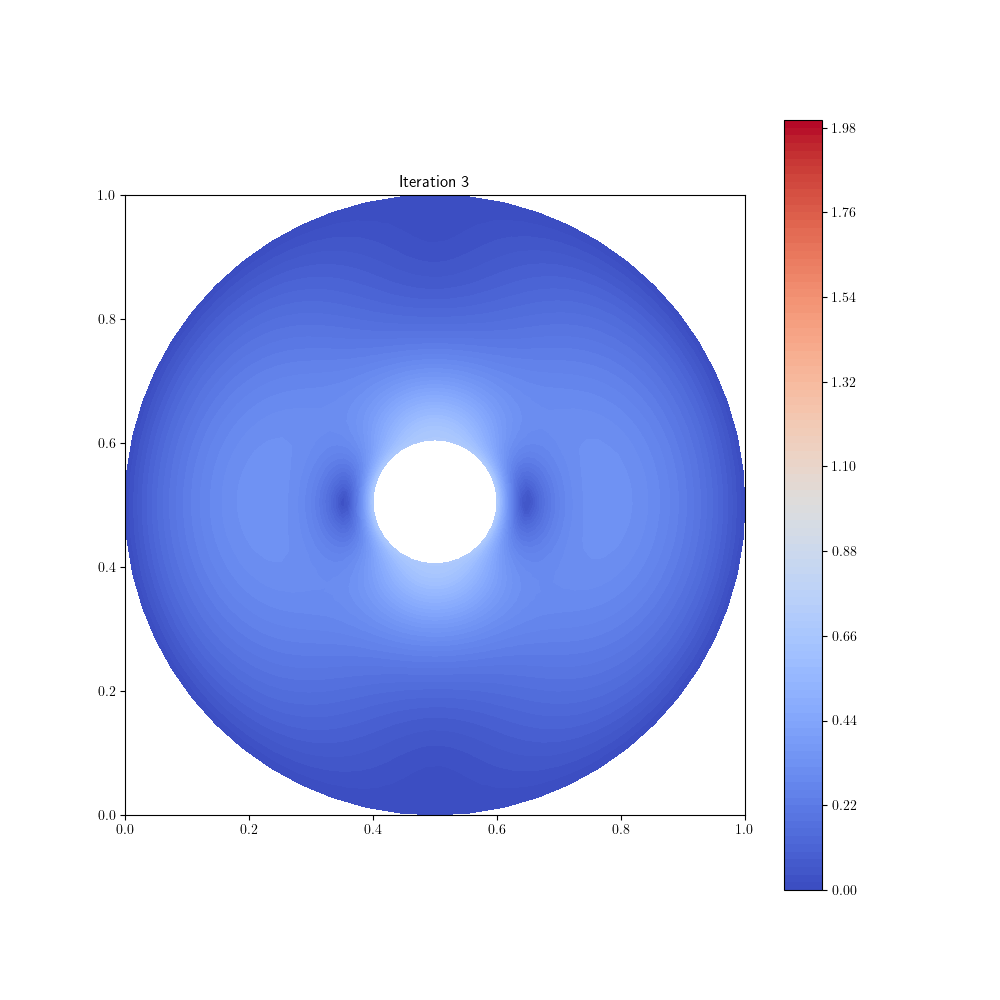}
        \caption{Iteration 3.}
    \end{subfigure}%
    \newline
    \begin{subfigure}[t]{0.3\linewidth}
        \centering
        \includegraphics[width =\linewidth]{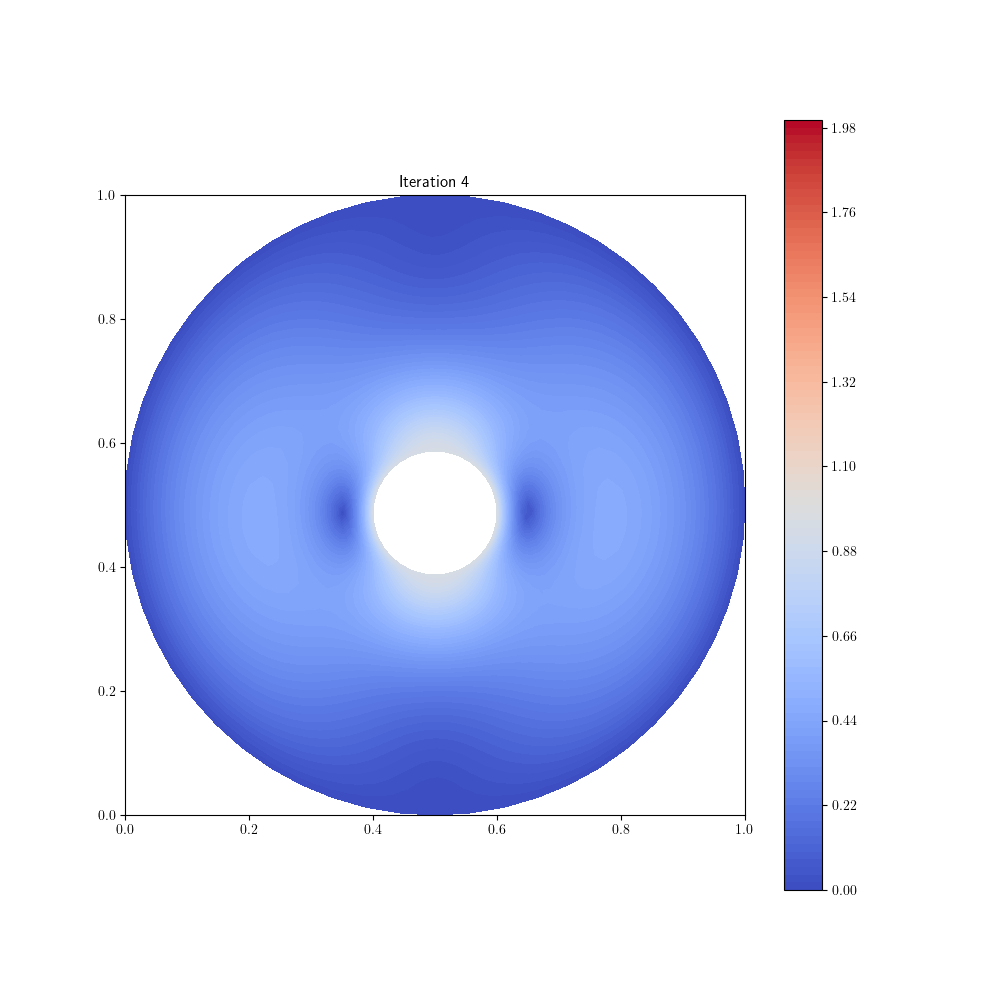}
        \caption{Iteration 4.}
    \end{subfigure}%
    ~
    \begin{subfigure}[t]{0.3\linewidth}
        \centering
        \includegraphics[width =\linewidth]{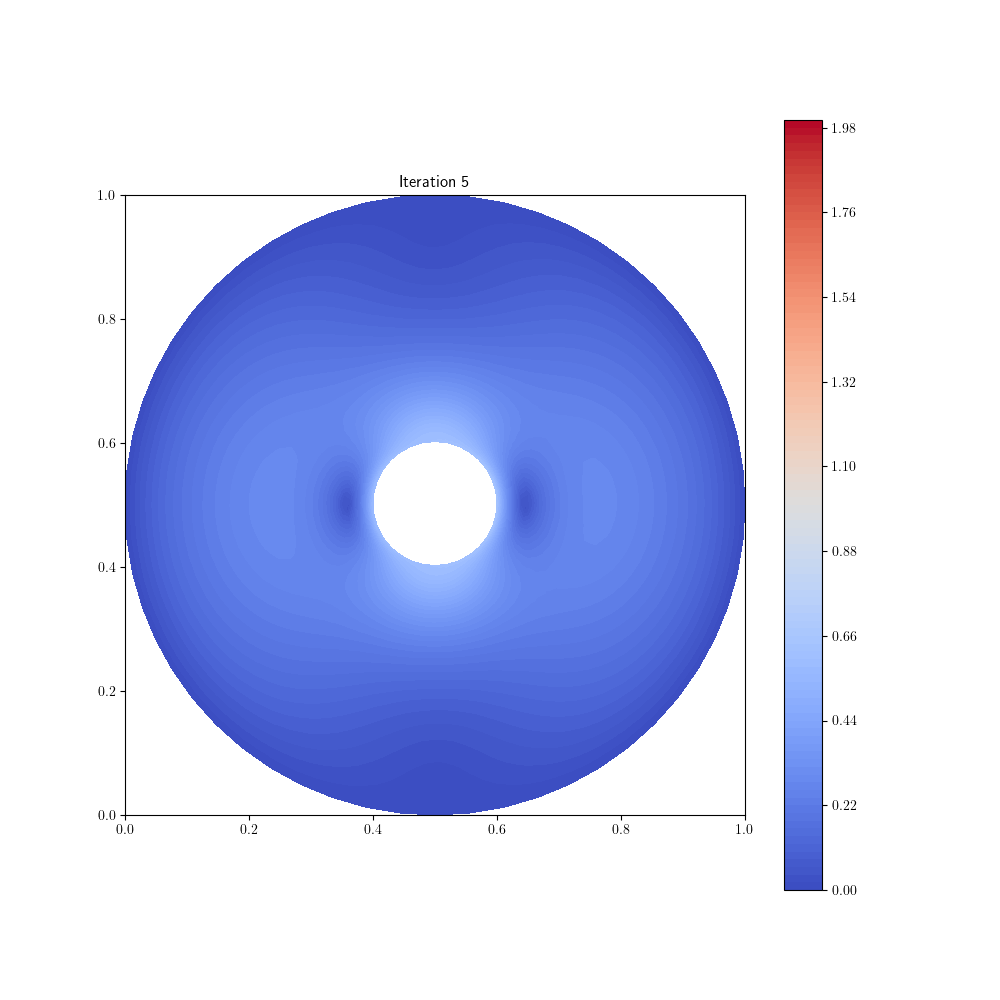}
        \caption{Iteration 5.}
    \end{subfigure}%
    ~
    \begin{subfigure}[t]{0.3\linewidth}
        \centering
        \includegraphics[width =\linewidth]{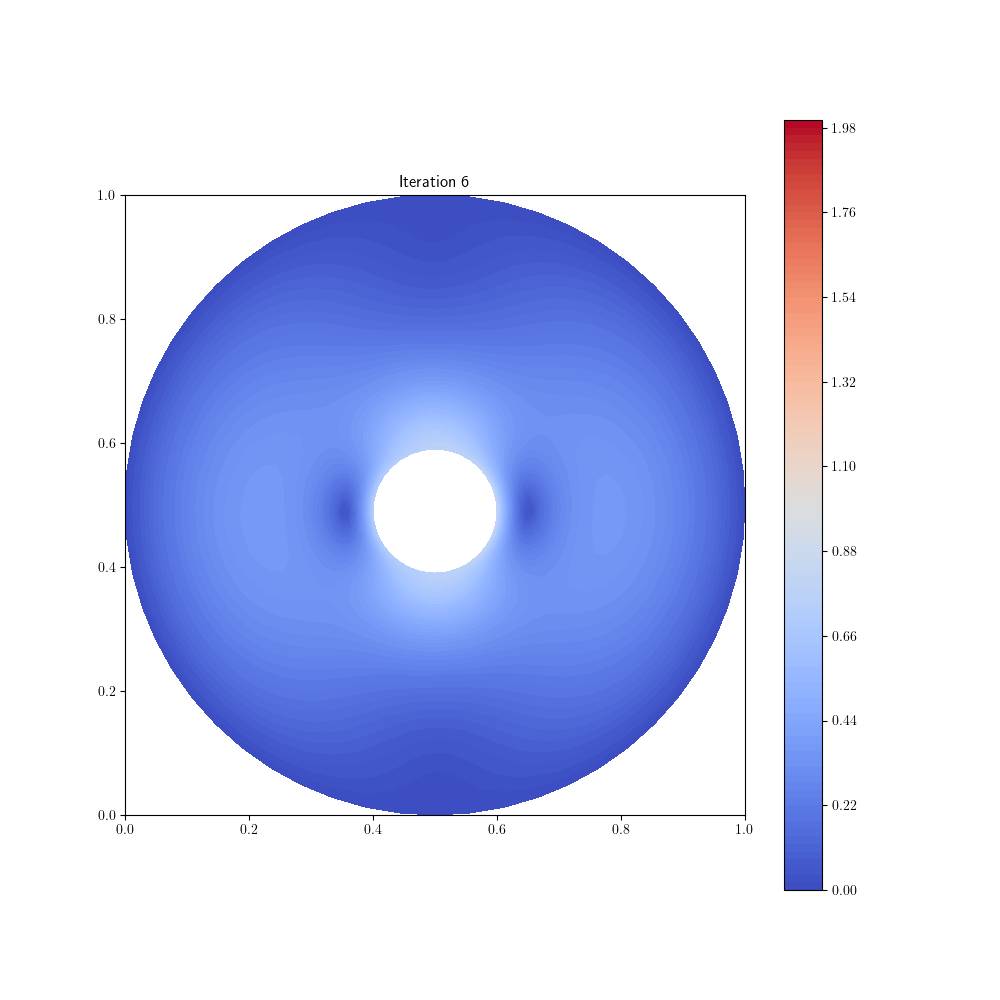}
        \caption{Iteration 6.}
    \end{subfigure}%
    \newline
    \begin{subfigure}[t]{0.3\linewidth}
        \centering
        \includegraphics[width =\linewidth]{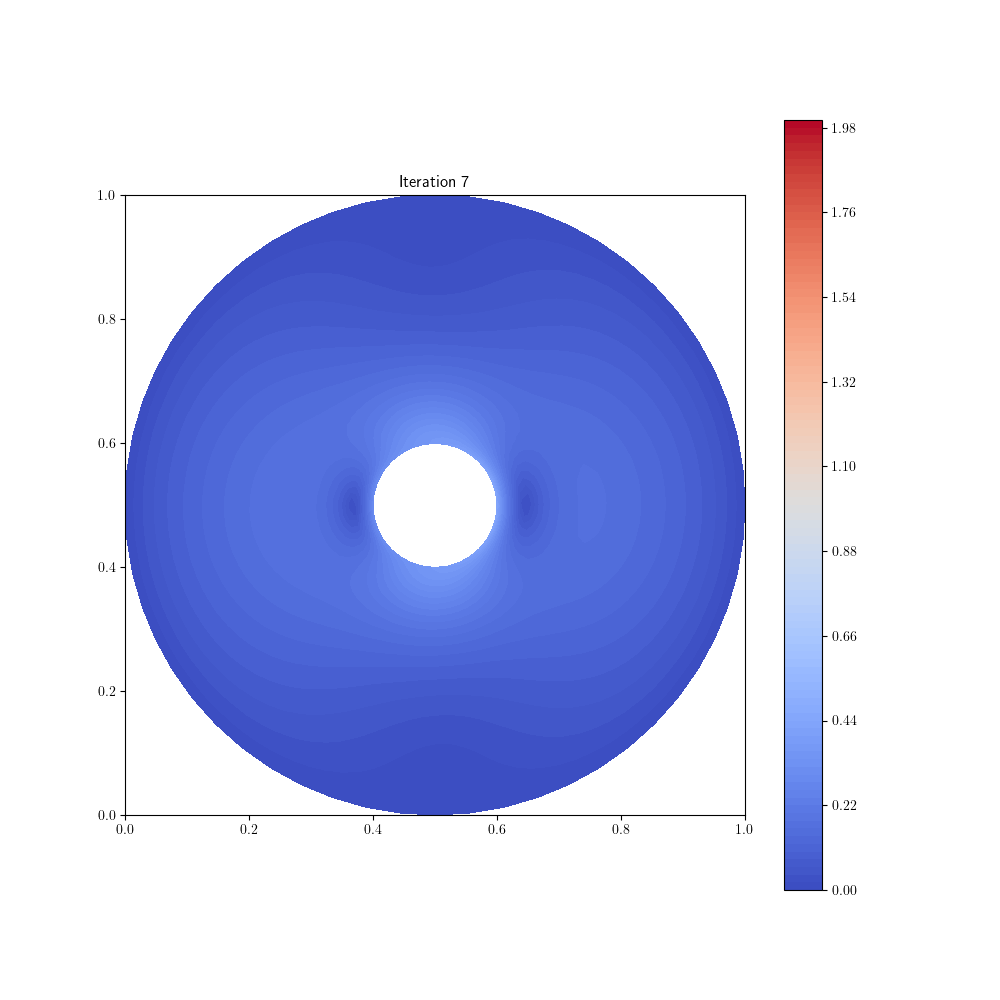}
        \caption{Iteration 7.}
    \end{subfigure}%
    ~
    \begin{subfigure}[t]{0.3\linewidth}
        \centering
        \includegraphics[width =\linewidth]{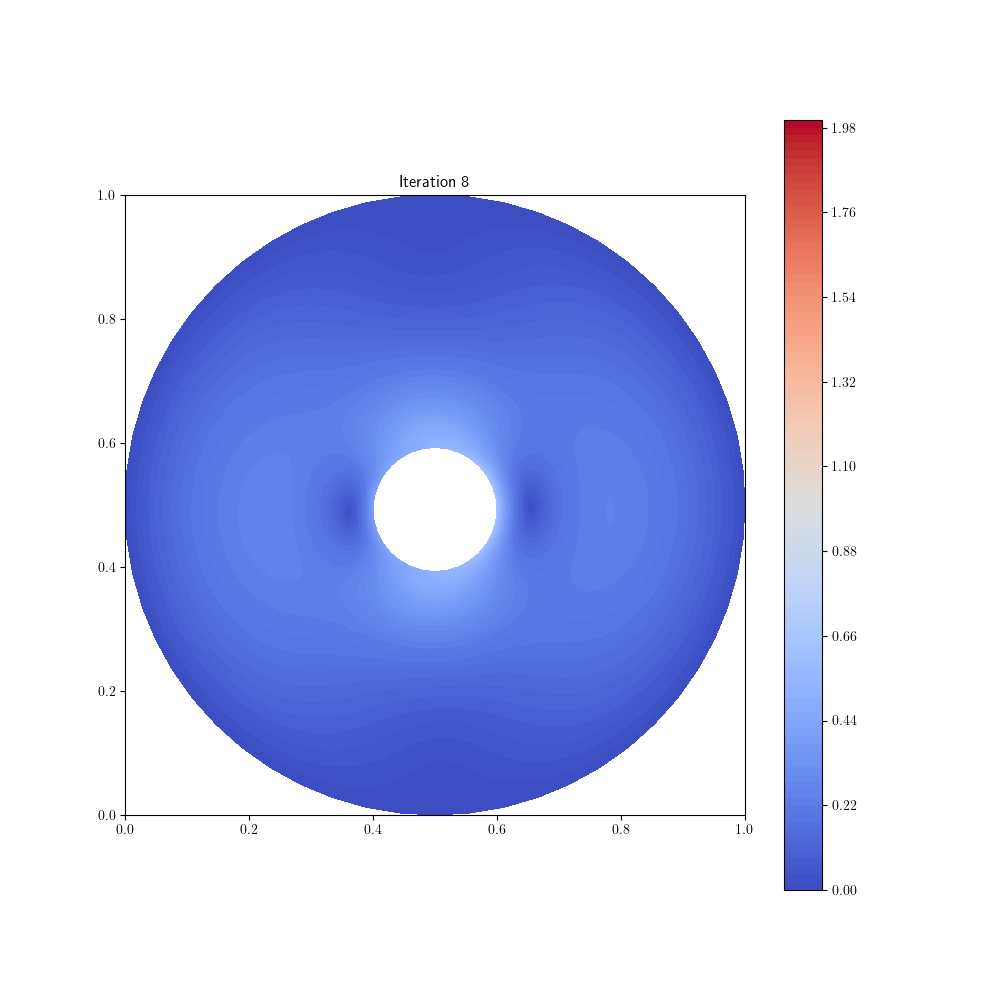}
        \caption{Iteration 8.}
    \end{subfigure}%
    ~
    \begin{subfigure}[t]{0.3\linewidth}
        \centering
        \includegraphics[width =\linewidth]{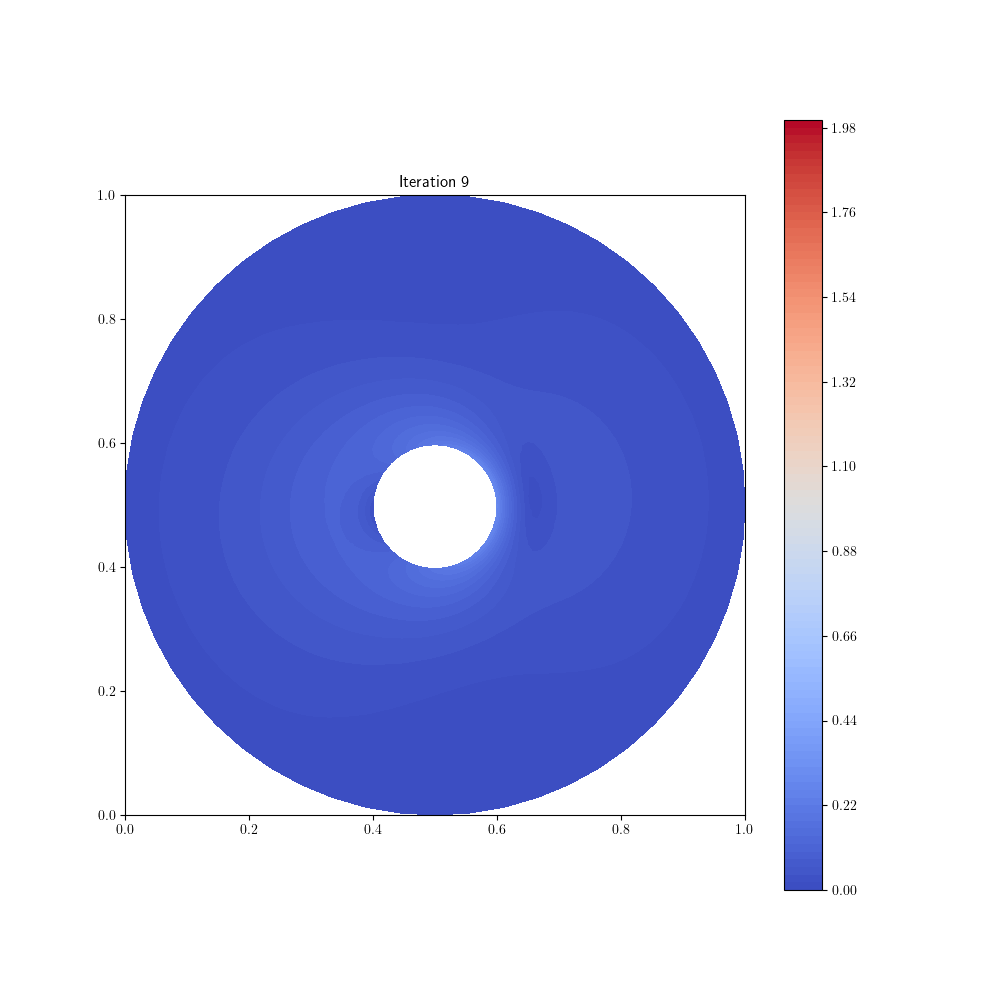}
        \caption{Iteration 9.}
    \end{subfigure}%
    \caption{Plots of the magnitude of the fluid velocity, at time $t = 0.05$, over 9 iterations, with a rigid body with density $\rho_B = \frac{10}{\pi}$.}
    \label{fig: fsi nonconvergence}
\end{figure}

\section*{Generalisations}
Our method readily extends to multiple rigid bodies with some minor, but tedious, modifications to our arguments.
Some more interesting topics would be whether this extends to the case of Navier-slip boundary conditions \eqref{eqn: slip BCs}, and to compressible fluids as in \cite{feireisl2003motion}.
Indeed the latter of these topics relies heavily on the short time existence of strong solutions to the compressible Navier--Stokes equations (see \cite{feireisl2004dynamics} or the recent monograph \cite{kreml2025book}) which would impact the proof of Lemma~\ref{lemma: non degeneracy1}.
Lastly, and the most challenging extension of our method would be to problems with a boundary that changes shape, as we have repeatedly exploited the fact that the boundary does not change shape.
Examples of such problems are fluid-elastic body interactions (such as those discussed in \cite{benevsova2023variational,vcanic2021moving}), free boundary problems (for example the problems discussed in \cite{EllOckBook82}), and problems coupling the geometric evolution of a surface to the solution of a PDE posed on the surface (see for example \cite{abels2023short}).

\subsection*{Acknowledgments}
Thomas Sales is supported by the UK Engineering and Physical Sciences Research Council (Grant number: EP/Z535138/1).

\appendix
\section{A calculation for Lemma~\ref{lemma: fsi energy estimates}}
\label{appendix: bihari use}

In this appendix we perform the calculation which was omitted at the end of the proof of Lemma \ref{lemma: fsi energy estimates}.
That is, we verify that \eqref{fsi energyestimate2} holds given that \eqref{smalltimecondition} holds.
We recall from the proof of Lemma \ref{lemma: fsi energy estimates} that we want to use Lemma \ref{biharilasalle} with $\lambda(s) = s + s^3$, where for a small, fixed $y_0 > 0$ one finds that 
\begin{gather*}
\Lambda(y) = \int_{y_0}^y \frac{1}{s + s^3} \, \mathrm{d}s = \frac{1}{2}\log\left(\frac{y^2}{1+y^2}\right) + \frac{1}{2}\log\left(\frac{1+y_0^2}{y_0^2}\right),\\
\Lambda^{-1}(z) = \left( \frac{y_0^2}{1 + y_0^2 - y_0^2\exp(2z))} \right)^{\frac{1}{2}} \exp(z),\\
\text{dom}(\Lambda^{-1}) = \left(-\infty, \frac{1}{2}\log\left(\frac{1+y_0^2}{y_0^2}\right)\right).
\end{gather*}
It is clear to see that the condition \eqref{smalltimecondition} is equivalent to 
\begin{multline*}
		\frac{1}{2} \log \left( \frac{\rho^2 \|\grad \mbf{u}_0\|_{\mbf{L}^2(\Omega(0))}^4 + C\left(\kappa \int_0^{\widetilde{T}} \|\mbf{B}\|_{\mbf{H}^1(\Omega(t))}^2 \right)^2}{4 + \rho^2 \|\grad \mbf{u}_0\|_{\mbf{L}^2(\Omega(0))}^4 + C\left(\kappa\int_0^{\widetilde{T}} \|\mbf{B}\|_{\mbf{H}^1(\Omega(t))}^2 \right)^2} \right) + \frac{1}{2}\log\left(\frac{1+y_0^2}{y_0^2}\right)\\
        +\int_0^{\widetilde{T}}C \left(\frac{1}{\kappa^2} + \|\Div\mbf{V}\|_{L^\infty(\Omega(t))} + \|\mbf{V}\|_{\mbf{L}^\infty(\Omega(t))}^2 +\|\widetilde{\mbf{u}}\|_{\mbf{L}^4(\Omega(t))}^8 +\|\widetilde{\mbf{u}}\|_{\mbf{W}^{1,4}(\Omega(t))}^2\right)\\
        \in \left(-\infty, \frac{1}{2}\log\left(\frac{1+y_0^2}{y_0^2}\right)\right),
\end{multline*}
and hence we may apply $\Lambda^{-1}$ to this term.
For ease of notation we now define 
\begin{gather*}
    \mathcal{A}_1 := \frac{\rho^2 \|\grad \mbf{u}_0\|_{\mbf{L}^2(\Omega(0))}^4 + C\left(\int_0^{\widetilde{T}} \|\mbf{B}\|_{\mbf{L}^2(\Omega(t))}^2 \right)^2}{4 + \rho^2 \|\grad \mbf{u}_0\|_{\mbf{L}^2(\Omega(0))}^4 + C\left(\int_0^{\widetilde{T}} \|\mbf{B}\|_{\mbf{L}^2(\Omega(t))}^2 \right)^2},\\
    \mathcal{A}_2 := \int_0^{\widetilde{T}}C \left( \frac{1}{\kappa^2} + \|\Div\mbf{V}\|_{L^\infty(\Omega(t))} + \|\mbf{V}\|_{\mbf{L}^\infty(\Omega(t))}^2 +\|\widetilde{\mbf{u}}\|_{\mbf{L}^4(\Omega(t))}^8 +\|\widetilde{\mbf{u}}\|_{\mbf{W}^{1,4}(\Omega(t))}^2\right).
\end{gather*}
Now using the above expression for $\Lambda^{-1}$ one finds that
\begin{multline*}
    \Lambda^{-1}\left(  \frac{1}{2} \log(\mathcal{A}_1) + \mathcal{A}_2 + \frac{1}{2} \log\left( \frac{1 + y_0^2}{y_0^2} \right) \right)\\
    = \left( \frac{y_0^2}{1 + y_0^2 - y_0^2 \left( \frac{1 + y_0^2}{y_0^2} \right)\mathcal{A}_1  \exp(2 \mathcal{A}_2)} \right)^{\frac{1}{2}} \sqrt{\mathcal{A}_1} \sqrt{\frac{1 + y_0^2}{y_0^2}} \exp(\mathcal{A}_2)\\
    = \frac{\sqrt{\mathcal{A}_1} \exp(\mathcal{A}_2)}{\sqrt{1 -\mathcal{A}_1 \exp(2 \mathcal{A}_2)}}.
\end{multline*}
Unpacking the expression for $\mathcal{A}_1$, and recalling $\mathcal{E}(\widetilde{T}) = \exp(2 \mathcal{A}_2)$ we see
\begin{equation*}
    \frac{\sqrt{\mathcal{A}_1} \exp(\mathcal{A}_2)}{\sqrt{1 -\mathcal{A}_1 \exp(2 \mathcal{A}_2)}} = \mathcal{D}(\widetilde{T}),
\end{equation*}
for $\mathcal{D}(\cdot)$ as defined in Lemma~\ref{lemma: fsi energy estimates}, from which one finds that \eqref{fsi energyestimate2} holds.

\section{The Stokes equations on an evolving domain}
\label{appendix:stokes}

Let $\mbf{v}(t)$ to be the unique solution to the following problem\footnote{Here we have set the viscosity to be $\mu = 1$ for notational simplicity.} on $\Omega(t)$:
\begin{align}
    \begin{rcases}
        -\Delta \mbf{v} + \grad p &= 0\\
    \Div \mbf{v} &= 0
    \end{rcases} \text{ on } \Omega(t),
    \label{stokes problem}
\end{align}
such that
\[ \mbf{v} = 0 \text{ on } \Gamma, \quad \mbf{v}= \mbf{q}'(t) + \boldsymbol{\omega}(t)\times(\mbf{x} - \mbf{q}(t)) \text{ on } \partial B(t). \]
Here we are considering $\Omega(t)$ to be determined by $\mbf{q}$ and $\boldsymbol{\omega}$, as in Section \ref{section: evolving NS}.
Existence of a weak solution follows from standard results on the Stokes equation, and moreover one finds that, for $r \in [1, \infty)$,
\begin{align}
    \| \mbf{v} \|_{\mbf{W}^{2,r}(\Omega(t))} + \|{p}\|_{W^{1,r}(\Omega(t))}\leq C \| \mbf{q}'(t) + \boldsymbol{\omega}(t)\times(\mbf{x} - \mbf{q}(t)) \|_{\mbf{W}^{2,r}(\Omega(t))}, \label{stokes regularity1}
\end{align}
for a constant $C$ which depends only on $r$ and the geometry of $\Omega(0)$, cf.~\cite{galdi2011introduction,temam2001navier}.
From this and the smoothness of $\mbf{q}$ and $\boldsymbol{\omega}$ we find that
\[ \mbf{v} \in L^\infty_{\mbf{W}^{2,r}}(\Phi), \quad p \in L^\infty_{W^{1,r}}(\Phi) \quad \forall r \in [1,\infty). \]
We will also require that the time derivative of $\mbf{v}$ to be sufficiently smooth, namely an element of $L^2_{\mbf{H}^1}(\Phi)$.
We verify this by pulling this equation back to $\Omega(0)$ and differentiating in time.

\subsection{Some estimates}

We introduce shorthand notation $\widehat{\mbf{v}} = \mbf{v} \circ \Phi(t)$ and $\widehat{p} = p \circ \Phi(t)$.
Then these solve the system

\begin{gather}
    \int_{\Omega(0)} J \mbb{D} \mbb{D}^T \grad \widehat{\mbf{v}}:\grad\boldsymbol{\eta} = \int_{\Omega(0)} J \widehat{p} \tr{\mbb{D}^T \grad \boldsymbol{\eta}}, \label{stokes pullback1}\\
    \int_{\Omega(0)} J q \tr{\mbb{D}^T \grad \widehat{\mbf{v}}} = 0, \label{stokes pullback2}
\end{gather}
for all $\boldsymbol{\eta} \in \mbf{H}^1_0(\Omega(0)), q \in L^2(\Omega(0))$, where $\mbb{D} = \grad \Phi, J = \det(\mbb{D})$.
We observe that
\[ \tr{\mbb{D}^T \grad \boldsymbol{\eta}} = \mbb{D}: \grad \boldsymbol{\eta}, \]
and will use this expression throughout.
It is useful to note that the bilinear form on the left of \eqref{stokes pullback1} is coercive, and the bilinear form in \eqref{stokes pullback2} satisfies an inf-sup condition.
This first statement is trivially true, but we briefly prove this latter statement. 

\begin{lemma}
    Let 
    \[ L_t^2(\Omega(0)) := \left\{ q \in L^2(\Omega(0)) \mid \int_{\Omega(t)} q \circ \Phi(t)^{-1} = 0 \right\}. \]
    This is a closed subspace of $L^2(\Omega(0))$, and moreover there exists a constant $C(\Phi(t))$ such that
    \begin{align}
        \inf_{q \in L_t^2(\Omega(0))} \sup_{\boldsymbol{\eta} \in \mbf{H}^1(\Omega(0))} \frac{\int_{\Omega(0)} J q \mbb{D}: \grad \boldsymbol{\eta}}{\|\boldsymbol{\eta}\|_{\mbf{H}^1(\Omega(0))} \|q \|_{L^2(\Omega(0))}} \geq C(\Phi(t)) > 0.
    \end{align}
\end{lemma}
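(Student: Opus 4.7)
The plan is to reduce the inf-sup claim on $\Omega(0)$ to an analogous statement on $\Omega(t)$ by a change of variables, and then construct the required test function by solving a single scalar elliptic equation in non-divergence form. First, the change of variables $\mbf{x} = \Phi(t)\mbf{p}$ gives $L^2_t(\Omega(0)) = \{q \in L^2(\Omega(0)) : \int_{\Omega(0)} Jq = 0\}$, which is the kernel of a bounded linear functional and hence closed.

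For $\boldsymbol{\eta} \in \mbf{H}^1(\Omega(0))$ I set $\boldsymbol{\xi} := \boldsymbol{\eta}\circ\Phi(t)^{-1} \in \mbf{H}^1(\Omega(t))$. The chain rule gives $\grad\boldsymbol{\eta} = ((\grad\boldsymbol{\xi})\circ\Phi(t))\,\mbb{D}$, and the cyclic property of the trace then yields
\[
\mbb{D} : \grad \boldsymbol{\eta} = \tr{\mbb{D}\mbb{D}^T (\grad \boldsymbol{\xi})\circ\Phi(t)} = (\widetilde{G}\circ \Phi(t)) : (\grad\boldsymbol{\xi})\circ\Phi(t), \qquad \widetilde{G} := (\mbb{D}\mbb{D}^T)\circ \Phi(t)^{-1}.
\]
Pulling forward to $\Omega(t)$ (the Jacobian $J$ is absorbed into $d\mbf{x}$) produces the key identity
\[
\int_{\Omega(0)} Jq\,\mbb{D} : \grad \boldsymbol{\eta} = \int_{\Omega(t)} \widetilde{q}\, \widetilde{G} : \grad \boldsymbol{\xi}, \qquad \widetilde{q} := q\circ \Phi(t)^{-1}.
\]
For $q \in L^2_t(\Omega(0))$ one has $\widetilde{q} \in L^2_0(\Omega(t))$, and since $\Phi(t) \in C^{2,1}$ the tensor $\widetilde{G}$ is continuous, symmetric, and uniformly positive definite on $\Omega(t)$ with constants depending only on $\Phi(t)$.

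The choice of test function is dictated by the observation that if $\boldsymbol{\xi} = \grad\phi$ then $\widetilde{G} : \grad \boldsymbol{\xi} = \widetilde{G}_{ij}\partial_i\partial_j \phi$. I therefore solve the non-divergence form Dirichlet problem
\[
\widetilde{G}_{ij}\,\partial_i \partial_j \phi = \widetilde{q} \quad \text{in } \Omega(t), \qquad \phi = 0 \quad \text{on } \partial \Omega(t).
\]
Since the operator is uniformly elliptic with continuous coefficients and $\Omega(t)$ has $C^{2,1}$ boundary, classical theory --- for instance by rewriting as $-\Div(\widetilde{G}\grad\phi) + \mbf{c}\cdot\grad\phi = -\widetilde{q}$ with $\mbf{c} \in \mbf{L}^\infty(\Omega(t))$ and applying Lax--Milgram/Fredholm alternative together with a maximum-principle uniqueness argument as in~\cite[Chapter 9]{gilbarg1977elliptic} --- produces a unique solution $\phi \in H^2(\Omega(t)) \cap H^1_0(\Omega(t))$ with $\|\phi\|_{H^2(\Omega(t))} \leq C(\Phi(t))\|\widetilde{q}\|_{L^2(\Omega(t))}$.

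Setting $\boldsymbol{\xi} := \grad\phi$ and $\boldsymbol{\eta} := \boldsymbol{\xi}\circ \Phi(t)$, the key identity collapses to $\int_{\Omega(0)} Jq\,\mbb{D} : \grad \boldsymbol{\eta} = \|\widetilde{q}\|^2_{L^2(\Omega(t))}$, while the compatibility of function spaces (Lemma~\ref{pullback lemma}) yields $\|\boldsymbol{\eta}\|_{\mbf{H}^1(\Omega(0))} \leq C(\Phi(t))\|q\|_{L^2(\Omega(0))}$. A further change of variables gives $\|\widetilde{q}\|^2_{L^2(\Omega(t))} \geq c(\Phi(t))\|q\|^2_{L^2(\Omega(0))}$, and combining these three estimates produces the inf-sup constant $C(\Phi(t)) > 0$. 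The hard step is the construction of $\phi$: the equation $\widetilde{G} : \grad \boldsymbol{\xi} = \widetilde{q}$ is underdetermined as a system, and the gauge ansatz $\boldsymbol{\xi} = \grad\phi$ is precisely what converts it into a well-posed scalar non-divergence form equation whose solvability requires the $C^{2,1}$ regularity of $\Phi(t)$ (to ensure continuity and ellipticity of $\widetilde{G}$) and of $\partial\Omega(0)$ (to invoke $H^2$-regularity on a smooth domain).
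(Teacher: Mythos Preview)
Your proof is correct but takes a genuinely different route from the paper's. The paper's argument is very short: it observes that the bilinear form $\int_{\Omega(0)} Jq\,\mbb{D}:\grad\boldsymbol{\eta}$ is, by construction (cf.~\eqref{stokes pullback2}), the pullback of the standard pressure--velocity pairing $\int_{\Omega(t)} \tilde q\,\Div\tilde{\boldsymbol{\eta}}$ from the Stokes problem on $\Omega(t)$, and therefore the inf-sup on $\Omega(0)$ is inherited directly from the classical Stokes inf-sup on $\Omega(t)$, with $C(\Phi(t))$ the product of that geometric constant with the norm-equivalence factors $C_{L^2}(t)$, $C_{\mbf{H}^1}(t)$ from compatibility of the function-space pairs. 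You instead push forward explicitly, arrive at the weighted form $\int_{\Omega(t)} \tilde q\,\widetilde G:\grad\boldsymbol{\xi}$ with $\widetilde G=(\mbb{D}\mbb{D}^T)\circ\Phi^{-1}$, and construct a near-optimal test function via the gradient ansatz $\boldsymbol{\xi}=\grad\phi$, which reduces the problem to a scalar non-divergence elliptic equation whose $H^2$-solvability you invoke. The paper's route is more economical because it simply quotes a standard result; yours is constructive and self-contained but leans on strong-solution theory for non-divergence operators (and on the $C^{2,1}$ regularity of $\Phi$ to ensure continuous coefficients). It is worth noting that your explicit computation of the pushforward produces $\widetilde G:\grad\boldsymbol{\xi}$ rather than $\Div\boldsymbol{\xi}$; this means your argument proves the lemma exactly as stated, whereas the paper's one-line reduction implicitly relies on the form being the pullback of $\Div$, which is the intended meaning of \eqref{stokes pullback2}.
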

\begin{proof}
    Firstly we notice that by pulling back the integral which defines $L_t^2(\Omega(0))$ we find
    \[ \int_{\Omega(t)} q \circ \Phi(t)^{-1} = \int_{\Omega(0)} J(t) q, \]
    and so we can view $L_t^2(\Omega(0))$ as the preimage of $0$ under the map $q \mapsto \int_{\Omega(0)} J(t) q$, and is hence a closed subspace of $L^2(\Omega(0))$.

    By considering the pushforward of this bilinear form we find that an equivalent quantity to consider is
    \[ \inf_{\widetilde{q} \in L_0^2(\Omega(t))} \sup_{\widetilde{\boldsymbol{\eta}} \in \mbf{H}^1(\Omega(t))} \frac{\int_{\Omega(t)} \widetilde{q} \Div \widetilde{\boldsymbol{\eta}}}{C_{L^2}(t)C_{\mbf{H}^1}(t)\|\widetilde{\boldsymbol{\eta}}\|_{\mbf{H}^1(\Omega(t))} \|\widetilde{q} \|_{L^2(\Omega(t))}},  \]
    where $C_{L^2}(t), C_{\mbf{H}^1}(t)$ are norm equivalence constants which appear in the compatibility of the pairs $(L^2(\Omega(t)), \Phi(t))$, $(\mbf{H}^1(\Omega(t)), \Phi(t))$ respectively, and
    \[ L^2_0(\Omega(t)) := \left\{ \widetilde{q} \in L^2(\Omega(t)) \mid \int_{\Omega(t))} \widetilde{q} = 0\right\}. \]
    It is well known that there is a constant $\widetilde{C} > 0$, which depends only on the geometry of $\Omega(0)$, such that
    \[ \inf_{\widetilde{q} \in L_0^2(\Omega(t))} \sup_{\widetilde{\boldsymbol{\eta}} \in \mbf{H}^1(\Omega(0))} \frac{\int_{\Omega(t)} \widetilde{q} \Div \widetilde{\boldsymbol{\eta}}}{\|\widetilde{\boldsymbol{\eta}}\|_{\mbf{H}^1(\Omega(t))} \|\widetilde{q} \|_{L^2(\Omega(t))}}  \geq \widetilde{C} > 0.\]
    Thus our result holds for $C(\Phi(t)) := \widetilde{C}C_{L^2}(t)C_{\mbf{H}^1}(t)$.
\end{proof}
With this inf-sup condition we can now show a bound for $\ddt{\widehat{\mbf{v}}}$.
\begin{lemma}
    $\mbf{v}$ solving \eqref{eqn: stokes eqn} for almost all $t \in [0,T]$ is such that $\mbf{v} \in H^1_{\mbf{H}^1} \cap L^\infty_{\mbf{W}^{2,r}}$ for all $r \in [1,\infty)$.
\end{lemma}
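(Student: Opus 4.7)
The plan has two parts, corresponding to the two regularity claims. For the $L^\infty_{\mbf{W}^{2,r}}$ regularity, essentially nothing new is needed: \eqref{stokes regularity1} applied pointwise in $t$ together with the observation that $\mbf{q}'(t) + \boldsymbol{\omega}(t)\times(\mbf{x} - \mbf{q}(t))$ is a degree-one polynomial in $\mbf{x}$ whose coefficients are uniformly bounded in $t$ (via the Sobolev embeddings $\mbf{H}^2([0,T]) \hookrightarrow \mbf{C}^{1,1/2}([0,T])$ and $\mbf{H}^1([0,T]) \hookrightarrow \mbf{C}^{0,1/2}([0,T])$) immediately delivers the uniform-in-$t$ $\mbf{W}^{2,r}$ bound on $\mbf{v}$, and similarly on $p$.

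The substance lies in showing the $H^1_{\mbf{H}^1}$ regularity, for which I would work entirely on the reference domain $\Omega(0)$ with the pulled-back solution $\widehat{\mbf{v}}$. The strategy is to differentiate \eqref{stokes pullback1}--\eqref{stokes pullback2} in time: denoting
\[ a(t;\mbf{w},\boldsymbol{\eta}) := \int_{\Omega(0)} J \mbb{D}\mbb{D}^T \grad\mbf{w} : \grad\boldsymbol{\eta}, \qquad b(t;\boldsymbol{\eta},q) := \int_{\Omega(0)} J q\, \mbb{D}:\grad\boldsymbol{\eta}, \]
the pair $(\ddt{\widehat{\mbf{v}}}, \ddt{\widehat{p}})$ should solve a Stokes-type saddle point problem whose right-hand sides collect $\ddt{}(J\mbb{D}\mbb{D}^T)$ and $\ddt{}(J\mbb{D})$ paired against the already controlled $\widehat{\mbf{v}}$ and $\widehat{p}$, and whose inhomogeneous Dirichlet data on $\partial B(0)$ is obtained by formally differentiating the pulled-back boundary condition,
\[ \ddt{\widehat{\mbf{v}}}\big|_{\partial B(0)} = \mbf{q}''(t) + \boldsymbol{\omega}'(t) \times (\Phi(t) - \mbf{q}(t)) + \boldsymbol{\omega}(t) \times \bigl(\ddt{\Phi} - \mbf{q}'(t)\bigr). \]
Thanks to the uniform bounds on $J$, $\mbb{D}$, $\mbb{D}^{-1}$ from \S\ref{subsection: construction}, $a(t;\cdot,\cdot)$ is coercive and $b(t;\cdot,\cdot)$ satisfies the inf-sup condition uniformly in $t$, so after lifting the boundary data the resulting saddle point problem is well-posed by Babu\v{s}ka--Brezzi theory. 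This yields a pointwise-in-$t$ estimate of the shape
\[ \|\ddt{\widehat{\mbf{v}}}(t)\|_{\mbf{H}^1(\Omega(0))} \leq C\bigl(1 + |\mbf{q}''(t)| + |\boldsymbol{\omega}'(t)|\bigr), \]
with $C$ depending on the already-established $L^\infty$ bounds on $\widehat{\mbf{v}}$ and $\widehat{p}$. Squaring and integrating in $t$ then yields $\ddt{\widehat{\mbf{v}}} \in L^2(0,T; \mbf{H}^1(\Omega(0)))$, which is precisely $\mbf{v} \in H^1_{\mbf{H}^1}(\Phi)$.

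The main technical obstacle is making this time differentiation rigorous, since a priori $\widehat{\mbf{v}}$ is defined only by an elliptic problem at each $t$ and its temporal regularity is exactly what we are trying to prove. I would resolve this by a difference-quotient argument: apply the Babu\v{s}ka--Brezzi theory to $(\widehat{\mbf{v}}(t+h) - \widehat{\mbf{v}}(t))/h$ in place of $\ddt{\widehat{\mbf{v}}}$, exploiting the $\mbf{H}^2([0,T])$ regularity of $\mbf{q}$ and $\mbf{H}^1([0,T])$ regularity of $\boldsymbol{\omega}$ (which gives $L^2$-in-time difference quotients for the boundary data) together with the $\mbf{C}^{1,1}$ spatial regularity of $\Phi$ (which gives uniform difference quotient bounds for $J$ and $\mbb{D}$ through the ODE \eqref{eqn: parametrisation defn}). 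A uniform-in-$h$ $\mbf{H}^1$ bound on the quotient, followed by passing to a weak limit, delivers the claimed regularity and completes the proof.
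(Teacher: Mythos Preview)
Your proposal is correct and follows essentially the same approach as the paper: pull back to $\Omega(0)$, formally differentiate the saddle point system \eqref{stokes pullback1}--\eqref{stokes pullback2} in time, and invoke Babu\v{s}ka--Brezzi theory using the uniform coercivity and inf-sup bounds. The paper leaves the rigorous justification of the time differentiation as a formal argument (referring to \cite{elliott2024navier} for details), whereas you go further and sketch a difference-quotient argument; this is a perfectly reasonable way to fill that gap and is compatible with the paper's treatment.
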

\begin{proof}
    We have already provided the $\mbf{W}^{2,r}$ estimate above, and so our focus is on the time derivative of $\mbf{v}$.
    One can show the existence of strong time derivatives (see \cite[Appendix A]{elliott2024navier} for a similar calculation), but we provide only a formal argument which yields bounds for this function.
    By differentiating \eqref{stokes pullback1} in time, for a sufficiently smooth test function, one finds
    \begin{align*}
        &\int_{\Omega(0)} \ddt{}(J \mbb{D} \mbb{D}^T) \grad \widehat{\mbf{v}}: \grad \boldsymbol{\eta} + \int_{\Omega(0)} J \mbb{D} \mbb{D}^T \grad \ddt{\widehat{\mbf{v}}}:\grad \boldsymbol{\eta} + \int_{\Omega(0)} J \mbb{D} \mbb{D}^T \grad\widehat{\mbf{v}}:\grad\ddt{\boldsymbol{\eta}}\\
        &= \int_{\Omega(0)} \ddt{J} \widehat{p} \tr{\mbb{D}^T \grad \boldsymbol{\eta}} + \int_{\Omega(0)} J \ddt{\widehat{p}} \mbb{D} :\grad \boldsymbol{\eta} + \int_{\Omega(0)} J \widehat{p} \ddt{\mbb{D}}:\grad \boldsymbol{\eta}+ \int_{\Omega(0)} J \widehat{p} \mbb{D}: \grad \ddt{\boldsymbol{\eta}},
    \end{align*}
    and we can use \eqref{stokes pullback1} to see that this simplifies to
    \begin{equation}
        \begin{aligned}
        &\int_{\Omega(0)} \ddt{}(J \mbb{D} \mbb{D}^T) \grad\widehat{\mbf{v}}:\grad \boldsymbol{\eta} + \int_{\Omega(0)} J \mbb{D} \mbb{D}^T \grad\ddt{\widehat{\mbf{v}}}: \grad \boldsymbol{\eta}\\
        &= \int_{\Omega(0)} \ddt{J} \widehat{p} \mbb{D}: \grad \boldsymbol{\eta} + \int_{\Omega(0)} J \ddt{\widehat{p}} \mbb{D}: \grad \boldsymbol{\eta} + \int_{\Omega(0)} J \widehat{p} \ddt{\mbb{D}} : \grad \boldsymbol{\eta}.
        \end{aligned}
        \label{stokes pullback3}
    \end{equation}
    Similarly we find that differentiating \eqref{stokes pullback2} yields
    \begin{align}
        \int_{\Omega(0)} \ddt{J} q \mbb{D}: \grad \widehat{\mbf{v}} + \int_{\Omega(0)} J q \ddt{\mbb{D}}: \grad \widehat{\mbf{v}} + \int_{\Omega(0)} J q \mbb{D}:\grad \ddt{\widehat{\mbf{v}}} = 0. \label{stokes pullback4}
    \end{align}
Thus we find that the pair $\left( \ddt{\widehat{\mbf{v}}}, \ddt{\widehat{p}} \right)$ solve the saddle point problem
\begin{gather}
    \begin{aligned}
    \int_{\Omega(0)} J \mbb{D} \mbb{D}^T \grad \ddt{\widehat{\mbf{v}}}:\grad \boldsymbol{\eta} - \int_{\Omega(0)} J \ddt{\widehat{p}} \mbb{D}: \grad \boldsymbol{\eta} 
        &= -\int_{\Omega(0)} \ddt{}(J \mbb{D} \mbb{D}^T) \grad \widehat{\mbf{v}}:\grad \boldsymbol{\eta}\\
        &+ \int_{\Omega(0)} \ddt{J} \widehat{p} \mbb{D}: \grad \boldsymbol{\eta} + \int_{\Omega(0)} J \widehat{p} \ddt{\mbb{D}}: \grad \boldsymbol{\eta},
    \end{aligned}\label{eqn: stokes pullback1}\\
    \int_{\Omega(0)} J q \mbb{D}: \grad \ddt{\widehat{\mbf{v}}} = -\int_{\Omega(0)} \ddt{J} q \mbb{D} :\grad \widehat{\mbf{v}} -\int_{\Omega(0)} J q \ddt{\mbb{D}} :\grad \widehat{\mbf{v}}, \label{eqn: stokes pullback2}
\end{gather}
for all $q \in L^2(\Omega(0)), \boldsymbol{\eta} \in \mbf{H}_0^1(\Omega(0))$.
We have written the system in this way so that the terms on the left-hand side involve the time derivatives, and the terms on the right-hand side can be seen as some ``forcing'' term in the system.
This system is subject to boundary conditions
\begin{align*} \ddt{\widehat{\mbf{v}}} = 0 \text{ on } \Gamma, \quad \ddt{\widehat{\mbf{v}}} = \mbf{q}''(t) + \boldsymbol{\omega}'(t)\times(\mbf{x} - \mbf{q}(t)) - \boldsymbol{\omega}(t) \times \mbf{q}'(t) \text{ on } \partial B(0).
\end{align*}
Now owing to the fact that these bilinear forms have coercivity and inf-sup properties respectively we can apply standard results for saddle point problems in Hilbert spaces, see for instance \cite[Chapter 4]{Boffi13Book}, to obtain a bound of the form
\begin{align}
    \left\|  \ddt{\widehat{\mbf{v}}} \right\|_{\mbf{H}^1(\Omega(0))} &\leq \frac{C}{\inf_{\Omega(0)} |J \mbb{D} \mbb{D}^T|} \left\| \ddt{}(J \mbb{D} \mbb{D}^T) \right\|_{\mbf{L}^\infty(\Omega(0))}\|\widehat{\mbf{v}}\|_{\mbf{H}^1(\Omega(0))} \notag \\
    &+ \frac{C}{\inf_{\Omega(0)} |J \mbb{D} \mbb{D}^T|} \left\| \ddt{J} \right\|_{L^\infty(\Omega(0))} \| \mbb{D} \|_{\mbf{L}^\infty(\Omega(0))} \|\widehat{p}\|_{\mbf{L}^2(\Omega(0))}\notag \\
    &+ \frac{C}{\inf_{\Omega(0)} |J \mbb{D} \mbb{D}^T|}\left\| J\right\|_{L^\infty(\Omega(0))} \left\| \ddt{\mbb{D}} \right\|_{\mbf{L}^\infty(\Omega(0))} \|\widehat{p}\|_{\mbf{L}^2(\Omega(0))} \notag \\
    &+ \frac{C \sup_{\Omega(0)}|J \mbb{D} \mbb{D}^T|}{C(\Phi(t)) \inf_{\Omega(0)} |J \mbb{D} \mbb{D}^T| }\left\|  \ddt{J} \right\|_{L^\infty(\Omega(0))} \| \mbb{D}\|_{\mbf{L}^\infty(\Omega(0))} \|\widehat{\mbf{v}}\|_{\mbf{H}^1(\Omega(0))} \label{stokes regularity2}\\
    &+ \frac{C \sup_{\Omega(0)}|J \mbb{D} \mbb{D}^T|}{C(\Phi(t)) \inf_{\Omega(0)} |J \mbb{D} \mbb{D}^T| }\left\|  J \right\|_{L^\infty(\Omega(0))} \left\| \ddt{\mbb{D}}\right\|_{\mbf{L}^\infty(\Omega(0))} \|\widehat{\mbf{v}}\|_{\mbf{H}^1(\Omega(0))} \notag \\
    &+ \frac{C}{\inf_{\Omega(0)} |J \mbb{D} \mbb{D}^T|} \left\| \mbf{q}''(t) + \boldsymbol{\omega}'(t)\times(\mbf{x} - \mbf{q}(t)) - \boldsymbol{\omega}(t) \times \mbf{q}'(t) \right\|_{\mbf{H}^1(\Omega(0))}\notag \\
    &+ \frac{C \sup_{\Omega(0)}|J \mbb{D} \mbb{D}^T|}{C(\Phi(t)) \inf_{\Omega(0)} |J \mbb{D} \mbb{D}^T| } \left\| \mbf{q}''(t) + \boldsymbol{\omega}'(t)\times(\mbf{x} - \mbf{q}(t)) - \boldsymbol{\omega}(t) \times \mbf{q}'(t) \right\|_{\mbf{H}^1(\Omega(0))}.\notag
\end{align}
We then pushforward onto $\Omega(t)$ to transform this into a bound for $\ddt{\mbf{v}}$, using the compatibility of the function space pairs.
Firstly we notice that, due to the evolving domain, taking the pushforward of $\frac{\partial \widehat{\mbf{v}}}{\partial t}$ does not yield $\frac{\partial \mbf{v}}{\partial t}$ but rather the material derivative, $\frac{\partial \mbf{v}}{\partial t} + (\mbf{V} \cdot \grad) \mbf{v}$.
All in all we obtain a bound of the form,
\begin{align}
\begin{aligned}
    \left\|  \ddt{\mbf{v}} \right\|_{\mbf{H}^1(\Omega(t))} &\leq \| \mbf{V}\cdot \grad \mbf{v}\|_{\mbf{H}^1(\Omega(t))}\\
    &+ \frac{C_{\mbf{H}^1}(t)^2}{\inf_{\Omega(0)} |J \mbb{D} \mbb{D}^T|} \left\| \ddt{}(J \mbb{D} \mbb{D}^T) \right\|_{\mbf{L}^\infty(\Omega(0))} N_1(\mbf{q}, \boldsymbol{\omega};t)\\
    &+ \frac{C_{\mbf{H}^1}(t)^2}{\inf_{\Omega(0)} |J \mbb{D} \mbb{D}^T|}\left\| \ddt{J} \right\|_{L^\infty(\Omega(0))} \| \mbb{D} \|_{\mbf{L}^\infty(\Omega(0))} N_1(\mbf{q}, \boldsymbol{\omega};t)\\
    &+ \frac{C_{\mbf{H}^1}(t)^2}{\inf_{\Omega(0)} |J \mbb{D} \mbb{D}^T|}\left\| J\right\|_{L^\infty(\Omega(0))} \left\| \ddt{\mbb{D}} \right\|_{\mbf{L}^\infty(\Omega(0))} N_1(\mbf{q}, \boldsymbol{\omega};t)\\
    &+ \frac{C_{\mbf{H}^1}(t)^2 \sup_{\Omega(0)}|J \mbb{D} \mbb{D}^T|}{C(\Phi(t)) \inf_{\Omega(0)} |J \mbb{D} \mbb{D}^T| }\left\|  \ddt{J} \right\|_{L^\infty(\Omega(0))} \| \mbb{D}\|_{\mbf{L}^\infty(\Omega(0))} N_1(\mbf{q}, \boldsymbol{\omega};t)\\
    &+ \frac{C_{\mbf{H}^1}(t)^2 \sup_{\Omega(0)}|J \mbb{D} \mbb{D}^T|}{C(\Phi(t)) \inf_{\Omega(0)} |J \mbb{D} \mbb{D}^T| }\left\|  J \right\|_{L^\infty(\Omega(0))} \left\| \ddt{\mbb{D}}\right\|_{\mbf{L}^\infty(\Omega(0))} N_1(\mbf{q}, \boldsymbol{\omega};t)\\
    &+ \left(\frac{C_{\mbf{H}^1}(t)^2}{\inf_{\Omega(0)} |J \mbb{D} \mbb{D}^T|} + \frac{C_{\mbf{H}^1}(t)^2 \sup_{\Omega(0)}|J \mbb{D} \mbb{D}^T|}{C(\Phi(t)) \inf_{\Omega(0)} |J \mbb{D} \mbb{D}^T| }\right) N_2(\mbf{q}, \boldsymbol{\omega};t),
    \end{aligned}
    \label{stokes regularity3}
\end{align}
where we have introduced shorthand notation
\begin{gather*}
    N_1(\mbf{q}, \boldsymbol{\omega};t) := \| \mbf{q}'(t) + \boldsymbol{\omega}(t)\times(\mbf{x} - \mbf{q}(t)) \|_{\mbf{H}^{1}(\Omega(t))},\\
    N_2(\mbf{q}, \boldsymbol{\omega};t) := \left\| \mbf{q}''(t) + \boldsymbol{\omega}'(t)\times(\mbf{x} - \mbf{q}(t)) - \boldsymbol{\omega}(t) \times \mbf{q}'(t) \right\|_{\mbf{H}^1(\Omega(t))},
\end{gather*}
for better legibility.
\end{proof}

\subsection{Estimates using the construction in \S \ref{subsection: construction}}

Lastly we end this appendix by turning \eqref{stokes regularity3} into something more conducive to our iterative argument by expressing this bound in terms of $\mbf{q}$ and $ \boldsymbol{\omega}$.
We assume that the map $\Phi(t): \Omega(0) \rightarrow \Omega(t)$ is given by the construction in \S \ref{subsection: construction}.
This means that one has bounds
\begin{gather*}
	\frac{1}{E(\mbf{q}, \boldsymbol{\omega};t)} \leq |\grad \Phi| \leq E(\mbf{q}, \boldsymbol{\omega};t), 
\end{gather*}
where we have introduced $E(\mbf{q}, \boldsymbol{\omega};t)$ given by
\[ E(\mbf{q}, \boldsymbol{\omega};t) :=  \exp \left( C \int_0^t \left\| \frac{\mathrm{d}\mbf{q}}{\mathrm{d}t} + \boldsymbol{\omega} \times (\mbf{x} - \mbf{q}(t)) \right\|_{\mbf{C}^{2,1}(\Omega(s))} \, \mathrm{d}s \right),\]
for more concise notation.
Similarly one finds that
\begin{align*}
    \left\|\ddt{\grad \Phi}\right\|_{\mbf{L}^\infty(\Omega(0))} \leq C \left\| \frac{\mathrm{d}\mbf{q}}{\mathrm{d}t} + \boldsymbol{\omega} \times (\mbf{x} - \mbf{q}(t)) \right\|_{\mbf{C}^{2,1}(\Omega(t))}E(\mbf{q}, \boldsymbol{\omega};t).
\end{align*}
This lets one control all of the terms involving $J, \mbb{D}$ and their time derivatives, but it remains to deal with the term $C(\Phi(t))$.
For this we require Lemma \ref{pullback lemma}, where by inspecting the proof one can verify that
\[ C(\Phi(t)) = C\|J\|_{\mbf{L}^\infty(\Omega(t))}(1 + \|\mbb{D}\|_{\mbf{L}^\infty(\Omega(t))}) \geq \frac{C}{E(\mbf{q}, \boldsymbol{\omega};t)},\]
and likewise
\[ C_{\mbf{H}^1}(t) \leq C E(\mbf{q}, \boldsymbol{\omega};t). \]
It is now straightforward to use these bounds in \eqref{stokes regularity3} to find that now
\begin{align}
\begin{aligned}
   \left\|  \ddt{\mbf{v}} \right\|_{\mbf{H}^1(\Omega(t))} &\leq C \left\| \frac{\mathrm{d}\mbf{q}}{\mathrm{d}t} + \boldsymbol{\omega} \times (\mbf{x} - \mbf{q}(t)) \right\|_{\mbf{C}^{2,1}(\Omega(t))} E(\mbf{q}, \boldsymbol{\omega};t)\\
   &+ C \left\| \frac{\mathrm{d}\mbf{q}}{\mathrm{d}t} + \boldsymbol{\omega} \times (\mbf{x} - \mbf{q}(t)) \right\|_{\mbf{C}^{2,1}(\Omega(t))}^2 E(\mbf{q}, \boldsymbol{\omega};t)\\
   &+ C\left\| \mbf{q}''(t) + \boldsymbol{\omega}'(t)\times(\mbf{x} - \mbf{q}(t)) - \boldsymbol{\omega}(t) \times \mbf{q}'(t) \right\|_{\mbf{H}^1(\Omega(t))} E(\mbf{q}, \boldsymbol{\omega};t),
   \end{aligned}
   \label{stokes regularity4}
\end{align}
for constants independent of $\mbf{q}, \boldsymbol{\omega}$.
This result will be used in the proof of Lemma~\ref{lemma: non degeneracy2}.

\subsection{A perturbation result}
\label{subsection: stokes perturbation}
We now consider the problem of bounding the difference of two solutions of \eqref{stokes problem} defined over two different domains --- this complements some of the missing calculations from Lemma \ref{lemma: fsi contraction}.
Given $(\mbf{q}^i, \boldsymbol{\omega}^i)$ defining a domain $\Omega^i(t)$, as above, we denote the solution of \eqref{stokes problem} by $\mbf{v}^i : \Omega^i(t) \rightarrow \mbb{R}^3$ and $p^i: \Omega^i(t) \rightarrow \mbb{R}$, for $i = 1,2$.
As in Section \ref{section:FSI iteration} we define maps $\Phi^{2,1}(t) : \Omega^2(t) \rightarrow \Omega^1(t)$ given by
\[ \Phi^{2,1}(t) := \Phi^1(t) \circ \Phi^2(t)^{-1},\]
and the inverse of $\Phi^{2,1}(t)$ as
\[ \Phi^{1,2}(t) := \Phi^2(t) \circ \Phi^1(t)^{-1} : \Omega^1(t) \rightarrow \Omega^2(t). \]
We consider the associated Piola transform to be defined as
\[\mathcal{P}_t^{1,2} \boldsymbol{\psi}(\mbf{x}) = {\det(\grad\Phi^{1,2}(t)(\mbf{x}))} (\grad\Phi^{2,1}(t) \boldsymbol{\psi} )\circ \Phi^{1,2}(t), \]
for a function $\boldsymbol{\psi}: \Omega^2(t) \rightarrow \mbb{R}^3$.

Our plan now is to pull $\mbf{v}^2, p^2$ back onto $\Omega^1(t)$ and bound the transformed functions
\[ \widehat{\mbf{v}^2} := \mathcal{P}^{1,2}_t \mbf{v}^2, \quad \widehat{p^2} := p^2 \circ \Phi^{1,2}(t). \]
We find that $\widehat{\mbf{v}^2}, \widehat{p^2}$ solve
\begin{gather*}
    \int_{\Omega^1(t)} \det(\grad \Phi^{1,2}) \grad \Phi^{1,2} (\grad \Phi^{1,2})^T \grad \widehat{\mbf{v}^2} \cdot \grad \boldsymbol{\eta} + \int_{\Omega^1(t)} \det(\grad \Phi^{1,2}) \widehat{p^2} \grad \Phi^{1,2} : \grad \boldsymbol{\eta} = 0,\\
    \int_{\Omega^1(t)} q \Div \widehat{\mbf{v}^2} = 0,
\end{gather*}
for all $\boldsymbol{\eta} \in \mbf{H}^1_0(\Omega^1(t)), q \in L^2(\Omega^1(t))$ and all $t \in [0,T]$, along with the corresponding boundary conditions.
We then observe that the differences $\mbf{v}^1 - \widehat{\mbf{v}^2}$ and $p^1 - \widehat{p^2}$ then solve
\begin{gather*}
\begin{aligned}
    \int_{\Omega^1(t)} \grad (\mbf{v}^1 - \widehat{\mbf{v}^2}) \cdot \grad \boldsymbol{\eta} + \int_{\Omega^1(t)} (p^1 -  \widehat{p^2}) \Div \boldsymbol{\eta} &= \int_{\Omega^1(t)} \left( \det(\grad \Phi^{1,2}) \grad \Phi^{1,2} (\grad \Phi^{1,2})^T - \mbb{I} \right) \grad \widehat{\mbf{v}^2} \cdot \grad \boldsymbol{\eta}\\
    &+ \int_{\Omega^1(t)} \widehat{p^2} \left(\det(\grad \Phi^{1,2})  \grad \Phi^{1,2} - \mbb{I} \right): \grad \boldsymbol{\eta},
    \end{aligned}\\
    \int_{\Omega^1(t)} q \Div (\mbf{v}^1 - \widehat{\mbf{v}^2}) = 0,
\end{gather*}
with boundary conditions
\begin{gather*}
    \mbf{v}^1 - \widehat{\mbf{v}^2} = 0, \text{ on } \Gamma,\\
    \mbf{v}^1 - \widehat{\mbf{v}^2} = \frac{\mathrm{d}(\mbf{q}^1-\mbf{q}^2)}{\mathrm{d}t} + (\boldsymbol{\omega}^1 - \boldsymbol{\omega}^2)\times \mbf{x} - \boldsymbol{\omega}^1 \times (\mbf{q}^1 - \mbf{q}^2) - (\boldsymbol{\omega}^1 - \boldsymbol{\omega}^2)\times \mbf{q}^2, \text{ on } \partial B^1(t).
\end{gather*}

The idea is now to use this PDE to obtain bounds for the differences $\mbf{v}^1 - \widehat{\mbf{v}^2}$ and $p^1 -  \widehat{p^2}$, which follows from elliptic regularity theory for the Stokes problem (see for instance \cite{galdi2011introduction}) provided we can bound the terms on the right-hand side.
This is straightforward, and follows by using Lemma \ref{lemma: gradient difference} as we did in the proof of Lemma \ref{lemma: fsi functional bounds}.
This allows one to obtain a bound of the form
\begin{multline}
    \|\mbf{v}^1 - \widehat{\mbf{v}^2}\|_{\mbf{H}^2(\Omega^1(t))} + \|p^1 - \widehat{p^2}\|_{H^1(\Omega^1(t))}\\
    \leq C\left( 1 + \|\widehat{p^2}\|_{H^1(\Omega^1(t))} + \| \widehat{\mbf{v}^2} \|_{\mbf{H}^2(\Omega^1(t))}  \right)N_3(\mbf{q}^1, \boldsymbol{\omega}^1,\mbf{q}^2, \boldsymbol{\omega}^2;t),
    \label{stokes regularity5}
\end{multline}
where
\begin{multline*}
N_3(\mbf{q}^1, \boldsymbol{\omega}^1,\mbf{q}^2, \boldsymbol{\omega}^2;t):=\\
 \left\|\frac{\mathrm{d}(\mbf{q}^1-\mbf{q}^2)}{\mathrm{d}t} + (\boldsymbol{\omega}^1 - \boldsymbol{\omega}^2)\times \mbf{x} - \boldsymbol{\omega}^1 \times (\mbf{q}^1 - \mbf{q}^2) - (\boldsymbol{\omega}^1 - \boldsymbol{\omega}^2)\times \mbf{q}^2 \right\|_{\mbf{C}^{2,1}(\Omega^1(t))}
\end{multline*}
the constant $C$ depends only on the geometry of $\Omega(0)$.
One may similarly obtain a bound for the difference of the time derivatives, $\ddt{}(\mbf{v}^1 - \widehat{\mbf{v}^2})$, by combining the methods used in this appendix.
We omit details on this calculation as it is incredibly long, and adds nothing of value to the presentation of this paper.

\bibliographystyle{acm}
\bibliography{boundarybib.bib}

\end{document}